\def\trans{\xi}
\def\addfn{Z}
\def\cendep{\tilde{D}}
\def\integspace{{\cal C}_{\chatm}}
\def\cad{{c\`{a}dl\`{a}g }}
\def\are{\Phi}
\def\opint{\Theta}
\def\cb{\C_b}
\def\testspace{{\cal S}}
\def\dualspace{{\cal S}'}
\def\cg{\overline{G}}
\def\hatm{\widehat{M}}
\def\hatmn{\widehat{M}^{N}}
\def\meas{\nu}
\def\L{{\mathbb L}}
\def\H{{\mathbb H}}
\def\R{{\mathbb R}}
\def\N{{\mathbb N}}
\def\C{{\mathbb C}}
\def\Z{{\mathbb Z}}
\def\ind{{1\!\!1}}
\def\E{\mathbb{E}}
\def\P{\mathbb{P}}
\def\D{{\mathbb D}}
\def\cal{\mathcal}
\def\dom{\rm{dom}\,}
\def\supp{{\rm{supp}}}
\def\endsup{L}
\def\altendsup{\ell'}
\def\hconst{C}
\def\hexp{\gamma}
\def\del{\nabla}
\def\ops{{\cal S}}
\def\theequation{\arabic{section}.\arabic{equation}}
\def\thetheorem{\arabic{section}.\arabic{theorem}}
\newcounter{bean}
\newcommand{\benuma}{\setlength{\labelwidth}{.25in}
\begin{list}%
{(\alph{bean})}{\usecounter{bean}}}
\newcommand{\eenuma}{\end{list}}
\newcommand{\beginsec}{\setcounter{equation}{0}}
\newtheorem{theorem}{Theorem}[section]
\newtheorem{remark}[theorem]{Remark}
\newtheorem{lemma}[theorem]{Lemma}
\newtheorem{cor}[theorem]{Corollary}
\newtheorem{defn}[theorem]{Definition}
\newtheorem{ass}{Assumption}
\newtheorem{prop}[theorem]{Proposition}
\newcommand{\noi}{\noindent }
\newcommand{\ba}{\begin{array}}
\newcommand{\ea}{\end{array}}
\newcommand{\bea}{\begin{eqnarray}}
\newcommand{\eea}{\end{eqnarray}}
\newcommand{\beas}{\begin{eqnarray*}}
\newcommand{\eeas}{\end{eqnarray*}}
\newcommand{\be}{\begin{equation}}
\newcommand{\ee}{\end{equation}}
\newcommand{\bt}{\begin{theorem}}
\newcommand{\et}{\end{theorem}}
\newcommand{\bc}{\begin{center}}
\newcommand{\ec}{\end{center}}
\newcommand{\ben}{\begin{enumerate}}
\newcommand{\een}{\end{enumerate}}
\newcommand{\lan}{\langle}
\newcommand{\ran}{\rangle}
\newcommand{\ei}{\end{itemize}}
\newcommand{\nrm}[1]{\left\Vert #1 \right\Vert}
\newcommand{\ds}{\displaystyle}
\newcommand{\ve}{\varepsilon}
\def\dataspace{\widehat{{\cal I}}_0}
\def\newspace{{\cal I}_0}
\newcommand{\Ra}{\Rightarrow}
\newcommand{\ra}{\rightarrow}
\def\vconv{\stackrel{v}{\rightarrow}}
\def\wconv{\stackrel{w}{\rightarrow}}
\def\swconv{\Rightarrow}
\def\met{{\cal E}}
\def\eem{\mm(\met)}
\def\eemf{\mm_F(\met)}
\def\mm{{\mathbb M}}
\def\mmf{{\mathbb M}_F[0,\endsup)}
\def\mmsp{{\mathbb M}_{\leq 1} [0,\endsup)}
\def\mrangespace{{\cal S}'}
 \def\cinfspace{\C^\infty_c([0,\endsup) \times \R_+)}
\def\ddspace{{\mathbb D}_{\mrangespace}[0,\infty)}
\def\dspaceh{{\mathbb D}_{\cal H}[0,\infty)}
\def\dTspaceh{{\mathbb D}_{\cal H}[0,T]}
\def\dspacerp{{\mathbb D}_{\R_+}[0,\infty)}
\def\dspacer{{\mathbb D}_{\R}[0,\infty)}
\def\incspace{{\mathbb I}_{\R_+}[0,\infty)}
\def\ecb{{\cal C}_b(\met)}
\def\ecc{{\cal C}_c (\met)}
\def\eocd{{\cal C}^1(\met)}
\def\eocdc{{\cal C}^1_c(\met)}
\def\eocdb{{\cal C}^1_b(\met)}
\def\cb{{\cal C}_b(\R_+)}
\def\acb{{{\cal A}{\cal C}}_b({\cal E})}
\def\acbl{{{\cal A}{\cal C}}_b[0,\endsup)}
\def\newocdcpm{{\cal C}^{1,1}_c([0,\endsup)\times\R_+)}
\def\newcbm{{\cal C}_b([0,\endsup)\times\R)}
\def\tnewf{\tilde{\varphi}}
\def\newf{\varphi}
\def\newft{\varphi(\cdot,t)}
\def\newfs{\varphi(\cdot,s)}
\def\dxnewfs{\newf_x(\cdot,s)}
\def\dtnewfs{\newf_s(\cdot,s)}
\def\f1{{\bf 1}}
\def\zerof{{\bf 0}}
\def\measzero{{\bf \tilde{0}}}
\def\ubound{L}
\def\lc{\Lambda}
\def\fren{\overline{R}_E^{(N)}}
\def\ren{R_E^{(N)}}
\def\qn{Q^{(N)}}
\def\dn{D^{(N)}}
\def\kn{K^{(N)}}
\def\kninv{\theta^{(N)}}
\def\kinv{\theta}
\def\fkn{\overline{K}^{(N)}}
\def\en{E^{(N)}}
\def\agen{a^{(N)}}
\def\measn{\nu^{(N)}}
\def\fe{\overline{E}}
\def\fen{\overline{E}^{(N)}}
\def\fmeasn{\overline{\nu}^{(N)}}
\def\fmeas{\overline{\nu}}
\def\flam{\overline{\lambda}}
\def\fdn{\overline{D}^{(N)}}
\def\xn{X^{(N)}}
\def\fxn{\overline{X}^{(N)}}
\def\fx{\overline{X}}
\def\fdcomp{\overline{A}}
\def\ecompn{A_E^{(N)}}
\def\fecompn{\overline{A}_E^{(N)}}
\def\hmeasn{\widehat{\nu}^{(N)}}
\def\hxn{\widehat{X}^{(N)}}
\def\hmeas{\widehat{\nu}}
\def\fk{\overline{K}}
\def\hkn{\widehat{K}^{(N)}}
\def\gene{E}
\def\geninitx{x}
\def\initxn{x_0^{(N)}}
\def\finitxn{\overline{x}_0^{(N)}}
\def\finitx{\overline{x}_0}
\def\hinitxn{\widehat{x}_0^{(N)}}
\def\hinitx{\widehat{x}_0}
\def\genzeta{\meas}
\def\genk{K}
\def\genx{X}
\def\genflx{\overline{X}}
\def\genh{{\mathcal{H}}}
\def\fsmallk{\overline{\kappa}}
\def\hatxn{\widehat{X}^{(N)}}
\def\hatx{\widehat{X}}
\def\filtn{\{{\cal F}^{(N)}_t\}}
\def\haten{\widehat{E}^{(N)}}
\def\hatmeasn{\widehat{\nu}^{(N)}}
\def\hatkn{\widehat{K}^{(N)}}
\def\bv{C}
\def\hate{\widehat{E}}
\def\hath{\widehat{{\cal H}}}
\def\hathn{\widehat{{\cal H}}^{(N)}}
\def\calk{{\cal K}}
\def\hcalkn{\widehat{{\cal K}}^{(N)}}
\def\hcalk{\widehat{{\cal K}}}
\def\hatk{\widehat{K}}
\def\hatmn{\widehat{M}^{(N)}}
\def\cmn{{\cal M}^{(N)}}
\def\cm{{\cal M}}
\def\chatmn{\widehat{{\cal M}}^{(N)}}
\def\chatm{\widehat{{\cal M}}}
\def\covfnal{{\cal Q}}
\def\hatm{\widehat{M}}
\def\hatmeas{\widehat{\nu}}
\def\chn{{\cal H}^{(N)}}
\def\bare{Q}
\def\baren{Q^{(N)}}
\def\fbaren{\overline{Q}^{(N)}}
\def\fmartn{\overline{M}^{(N)}}
\def\martn{M^{(N)}}
\def\dcompn{A^{(N)}}
\def\fdcompn{\overline{A}^{(N)}}
\def\mn{M^{(N)}}
\def\refmeas{\zeta}
\def\trefmeas{\tilde{\refmeas}}
\def\refx{X}
\def\refe{E}
\def\suli{\sum\limits}
\title[Limits of Many-Server Queues]{SPDE Limits of Many-Server Queues}
\author{Haya Kaspi}
\address{Department of Industrial Engineering and Management \\
Technion, Haifa, Israel} \email{iehaya@techunix.technion.ac.il}
\author{Kavita Ramanan }
\address{Division of Applied Mathematics \\
Brown University \\
Providence, RI 02912, USA }
\email{Kavita\_Ramanan@brown.edu }
\thanks{Both authors were partially supported by the US-Israel Binational
Science Foundation under Grant BSF-2006379. The first author was
partially supported by the Milford Bohm Chair Grant. The second author was
partially supported by the National Science Foundation under Grants
CMMI-0728064 (CMMI-1059967) and CMMI-0928154 (CMMI-1052750).}
\subjclass[2000]{Primary: 60F17, 60H15; 60K25;   Secondary: 90B22; 68M20.}
\keywords{Many-server queues, GI/G/N queue, fluid limits, diffusion limits, 
 measure-valued processes, Ito diffusions, stochastic partial differential 
equations}  
\begin{document}

\begin{abstract} 
A many-server queueing system is considered in which 
customers with independent and identically distributed service
times enter service in the order of arrival. 
The state of the system is represented by a process that describes 
the total number of customers in the system, as well as 
a measure-valued process that keeps track of the ages of 
customers in service,  leading to a Markovian description of the dynamics.  
Under suitable assumptions, a functional central limit theorem is 
established for the sequence of (centered and scaled) 
state processes as the number of servers goes 
to infinity. 
The limit process describing the total number 
in system is shown to be an It\^{o} diffusion with a constant diffusion 
coefficient that is insensitive to  the service distribution.  
The  limit of the sequence of (centered and scaled) age processes is shown 
to be  a Hilbert space valued diffusion that can also be characterized as 
 the unique solution of a stochastic partial differential equation 
that is coupled with the It\^{o} diffusion. 
Furthermore, the limit processes are  shown to be semimartingales and to 
possess a strong Markov property.  
\end{abstract}

\date{September 29, 2010}

\maketitle


\vspace{-0.5cm} 

\bigskip
\hrule
 \vspace{-0.25cm}
\tableofcontents
\vspace{-1.0cm}
\hrule

\beginsec
\section{Introduction}
\label{sec-intro}

\subsection{Background, Motivation and Results.}
\label{subs-back}

Many-server queues constitute a fundamental model in 
queueing theory and are typically harder to analyze 
than single-server queues. 
The main objective of this paper is to establish useful 
functional central limit theorems for many-server 
queues in the  asymptotic regime in which 
the number $N$ of servers tends to infinity and the mean arrival rate 
scales as $\lambda^{(N)} = \flam N - \beta \sqrt{N}$ for some $\flam > 0$ 
and $\beta \in (-\infty, \infty)$.  For many-server queues with Poisson
arrivals, this scaling was considered more than half a century ago by 
Erlang \cite{Erlang48} and thereafter by Jagerman \cite{Jag74} for a loss system with 
exponential service times, but it was not until the 
influential work of Halfin and Whitt \cite{HalWhi81} that 
   a general heavy traffic limit theorem was 
established for queues with  renewal arrivals, 
 exponential service times, normalized to have unit mean, 
and $\flam = 1$.  As a result, this asymptotic regime is often 
referred to as the Halfin-Whitt regime.    
In contrast to conventional heavy traffic scalings, 
in the Halfin-Whitt regime 
 the limiting stationary probability of a positive wait is non-trivial  
(i.e., it lies strictly between zero and one), 
which better models the behavior of 
many systems found in applications.  
 Halfin and Whitt \cite{HalWhi81} showed that 
the limit of the sequence of 
processes representing the (appropriately centered and scaled) 
number of customers in the system is a diffusion process that behaves like 
an Ornstein-Uhlenbeck process below zero and like 
a Brownian motion with drift above zero.  
When  $\beta > 0$, which ensures that 
each of the $N$-server queues is stable,  
 this characterization of the 
limit process was  used to establish approximations 
to the stationary probability of positive wait in 
a queue with $N$ servers. 
For exponential service distributions, the work 
of Halfin and Whitt was subsequently generalized by Mandelbaum, Massey and Reiman
\cite{manmasrei} to the network 
setting and the case of inhomogeneous Poisson arrivals.

However, in many applications,  
statistical evidence suggests that 
 it may be more appropriate to model the service times 
as being non-exponential (see, for example, the study 
of real call center data in Brown et al.\ \cite{brownetal} that 
suggests that the service times are lognormally distributed). 
A natural goal is then to understand the behavior of 
many-server queues in this scaling regime when the service distribution 
is not exponentially distributed.   
  Specifically, in addition to 
establishing a limit theorem, the aim is 
to obtain a tractable representation of the 
limit process that makes 
 it  amenable to computation, so that 
 the limit  could be used to shed insight 
into performance measures of interest for an $N$-server 
queue.  

In this work, we represent the state  of the $N$-server queue  
by a nonnegative, integer-valued process $\xn$ that
records the  total number of customers in system, as well as a
measure-valued process $\measn$ that keeps track of the ages of customers
in service.  This representation was first introduced by Kaspi and  
Ramanan in \cite{KasRam07}, where it was
 used to identify the functional strong law of 
large numbers limits or equivalently, fluid limits  
for these queues and was  
 subsequently shown  to provide 
 a Markovian description  of the dynamics 
(see  Kang and Ramanan \cite{KanRam08}). 
Under suitable assumptions, 
in each of the cases when the fluid limit is subcritical,
critical or supercritical (which, roughly speaking, corresponds to the cases 
$\flam < 1$, $\flam = 1$ and $\flam > 1$), we show  (in Theorems 
\ref{th-main} and \ref{th-fclt}) 
that the diffusion-scaled state sequence,  
$\{(\hatxn, \hmeasn)\}_{N \in  \N}$ 
 obtained by centering the state around the fluid limit 
and multiplying the centered state by $\sqrt{N}$, converges weakly 
to a limit  process $(\hatx, \hmeas)$.   
Moreover, the component  
$\hatx$ is characterized as  a real-valued \cad process that is the solution
to an It\^{o} diffusion  with a constant 
 diffusion coefficient that is {\em  insensitive to} the service distribution, 
and whose drift is an adapted process that is a functional of 
$\hmeas_t$ (see Corollary \ref{cor-ito}). 
As for the age process, although the $\hmeasn$ are 
(signed) Radon measure valued processes, the limit $\hmeas$ lies outside 
this space. A key challenge was to identify a suitable space in 
which to establish convergence without imposing restrictive 
assumptions on the service distribution $G$.  Under 
conditions that include a large class of service distributions 
relevant in applications such as phase-type, Weibull, lognormal, logistic  
and (for a large class of parameters) Erlang and Pareto distributions, 
we show that the convergence of $\hmeasn$ to $\hmeas$ holds in the space 
of $\H_{-2}$-valued c\`{a}dl\`{a}g processes, where $\H_{-2}$ is the 
dual of the Hilbert space $\H_2$.  
In particular, this immediately implies convergence of a large class
 of functionals of the many-server queue. 
In addition,  we  show that both processes 
are semimartingales with an explicit decomposition 
(see Theorem \ref{th-main1})  and 
we  characterize $\hmeas$ as the unique solution to 
a stochastic partial differential equation that is coupled with 
the It\^{o} diffusion $\hatx$ (see Theorem \ref{th-main2}(a)).  
Furthermore (in Theorem \ref{th-main2}(b)), we also  show that the pair, along 
with an appended state, forms a strong Markov process.  

\subsection{Relation to Prior Work.}

To date, the most general results on process level convergence in the 
Halfin-Whitt regime were obtained in a nice pair of papers 
by Reed \cite{reed07} and Puhalskii and Reed \cite{Puhree09}. 
Under the assumptions that $\flam = 1$, the residual service times of customers 
in service at time $0$ are independent and identically distributed 
(i.i.d.)\ and taken from the equilibrium 
fluid distribution, and the total (fluid scaled) number in system converges to $1$, 
a heavy traffic limit theorem for the sequence  of processes 
$\{\widehat{X}^{(N)}\}_{N \in \N}$ was 
established by Reed \cite{reed07}  with only a finite mean condition on the
service distribution.   
 This result was extended by Puhalskii and Reed \cite{Puhree09} to allow  
for more general, possibly inhomogeneous arrival 
processes and residual service times of customers in 
service at time zero  that, while still  i.i.d.\, could be chosen from 
an arbitrary distribution.  In this setting, 
  convergence of finite-dimensional distributions  was established in 
\cite{Puhree09}, 
and strengthened to process level convergence established when the 
service distribution is continuous.  
 The general approach used in both these 
papers is to represent the many-server queue as a perturbation 
of an infinite-server queue and to  establish tightness and 
convergence  using a continuous mapping representation and  
estimates analogous to those obtained by Krichagina and Puhalskii
\cite{KriPuh97} for the infinite-server queue under  
similar assumptions on the initial conditions. 
In both papers, the limit is characterized as the unique 
 solution to a certain implicit stochastic convolution equation.  
Several previous works had also extended 
the Halfin-Whitt process level result for specific classes of 
service distributions.  
Noteworthy amongst them is the paper by 
Puhalskii and Reiman \cite{PuhRei00}, which considered 
phase-type service distributions and characterized the  
heavy traffic limit theorem as  a multidimensional diffusion, 
where each dimension corresponds to a different phase of the service 
distribution.  
Whitt \cite{whi05} also established a process level result 
for a many-server queue with finite waiting room and 
a service distribution that is a mixture of an exponential 
random variable and a point mass at zero. 
Moreover, for service distributions 
with finite support, 
Mandelbaum and  Mom\c{c}ilovi\'{c} \cite{ManMom05} used a combination 
of combinatorial and probabilistic methods to study 
the limit of the virtual waiting time process. 
 In addition to the process level results described above, interesting 
results on the asymptotics of steady state distributions in the Halfin-Whitt regime 
have been obtained by Jelenkovic, Mandelbaum and 
Mom\c{c}ilovi\'{c} \cite{JelManMom04} 
for deterministic service times and by Gamarnik and Mom\c{c}ilovi\'{c} 
\cite{GamMom08} for service times that are 
lattice-valued with finite support.

  Our work serves 
to complement the above mentioned results, with the focus being on 
establishing tractability of the limit process under 
reasonably general 
assumptions on the service distribution that includes 
a large class of service distributions of interest. 
  Whereas in all the above 
papers only the number in system is considered,
 we establish convergence for a more general state process, which 
implies the convergence of a large class of functionals of the 
process and not just the number in system.    
In addition, our approach  leads to a new characterization 
for the limiting number in system $\hatx$ as 
an It\^{o} diffusion, which 
relies on an asymptotic independence 
result for the centered arrival and departure processes 
(see Proposition \ref{prop-martconv}) that 
may be of independent interest.   We also establish 
an insensitivity result showing that the diffusion coefficient depends only 
on the mean and variance of the interarrival times and is independent 
of the service distribution. 
As a special case, we can  recover the results of 
Halfin and Whitt \cite{HalWhi81} and  Puhalskii and Reiman 
\cite{PuhRei00}  and (for the smaller class of 
service distributions that we consider) 
Reed \cite{reed07}.  
Moreover, we allow in a sense more general initial conditions  
than those considered  in Reed \cite{reed07} and Puhalskii and Reed 
\cite{Puhree09}, both of which assume that the residual times 
of customers in service at the initial time are i.i.d. 
This property is not typically preserved at positive times.  
In contrast, we establish a consistency property (see Lemma 
\ref{lem-consistency}) that shows that the assumptions we impose at the initial time 
are also satisfied at any positive time and our assumptions are 
trivially satisfied by a system that starts with zero initial 
conditions (i.e., at the fluid initial condition). 
 As shown in Theorem 3.7 and Section 6 of Kaspi and Ramanan 
\cite{KasRam07},  starting 
from an empty system, the fluid limit does not reach 
the fluid equilibrium state in finite time. 
Therefore, the consideration of general initial conditions 
is useful for both capturing the 
transient behavior of the system as well as 
for establishing the (strong) Markov property for 
the limit process. 
The latter can  be potentially useful as this enables the 
application of a wide array of tools available for Markov 
processes in general state spaces. 

The Markovian representation 
of the state, though infinite-dimensional,
 leads to an intuitive characterization of the dynamics, 
which allows the framework  to be extended to incorporate more general features 
into the model (see, for example, the extension of this framework 
to include abandonments by Kang and Ramanan in \cite{KanRam08} and \cite{KanRam10a})). 
In the subcritical case our results  provide a characterization  
of the diffusion limit of the well studied 
 infinite-server queue, which 
is easier to analyze  due to the absence of a queue and, hence, 
of an interaction between those in service and those waiting in queue.  
A few representative works on diffusion limits of the number in system 
in the infinite-server queue include 
Iglehart \cite{Igl65}, Borovkov \cite{Bor67}, Whitt \cite{Whi82} and 
Glynn and Whitt \cite{GlyWhi91}, where the limit process 
 is characterized as an Ornstein-Uhlenbeck process, and 
Krichagina and Puhalskii   
\cite{KriPuh97}, who provided an alternative  representation 
of the limit in terms of the so-called Kiefer process. 
More recently,  a functional central limit theorem in the 
space of distribution-valued processes was established 
for the $M/G/\infty$ queue by 
Decreusefond and Moyal \cite{DecMoy08}. 
In contrast to the infinite-dimensional 
Markovian representation in terms of 
residual service times used in  Decreusefond and 
Moyal \cite{DecMoy08}, 
the Markovian representation in terms of the age process that we use  
allows us to associate some natural martingales that 
facilitate the analysis.  This perspective may be 
useful in the analysis of other queueing networks as well  
and has, for example, been recently adopted by
 Reed and Talreja \cite{ReedTal09} 
 in their extension of the work 
of Decreusefond and Moyal \cite{DecMoy08} 
 to establish infinite-dimensional functional central limit theorems 
for the $GI/G/\infty$ queue.  The work  \cite{ReedTal09} adopts a semi-group 
approach that seems to require much stronger assumptions on the 
service distribution (namely that the hazard rate function $h$ 
of the service distribution is infinitely differentiable and $h$ 
and its derivatives are all uniformly bounded)  than is imposed in our paper.

\subsection{Outline of the Paper} 
\label{subs-outline}

Section \ref{sec-mode} contains a precise mathematical description of the
model and the state descriptor used, as well as  the defining dynamical
equations. A deterministic analog of the model,  described by dynamical equations that 
are referred to as the fluid equations, is introduced in Section \ref{sec-res}. 
Section \ref{sec-res} also recapitulates the  result of Kaspi and Ramanan 
 \cite{KasRam07} that shows that (under fairly general conditions stated as 
Assumptions \ref{as-flinit} and \ref{as-h}) the functional strong law of 
large numbers limit of the normalized (divided by $N$) state 
of the $N$-server system is the unique solution to the fluid equations. 
In Section \ref{subs-mmeas} a sequence of martingales obtained as 
compensated departure processes, which play an important role in the analysis, 
is introduced and the associated scaled martingale measures $\chatmn$, $N \in \N$, 
are shown to be orthogonal, which allows one to define
 certain associated stochastic convolution integrals $\hathn$.  
The main results and their corollaries are stated in Section
\ref{sec-mainres}, and their proofs are presented in  
Section \ref{sec-proofs}.  The proofs rely on results  
obtained in Sections \ref{subs-prelim}, \ref{subs-cont} and
\ref{sec-martconv}. 
Section \ref{subs-prelim} contains a 
succinct characterization of the dynamics and establishes a representation 
(see Proposition \ref{cor-sae1})
for $\hmeasn$, the diffusion-scaled age process  in the $N$-server 
system, in terms of certain stochastic convolution integrals $\hathn$, 
$\hcalkn$ and the initial data. 
In Section \ref{subs-cont}, it is shown that the processes $\hcalkn$, $\hxn$ and 
$\hmeasn$ can be obtained as a continuous mapping of 
the initial data sequence and the process $\hathn$.    
 Section \ref{sec-martconv} is devoted to establishing  convergence 
of the martingale measure sequence $\{\chatmn\}_{N \in \N}$ 
and the associated sequence $\{\hathn\}_{N \in \N}$ of stochastic convolution
integrals, jointly with the sequence of centered arrival processes and 
initial conditions (see Corollary
\ref{cor-hreg}). In particular, the asymptotic independence 
property is established. 
Section \ref{sec-martconv} is the most technically demanding part of the paper. 
To maintain the flow of the exposition, some supporting results 
are relegated to the Appendix.   Appendix \ref{sec-consistency} also contains 
the proof of a consistency result, which shows that the assumptions 
on the initial conditions are reasonable.  
 First, in Section \ref{subs-not} we  introduce some common 
notation and terminology used in the paper.

\subsection{Notation and Terminology}   
\label{subs-not}

The following notation will be used throughout the paper.
 $\Z_+$ is the set of non-negative integers, $\N$ is the set of 
natural numbers or, equivalently,  strictly positive integers,
$\R$ is the set of real numbers and
$\R_+$ the set of non-negative real numbers.
For $a, b \in \R$, $a \vee b$ and $a \wedge b$ denote, respectively, 
  the maximum and minimum of $a$ and $b$,
 and the short-hand notation $a^+$ will also be used for $a \vee 0$.
Given $B \subset \R$, 
 $\ind_B$ denotes the indicator function of the set $B$
(that is, $\ind_B (x) = 1$ if $x \in B$ and $\ind_B(x) = 0$ otherwise).

\subsubsection{Function Spaces}
\label{subsub-fun}

Given any metric space $\met$, we denote by ${\cal B}(\met)$ the Borel sets
of $\met$ (with topology compatible with  the metric on $\met$), and  
let $\ecb$, $\acb$ and $\ecc$, respectively, denote 
 the space of bounded continuous functions, bounded absolutely continuous 
functions and 
 the space of continuous  functions with compact support defined on
$\met$ and taking values in the reals.  We also let 
 $\eocd$ and ${\cal C}^{\infty}(\met)$, respectively, represent the space of real-valued,
once  continuously differentiable and infinitely differentiable 
functions on $\met$,   $\eocdc$
the subspace of functions in $\eocd$ that have compact support and  
$\eocdb$  the subspace of functions in $\eocd$ that, together with its 
first derivatives, are bounded. 
 We let
 $\D_{{\cal E}}[0,\infty)$ denote the space of ${\cal E}$-valued 
c\`{a}dl\`{a}g functions defined on $[0,\infty)$ and let  $\supp(\newf)$ 
 denote the support of a function $\newf$.

We will mostly be interested in the case when
 $\met = [0,\endsup)$ and  $\met = [0,\endsup) \times \R_+$, for some $\endsup \in (0,\infty]$.
To distinguish these cases, we will usually use $f$ to denote generic  functions
on $[0,\endsup)$  and $\newf$ to denote generic
 functions on $[0,\endsup) \times \R_+$.  By some abuse of notation,
given $f$ on $[0,\endsup)$, we will sometimes
also treat it as a function on $[0,\endsup) \times \R_+$ 
that is constant in the second variable.  
Recall that given $T < \infty$ and 
a continuous function $f \in {\cal C}[0,T]$, the modulus 
of continuity $w_f(\cdot)$ of $f$ is defined by 
\be
\label{def-modcon}
 w_f(\delta) \doteq \sup_{s,t \in [0,T]:|t-s| < \delta}
|f(t)-f(s)|, \qquad \delta > 0. 
\ee
When $\met = [0,\endsup) \times \R_+$, 
for some $\endsup \leq \infty$, we let 
${\cal C}^{1,1}([0,\endsup) \times \R_+)$ 
denote the 
space of absolutely continuous functions $\newf$
 on $[0,\endsup) \times \R_+$ for which the directional 
derivative  $\newf_x + \newf_s$ in the $(1,1)$ direction exists 
and is continuous and let 
${\cal C}^{1,1}_c([0,\endsup) \times \R_+)$ (respectively, 
${\cal C}^{1,1}_b([0,\endsup) \times \R_+)$) 
denote the subset of functions $\varphi$ in 
${\cal C}^{1,1}([0,\endsup) \times \R_+)$ such that $\varphi$, along with 
its directional derivative $\varphi_x + \varphi_s$, 
has  compact support (respectively, is bounded).  We let  
$\incspace$ denote the space 
of non-decreasing functions $f \in \D_{\R}[0,\infty)$ with $f(0) = 0$. 
For $\endsup \in [0,\infty]$, 
 $\L^\alpha[0,\endsup)$, $\alpha \geq 1$, and $\L^\infty[0,\endsup)$ 
represent, respectively, the spaces of measurable functions $f$ 
such that $\int_{[0,\endsup)} |f|^\alpha < \infty$    
and the space of essentially bounded functions on $[0,\endsup)$. 
Also,   
$L^{i}_{loc}[0,\endsup)$, $i = 1, 2, \infty$, represents the corresponding 
space in which the associated property holds only locally, that is, on every 
compact subset of $[0,\endsup)$. 
The constant functions 
$f \equiv 1$ and $f \equiv 0$ on $[0,\endsup)$ 
will be represented by the symbols
$\f1$ and $\zerof$, respectively. Given any c\`{a}dl\`{a}g,
real-valued function $f$ defined on $E$, we define $\nrm{f
}_T \doteq \sup_{s \in [0,T]} |f (s)|$ for every $T < \infty$,
and let $\nrm{f}_\infty \doteq \sup_{s \in [0,\infty)}
|f(s)|$, which could possibly take the value $\infty$.  
Also, for $f \in \D_{\R}[0,\infty)$, we use  
$\Delta f (t) = f(t) - f(t-)$ to denote the jump of 
$f$ at $t$.

For any  $f\in {\cal C}^\infty[0,\endsup)$,  
let $f^{(n)}$ denote the $n$th derivative of $f$.  
 Also, let $\nrm{f}_{\H_0}$ be the usual $\L^2$-norm: 
$\nrm{f}_{\H_0}^2 \doteq \nrm{f}_{\L^2}^2 \doteq \left( \int_0^\endsup f^2(x) \, dx\right)$, 
and set 
\[ \nrm{f}_{\H_n}^2 \doteq \nrm{f}_{\H_0}^2 + \sum_{i=1}^n
\nrm{f^{(i)}}_{\H_0}^2. 
\] 
For $n = 1, 2$, and $f$ for which the corresponding first or 
second (weak) derivatives are well defined, we 
will sometimes also use the notation $f^\prime  = f^{(1)}$ and 
$f^{\prime \prime} = f^{(2)}$.  
Note that if $f \in \L^2[0,\infty)$, then there exists a real-valued 
sequence $\{x_n\}$ with $x_n \ra \infty$ and $f(x_n) \ra 0$ 
as $n \ra \infty$.  Moreover, $f^2(x_n) - f^2(0) = 2 \int_0^{x_n} f(u) f^\prime (u)
\, du$.  Applying the  Cauchy-Schwarz inequality and taking 
limits as $n \ra \infty$, this implies 
$|f(0)|^2 \leq 2 \nrm{f}_{\H_0} \nrm{f^\prime}_{\H_0} \leq 2 \nrm{f}_{\H_1}^2$.   
When combined with the relation  
 $f^2(x) = f^2(0) + 2\int_0^x f(u) f^\prime(u) \, du$
and  another application of 
the Cauchy-Schwarz inequality, this yields the norm inequalities 
\be
\label{norm-ineq}
 |f(0)| \leq  \sqrt{2} \nrm{f}_{\H_1}, \qquad 
\nrm{f}_{\infty} \leq 2 \nrm{f}_{\H_1}, 
\ee
which will be used in the sequel.

For a fixed $[0,\endsup)$, we
define $\testspace = \testspace[0,\endsup)$ (respectively, $\testspace_c = \testspace_c[0,\endsup)$) 
to be the vector space of ${\cal C}^{\infty}$ functions 
(respectively, ${\cal C}^{\infty}$ functions with compact support)  
on $[0,\endsup)$),  
 equipped with 
the sequence of norms $\nrm{\cdot}_{\H_n}$, $n = 0, 1, 2, \ldots$, 
and let $\H_n = \H_n[0,\endsup)$ be the completion of $\testspace$ relative to the norm 
$\nrm{\cdot}_n$.  
Moreover, let $\dualspace$ be the dual of $\testspace$ 
(i.e., the space of continuous linear functionals on $\testspace$) 
equipped with the strong topology and likewise, let 
$\dualspace_c$ be the dual of $\testspace_c$.   For $n \in \N$, 
let $\H_{-n} = \H_{-n}[0,\endsup)$ be the dual of $\H_n[0,\endsup)$, 
with the dual norm $\nrm{\cdot}_{-n}$ defined by 
\[ \nrm{f}_{-n}^2  = \sum_{k=1}^\infty f(e_{nk})^2, \qquad f \in
\H_{-n}, 
\]
where $\{e_{nk}, k = 1, \ldots,\}$ is a complete orthonormal 
system in $(\testspace, \nrm{\cdot}_n)$.  
Each $\H_n$ is a Sobolev space and also a 
Hilbert space and it follows from Maurin's theorem (see, for example, 
Theorem 6.53 of \cite{adamsbook}) that 
$\nrm{\cdot}_{\H_1}\overset{\mbox{\tiny{HS}}}{<} \nrm{\cdot}_{\H_2}$ 
 and it follows from Lemma 5 and Assertion 11 of \cite{aga89} 
that $\testspace$ is a separable,  Fr\'{e}chet nuclear space and 
consequently (see Corollary 2 and
Assertion 11 of \cite{aga89}),   its dual  $\dualspace$ is also a separable
Fr\'{e}chet nuclear space.
For $\meas \in \dualspace$ and $f \in \testspace$ and likewise, 
for $\meas \in \H_{-n}$ and $f \in \H_n$,  we let 
$\meas(f)$ denote the duality pairing.

\subsubsection{Measure Spaces}
\label{subsub-meas}

 The space of  Radon measures on a metric
space $\met$, endowed with the Borel $\sigma$-algebra,
 is denoted by  $\eem$,  
  $\eemf$  is the subspace of finite 
measures in $\eem$ and $\mm_{\leq 1}(\met)$ is the subspace  
of sub-probability  measures (i.e., positive measures 
with total mass less than on equal to $1$) on $\met$. 
For any 
Borel measurable function $f: \met \ra \R$ that is integrable
with respect to $\xi \in  \mm(\met)$, we  often use the short-hand notation
$\lan f, \xi \ran \doteq \int_{\met} f(x) \, \xi(dx).$ 
Recall that a Radon measure on $\met$ is one that assigns a finite measure to every relatively
compact subset of $\met$.
By identifying a Radon measure $\mu \in \eem$ with the mapping on $\ecc$ defined by
$f \mapsto \lan f, \mu\ran,$ 
one can equivalently define a Radon measure on $\met$ as a linear mapping
from $\ecc$ into $\R$ such that for every compact set ${\cal K} \subset \met$, there exists
$L_{{\cal K}} < \infty$ such that
\[ \left| \lan f, \mu\ran \right| \leq L_{{\cal K}} \nrm{f}_\infty \quad \quad \forall
f \in \ecc  \mbox{ with } \supp (f) \subset {\cal K}. \] 
We will equip   $\eemf$ with the weak topology, i.e., a sequence 
$\{\mu_n\}_{n \in \N}$ in $\eemf$ is said to converge to $\mu$ in the weak
 topology (denoted $\mu_n \wconv \mu$) 
if and only if for every $f \in \ecb$,
$\lan f, \mu_n\ran \ra \lan f, \mu \ran$ as $n \ra \infty$. 
The symbol $\delta_x$ will be used to denote the measure with unit
mass at the point $x$ and we will use
$\measzero$ to denote the identically zero Radon measure. 
When $\met$ is an interval, say $[0,\endsup)$, for notational conciseness,
we will often write $\mm[0,\endsup)$ instead of $\mm([0,\endsup))$.
Also, for ease of notation, given $\xi \in \mm[0,\endsup)$ and an interval $(a,b) \subset [0,\endsup)$, we will
use $\xi(a,b)$ and $\xi(a)$ to denote $\xi((a,b))$ and $\xi(\{a\})$, respectively.

\subsubsection{Stochastic Processes}
\label{subs-notsp}

Given a Polish space ${\cal H}$, we denote by $\dTspaceh$
(respectively, $\dspaceh$) the space of ${\cal H}$-valued,
c\`{a}dl\`{a}g functions on $[0,T]$ (respectively, $[0,\infty)$),
 endowed  with the usual Skorokhod $J_1$-topology (see 
\cite{bilbook} for details on this topology). 
Then $\dTspaceh$ and $\dspaceh$ are also Polish
spaces. In this work, we will be interested
in ${\cal H}$-valued stochastic processes, especially the 
cases when ${\cal H} = \R$, 
${\cal H} = \mmf$ for some $\endsup \le\infty$, 
 ${\cal H} = \dualspace[0,\endsup)$ and ${\cal H} = \H_{-n}[0,\endsup)$ for  
$n = 1, 2$, and products of these spaces. 
These are random elements that are defined on a probability space 
$(\Omega, {\cal F}, \P)$ and  take values in $\dspaceh$, equipped 
with the Borel $\sigma$-algebra (generated by open sets under the
Skorokhod $J_1$-topology). A sequence $\{Z^{(N)}\}_{N \in \N}$ of c\`{a}dl\`{a}g,
${\cal H}$-valued  processes, with $Z^{(N)}$ defined on the
probability space $(\Omega^{(N)}, {\cal F}^{(N)}, \P^{(N)})$,
 is said to converge in distribution
to a c\`{a}dl\`{a}g ${\cal H}$-valued process $Z$ defined on  $(\Omega, {\cal
  F}, \P)$ if and only if for every bounded, continuous functional
$F:\dspaceh \ra \R$, 
\[ \lim_{n \ra \infty}  \E^{(N)}\left[ F(Z^{(N)}) \right] = \E \left[ F(Z)\right],
\]
where $\E^{(N)}$ and $\E$ are the expectation operators with respect
to the probability measures $\P^{(N)}$ and $\P$, respectively.
Convergence in distribution of $Z^{(N)}$ to $Z$ will be denoted by
$Z^{(N)} \swconv Z$.

\beginsec

\section{Description of the Model}
\label{sec-mode}

In Section \ref{subs-modyn} we describe the many-server model under 
consideration.  In Section \ref{subs-aux} we introduce the state descriptor
and the dynamical equations that describe the evolution of the state.

\subsection{The $N$-server model}
\label{subs-modyn}

Consider a system with $N$ servers, where arriving
customers are served in a non-idling, First-Come-First-Serve (FCFS) manner,
i.e., a newly arriving customer immediately enters service if there are any
idle servers or, if all servers are busy, then the customer joins the back of
the queue and the customer at the head of the queue (if one is present)
enters  service as soon as a server becomes free.
Our results are not sensitive to the exact mechanism used to assign an
arriving customer to an idle server as long as the non-idling condition
is satisfied.
Customers are assumed to be infinitely patient, i.e., they wait in queue till they
receive service.  Servers are non-preemptive and serve a customer 
to completion before starting service of a new customer. 
 Let $\en$ denote the
cumulative arrival process, with $\en (t)$ representing the total number
of customers that arrive into the system in the time
interval $[0,t]$, and let the service requirements be given by
the i.i.d.\ sequence $\{v_i, i = -N+1, -N+2, \ldots, 0, 1, \ldots\}$, with
common cumulative distribution function $G$.
Let $\xn(0)$ represent the number of customers in the system at time $0$.
Due to the non-idling condition, the number of customers in service at time
$0$ is then $\xn(0) \wedge N$.
The sequence $\{v_i, i = -\xn(0) \wedge N + 1, \ldots, 0\}$ represents
the service requirements of customers already in service at time zero,
ordered according to the amount of time they have spent in service at time
zero, whereas for $i \in \N$, $v_i$ represents the service requirement of the
$i$th customer to enter service after time $0$.  

Consider the c\`{a}dl\`{a}g process $\ren$   defined by 
\be \label{def-ren}
 \ren (s) \doteq
\inf \left\{u > s:E^{(N)}(u) > E^{(N)}(s)\right\}-s, \qquad s \in [0,\infty). 
\ee 
Note that $\ren(s)$ represents the time to 
the next arrival.  
The following mild assumptions will be imposed throughout, without explicit mention.
\begin{itemize}
\item $\en$ is a \cad non-decreasing pure jump process with $\en(0) = 0$ and almost
  surely, for
$t \in [0,\infty)$, $\en(t) < \infty$ and $\en(t) - \en(t-) \in \{0,1\}$;
\item
The process $\ren$ is Markovian with respect to the augmentation of 
its own natural filtration;  
\item
The cumulative arrival process is independent of
the i.i.d.\ sequence of service requirements
$\{v_j, j = -N+1, \ldots, \}$ and, given $\ren (0)$,
$(\en (t), t > 0)$ is independent of $\xn (0)$ and
the ages of the customers in service at time zero, where 
the age of a customer is defined to be the amount of time elapsed since 
the customer entered  service; 
 \item
 $G$ has density $g$;
\item
Without loss of generality, we can (and will) assume that the mean service requirement
is $1$:
\be
\label{def-mean1}
\int_{[0,\infty)} \left( 1 - G(x) \right) \, dx = \int_{[0,\infty)} x g(x) \, dx = 1.
\ee
Also,  the right-end of the support of the service distribution  is denoted
by
\[  \endsup \doteq \sup \{x \in [0,\infty): G(x) < 1 \}. \]
\end{itemize}
 Note that the existence of a density for $G$ 
implies, in particular, that $G(0+) = 0$.

\begin{remark}
\label{rem-cumarr}
{\em 
The assumptions above are fairly general,  allowing for a large class of
arrival processes and service distributions, and 
this model is sometimes referred to as the G/GI/N queueing model. 
When $\en$ is a renewal process, $\ren$ is simply the
forward recurrence time process,
the second assumption holds (see Proposition V.1.5 of Asmussen \cite{asmbook})  and the model corresponds to a GI/GI/N queueing system.  
However, the second assumption  holds more generally  
such as, for example, when $\en$ is an inhomogeneous Poisson process 
(see, for example, Lemma II.2.2 of Asmussen \cite{asmbook}). 
}
\end{remark}

The sequence of processes
$\{\ren, \en, \xn(0), v_i, i = -N +1, \ldots, 0, 1, \ldots\}_{N \in \N}$
are all assumed to be defined on a common probability
space $(\Omega, {\cal F}, \P)$ that is large enough for the
independence assumptions stated above to hold.

\subsection{State Descriptor and Dynamical Equations}
\label{subs-aux}

As in the study of the functional strong law of large numbers limit
for this model, which was carried out in Kaspi and Ramanan \cite{KasRam07}, we will represent
the state of the system by the vector of processes $(\ren, \xn, \measn)$,
where $\ren$ determines the cumulative arrival process via (\ref{def-ren}),
$\xn(t) \in \Z_+$ represents the total number of customers in system
(including those in service and those waiting in queue) at time
$t$ and $\measn_t$ is a discrete, non-negative finite measure on
$[0,\endsup)$ that has a unit mass at the age of
each customer in service at time $t$.  Here, the age $\agen_j$ of the $j$th
customer is (for each realization) a piecewise linear function
that is zero till the customer enters service, then increases
linearly while in service (representing the time elapsed since service
began) and then remains constant (equal to its service requirement) after
the customer completes service and departs the system.
In order to describe the state dynamics, we will find it
convenient to introduce the following auxiliary processes:

\begin{itemize}
\item
the cumulative departure process $\dn$, where $\dn(t)$ is the cumulative
number of customers that have departed the system in the interval $[0,t]$;
\item
the process $\kn$, where $\kn(t)$ represents the cumulative number of customers that
have entered service in the interval $[0,t]$.
\end{itemize}

\noi

A simple mass balance on the whole system shows that
\be
\label{def-dn}
\dn  \doteq \xn (0) - \xn + \en
\ee
Likewise, recalling that $\lan \f1, \measn \ran = \measn[0,\endsup)$ represents
the total number of customers in service, an analogous mass balance on
the number in service yields the relation
\be
\label{def-kn}
\kn \doteq \lan \f1, \measn \ran - \lan \f1, \measn_0 \ran + \dn.
\ee
For $j \in \N$, let
\[ \kninv_j \doteq \inf \{s \geq 0: \kn (s) \geq j\}, \]
with the usual convention that the infimum of an empty set is infinity, 
and note that $\kninv_j$
denotes the time of entry into service of the $j$th customer
to enter service after time $0$.
In addition,
for $j = - \xn (0)  \wedge N +1, \ldots, 0$,
set $\kninv_j = - \agen_j(0)$ to be the amount of time that the
$j$th customer in service at time $0$ has already been in service.
 Then, for $t \in [0,\infty)$ and
$j = -\xn(0) \wedge N+1, \ldots, 0, 1, \ldots$,
the age process is given explicitly by
\be
\label{def-agejn}
  \agen_j (t) = \left\{
\ba{ll}
\left[ t - \kninv_j \right] \vee 0 & \mbox{ if } t - \kninv_j < v_j, \\
v_j  & \mbox{ otherwise. }
\ea
\right.
\ee
Due to the FCFS nature of the service,   $\kn(t)$ is also
the highest index of any customer that has entered service
and  (\ref{def-agejn}) implies that for $j > \kn(t)$, $\kninv_j > t$ and
$\agen_j (t) = 0$.
The measure $\measn$ can then be expressed as 
\be
\label{def-nun}
\measn_t  = \sum\limits_{j = -\lan \f1, \measn_0 \ran   + 1}^{\kn(t)}
\delta_{\agen_j(t)} \ind_{\{\agen_j (t) < v_j\}},
\ee
where $\delta_x$ represents the Dirac mass at the point $x$.
The non-idling condition, which stipulates that there be no idle servers
when there are more than $N$ customers in the system, is
expressed via the relation
\be
\label{def-nonidling}
N - \lan \f1, \measn\ran = [N - \xn]^+.
\ee
For future purposes note that (\ref{def-dn}), (\ref{def-kn}) and
(\ref{def-nonidling}), together with the elementary identity
$x - x\vee 0 = x \wedge 0$, imply the relation
\be
\label{eq2-kn}
 \kn  = \xn \wedge N - \xn(0) \wedge N + \dn.
\ee

Note that $\lan \f1, \measn\ran \leq N$  
because the maximum number of customers in service at any given 
time is bounded by the number of servers.
  In addition, if the support of
$\measn_0$ lies in $[0,\endsup)$ then  it follows from
(\ref{def-agejn}) and (\ref{def-nun}) that $\measn_t$ takes values in 
$\mm_F[0,\endsup)$ for every $t \in [0,\infty)$. 
Thus,  the state of the system is represented by the
 c\`{a}dl\`{a}g process  $(\ren, \xn, \measn)$, which takes values in
$\R_+^2 \times \mm_F[0,\endsup)$.
For an explicit construction of the state that also shows that
the state and auxiliary processes are c\`{a}dl\`{a}g, see Lemma A.1 of
Kang and Ramanan \cite{KanRam08}.  
The results obtained in this paper are independent of the  particular rule used to
assign customers to stations, but for technical purposes 
we will  find it convenient to also introduce the additional ``station process''
$\sigma^{(N)} \doteq (\sigma_j^{(N)}, j \in  \{-N + 1,  \ldots, 0\} \cup \N)$.
For each $t \in [0,\infty)$, if customer $j$ has already entered service by time $t$,  then
 $\sigma_j^{(N)} (t)$ is equal to the index $i \in \{1, \ldots, N\}$ of the station  at which
customer $j$ receives/received service and  $\sigma_j^{(N)} (t) \doteq 0$
otherwise.
Finally, for $t \in [0,\infty)$,  
let $\overline{{\cal F}}_t^{(N)}$ be the $\sigma$-algbera generated by
$\{\ren(s),  a_j^{(N)}(s), \sigma_j^{(N)} (s), j \in \{-N, \ldots, 0\} \cup
\N, s \in [0,t]\}$, and let
$\{{\cal F}_t^{(N)},t \geq 0\}$ denote the associated  right continuous
filtration that is completed (with respect to $\P$) so that it
satisfies the usual conditions.
Then it is easy to verify that $(\ren,\xn,\measn)$ is 
$\{{\cal F}_t^{(N)}\}$-adapted (see, for example, Section 2.2 of Kaspi and Ramanan
\cite{KasRam07}). 
In fact, as shown in Lemma B.1 of Kang and 
Ramanan \cite{KanRam08}, 
 $\{(\ren (t), \xn (t), \measn_t)$, ${\cal F}_t^{(N)}, t \geq 0\}$ is a
strong Markov process.

\begin{remark}
\label{rem-indep}
{\em
The assumed Markov property of $\ren$ with respect to (the completed, right continuous
version of) its natural filtration 
and the independence properties of  $\en$ 
assumed in Section \ref{subs-modyn} together imply that 
for any $t \in [0,\infty)$, given $\ren(t)$ 
the future arrivals process $\{ \en(s), s > t\}$ is independent
of ${\cal F}_t^{(N)}$. 
}
\end{remark}

\beginsec

\section{Fluid Limit}
\label{sec-res}

We now recall the functional strong law of
large numbers limit or, equivalently, fluid limit obtained in \cite{KasRam07}. 
The initial data describing the system consists
of $\en$, the cumulative arrivals after zero,
$\xn(0)$, the number in system at time zero,  and $\measn_0$,
the age distribution of customers in
service at time zero.  The initial data belongs to the following space:
\be
\label{def-newspace}
\newspace \doteq \left\{(f,x,\mu) \in \incspace \times \R_+ \times \mmsp:
1 - \lan \f1, \mu \ran = [1-x]^+ \right\},
\ee
where $\incspace$  is the subset of non-decreasing functions
$f \in \dspacerp$ with $f(0) = 0$.  Assume that
$\newspace$ is equipped with the product topology.
Consider the ``fluid scaled''
versions of the  processes $H = E, X, K, D$ and measures
$H = \nu$ defined by
\be
\label{fl-scaling}
 \overline{H}^{(N)}  \doteq \dfrac{H^{(N)}}{N},
\ee
and let
\[  \fren(t) \doteq \ren (\en(t)),  \qquad t \in [0,\infty),  \]
for $N \in \N$.
The fluid results in \cite{KasRam07} were obtained 
under Assumptions \ref{as-flinit} and \ref{as-h} below.

\begin{ass}
\label{as-flinit}
There exists $(\fe, \finitx, \fmeas_0) \in \newspace$ such that, as $N \ra
\infty$,  $\E[\fxn(0)] \ra \E[\fx(0)]$, $\E[\fen(t)] \ra \E[\fe(t)]$ and,
almost surely,
\[
(\fen, \finitxn, \fmeasn_0) \ra (\fe, \finitx, \fmeas_0) \qquad \mbox{ in }
\newspace.
\]
\end{ass}

Next,
recall that $G$ has density $g$, and let $h$ denote its hazard rate: 
\be
\label{def-h}
h(x) \doteq \dfrac{g(x)}{1 - G(x)},  \quad \quad  x \in
[0,\endsup).
\ee
Observe that $h$ is automatically locally integrable on $[0,\endsup)$ because
for every $0 \leq a \leq b < \endsup$,
\be
\label{h-lint}
 \int_a^b h(x) \, dx = \ln (1 - G(a)) -\ln (1 - G(b)) < \infty.
\ee
However, $h$ is not integrable on $[0,\endsup)$.  In particular,
when $\endsup < \infty$, $h$ is unbounded on
$(\altendsup, \endsup)$ for every $\altendsup < \endsup$.

\begin{ass}
\label{as-h} At least one of the following two properties holds:
\begin{enumerate}
\item[(a)]
There exists  $\altendsup < \infty$ such that
$h$ is bounded on $(\altendsup, \infty)$;
\item[(b)]
There exists $\altendsup < \endsup$ such that
$h$  is lower-semicontinuous  on $(\altendsup, \endsup)$.
\end{enumerate}
\end{ass}

Note that Assumption \ref{as-h}(a) automatically implies that $\endsup =
\infty$. 
In Proposition \ref{prop-prelimit} (see also 
 Theorem 5.1 of \cite{KasRam07}) 
we provide a succinct description 
of the dynamics of the $N$-server system in terms of 
certain integral equations.  Here, we first introduce 
the deterministic analog of these equations, which 
we refer to as the fluid equations.

\begin{defn} {\bf (Fluid Equations)}
\label{def-fleqns} {\em The c\`{a}dl\`{a}g function $(\fx,
\fmeas)$ defined on $[0,\infty)$ and taking values in $\R_+ \times
\mmsp$ is said to solve the}  fluid equations {\em associated with
$(\fe, \overline{x}_0, \fmeas_0) \in \newspace$  if and only if }
$\fx(0) = \overline{x}_0$ {\em and for every} $t \in [0,\infty)$,
\be
\label{cond-radon}
 \int_0^t  \lan h, \fmeas_s \ran \, ds  < \infty 
\ee
{\em and  the following relations are satisfied: for every $\newf \in
  \newocdcpm$,}
\begin{eqnarray}
\label{eq-ftmeas}
\ds \lan \newft, \fmeas_t \ran  & = & \ds \lan  \newf(\cdot,0), \fmeas_0 \ran +
\int_0^t \lan \dtnewfs + \dxnewfs, \fmeas_s \ran \, ds  \\
 & & \nonumber
\quad  - \ds \int_0^t \lan  h(\cdot) \newfs,  \fmeas_s \ran \, ds+ \int_{[0,t]} \newf (0,s) \, d\fk (s),  \\
\label{eq-fx}
\fx (t) &  = & \fx (0) + \fe(t) - \ds \int_0^t \lan  h, \fmeas_s \ran \, ds
\end{eqnarray}
{\em and}
\be
\label{eq-fnonidling}
 1 - \lan \f1, \fmeas_t \ran = [1 - \fx(t)]^+,
\ee
{\em where }
\be
\label{eq-fk}
\fk(t) = \lan \f1, \fmeas_t \ran - \lan \f1, \fmeas_0 \ran + \int_0^t \lan h, \fmeas_s \ran \, ds.
\ee
\end{defn}

We now recall the result established in Kaspi and Ramanan 
\cite{KasRam07} (see Theorems 3.4 and 3.5 therein), which shows that
under Assumptions \ref{as-flinit} and \ref{as-h}, the fluid equations
uniquely characterize the functional strong law of large numbers or
 mean-field limit of the $N$-server system, in the asymptotic regime where
 the number of servers and arrival rates both tend to infinity.

\begin{theorem}[Kaspi and Ramanan \cite{KasRam07}]
\label{th-flimit}
Suppose Assumptions \ref{as-flinit} and  \ref{as-h}  are satisfied and
 $(\fe, \fx(0), \fmeas_0) \in \newspace$ is the limit of the initial 
data  as stated in Assumption \ref{as-flinit}.   
Then there exists a unique solution  $(\fx,\fmeas)$  to
the associated fluid equations (\ref{cond-radon})--(\ref{eq-fnonidling}) 
and, as $N \ra \infty$, 
 $(\fxn, \fmeasn)$ converges almost surely  to $(\fx,\fmeas)$. 
Moreover, $(\fx,\fmeas)$  satisfies the non-idling condition
(\ref{eq-fnonidling})  
and, for every $f \in \cb$,
\begin{eqnarray}
\label{final-fleqs}
\ds \int_{[0,M)} f(x) \,  \fmeas_t (dx)   & = &  \ds \int_{[0,M)} f(x+t)
\dfrac{1 - G(x+t)}{1 - G(x)} \fmeas_0 (dx) \\
\nonumber
& & \qquad + \int_{[0,t]} f(t-s) (1 - G(t-s)) d \fk (s),
\end{eqnarray}
where $\fk$
satisfies the relation (\ref{eq-fk}).
\end{theorem}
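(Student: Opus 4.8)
The plan is to leverage the succinct description of the $N$-server dynamics (Proposition \ref{prop-prelimit}/Theorem 5.1 of \cite{KasRam07}) together with a compactness argument. First I would establish tightness of the sequence $\{(\fxn, \fmeasn)\}_{N \in \N}$ in $\dspacerp \times \dspacem$: the initial-data convergence in Assumption \ref{as-flinit} controls the starting point, the mass balances (\ref{def-dn})--(\ref{eq2-kn}) bound $\fxn$ and $\langle \f1, \fmeasn \rangle$ uniformly (using $\langle \f1, \fmeasn \rangle \le 1$ and the expectation convergence of $\fen$), and for the measure-valued component one tests against a fixed $\newf \in \newocdcpm$, shows that each term in the pre-limit analog of (\ref{eq-ftmeas}) is tight using the Markov/martingale structure of the departure process and Assumption \ref{as-h}, and then invokes a Jakubowski-type criterion for tightness in the space of measure-valued \cad processes. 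The key technical input here is that Assumption \ref{as-h} — either eventual boundedness of $h$ or eventual lower-semicontinuity — yields the asymptotic regularity needed to pass the term $\int_0^t \langle h, \fmeasn_s\rangle\, ds$ to the limit and to verify the crucial Radon-type condition (\ref{cond-radon}) in the limit.

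Second, I would identify any subsequential limit $(\fx, \fmeas)$ as a solution of the fluid equations. The pre-limit equations are the stochastic analogs of (\ref{eq-ftmeas})--(\ref{eq-fnonidling}) with an additive martingale term (the compensated departure process, whose quadratic variation scales like $1/N$ and hence vanishes); along the convergent subsequence the martingale terms go to zero in probability by Doob's inequality, the drift and boundary integrals converge by the continuous-mapping/Skorokhod representation, and the non-idling identity (\ref{def-nonidling}) passes to the limit because $x \mapsto [1-x]^+$ is continuous. This shows existence of a solution. Uniqueness of the solution is the heart of the matter: given $(\fe, \fx(0), \fmeas_0)$, one shows that (\ref{eq-ftmeas}) with $\newf$ ranging over test functions forces the representation (\ref{final-fleqs}) for $\fmeas_t$ in terms of $\fk$ (this is essentially the method-of-characteristics solution of the transport PDE underlying (\ref{eq-ftmeas}), with the hazard-rate factor $\tfrac{1-G(x+t)}{1-G(x)}$ emerging from integrating the killing term $-h\, ds$ along characteristics), so that $\langle \f1, \fmeas_t\rangle$, hence $\fx(t)$ via (\ref{eq-fnonidling}) and $\fk(t)$ via (\ref{eq-fk}), is determined by a closed fixed-point equation in $(\fx, \fk)$; a Gronwall-type estimate using local integrability of $h$ (cf.\ (\ref{h-lint})) and the Lipschitz property of $[1-\cdot]^+$ then gives uniqueness on $[0,T]$ for every $T$, and hence on $[0,\infty)$.

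Third, once uniqueness of the fluid limit is in hand, the standard Prokhorov argument upgrades subsequential convergence in distribution to convergence of the full sequence; and since the limit is deterministic, convergence in distribution is equivalent to convergence in probability, which combined with the almost-sure convergence hypothesis on the initial data (via a Skorokhod representation, or by a direct Borel--Cantelli argument exploiting the fast decay of the martingale terms) yields the asserted almost-sure convergence. The representation (\ref{final-fleqs}) and the relation (\ref{eq-fk}) for the limit then follow from the characterization derived in the uniqueness step. I expect the main obstacle to be the uniqueness argument — specifically, handling the coupling between the free-boundary behavior encoded in (\ref{eq-fnonidling}) and the non-trivial dependence of $\langle h, \fmeas_t\rangle$ on the whole history of $\fmeas$, since $h$ is unbounded near $\endsup$ and one must exploit Assumption \ref{as-h} carefully to keep the relevant integrals finite and continuous in the fixed-point map.
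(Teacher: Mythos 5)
You should note at the outset that this paper contains no proof of Theorem \ref{th-flimit}: it is stated as a recalled result, imported verbatim from Kaspi and Ramanan \cite{KasRam07} (Theorems 3.4 and 3.5 there), so the only fair comparison is with the proof in that earlier paper. Your outline does track the strategy used there: a succinct pre-limit characterization of the dynamics (the stochastic analogue of (\ref{eq-ftmeas})--(\ref{eq-fnonidling}), cf.\ Proposition \ref{prop-prelimit}), tightness of $\{(\fxn,\fmeasn)\}$ with Assumption \ref{as-h} entering precisely where you say it does (controlling $\int_0^t\lan h,\fmeasn_s\ran\,ds$ and the limiting condition (\ref{cond-radon})), vanishing of the fluid-scaled compensated departure martingales, a method-of-characteristics solution of the age equation that produces the representation (\ref{final-fleqs}) with the survival ratio $(1-G(x+t))/(1-G(x))$, and uniqueness reduced to a closed equation in $(\fx,\fk)$. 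One cosmetic difference: the uniqueness step in the cited work (and in the analogous centered argument of Proposition \ref{prop-cont} in this paper) is run by iterating the convolution kernel and bounding by the renewal function $U(T)$, i.e.\ $1+G+G^{*,2}+\cdots$, rather than by a literal Gronwall inequality; with $g$ only locally integrable this renewal iteration is the cleaner route, though your Gronwall-flavoured fixed-point argument is the same idea and handles the Lipschitz boundary map $[1-\cdot]^+$ in the same way.

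The one genuine gap is your final step, the upgrade to almost-sure convergence. Convergence in distribution to a deterministic limit gives convergence in probability, but convergence in probability does not imply almost-sure convergence, and neither of your two suggested fixes closes this as written: a Skorokhod representation produces almost-sure convergence only on a new probability space, not for the original sequence $(\fxn,\fmeasn)$, and a Borel--Cantelli argument requires summable (in $N$) tail bounds on $\sup_{t\le T}$ of the scaled martingale terms, which Doob's $\L^2$ inequality alone (quadratic variation of order $1/N$) does not supply; you would need higher-moment or exponential maximal estimates that your sketch never states. The argument in \cite{KasRam07} is instead pathwise: on the full-measure event where the initial data converge (Assumption \ref{as-flinit}) and where the fluid-scaled martingale terms have been shown to vanish uniformly on compacts, one proves relative compactness of the deterministic sequence $(\fxn(\omega),\fmeasn(\omega))$ and identifies every subsequential limit as a solution of (\ref{cond-radon})--(\ref{eq-fnonidling}); uniqueness then forces convergence of the whole sequence for that $\omega$. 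So the missing ingredient is a genuinely almost-sure (not merely weak or in-probability) control of the martingale and error terms, and your write-up should either supply the requisite moment/maximal estimates for Borel--Cantelli or restructure the last step as this pathwise compactness-plus-uniqueness argument.
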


\begin{remark}
\label{rem-critical}
{\em The fluid limit will be said to be critical if
$\fx(t) = 1$ for all $t \in [0,\infty)$.  In addition, it will be
said to be subcritical (respectively, supercritical) if for every $T \in
[0,\infty)$,
$\sup_{t \in [0,T]} \fx(t) < 1$ (respectively, $\inf_{t \in [0,T]} \fx (t) >
1$).
Although, in general, the fluid limit may not stay in one regime
for all $t$ and may instead experience
 periods of subcriticality, criticality and supercriticality,
for many natural choices of 
initial data, such as either starting empty or starting 
on the so-called ``invariant manifold''
of the fluid limit, the fluid limit does belong to one of these 
three categories. 
Specifically, if we define the ``invariant'' fluid age measure to be  
\be
\label{eq-fmeasstar}
\fmeas_*(dx)  = (1-G(x)) \, dx, \quad x\in [0,\endsup), 
\ee
then  it follows from Remark 3.7 and Theorem 3.8 of 
Kaspi and Ramanan 
\cite{KasRam07} that the
fluid limit associated with the initial data $(\f1, 1, \fmeas_*)$
is critical, the fluid limit associated with the
initial data $(\f1, a, \fmeas_*)$ for some
$a > 1$ is supercritical, and if the support of $G$ is $[0,\infty)$, then
the fluid limit associated with the initial data $(\flam \f1, 0, \measzero)$ is
subcritical whenever $\flam \leq 1$.  A complete characterization of the 
invariant manifold of the fluid in the presence of abandonments 
can be found in \cite{KanRam10a}. 
}
\end{remark}

\beginsec

\section{Certain Martingale Measures and their Stochastic Integrals}
\label{subs-mmeas}

We now introduce some quantities that appear in the characterization of the 
functional central limit. The sequence of  martingales 
obtained by compensating the departure processes in each of 
the $N$-server systems played an important role in establishing  the 
fluid limit result in \cite{KasRam07}.  Whereas under the fluid scaling 
the limit of this sequence 
converges weakly to zero, under the diffusion scaling considered 
here, it converges to a non-trivial limit.  
This limit can be described in terms of 
a corresponding martingale measure, which is introduced in 
in Section \ref{subsub-mmeas}.  In Section \ref{subs-convint}  
 certain stochastic convolution integrals with respect to these 
martingale measures are introduced, 
which arise in the representation formula for the centered age 
process in the $N$-server system (see Proposition \ref{cor-sae1}). 
Finally, the associated ``limit'' quantities are 
 defined in Section \ref{subs-scaledmmeas}. 
The reader is referred to Chapter 2 of Walsh \cite{walshbook} for 
basic definitions of martingale measures and their stochastic integrals.

\subsection{A Martingale Measure Sequence} 
\label{subsub-mmeas} 

Throughout this section, suppose  that $(\en, \initxn, \measn_0)$ is an
$\newspace$-valued random element representing the initial data of
the $N$-server system, and let $(\ren, \xn, \measn)$ be the associated
state process  described in Section \ref{subs-aux}.
For any measurable function $\newf$ on $[0,\endsup)\times\R_+$, consider the
sequence of processes $\{\bare^{(N)}_\newf\}_{N \in \N}$ defined by 
 \be
\label{def-baren}
   \bare^{(N)}_\newf (t) \doteq  \sum_{s \in [0,t]} \sum_{j=-\lan \f1,
     \measn_0 \ran  + 1}^{\kn (t)}
\ind_{\left\{\frac{d}{dt} \agen_j (s-) > 0, \frac{d}{dt} \agen_j (s+)  = 0
  \right\}} \newf(\agen_j (s),s) \ee
for $N \in \N$ and $t \in [0,\infty)$,
where $\kn$ and $\agen_j$ are, respectively, 
the cumulative entry into service process and the age process 
of customer $j$ as defined by the relations
(\ref{def-kn}) and (\ref{def-agejn}). 
Note from (\ref{def-agejn})  that the $j$th customer 
completed service (and hence departed the system) at time $s$
 if and only if
\[ \dfrac{d}{dt} \agen_j (s-) > 0 \qquad \mbox{ and  } \qquad
\dfrac{d}{dt} \agen_j (s+) = 0. \]
 Thus, substituting  $\newf = \f1$ in (\ref{def-baren}),
we see that  $\bare^{(N)}_\f1$ is equal to   $\dn$,
the cumulative departure process  of (\ref{def-dn}).
Moreover, for $N \in \N$ and 
every bounded measurable function
$\newf$ on $[0,\endsup) \times [0,\infty)$,
consider the  process 
$\dcompn_{\newf}$ defined by
\be 
\label{def-dcompn}
\dcompn_{\newf}(t)\doteq
\int_0^t\left(\int_{[0,\endsup)} \newf(x,s)h(x)\, \measn_s(dx)\right)ds,
\qquad t \in [0,\infty), 
\ee
and set
\be
\label{def-martn}
 \martn_{\newf}\doteq \qn_{\newf}-\dcompn_{\newf}.
\ee
It was shown in Corollary 5.5 of Kaspi and Ramanan \cite{KasRam07} 
that for all bounded, continuous functions $\newf$
defined on $[0,\endsup) \times [0,\infty)$,
$\dcompn_{\newf}$ is the $\{{\cal F}_t^{(N)}\}$-compensator of
$\qn_{\newf}$, and that $\martn_{\newf}$ is a c\`{a}dl\`{a}g $\{{\cal
  F}_t^{(N)}\}$-martingale 
(see also Lemma 5.2 of Kang and Ramanan 
\cite{KanRam08} for a generalization of this result 
to a larger class of $\newf$).   
Moreover, from  the proof of Lemma 5.9 of Kaspi and Ramanan \cite{KasRam07}, 
it follows that the predictable quadratic variation of
 $\martn_{\newf}$ takes the form 
\be
\label{qv-martn} 
\lan \martn_{\newf} \ran_t = \dcompn_{\newf^2}(t) = 
\int_0^t \left(\int_{[0,\endsup)} \newf^2(x) h(x) \, \measn_s(dx) \right)\, ds,  
\quad t\in [0,\infty). 
\ee

Now, for $B \in {\cal B}[0,\endsup)$ and 
$t \in [0,\infty)$, 
 define 
\be
\label{def-cmn}
\cmn_t (B)  \doteq \mn_{\ind_B} (t) = 
\baren_{\ind_B} (t) - \dcompn_{\ind_B} (t). 
\ee
Let ${\cal B}_0[0,\endsup)$ denote the algebra 
generated by the intervals $[0,x]$, $x \in [0,\endsup)$. 
It is easy to verify that 
$\cmn = \{\cmn_t (B), {\cal F}_t^{(N)}, t \geq 0, B \in {\cal
  B}_0[0,\endsup)\}$  is a  martingale measure (for completeness, a proof 
is provided in Lemma \ref{lem-martmeas} of the Appendix). 
We now show that $\cmn$ is in fact an orthogonal  
martingale measure (see page 288 of Walsh \cite{walshbook} for a definition). 
Essentially, the orthogonality property holds  because
almost surely, no two departures occur at the same time.  
First, in Lemma \ref{lem-dep}  below, 
we first state a slight
generalization of this latter property,
which is also used in Section \ref{subs-asind}
to establish an asymptotic independence result.
Given $r, s \in [0,\infty)$,
 let $D^{(N),r}(s)$ denote the cumulative number of
departures during $(r,r+s]$ of customers that entered
service at or before $r$.   In what follows, recall that the 
notation $\Delta f (t) = f(t) - f(t-)$ is used to denote
the jump of a function $f$ at $t$.

\begin{lemma}
\label{lem-dep}
For every $N\in \N$, $\P$ almost surely,
\be
\label{ineq1-dep}
\Delta \dn(t) \leq 1,   \qquad  t \in [0,\infty),
\ee
and
\be
\label{ineq2-dep}
\sum_{s \in [0,\infty)} \Delta \en(r+s) \Delta D^{(N),r}(s) = 0, 
\qquad  r \in [0,\infty).
\ee
\end{lemma}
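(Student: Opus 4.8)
The plan is to prove both statements by appealing to the pathwise structure of the $N$-server system, where departures correspond to certain specified jumps of the age processes $\agen_j$, and then invoking the absolute continuity of the service distribution $G$ (recall $G$ has density $g$), together with the independence of the arrival process from the service requirements $\{v_j\}$, to rule out coincident departures and coincident arrival/departure events.

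For \eqref{ineq1-dep}, the idea is that by \eqref{def-agejn} and the characterization noted just before the lemma, a departure of customer $j$ at time $t$ occurs precisely when $\frac{d}{dt}\agen_j(t-) > 0$ and $\frac{d}{dt}\agen_j(t+) = 0$; equivalently, the age $\agen_j$ freezes at level $v_j$ at a time of the form $t = \kninv_j + v_j$. Since the customers in service at time $0$ are ordered, they have distinct initial ages $-\kninv_j$, and the service requirements $\{v_j\}$ form an i.i.d.\ sequence drawn from a distribution with a density, so the random variables $\{\kninv_j + v_j : j \le 0\}$ are $\P$-almost surely distinct. For the customers $j \ge 1$ entering service after time $0$, the entry times $\kninv_j$ are determined by the dynamics, but conditionally on $\sigma(\ren, \measn_0, \xn(0), \{v_i : i \le 0\})$ and on the entry times themselves, each $v_j$, $j \ge 1$, is still drawn from $G$ and is independent of $\{v_i : i < j\}$; hence again $\kninv_j + v_j$ avoids any fixed countable set almost surely, so no two of the (countably many) departure times $\{\kninv_j + v_j\}$ coincide. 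Since $\dn = \bare^{(N)}_{\f1}$ counts exactly these events, $\Delta \dn(t) \in \{0,1\}$ for all $t$. Some care is needed in the conditioning argument to make precise that the entry time $\kninv_j$ of customer $j$ does not depend on $v_j$ itself (it depends only on $v_i$ for $i < j$ and on $\ren$); this is where I would use the FCFS structure and the explicit recursive construction of the state in Lemma A.1 of Kang and Ramanan \cite{KanRam08}.

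For \eqref{ineq2-dep}, observe first that $\Delta \en(r+s) \in \{0,1\}$ by the standing assumptions on $\en$, and the jumps of $D^{(N),r}$ occur at the (countably many) departure times $\kninv_j + v_j$ of customers $j$ with $\kninv_j \le r$. For a fixed $r$ (or, since there are countably many jumps, for all $r$ simultaneously after a union bound is handled by noting the jump times form a countable set), the set of jump times of $s \mapsto D^{(N),r}(s)$ is, given the sigma-algebra generated by $\ren$ and by the service requirements $\{v_i : i \le 0\}$ together with the entry-time structure, a countable collection whose randomness beyond that conditioning comes only from the $v_j$; on the other hand the arrival times $\{$jump times of $\en\}$ are, by the independence assumption in Section \ref{subs-modyn} and Remark \ref{rem-indep}, independent of the service requirements given $\ren(0)$. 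Thus for each fixed departure time $T_j = \kninv_j + v_j$, $\P(\Delta \en(T_j) = 1) = 0$ because $\en$, conditioned on the relevant data, has at most countably many atoms and $T_j$ has a conditional density; summing over the countably many $j$ gives \eqref{ineq2-dep}.

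The main obstacle I anticipate is the measurability/conditioning bookkeeping in both parts: one must argue carefully that the entry time $\kninv_j$ of the $j$-th customer is a function of the data \emph{strictly prior} to using $v_j$, so that conditionally $v_j \sim G$ is independent of the ``target'' time against which we test for a coincidence, and similarly that the arrival epochs are conditionally independent of the full vector of service times given $\ren$. Once this independence-given-the-past structure is set up correctly (using the recursive pathwise construction of the state and the Markov/independence hypotheses on $\ren$ and $\en$), both claims reduce to the elementary fact that a real random variable with a conditional density puts zero mass on any fixed countable set. I would therefore structure the proof around a single lemma isolating this conditional-density property of $\{\kninv_j + v_j\}$, and then deduce \eqref{ineq1-dep} and \eqref{ineq2-dep} as short corollaries.
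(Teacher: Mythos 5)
Your route is sound and genuinely different in machinery from the paper's, though both rest on the same two pillars: absolute continuity of $G$ and independence of a customer's service requirement from everything that determines the candidate coincidence times. The paper does not argue pathwise with conditional densities; instead it fixes the index $k$, invokes the strong Markov property of $(\ren,\xn,\measn)$ at the entry time $\kninv_k$, and observes that the departure process of the \emph{other} customers has a continuous compensator, hence is quasi-left-continuous and has no fixed jump times; the coincidence probability is then killed by a Fubini argument integrating $\P(t\in\tilde J^k)=0$ against $dG(t)$, and (\ref{ineq2-dep}) is handled analogously by conditioning on ${\cal F}_r$, using conditional independence of post-$r$ arrivals from $D^{(N),r}$ and the absence of fixed jumps of $D^{(N),r}$. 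Your mirror-image argument (freeze the countable target set, give the remaining service time a conditional density) buys elementarity — no compensators or quasi-left-continuity — and for (\ref{ineq2-dep}) it even yields the slightly stronger statement that almost surely no departure epoch coincides with any arrival epoch, uniformly in $r$, whereas the paper's proof produces a null set for each fixed $r$ (which suffices for its later use along countably many $r$). What the paper's route buys is precisely the bookkeeping you flag as the obstacle: by restarting at $\kninv_k$ it never needs to exhibit $\kninv_k$ as a measurable function of $(\en,\xn(0),\measn_0,v_i,i<k)$, and the initial customers are absorbed into the age measure with the truncated law $\tilde G(\cdot)=(G(\cdot)-G(a_k(0)))/(1-G(a_k(0)))$ appearing only as a one-line modification.

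Two cautions if you carry your plan out. First, your phrase ``conditionally on \dots\ the entry times themselves'' must not be taken literally: $\kninv_i$ for $i>j$ can depend on $v_j$ (customer $j$'s departure may admit customer $i$ into service), so conditioning on all entry times would destroy the law of $v_j$; you must condition only on $\sigma(\en,\xn(0),\measn_0,v_i:i<k)$, compare the departure of $k$ with the (then deterministic) departures of $j<k$, and sum over $k$ — exactly the pairwise reduction the paper encodes through the events ${\cal E}_k$. Second, for the customers in service at time $0$ the relevant conditional law of $v_j$ given the age is $\tilde G$, not $G$ (and the ages need not be distinct, contrary to your aside), but since $\tilde G$ still has a density the coincidence probabilities remain zero, so neither point is fatal.
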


We relegate the proof of the lemma 
to Section \ref{subs-lemdep} and instead, now establish 
 the orthogonality property.

\begin{cor}
\label{cor-orth}
For each $N \in \N$, the martingale measure
$\cmn = \{\cmn_t(B), {\cal F}_t^{(N)}; t \geq 0, B \in {\cal
  B}_0[0,\endsup)\}$ is orthogonal and has
covariance functional
\begin{eqnarray}
\label{cov-fnal} \qquad
\covfnal^{(N)}_t (B,\tilde{B}) \doteq \left\lan \cmn (B), \cmn (\tilde{B})
\right\ran_t & = &  \dcompn_{\ind_{B \cap \tilde{B}}}
 (t) \\
\nonumber
& = & \int_0^t \left( \int_{B \cap \tilde{B}} h(x) \,
\measn_s (dx) \right) \, ds
\end{eqnarray}
for $B, \tilde{B} \in {\cal B}_0[0,\endsup)$.
\end{cor}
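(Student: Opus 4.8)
The plan is to verify the two defining properties of an orthogonal martingale measure with the stated covariance functional, reducing everything to facts already established for the scalar martingales $\martn_{\newf}$. First I would recall that for any $B \in {\cal B}_0[0,\endsup)$, the function $\ind_B$ is bounded and measurable, so by the results of \cite{KasRam07} (Corollary 5.5) cited above, $\martn_{\ind_B} = \cmn(B)$ is a c\`{a}dl\`{a}g $\{{\cal F}_t^{(N)}\}$-martingale, and by \eqref{qv-martn} its predictable quadratic variation is $\lan \cmn(B)\ran_t = \dcompn_{\ind_B}(t) = \int_0^t \int_B h(x)\,\measn_s(dx)\,ds$ (using that $\ind_B^2 = \ind_B$). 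For a pair $B, \tilde B$, the polarization identity gives
\[
\lan \cmn(B), \cmn(\tilde B)\ran_t = \tfrac14\left(\lan \cmn(B\cup\tilde B)\ran_t - \lan \cmn(\cdot)\ran\cdots\right),
\]
but it is cleaner to argue directly: by bilinearity of $\newf \mapsto \martn_{\newf}$ on the algebra ${\cal B}_0[0,\endsup)$ (immediate from \eqref{def-baren}, \eqref{def-dcompn}, \eqref{def-martn}) together with the additivity $\cmn(B) + \cmn(\tilde B) = \cmn(B \triangle \tilde B) + 2\cmn(B \cap \tilde B)$ for the decomposition into disjoint pieces, one reduces the covariation to quadratic variations of the $\martn_{\ind_A}$ over disjoint atoms $A$, and collects terms. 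This yields $\lan \cmn(B), \cmn(\tilde B)\ran_t = \dcompn_{\ind_{B \cap \tilde B}}(t)$, which is \eqref{cov-fnal}.

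For orthogonality, recall (Walsh \cite{walshbook}, p.~288) that $\cmn$ is orthogonal if $\cmn(B)$ and $\cmn(\tilde B)$ are strongly orthogonal martingales whenever $B \cap \tilde B = \emptyset$, i.e., their product is a martingale, equivalently $\lan \cmn(B), \cmn(\tilde B)\ran \equiv 0$. When $B$ and $\tilde B$ are disjoint, the covariance formula just derived gives $\dcompn_{\ind_{B \cap \tilde B}} = \dcompn_{\zerof} \equiv 0$, so this is immediate from the computation above. However, the covariance computation implicitly rests on the fact that $\martn_{\ind_B} \martn_{\ind_{\tilde B}}$ minus the claimed compensator is a martingale, and the only delicate point there is controlling the purely discontinuous part of the covariation: the jumps of $\cmn(B)$ occur exactly at departure epochs of customers with age in $B$, and $[\cmn(B), \cmn(\tilde B)]_t = \sum_{s \le t} \Delta\cmn_s(B)\Delta\cmn_s(\tilde B)$ picks up a contribution only when a single departure has its age in both $B$ and $\tilde B$. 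Here is where Lemma \ref{lem-dep} enters: \eqref{ineq1-dep} says $\Delta \dn(t) \le 1$, so at most one customer departs at any time $t$, hence a jump of $\cmn(B)$ and a jump of $\cmn(\tilde B)$ at the same time $t$ must come from the \emph{same} departing customer, whose single age $\agen_j(t)$ then lies in $B \cap \tilde B$. When $B \cap \tilde B = \emptyset$ this is impossible, so $[\cmn(B), \cmn(\tilde B)] \equiv 0$ and, the continuous parts of these pure-jump martingales being zero, the predictable covariation vanishes as well.

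The main obstacle I anticipate is the bookkeeping in passing from \eqref{qv-martn}, which is stated only for the quadratic variation $\lan \martn_{\newf}\ran$ of a single scalar martingale, to the cross-covariation $\lan \cmn(B), \cmn(\tilde B)\ran$ for distinct sets — one must either invoke polarization carefully, checking that $\newf \mapsto \martn_{\newf}$ is linear and that \eqref{qv-martn} extends to $\newf = \ind_B + \ind_{\tilde B}$ (which requires $\ind_B + \ind_{\tilde B}$ bounded, true), or argue pathwise via the optional quadratic variation and its compensator. I would take the polarization route: write $\cmn(B) \pm \cmn(\tilde B) = \martn_{\ind_B \pm \ind_{\tilde B}}$, apply \eqref{qv-martn} to each (noting $(\ind_B \pm \ind_{\tilde B})^2 = \ind_B + \ind_{\tilde B} \pm 2\ind_{B \cap \tilde B}$), and subtract to get $4\lan \cmn(B), \cmn(\tilde B)\ran_t = 4\dcompn_{\ind_{B \cap \tilde B}}(t)$. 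Everything else is a direct substitution into \eqref{def-dcompn}. The role of Lemma \ref{lem-dep}, as flagged in the text, is to guarantee that the computation is consistent with the pathwise picture of non-simultaneous departures, which is what makes the orthogonality (as opposed to mere uncorrelatedness) genuine.
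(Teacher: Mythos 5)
Your proof is correct, but it is organized differently from the paper's. The paper proves orthogonality \emph{first}: for disjoint $B, \tilde{B}$ it observes (via Lemma 5.2 of Kang and Ramanan) that $\cmn(B)$ and $\cmn(\tilde{B})$ are compensated sums of jumps whose jump times are departure epochs of customers with ages in $B$, respectively $\tilde{B}$; by (\ref{ineq1-dep}) of Lemma \ref{lem-dep} no two departures coincide, so the jump sets are a.s.\ disjoint, and Theorem I.4.52 of Jacod--Shiryaev gives $\lan \cmn(B), \cmn(\tilde{B})\ran \equiv 0$. Only then does it obtain (\ref{cov-fnal}), by biadditivity of the covariance functional together with (\ref{qv-martn}). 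You invert this order: polarization applied to $\martn_{\ind_B \pm \ind_{\tilde{B}}}$, using $(\ind_B \pm \ind_{\tilde{B}})^2 = \ind_B + \ind_{\tilde{B}} \pm 2\ind_{B\cap\tilde{B}}$, yields (\ref{cov-fnal}) directly, and orthogonality drops out as the special case $B\cap\tilde{B}=\emptyset$. That is a legitimate and arguably cleaner route, with two caveats worth making explicit. First, (\ref{qv-martn}) is stated in the text only for bounded \emph{continuous} $\newf$; your polarization needs it for the signed simple functions $\ind_B \pm \ind_{\tilde{B}}$, which are not continuous. The paper fills exactly this kind of gap for indicators in Lemma \ref{lem-martmeas} (via the left continuity of $s \mapsto \ind_B(a_j^{(N)}(s))$ and Lemma 5.2 of Kang--Ramanan), and the same argument extends by linearity to your integrands, so the gap is fillable but should be acknowledged rather than waved through with ``bounded, true''. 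Second, you correctly sense that Lemma \ref{lem-dep} has not disappeared from your route: the identification $[\martn_{\newf}]_t = \sum_{s\le t}(\Delta\martn_{\newf}(s))^2 = \baren_{\newf^2}(t)$, on which the predictable quadratic variation formula rests, already uses that departures are simple (otherwise a simultaneous departure of two customers would produce cross terms $2\newf(x_1,s)\newf(x_2,s)$ not captured by $\baren_{\newf^2}$). So in your version Lemma \ref{lem-dep} is hidden inside the verification of (\ref{qv-martn}) for simple integrands, whereas in the paper it is the visible engine of the orthogonality step; the net effect is the same, but your write-up should say this is where the lemma is actually used rather than describing it as a mere consistency check.
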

\begin{proof}
In order to show that the martingale measure $\cmn$ is orthogonal, it
suffices to show that for every $B, \tilde{B} \in {\cal B}_0[0,\endsup)$ 
such that $B\cap \tilde{B}=\emptyset$, the martingales $\{\cmn_t(B); t \geq 0\}$ and $\{ \cmn_t(\tilde{B}); t \geq 0\}$ are orthogonal or,
in other words,  that
\be
\label{orth}
B \cap \tilde{B} = \emptyset  \quad \Rightarrow \quad
\lan \cmn (B), \cmn (\tilde{B})\ran  \equiv 0.
\ee
Here, $\lan \cdot, \cdot \ran$ represents the predictable 
quadratic covariation between the two martingales. 
Fix  two sets $B, \tilde{B} \in {\cal B}_0[0,\endsup)$ with 
$B \cap \tilde{B} = \emptyset$.  By 
Lemma 5.2 of Kang and Ramanan 
\cite{KanRam08}, 
$\cmn (B) = \martn_{\ind_B}$ and $\cmn (\tilde{B}) =
\martn_{\ind_{\tilde{B}}}$ are martingales that are
 compensated sums of jumps,
where the jumps occur at  departure times of customers whose ages lie 
in the sets $B$ and $\tilde{B}$, respectively.
 Since, by (\ref{ineq1-dep}) of Lemma \ref{lem-dep}, 
 there are almost surely no two departures that occur at the same time,
it follows that almost surely, the set of jump points of $\cmn (B)$ and $\cmn
(\tilde{B})$ are disjoint.
By Theorem 4.52 of
 Chapter 1 of Jacod and Shiryaev \cite{JacShiBook}, it then follows that
the martingales are orthogonal and (\ref{orth}) holds.  
Combining (\ref{orth}) with (\ref{qv-martn}) 
 and the
biadditivity of the covariance functional, we then obtain (\ref{cov-fnal}).
\end{proof}

Due to the orthogonality property established in Corollary
\ref{cor-orth}, we can now define stochastic integrals
with respect to the martingale measure $\cmn$.
 Since $\E[\dcompn_{\f1} (T)] < \infty$ by Lemma 5.6 of Kaspi and Ramanan
\cite{KasRam07} and $\measn$ is a non-negative measure, 
the stochastic integral is defined for the space of deterministic, 
continuous and bounded functions on $[0,\endsup) \times [0,\infty)$ 
(it is in fact defined for a larger class of so-called predictable 
integrands satisfying a suitable integrability property, 
see page 292 of Walsh \cite{walshbook}).  Moreover, by 
 Theorem 2.5 on page 295 of Walsh \cite{walshbook}, it follows 
that for all bounded and continuous $\newf$, 
the stochastic  integral
$\{ \cmn_t (\newf)(B), \filtn; t \geq 0, B \in
{\cal B}_0[0,\endsup)\}$  of $\newf$ with respect to $\cmn$ is 
 a c\`{a}dl\`{a}g orthogonal martingale measure with covariance functional
 \be
\label{orth-sint}
 \lan  \cmn(\newf) (B),   \cmn(\tilde{\newf}) (\tilde{B}) \ran_t
= \int_0^t \left( \int_{B \cap \tilde{B}} \newf(x,s) \tilde{\newf}(x,s) h(x)
  \measn_s (dx) \right) \, ds
\ee
for bounded, continuous $\newf, \tilde{\newf}$ and $B, \tilde{B} \in {\cal
  B}_0[0,\endsup)$.
When $B = [0,\endsup)$, we will drop the dependence on $B$ and simply write
\[ \cmn (\newf)  = \cmn (\newf) ([0,\endsup)).
\]

\begin{remark}
\label{rem-mncadlag}
{\em 
For  $\newf \in {\cal C}_b([0,\endsup) \times \R_+)$,  
 the stochastic integral $\cmn (\newf)$ admits a  c\`{a}dl\`{a}g 
version. Indeed, the  c\`{a}dl\`{a}g martingale
$\martn_{\newf}$ defined in (\ref{def-martn}) is a version  of 
the stochastic integral $\cmn (\newf)$.
}
\end{remark}

It was shown in Lemma 5.9 of Kaspi and Ramanan \cite{KasRam07} that 
\[ \overline{{\cal M}}^{(N)}  \doteq \dfrac{\cmn}{N} \Rightarrow \overline{\cm} \equiv
\measzero
\]
 in the space
of c\`{a}dl\`{a}g finite Radon measure valued processes.
Therefore, consistent with the diffusion scaling (\ref{diff-scaling3}), 
we set 
\be
\label{def-chatmn}
 \chatmn \doteq \dfrac{\cmn}{\sqrt{N}}.
\ee
It is clear from the above discussion that each $\chatmn$ is an orthogonal
martingale measure with covariance functional
\[ \widehat{\covfnal}^{(N)}_t (B, \tilde{B}) = \int_0^t \left( \int_{B \cap \tilde{B}}
  h(x) \, \fmeasn_s (dx) \right)\, ds, \]
and that for any $\newf$ in a suitable class of functions that 
includes the space of bounded and continuous functions, 
the stochastic integral $\chatmn(\newf)$
is a well defined c\`{a}dl\`{a}g, 
 orthogonal  $\{{\cal F}_t^{(N)}\}$ martingale measure. 
Moreover, for every bounded, continuous $\newf, \tnewf$ and $t \in [0,\infty)$, 
\be
\label{cfnal-sint}
\lan \chatmn(\newf),  \chatmn (\tilde{\newf}) \ran_t
= \int_0^t \left( \int_{[0,\endsup)} \newf(x,s) \tilde{\newf}(x,s) h(x)
\,   \fmeasn_s (dx) \right) \, ds.
\ee

\subsection{Some Associated Stochastic Convolution Integrals}
\label{subs-convint}

In  Proposition \ref{cor-sae1} we show that 
the stochastic measure-valued process $\{\measn_t,t \geq 0\}$ that 
describes the ages of customers in the $N$-server system admits 
a convenient representation that is similar to the 
 representation (\ref{final-fleqs}) for  its 
fluid counterpart $\{\fmeas_t,t \geq 0\}$, 
except that it contains an additional stochastic term involving 
the following stochastic convolution integral. 
For $N \in \N$, $\newf \in {\cal C}_b([0,\endsup) \times [0,\infty))$ and $t \in [0,\infty)$, define  
\be
\label{def-chn}
  \chn_t (\newf) \doteq 
\underset{[0,\endsup) \times [0,t]}{\int \int} \newf(x+t-s,s)  \dfrac{1-G(x+t-s)}{1
  -  G(x)} \cmn (dx, ds), 
\ee
where the latter  stochastic integral with respect to  $\cmn$ 
 is well defined because $\cmn$ is an  orthogonal 
martingale measure and the function  
$(x,s) \mapsto \newf(x+t-s,s) (1-G(x+t-s))/(1-G(x))$ lies 
in ${\cal C}_b([0,\endsup) \times \R_+)$ for all
$\newf \in   {\cal C}_b([0,\endsup) \times \R_+)$.  
The scaled version of this quantity is then defined in the 
obvious manner: 
for $N \in \N$, $\newf \in {\cal C}_b([0,\endsup) \times [0,\infty))$ and $t \in [0,\infty)$, let 
\be
\label{def-hathn}
 \hathn_t (\newf) \doteq 
\underset{[0,\endsup) \times [0,t]}{\int \int} \newf(x+t-s,s)  \dfrac{1-G(x+t-s)}{1
  -  G(x)} \chatmn (dx, ds).
\ee

\subsection{Related Limit Quantities}
\label{subs-scaledmmeas}

We now define some additional quantities, which we 
subsequently show (in Corollaries 
\ref{cor-mreg} and \ref{cor-hreg}) 
to be limits of the sequences 
$\{\chatmn\}_{N \in \N}$ and $\{\hathn\}_{N \in \N}$.  
Fix a probability space 
$(\widehat{\Omega},\widehat{{\cal F}}, \widehat{\P})$ and, on this space,
 let 
 $\chatm = \{\chatm_t(B), B \in {\cal B}_0[0,\endsup), t \in [0,\infty)\}$
be a  continuous martingale measure with the  deterministic 
covariance functional
\be
\label{def-covfnalchatm}
 \widehat{\covfnal}_t (B, \tilde{B})  \doteq \lan \chatm(B), \chatm(\tilde{B})
\ran_t = \int_0^t \left( \int_{[0,L)}
  \ind_{B \cap \tilde{B}}(x) h(x) \fmeas_s (dx) \right) \, ds 
\ee
for $t \in [0,\infty)$.  Thus,  $\chatm$  is  a white noise. 
Let $\integspace$ denote the subset of continuous  functions 
on $[0,\endsup) \times [0,\infty)$ that satisfies 
\be
\label{integrand-cond}
\int_0^t \left( \int_{[0,\endsup)} 
\newf^2(x,s) h(x) \, \fmeas_s (dx) \right)\, ds < \infty,  \qquad \ t \in
[0,\infty). 
\ee
Note that $\integspace$ includes, in particular, 
the space of bounded and continuous functions. 
For any such $\newf \in \integspace$, 
the stochastic integral $\chatmn(\newf)$
is a well defined c\`{a}dl\`{a}g, 
 orthogonal  $\{\widehat{{\cal F}}_t, t \geq 0\}$ martingale measure 
(see page 292 of Walsh \cite{walshbook} for details).   
For any $\newf\in\integspace$ and $t \in [0,\infty)$,  the stochastic 
integral of $\newf$ with respect to $\chatm$ on $[0,\endsup) \times [0,t]$, 
denoted by 
\be\label{def-chatm}
 \chatm_t(\newf) \doteq \underset{[0,\endsup) \times [0,t]}{\int \int} \newf(x,s) \, 
\chatm(dx, ds), 
\ee 
is well defined.  Moreover, because $\chatm$ is a continuous martingale measure,  
$\chatm(\newf)$ has a version as a continuous real-valued process.   
In fact, as 
Corollary \ref{cor-mreg} shows,  $\chatm$ admits a version as a continuous  
$\H_{-2}$-valued process.

Next, for $t \in [0,\infty)$ and $f \in {\cal C}_b[0,\endsup)$, let 
 $\hath_t(f)$ be the random variable given by the 
following convolution integral:
\be
\label{def-hath}
   \hath_t(f) \doteq \underset{[0,\endsup) \times [0,t]}{\int \int}
f(x+t-s) \dfrac{1 - G(x+t-s)}{1-G(x)}
\, \chatm (dx, ds).
\ee
It is shown in  Lemma \ref{lem-hathn} 
that if $f$ is bounded and H\"{o}lder continuous then  the real-valued 
stochastic process 
$\hath(f) = \{\hath_t(f), t \geq 0\}$  admits a continuous version, 
and that $\hath$ also admits a version as a continuous $\H_{-2}$-valued 
process. 

In order to write the convolution integrals
 in a more succinct fashion, we introduce 
the family of operators  $\{\Psi_t, t \geq 0\}$ defined, for $t > 0$ and 
$(x,s) \in [0,\endsup) \times [0,\infty)$, by 
\be
\label{def-thetat}
 \left( \Psi_t f \right) (x,s) \doteq  f (x + (t-s)^+)
\dfrac{1 - G(x+(t-s)^+)}{1-G(x)},  
\ee
for bounded measurable $f$, 
where recall $(t-s)^+ = \max(t-s,0)$. 
Each operator $\Psi_t$  maps the space of
bounded measurable functions on $[0,\endsup)$ to the 
space of bounded measurable functions on $[0,\endsup) \times [0,\infty)$
and we also have 
\be
\label{op-ineqs}
 \quad
\sup_{t\in [0,\infty)} \nrm{\Psi_t \newf}_\infty \leq \nrm{\newf}_{\infty}.
\ee
We can then write $\hath$ and $\hathn$, respectively, 
in terms of $\chatm$ and $\chatmn$ as follows: 
\be
\label{rel-hath}
\hath_t (f) = \chatm_t (\Psi_t f), \qquad \hathn_t (f) = \chatmn_t (\Psi_t f), 
\qquad t \geq 0.  
\ee

\beginsec

\section{Main Results}
\label{sec-mainres}

The main results of the paper are stated in
Section \ref{subs-mainres}. 
They rely on some basic assumptions and the definition of 
a certain map, which  are first introduced in Sections \ref{subs-ass} 
and \ref{subs-mssm}, respectively. 
Corollaries of the main results  
are discussed in Section \ref{subs-cor}.

\subsection{Basic Assumptions}
\label{subs-ass}

We now state our assumptions on the arrival processes and initial conditions.  
  For $Y =  E, x_0,\nu,  X,  K$, let $\overline{Y}$
be the corresponding fluid limit as described in Theorem \ref{th-flimit}.
For $N \in \N$, the diffusion scaled quantities $\widehat{Y}^{(N)}$ are
defined as follows:
\be
\label{diff-scaling3} \widehat{Y}^{(N)}  \doteq \sqrt{N} \left(
\overline{Y}^{(N)} - \overline{Y} \right).
\ee
 For simplicity, we  restrict the arrival processes  
to be either renewal processes or time-inhomogeneous Poisson processes.  

\begin{ass}
\label{as-hate}  
The sequence  $\{\en\}_{N \in \N}$
of cumulative arrival processes satisfies one of 
the following two conditions: 
\begin{enumerate}
\item[(a)]  there exist constants $\flam, \sigma^2 \in (0,\infty)$ 
and $\beta \in \R$, such that 
for every $N \in \N$, $\en$ is a renewal process with i.i.d.\ 
inter-renewal times $\{\xi_j^{(N)}\}_{j \in \N}$ 
that have mean $1/\lambda^{(N)}$ and variance 
$(\sigma^2/\flam)/ (\lambda^{(N)})^2$, where 
 \be
\label{cond-qed}
\lambda^{(N)} \doteq \flam N - \beta \sqrt{N}.
\ee
\item[(b)] there exist locally integrable functions $\flam$ and 
$\beta$ on $[0,\infty)$ such that for every $N \in \N$, 
$\en$ is an inhomogeneous Poisson process with intensity function 
\be
\label{cond-qed2}
\lambda^{(N)} (t) \doteq \flam(t)N - \beta(t) \sqrt{N}, 
\qquad t \in [0,\infty). 
\ee
\end{enumerate}
\end{ass}

\begin{remark}
\label{rem-asdiff1}  
{\em  Let $\flam$ and $\beta$ be the locally integrable 
functions defined in Assumption \ref{as-hate} (which are 
constant if Assumption \ref{as-hate}(a) holds) and let 
$\sigma(\cdot)$ be the locally square integrable function that 
equals the constant $\sqrt{\sigma^2}$ if Assumption \ref{as-hate}(a) holds, 
and equals $\sqrt{\lambda(\cdot)}$ if Assumption \ref{as-hate}(b) holds. 
Then, given a standard Brownian motion $B$, the process $\hate$ given by 
 \be
\label{def-hate}
 \hate (t) \doteq \int_0^t \sigma(s) \, dB(s) - \int_0^t \beta(s)\, ds,  \qquad t \in [0,\infty),
\ee 
is a well defined diffusion and therefore a semimartingale, with $\int_0^t
\sigma(s) \, dB(s), t \geq 0$ being 
the local martingale and $\int_0^t b(s) \, ds, t \geq 0,$ the finite variation process 
in the decomposition. 
If Assumption \ref{as-hate}  holds then 
it is easy to see that $\fe$ in 
Assumption \ref{as-flinit} is given by 
$\fe(t) = \int_0^t \flam (s) \, ds$, $t \geq 0$,  and 
  $\haten \Rightarrow \hate$ as $N \ra \infty$  
 (a proof of the latter convergence can be found in  
Proposition \ref{prop-martconv}, which establishes a more general 
result).  
}
\end{remark}

We now impose a technical condition on the service
distribution, which is used mainly
 to establish the convergence of $\hathn(f)$ to $\hath(f)$ 
in ${\cal D}_{\R}[0,\infty)$ for H\"{o}lder continuous $f$  
in Section \ref{sec-martconv}. 


\begin{ass}
\label{as-holder} For every  $x \in [0,\endsup)$, 
the function from $[0,\endsup)$ to $[0,1]$ that takes 
$y \mapsto (1-G(x+y))/(1-G(x))$ is H\"{o}lder continuous on $[0,\infty)$, 
uniformly in $x$, i.e., there exist   
$\hconst_G < \infty$ and $\hexp_G \in (0,1)$ and $\delta > 0$
such that for every $x \in [0,\endsup)$ and 
 $y, y^\prime \in [0,\endsup)$ with $|y-y^\prime| < \delta$,
\be
 \label{ineq-holder}
   \dfrac{|G(x+y) - G(x+y^\prime)|}{1-G(x)} \leq  
\hconst_G |y-y^\prime|^{\hexp_G}.
 \ee
 \end{ass}

\begin{remark}
\label{rem-holder}
{\em As shown below,  Assumption \ref{as-holder} is satisfied if 
either  $h$ is bounded, 
or if there exists $l_0 < \infty$ such that 
$\sup_{x \in [l_0, \infty)}h(x)  < \infty$ and 
$G$ is uniformly H\"{o}lder continuous on $[0,\endsup)$.  
The hazard rate function $h$ is locally integrable, but 
not integrable, on $[0,\endsup)$.  Thus,  if $h$ is bounded (either 
uniformly or on $[\ell_0, \infty)$ for some $\ell_0 < \infty$) 
it must be that $\endsup = \infty$. 
Under the first condition above, 
for any $x, y, y' \in [0,\infty)$, $y < y^\prime$, 
\[  \left|\dfrac{G(x+y) - G(x+y')}{1-G(x)}\right| = \int_{y}^{y'} \dfrac{g(x+u)}{1-G(x)} \,
du \leq \int_{y}^{y'} h(u) \, du \leq \nrm{h}_{\infty} |y - y^\prime|, 
\]
and so Assumption \ref{as-holder} is satisfied.  
On the other hand, if there only exists $\ell_0 < \infty$ such that $\sup_{x
  \in [\ell_0, \infty)} h(x) < \infty$, but 
$G$ is uniformly H\"{o}lder continuous on $[0,\infty)$, 
with constant $C < \infty$ and exponent $\gamma > 0$, 
then straightforward calculations show 
\[  \left|\dfrac{G(x+y) - G(x+y')}{1-G(x)}\right| \leq 
\max\left( \dfrac{C}{1-G(\ell_0)},
\nrm{\ind_{[\ell_0,\infty)}(x)h(x)}_{\infty}\right)(y-y')^{\gamma \wedge 1},
\]
and once again Assumption \ref{as-holder} is satisfied. 
A relatively easily verifiable  
sufficient condition for $G$ to be uniformly H\"{o}lder 
continuous is that  $g \in \L^{1+\alpha}$ for some $\alpha > 0$ (recall that 
since $g$ is a density, we automatically have $g \in \L^1$; thus the latter 
condition imposes just a little additional regularity on $g$). 
Indeed, in this case, by H\"{o}lder's inequality,  
\[ |G(x) - G(y)| = \left|\int_{x}^y g(u) \, du \right| \leq  \nrm{g}_{\L^{1+\alpha}} 
(y-x)^{\alpha/(1+\alpha)}, \]
and so $G$ is uniformly H\"{o}lder continuous with exponent
$\gamma = \alpha/(1+\alpha) < 1$.  
}
\end{remark}

Now, given $s \geq 0$, recall that 
$\hmeasn_s$ represents the (scaled and centered) 
initial age distribution at time $s$, and define
\[
\ops^{\hmeasn_s}_t (f) \doteq \int_{[0,\endsup)} f(x+t)
  \dfrac{1-G(x+t)}{1-G(x)} \hmeasn_s (dx),  \qquad f \in {\cal
    C}_b[0,\endsup),  t \geq 0. 
\]
$\{\ops^{\hmeasn_0}_t(f), t
\geq 0\}$ plays an important role in the analysis because it arises in 
the representation for $\lan f, \hmeasn\ran$ given 
in Proposition \ref{cor-sae1}.  
In order to write $\ops^{\hmeasn_s}$ 
more concisely, it 
will be convenient to introduce a certain family of operators. 
For $t \in [0,\infty)$, define 
\be
\label{def-aret}
  \left( \are_t f \right) (x) \doteq f(x+t) \dfrac{1-G(x+t)}{1-G(x)}, \qquad
x \in [0,\endsup).
\ee
Each $\are_t$  maps the space of (bounded) 
measurable functions on $[0,\endsup)$ into itself and 
\be
\label{eq-phibound}
  \sup_{t \in [0,\infty)} \nrm{\are_t f}_\infty \leq \nrm{f}_{\infty}.
\ee 
Moreover, for future purposes, we also note that 
 $\{\are_t,t \geq 0\}$ defines a semigroup, i.e.,
$\are_0 f = f$ and
\be
\label{eq-sgroup}
\are_t (\are_s f) = \are_{t+s} f, \qquad s, t \geq 0, 
\ee 
and, recalling the definition (\ref{def-thetat}) of the family 
of operators  
$\{\Psi_t,t \geq 0\}$, it is easily verified that for every bounded, measurable function 
$f$ on $[0,\endsup)$ and $s,t \geq 0$, 
\be
\label{rel-psiphi}
(\Psi_s \Phi_t f) (x,u) = (\Psi_{s+t} f)(x,u), \qquad (x,u) \in [0,\endsup)
\times [0,s].  
\ee
We can now rewrite $\ops^{\hmeasn_s}$ in terms of the 
operators $\Phi_t$, $t \geq 0$, as follows: 
\be
\label{def-ops}
\ops^{\hmeasn_s}_t (f) = \lan \Phi_tf, \hmeasn_s\ran, \qquad s, t \geq 0. 
\ee
Since $G$ is continuous, 
it is clear from (\ref{eq-phibound}) 
that $\Phi_t f \in {\cal C}_b[0,\endsup)$ when 
$f \in {\cal C}_b[0,\endsup)$ and hence, 
  $\{\ops_t^{\hmeasn_0}(f), f \in {\cal C}_b[0,\endsup), t \geq 0\}$ 
is a well defined stochastic process. 

\begin{remark}
\label{rem-holder2}
{\em  
Since $\hmeasn_0$ is a signed measure with finite total mass bounded 
by $\sqrt{N}$, it can be easily verified that almost surely for 
$f \in \H_1$, by the norm inequality (\ref{norm-ineq}), 
\[  |\lan f, \hmeasn_0\ran| \leq 
2 \sqrt{N} \nrm{f}_{\infty} \leq 2 \sqrt{N} \nrm{f}_{\H_1}. 
\]
Moreover, if Assumption \ref{as-holder} holds 
  calculations similar to  those in 
Remark \ref{rem-holder}, the norm inequality (\ref{norm-ineq}) and 
the Cauchy-Schwarz inequality show 
  that for  $f \in \H_1$ and $0 \leq s < t < \infty$, 
\[  |\ops^{\hmeasn_0}_t (f) - \ops^{\hmeasn_0}_s (f) |
\leq C_G (t-s)^{\gamma_G} \nrm{f}_{\infty} +  \nrm{f}_{\H_0} (t-s)^{1/2}
\leq (2C_G + 1) \nrm{f}_{\H_1} |t-s|^{\gamma_G \wedge 1/2}.  
\]
This shows that for every $N \in \N$, 
$\ops^{\hmeasn_0}$ is a \cad process that almost surely takes values 
in $\H_{-1}$. 
}
\end{remark}

We now consider the initial conditions. 
We impose fairly general assumptions on the 
initial age sequence so 
as to establish the Markov property for the limit process.  
As shown in Lemma \ref{lem-consistency}, these conditions are consistent 
 in the sense that they are satisfied  at any time $s > 0$ if they are satisfied at time $0$.  
In addition, they are trivially satisfied if each $N$-server system starts 
precisely at the fluid limit, i.e., if $\hmeasn_0 = 0$ for 
every $N$.  The reader may prefer to make the latter assumption on first 
reading to avoid  the technicalities in the statement of this assumption.
To motivate the form of the assumptions, first note that 
the total variation of the sequence of finite signed measures 
 $\{\hmeasn_0\}_{N \in \N}$  
tends to 
 infinity as $N \ra \infty$, and so it is not reasonable to 
expect the sequence to converge in the space of finite or Radon measures. 
Instead, we impose convergence in a slightly different 
space.  As observed in Remark \ref{rem-holder2}, $\hmeasn_0$ can be viewed 
as an $\H_{-1}$-valued (and therefore $\H_{-2}$-valued) random element 
and under Assumption \ref{as-holder}, $\{\ops^{\hmeasn_0}_t, t \geq 0\}$ is 
a \cad $\H_{-1}$-valued stochastic process and $\{\ops^{\hmeasn_0}_t(\f1), t \geq 0\}$ 
is a \cad real-valued process 
 (see Section \ref{subsub-fun} for a definition of these spaces). 

\begin{ass} 
\label{as-diffinit}  
There exists an 
$\R$-valued random variable $\hinitx$ and 
a family of random variables $\{\hmeas_0(f), f \in \acbl\}$,  
 all  defined on a 
common probability space, 
such that 
\begin{enumerate}
\item[(a)]
$\hmeas_0$ admits a version as an $\H_{-2}$-valued random element; 
\item[(b)]  For $f \in \acbl$, given 
\be
\label{rel-ops}
 \ops_t^{\hmeas_0} (f) \doteq \hmeas_0\left( \Phi_t f\right) = 
\hmeas_0\left(f(\cdot+t)
  \dfrac{1-G(\cdot+t)}{1-G(\cdot)}\right), \qquad t \geq 0, 
\ee
$\{\ops^{\hmeas_0}_t(f), f \in \acbl, t \geq 0\}$ is a family 
of random variables such that $\{\ops^{\hmeas_0}_t, t \geq 0\}$ 
admits a version as a  continuous $\H_{-2}$-valued process, 
$\{\ops^{\hmeas_0}_t(\f1), t \geq 0\}$ admits a version as  a continuous
$\R$-valued 
process and, for every $t > 0$ 
almost surely, $f \mapsto \ops^{\hmeas_0}_t(f)$ is a measurable 
mapping from $\acbl \subset {\cal C}_b[0,\endsup) \mapsto 
\R$ (both equipped with their respective Borel $\sigma$-algebras); 
 \item[(c)]
as $N \ra \infty$, $(\hinitxn, \hatmeasn_0, \ops^{\hmeasn_0}, \ops^{\hmeasn_0}(\f1))
\Rightarrow (\hinitx, \hmeas_0, \ops^{\hmeas_0}, \ops^{\hmeas_0}(\f1))$ in 
$\R \times \H_{-2} \times
{\cal D}_{\H_{-2}}[0,\infty) \times {\cal D}_{\R}[0,\infty)$.
\end{enumerate}
\end{ass}

For some results, we will require the following strengthening 
of Assumption \ref{as-diffinit}. \\

\noi 
{\bf Assumption \ref{as-diffinit}'}
{\em 
Assumption \ref{as-diffinit} holds and in addition, 
\begin{enumerate}
\item[(d)] 
Suppose that $\newf \in {\cal C}_b([0,\endsup) \times [0,\infty))$ is such that 
for every $r > 0$, $x \mapsto \newf(x, r)$ is absolutely continuous and, 
for every  $T < \infty$, 
$\newf_x(\cdot, \cdot)$ is integrable on $[0,\endsup) \times [0,T]$ 
and $x \mapsto \int_0^t \newf(x,r ) \, dr$ is H\"{o}lder continuous. 
Then $\P$-almost surely, $r \mapsto \hmeas_0\left(\Phi_r \varphi(\cdot,r) \right)$
is measurable and for every 
$t \geq 0$,    
\be
\label{as-fub2}
\int_0^t \hmeas_0 \left( \Phi_r \varphi(\cdot,r) \right) \, dr 
= \hmeas_0 \left( \int_0^t \Phi_r \varphi(\cdot,r) \, dr \right).  
\ee
\end{enumerate}
}

Now, let $(\widehat{\Omega}, \widehat{{\cal F}}, \widehat{\P})$ be a common
 probability space that supports 
the martingale measure $\chatm$ introduced in Section \ref{subs-scaledmmeas},
 the standard Brownian motion $B$ of Remark \ref{rem-asdiff1},  
the family of random variables $\hmeas_0(f)$, $f \in \acbl$, and 
the random variable 
$\hinitx$ of Assumption \ref{as-diffinit} 
such that $\chatm, B$ and $(\hinitx, \hmeas_0(f), f \in \acbl)$ are mutually independent. 
Let $\widehat{{\cal F}}_0$ be the  $\sigma$-algebra 
generated by $(\hinitx, \hmeas_0(f), f \in \acbl,$ 
and for $t \geq 0$, let $\widehat{{\cal F}}_t \doteq 
\widehat{{\cal F}}_0 \vee \sigma (B_s, \chatm_s, s \in [0, t])$.
Then for $t \geq 0$, 
$\ops_t^{\hmeas_0}(f)$, $f \in \acbl$, (and, in particular, 
$\ops_t^{\hmeas_0}(\f1)$)  
 are all well defined $\widehat{{\cal F}}_0$-measurable random variables, and 
$(\hate_t, \chatm_t)_{t \geq 0}$ are $\{\widehat{{\cal F}}_t\}_{t \geq
  0}$-adapted stochastic processes. 
The  description of the $N$-server model listed prior to 
Remark \ref{rem-cumarr} assumes that for each $N \in \N$, given $\ren(0)$,  
 $\en$ is independent 
of the initial conditions $\measn_0$ and $\xn(0)$.  
Together with Assumptions \ref{as-hate}, \ref{as-diffinit} and the fact that 
$\ren(0) \ra 0$ almost surely as $N \ra \infty$, this implies that  
as $N \ra \infty$, 
\be
\label{weak-limit}
(\haten, \hinitxn, \hatmeasn_0, \ops^{\hmeasn_0}, \ops^{\hmeasn_0}(\f1))
\Rightarrow (\hate, \hinitx, \hmeas_0, \ops^{\hmeas_0}, \ops^{\hmeas_0}(\f1))
\ee
in ${\cal D}_{\R}[0,\infty) \times \R \times \H_{-2} \times
{\cal D}_{\H_{-2}}[0,\infty) \times {\cal D}_{\R}[0,\infty)$.

\subsection{The Centered Many-Server Map}
\label{subs-mssm}

We introduce a map, which we refer to as the 
centered many-server map, which will be used to 
characterize the limit.  
Let ${\cal D}_{\R}^0[0,\infty)$ be the subset of functions $f$ in
${\cal D}_{\R}[0,\infty)$ with $f(0) = 0$. 
The input data for this map lies in the following space:
\[ \dataspace \doteq
{\cal D}_{\R}^0[0,\infty) \times \R \times {\cal D}_{\R}[0,\infty).  
\]

\begin{defn}
\label{def-mseq}
{\em ({\bf Centered Many-Server Equations})
Let $\genflx \in {\cal D}_{\R_+}[0,\infty)$ be fixed.
Given $(\gene,  \geninitx_0, \addfn)$ $ \in \dataspace$,
we say that
$(\genk, \genx, v) \in {\cal D}_{\R}^0[0,\infty)\times {\cal D}_{\R}[0,\infty)^2$ solves
the centered many-server equations (henceforth, abbreviated  CMSE) associated with
$\fx$ and $(\gene,  \geninitx_0, \addfn)$  
if and only if 
for $t \in [0,\infty)$,
\begin{eqnarray}
\label{ref-meas}
v(t) & = & \addfn (t) +  K(t) - \int_0^t g(t-s) K(s) \, ds, \\
\label{ref-sm}
K(t) & = & E(t) + x_0 - X(t) + v(t)  - v(0)  
\end{eqnarray}
and
\be
\label{ref-nonidling}
v(t) = \left\{
\ba{ll}
\genx(t) & \mbox{ if } \genflx(t) < 1, \\
\genx(t) \wedge 0 & \mbox{ if } \genflx(t)  = 1, \\
0 & \mbox{ if } \genflx(t) > 1.
\ea
\right.
\ee
}
\end{defn}

Note that this definition automatically requires that $E(0) = K(0) = 0$, 
$X(0) = x_0$ and  
$\addfn (0) = v(0)$,  
which  equals $x_0$, $x_0 \wedge 0$ or $0$, depending on whether 
$\fx(0) < 1$, $\fx(0) = 1$ or $\fx(0) > 1$. 
It is shown in  Proposition \ref{prop-cont} 
 that there exists at most one solution
to the CMSE for any given input data in $\dataspace$.
When a solution exists, we let $\lc$
denote the associated "centered many-server'' mapping (associated with $\genflx$)
that takes $(\gene,  \geninitx_0, \addfn)$ $\in \dataspace$ 
to the corresponding solution $(\genk, \genx,  v)$. 
 Let $\dom (\lc )$ denote the domain of $\lc$, which is defined to be
 the collection of input data in $\dataspace$ 
for which a solution to the CMSE exists. 

\begin{remark}
\label{rem-contlambda}
{\em  Suppose $(\genk, \genx, v) \in \Lambda (\gene,  \geninitx_0, \addfn)$ 
for some  $(\gene,  \geninitx_0, \addfn) \in \dataspace$.  Then 
(\ref{ref-meas}) and (\ref{ref-sm}) together show that for $t \geq 0$, 
\be
\label{reed-comp0}
 \genx(t) = \geninitx_0 + \gene (t) + \addfn(t) - \int_0^t g(t-s) \left[
  \gene(s) + \geninitx_0 - v(0) - X(s) + v(s) \right] \, ds. 
\ee
Thus, if $\gene$, $\addfn$ and $g$ are continuous then $\genx$ is also continuous. 
If, in addition,  
the fluid limit is either subcritical, critical or supercritical then the
continuity of $\genx$ and 
(\ref{ref-nonidling}) imply the continuity of $v$ and, in turn,   
(\ref{ref-sm}) implies the continuity of  $\genk$.
}
\end{remark}

The importance of the CMSE stems from the relation 
\be
\label{cmse-eq}
  (\hatkn, \hatxn, \hmeasn(\f1)) = \Lambda \left( \haten, \widehat{x}_0^{(N)},
    \ops^{\hmeasn_0}(\f1) - \hathn(\f1)\right), \qquad N \in \N, 
\ee
which is established in Lemma \ref{lem-lambdarep} under the 
assumption that the fluid limit is either subcritical, critical or supercritical.

\subsection{Statements of Main Results}
\label{subs-mainres}

The first result of the paper, Theorem \ref{th-main} below, 
identifies the limit of the sequence $\{\hatxn\}_{N \in \N}$.  
Let 
\be
\label{def-y1n}
\widehat{Y}^{(N)}_1 \doteq (\haten, \widehat{x}^{(N)}_0, \hmeasn_0, 
\ops^{\hmeasn_0}, \ops^{\hmeasn_0}(\f1), \chatmn, \hathn, \hathn (\f1)), 
\ee
and let $\widehat{Y}_1$ be the corresponding quantity  
without the superscript $N$, 
where $\hate, \widehat{x}_0$, $\hmeas_0, \ops^{\hmeas_0},
\ops^{\hmeas_0}(\f1)$  are as defined in  Remark \ref{rem-asdiff1} and 
Assumption \ref{as-diffinit}, and   
 $\chatm$, $\hath$ and $\hath(\f1)$ are as defined in Section \ref{subs-scaledmmeas}.  
Also, define 
\be
\label{def-caly1} {\cal Y}_1 \doteq {\cal D}_{\R}[0,\infty) 
\times \R \times \H_{-2} \times  {\cal D}_{\H_{-2}}[0,\infty) \times  {\cal D}_{\R}[0,\infty) \times
 {\cal D}_{\H_{-2}}[0,\infty)^2 \times 
{\cal D}_{\R}[0,\infty). 
\ee

\begin{theorem}
\label{th-main}
Suppose Assumptions \ref{as-flinit}--\ref{as-diffinit} are satisfied and 
suppose that the fluid limit is either  
subcritical, critical or  supercritical. 
Then $(\hate, \widehat{x}_0, \ops^{\hmeas_0}(\f1) - \hath (\f1))$ lies 
in the domain of the centered many-server map $\Lambda$ and, 
as $N \ra \infty$,  
\be
\label{conv-main}
(\widehat{Y}_1^{(N)}, \hatx^{(N)}, \hatk^{(N)}, \lan \f1, \hmeasn\ran) 
\Rightarrow 
(\widehat{Y}_1, \hatx, \hatk, \hmeas(\f1)) 
\ee  
in ${\cal Y}_1 \times {\cal D}_{\R}[0,\infty)^3$,  
where $(\hatx,\hatk, \hmeas(\f1)) \doteq
 \Lambda(\hate, \widehat{x}_0,\ops^{\hmeas_0}(\f1) - \hath(\f1))$ is 
almost surely continuous. 
Furthermore, if $g$ is continuous, then 
\be
\label{rep-hatx}
 \hatx (t) = \hatx_0 + \hate(t)- \chatm_t (\f1) - \cendep(t),  \qquad 
t \in [0,\infty), 
\ee
where 
\be
\label{def-calg}
\cendep(t) \doteq \hmeas_0(\f1) - \ops^{\hmeas_0}_t(\f1) - \chatm_t (\f1) 
+ \hath_t (\f1) + \int_0^t g(t-s) \hatk(s) \, ds. 
\ee
\end{theorem}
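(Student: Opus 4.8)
\noindent\emph{Proof approach.} The plan is to derive (\ref{conv-main}) from the continuous mapping theorem applied to the centered many-server map $\Lambda$, exploiting the finite-$N$ identity (\ref{cmse-eq}), which already realises $(\hatkn,\hatxn,\hmeasn(\f1))$ as $\Lambda$ evaluated at the input triple $(\haten,\widehat{x}_0^{(N)},\ops^{\hmeasn_0}(\f1)-\hathn(\f1))$. First I would invoke Corollary \ref{cor-hreg} to obtain the joint convergence $\widehat{Y}_1^{(N)}\Rightarrow\widehat{Y}_1$ in ${\cal Y}_1$, the limiting vector being assembled from $\hate$ (Remark \ref{rem-asdiff1}), $\widehat{x}_0,\hmeas_0,\ops^{\hmeas_0},\ops^{\hmeas_0}(\f1)$ (Assumption \ref{as-diffinit}) and $\chatm,\hath,\hath(\f1)$ (Section \ref{subs-scaledmmeas}); the mutual independence of the constituents of $\widehat{Y}_1$ is precisely the asymptotic independence proved in Section \ref{sec-martconv}. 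Since the third input coordinate $\ops^{\hmeasn_0}(\f1)-\hathn(\f1)$ is a difference of components of $\widehat{Y}_1^{(N)}$ whose limits $\ops^{\hmeas_0}(\f1)$ and $\hath(\f1)$ are a.s.\ continuous---so that subtraction is continuous at the limit point---and since $\haten(0)=0$, the input triples converge jointly with $\widehat{Y}_1^{(N)}$ to $\xi:=(\hate,\widehat{x}_0,\ops^{\hmeas_0}(\f1)-\hath(\f1))\in\dataspace$.

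Next I would argue that $\xi$ is almost surely a continuity point of $\Lambda$ lying in its domain. By Remark \ref{rem-asdiff1}, Assumption \ref{as-diffinit}(b) and Lemma \ref{lem-hathn}, the processes $\hate$, $\ops^{\hmeas_0}(\f1)$ and $\hath(\f1)$ admit continuous versions, so $\xi$ is a.s.\ a continuous element of $\dataspace$. Because the fluid limit is subcritical, critical or supercritical, Proposition \ref{prop-cont} gives that every continuous input triple lies in $\dom(\Lambda)$ and that $\Lambda$ is continuous there (for continuous limits, $J_1$-convergence coincides with uniform convergence on compacts); this is exactly the first assertion of the theorem, $\xi\in\dom(\Lambda)$ a.s. If Proposition \ref{prop-cont} furnishes only uniqueness, I would instead obtain $\xi\in\dom(\Lambda)$ from a $C$-tightness argument for $\{(\hatkn,\hatxn,\hmeasn(\f1))\}_N$ (their jumps are $O(1/\sqrt N)$ by construction): any subsequential limit solves the centered many-server equations with data $\xi$ by passage to the limit in (\ref{ref-meas})--(\ref{ref-nonidling}), and uniqueness then pins it down.

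With $\xi$ identified as an a.s.\ continuity point of $\Lambda$, I would apply the continuous mapping theorem to $y\mapsto\big(y,\Lambda(\mathrm{input}(y))\big)$, which by (\ref{cmse-eq}) sends $\widehat{Y}_1^{(N)}$ to $(\widehat{Y}_1^{(N)},\hatkn,\hatxn,\hmeasn(\f1))$; this yields $(\widehat{Y}_1^{(N)},\hatkn,\hatxn,\hmeasn(\f1))\Rightarrow(\widehat{Y}_1,\hatk,\hatx,\hmeas(\f1))$, and reordering coordinates (recall $\lan\f1,\hmeasn\ran=\hmeasn(\f1)$) gives (\ref{conv-main}). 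A.s.\ continuity of the limit $(\hatk,\hatx,\hmeas(\f1))$ follows from the argument of Remark \ref{rem-contlambda}, whose requirement that $g$ be continuous can be weakened to $g\in\L^1$, since the convolution of an $\L^1$ function with a locally bounded function is continuous. For the explicit representation I would take $g$ continuous, set $\addfn:=\ops^{\hmeas_0}(\f1)-\hath(\f1)$ and work directly from $\Lambda(\hate,\widehat{x}_0,\addfn)=(\hatk,\hatx,v)$ with $v=\hmeas(\f1)$: solving (\ref{ref-sm}) for $\hatx$ and using (\ref{ref-meas}) to substitute $v(t)-\hatk(t)=\addfn(t)-\int_0^t g(t-s)\hatk(s)\,ds$ gives
\[
\hatx(t)=\widehat{x}_0+\hate(t)+\addfn(t)-v(0)-\int_0^t g(t-s)\hatk(s)\,ds,\qquad t\ge 0,
\]
where $v(0)=\addfn(0)=\ops^{\hmeas_0}_0(\f1)-\hath_0(\f1)=\hmeas_0(\f1)$ because $\Phi_0\f1=\f1$ and $\hath_0(\f1)=0$. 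Adding and subtracting $\chatm_t(\f1)$ and comparing with (\ref{def-calg}) identifies the right-hand side with $\widehat{x}_0+\hate(t)-\chatm_t(\f1)-\cendep(t)$, which is (\ref{rep-hatx}).

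The genuinely hard inputs---the joint convergence of $(\chatmn,\hathn)$ with the centered arrival processes and the accompanying asymptotic independence (Corollary \ref{cor-hreg}, Section \ref{sec-martconv}), and the well-posedness and continuity of $\Lambda$ (Proposition \ref{prop-cont})---are proved elsewhere and used here as black boxes, so within this argument the one delicate point is making the continuous mapping theorem legitimately apply \emph{at the random limit} $\xi$. This is exactly where the subcritical/critical/supercritical hypothesis enters: it guarantees (through Proposition \ref{prop-cont}, or through the fallback tightness argument) that $\xi$ almost surely lands in the continuous part of $\dom(\Lambda)$ on which $\Lambda$ is continuous; everything else is bookkeeping and the algebraic rearrangement above.
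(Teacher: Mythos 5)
Your proposal is correct, and its skeleton for (\ref{conv-main}) is the paper's: the finite-$N$ identity of Lemma \ref{lem-lambdarep} (your (\ref{cmse-eq})), the joint convergence of Corollary \ref{cor-hreg}, a.s.\ continuity of the limiting input coordinates so that subtraction/addition are continuous at the limit, and then the continuous mapping theorem applied to $\Lambda$. The differences are in the supporting steps. For domain membership and the legitimacy of the CMT at the random limit, the paper does not rely on ``every continuous triple lies in $\dom(\Lambda)$'' (Proposition \ref{prop-cont} indeed only gives the Lipschitz estimate, continuity and single-valuedness); it instead combines that estimate with the measurability of $\Lambda$ (Lemma \ref{lem-lambdameas}) and the generalized continuous mapping theorem, existence at the limit data being implicit in the convergence of the prelimit solutions; your fallback (jumps of size $O(1/\sqrt N)$, $C$-tightness, passage to the limit in the CMSE, uniqueness) is a legitimate alternative that makes the existence point explicit. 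For a.s.\ continuity of $(\hatk,\hatx,\hmeas(\f1))$ the paper argues from the $3/\sqrt N$ jump bound and continuity of the maximum-jump functional, mentioning the Remark \ref{rem-contlambda} route only as an aside; your observation that the remark's continuity of $g$ can be relaxed to local integrability is correct but unnecessary here. The genuine divergence is (\ref{rep-hatx}): you obtain it by purely algebraic manipulation of the limiting CMSE (solving (\ref{ref-sm}) and substituting (\ref{ref-meas}), with $v(0)=Z(0)=\hmeas_0(\f1)$), which is valid and in fact does not use continuity of $g$; the paper instead passes to the limit in the prelimit relation (\ref{eq-hdn})/(\ref{eq-dprelimit2}) using the continuity of $K\mapsto\calk(\f1)$ (Lemma \ref{lem-calk}(2)), which costs the continuity hypothesis on $g$ but buys the identification of $\cendep$ as the limit of the centered departure compensators $\int_0^\cdot\lan h,\hmeasn_s\ran\,ds$, the interpretation that underlies the later drift term $\int_0^\cdot\hmeas_s(h)\,ds$ in Theorem \ref{th-main1}. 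One small inaccuracy: the constituents of $\widehat{Y}_1$ are not mutually independent ($\chatm$, $\hath$ and $\hath(\f1)$ are built from the same white noise); what Section \ref{sec-martconv} gives is independence of $\chatm$ (hence of $\hath$) from $(\hate,\widehat{x}_0,\hmeas_0,\ops^{\hmeas_0},\ops^{\hmeas_0}(\f1))$ — this is inessential to your argument but should be stated precisely.
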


The proof of Theorem \ref{th-main} is presented in Section \ref{subs-fclt}. 
In addition to establishing the representation (\ref{cmse-eq}), 
the key elements of the proof involve  showing the convergence 
$\widehat{Y}^{(N)}_1 \Rightarrow \widehat{Y}_1$, which is 
carried out in Corollary \ref{cor-hreg}, and establishing 
continuity of the centered many-server map $\Lambda$ and another 
auxiliary map $\Gamma$, 
which are established in Proposition \ref{prop-cont} and 
Lemma \ref{lem-calk}, respectively.

We now establish a more general convergence result for 
the pair $\{(\hatxn, \hmeasn)\}_{N \in \N}$, which 
automatically yields convergence of several functionals  
of the process.   The proof of this result is
 also given in Section \ref{subs-fclt}. 
 With $\hatk$ equal to  the limit 
process  obtained in Theorem \ref{th-main},  we define 
for all bounded and absolutely continuous $f$,  
\begin{eqnarray}
\label{def-hmeas}
\quad \hmeas_t(f)  & \doteq & {\cal S}_t^{\hmeas_0}(f)  + f(0) \hatk (t) + 
\int_0^t 
\hatk(s) f^\prime(t-s) (1-G(t-s)) \, ds \\
\nonumber & & \quad 
- \int_0^t \hatk(s) g(t-s) f(t-s) \, ds - \hath_t(f). 
\end{eqnarray}
Note that the first term on the right-hand side 
is well defined by the discussion following  Assumption \ref{as-diffinit} 
(see also Lemma \ref{lem-asmarkov}),  
 the next three terms are well defined because $\hatk$ is continuous 
and $f^\prime, (1-G)$, $g$ and $f$ are all locally integrable, 
and the last term is well defined since $\hath_t (f) = \chatm_t (\Psi_t f)$
and the continuity and boundedness of $f$ implies 
$\Psi_t f \in {\cal C}_b([0,\endsup) \times \R_+)$.

\begin{theorem}
\label{th-fclt}
Suppose Assumptions \ref{as-flinit}-\ref{as-diffinit} 
are satisfied, the fluid limit is either subcritical, critical or
supercritical and $g$ is continuous. 
Then, as $N \ra \infty$,  
\be
\label{conv-main2}
(\widehat{Y}_1^{(N)}, \hatx^{(N)}, \hatk^{(N)}, \hmeasn) \Rightarrow 
(\widehat{Y}_1, \hatx, \hatk, \hmeas) 
\ee
in ${\cal Y} \doteq {\cal Y}_1 \times {\cal D}_{\R}[0,\infty)^2 \times
{\cal  D}_{\H_{-2}}[0,\infty)$. 
\end{theorem}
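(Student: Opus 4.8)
The plan is to derive Theorem~\ref{th-fclt} from Theorem~\ref{th-main} by a continuous-mapping argument; the only new point beyond Theorem~\ref{th-main} is to promote the convergence of the scalar total-mass process $\lan \f1, \hmeasn\ran$, which that theorem already supplies, to convergence of the full $\H_{-2}$-valued age process $\hmeasn$, jointly with the remaining coordinates. First I would invoke the representation of Proposition~\ref{cor-sae1}, which expresses $\hmeasn_t(f)$, for bounded absolutely continuous $f$, through the initial datum $\ops^{\hmeasn_0}$, the stochastic convolution integral $\hathn$, and the convolution integral $\hcalkn$ --- the latter being itself a continuous image of $\hatk^{(N)}$ by the continuity results of Section~\ref{subs-cont} (e.g.\ the continuity of $\Gamma$ in Lemma~\ref{lem-calk}). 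This representation is the exact pre-limit analogue of the defining identity~\eqref{def-hmeas}, with $\hatk$, $\ops^{\hmeas_0}$, $\hath$ replaced by $\hatk^{(N)}$, $\ops^{\hmeasn_0}$, $\hathn$; hence $\hmeasn = \Xi(\ops^{\hmeasn_0}, \hathn, \hatk^{(N)})$ and $\hmeas = \Xi(\ops^{\hmeas_0}, \hath, \hatk)$ for one and the same deterministic assembly map $\Xi$, whose first two arguments are coordinates of $\widehat{Y}_1^{(N)}$ (resp.\ $\widehat{Y}_1$).

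The heart of the matter is the continuity of $\Xi$ as a map into ${\cal D}_{\H_{-2}}[0,\infty)$, at input tuples whose $\hatk$-coordinate is continuous. Two of the five terms on the right of~\eqref{def-hmeas}, namely $t \mapsto \ops_t^{\hmeas_0}$ and $t \mapsto \hath_t$, already appear in Assumption~\ref{as-diffinit}(b) and Lemma~\ref{lem-hathn} as continuous $\H_{-2}$-valued processes, so for these continuity of $\Xi$ reduces to continuity of coordinate projections on ${\cal D}_{\H_{-2}}[0,\infty)$. For the three convolution terms I would, after the change of variable $r = t-s$, recognize each as a functional of $f$ (or of $f^\prime$) paired with a finite signed measure on $[0,t]$ whose total variation is at most $\sup_{s \le t}|\hatk^{(N)}(s)|$ times a locally integrable factor built from $g$ and $1-G$; the norm inequalities~\eqref{norm-ineq} then place each such functional in $\H_{-2}$ (indeed in $\H_{-1}$ for the two terms involving only $f$), with $\H_{-2}$-norm controlled locally uniformly in $t$, and a direct estimate --- using that $\hatk^{(N)} \to \hatk$ uniformly on compact time sets whenever the limit is continuous --- yields both continuity in $t$ and joint continuity in the $\hatk$-argument. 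Since the limit $\hmeas$ is itself a continuous $\H_{-2}$-valued process (each of its terms being continuous and $\hatk$ being a.s.\ continuous by Theorem~\ref{th-main}), convergence in the Skorokhod $J_1$ topology on ${\cal D}_{\H_{-2}}[0,\infty)$ is equivalent to local uniform convergence, which is exactly what these estimates deliver. Feeding in the joint weak convergence $(\widehat{Y}_1^{(N)}, \hatx^{(N)}, \hatk^{(N)}) \Rightarrow (\widehat{Y}_1, \hatx, \hatk)$ of Theorem~\ref{th-main} and applying the continuous mapping theorem --- with $\hmeasn$ viewed as the image under $\Xi$ of the coordinates $\ops^{\hmeasn_0}$, $\hathn$ of $\widehat{Y}_1^{(N)}$ and of $\hatk^{(N)}$ --- then gives~\eqref{conv-main2}.

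I expect the main obstacle to be precisely this $\H_{-2}$-continuity of $\Xi$: one must obtain bounds on the $\H_{-2}$-norm of every term of~\eqref{def-hmeas} that are uniform on bounded time intervals and stable under the merely Skorokhod-$J_1$ convergence of the inputs, while never letting the convolution operator $\Phi_t$ act on $\H_2$ directly --- instead the unbounded hazard rate $h$ must be kept implicit, through $g$ and $1-G$, and the well-definedness and measurability clauses of Assumption~\ref{as-diffinit}(b) and Lemma~\ref{lem-hathn} must be used to handle the $\ops^{\hmeas_0}$ and $\hath$ terms. A minor additional point is to check that the Proposition~\ref{cor-sae1} representation holds simultaneously over test functions, i.e.\ as an identity between $\H_{-2}$-valued processes, not merely for each fixed $f$.
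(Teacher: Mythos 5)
Your proposal follows essentially the same route as the paper's proof (Proposition \ref{prop-fclt}): it uses the representation $\hmeasn = \ops^{\hmeasn_0} - \hathn + \hcalkn$ from Proposition \ref{cor-sae1}, the continuity of the map $K \mapsto \calk$ from Lemma \ref{lem-calk}(2), the joint convergence $\widehat{Y}_1^{(N)} \Rightarrow \widehat{Y}_1$ (which already carries $\ops^{\hmeasn_0}$ and $\hathn$ as ${\cal D}_{\H_{-2}}$-valued coordinates), and the continuous mapping theorem at the a.s.\ continuous limit. The extra estimates you sketch for the convolution terms are precisely the content of Lemma \ref{lem-calk}, so no new argument is needed.
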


A main focus of this paper is to show 
that the approximating process is a tractable process, 
thus demonstrating the usefulness of the  
approximation theorem obtained. 
The next two theorems  show that this is indeed 
the case  under some additional regularity conditions 
on the hazard rate $h$.  
First, in Theorem \ref{th-main1}
we show that $\{\hatx_t, \widehat{{\cal F}}_t, t \geq 0\}$ is also 
a semimartingale. 
By It\^{o}'s formula this enables the 
description of 
the evolution of a large class of functionals of the process. 
The proof of Theorem \ref{th-main1}
is given in Section \ref{subs-smg}. 

\begin{theorem}
\label{th-main1} 
Suppose that Assumptions \ref{as-flinit}, \ref{as-hate} and 
\ref{as-diffinit}' are satisfied,  
the fluid limit is either subcritical, critical or supercritical and 
 $h$ is bounded and absolutely continuous. 
If  $(\hatk, \hatx, \hmeas)$ is 
the limit process of Theorem \ref{th-fclt}, then   
 $\hatx$ and $\hatk$ are semimartingales with 
decompositions $\hatx = \hatx(0) + M^X + \bv^X$ and $\hatk = M^K + \bv^K$, respectively, 
where 
\[ M^X(t) = \int_0^t \sigma(s) \, dB(s) - \chatm_t(\f1), \qquad C^X(t) = -\int_0^t \beta(s) \, ds - \int_0^t \hmeas_s
(h) \, ds,  \qquad t \geq 0, 
\]
and if $\fx$ is subcritical, then $\hatk  = \hate$ and so 
\[  M^K (t) = \int_0^t \sigma(s) \, dB(s),  \qquad \bv^K (t)= -\int_0^t \beta(s) \, ds, \qquad t \geq 0,\]
if $\fx$ is supercritical, then 
\[M^K(t) = \chatm_t(\f1), \qquad \bv^K(t) = \int_0^t \hmeas_s
(h) \, ds, \qquad t \geq 0, \]
and if $\fx$ is critical, then  
\[ M^K (t) =  \int_0^t \ind_{\{\hatx(s) \leq 0\}} \sigma(s) \, dB_s  + \int_0^t \ind_{\{\hatx(s) >
  0\}}\,  d \chatm_s(\f1),  \qquad t \geq 0, 
\]
and 
\[  \bv^K(t) =  -\int_0^t \beta(s) \ind_{\{\hatx(s) \leq 0\}} \, ds + \int_0^t
\ind_{\{\hatx(s) > 0\}}\hmeas_s(h)\, ds + 
  \dfrac{1}{2} L^{\hatx}_0(t), \qquad t \geq 0, \]
where, $L^{\hatx}_0(t)$ is the local time of $\hatx$ at zero on the 
interval $[0,t]$.  
Moreover, for each $t > 0$ and $f \in \acbl$,  $\hmeas_t(f)$ admits 
the alternative representation 
\be
\label{def-hmeas2}
 \hmeas_t (f) = \ops^{\hmeas_0}_t (f) +  \int_0^t f(t-s) (1 -G(t-s)) \, d\hatk(s)
- \hath_t (f), 
\ee 
where the second term is the stochastic convolution integral with respect 
to the semimartingale $\hatk$.   
\end{theorem}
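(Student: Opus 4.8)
\emph{Proof strategy.} The plan is to work directly from the representations \eqref{rep-hatx}, \eqref{def-calg}, \eqref{def-hmeas} and the centered many-server equations \eqref{ref-meas}--\eqref{ref-nonidling}, without returning to the prelimit systems. The crucial step is to show that the process $\cendep$ of \eqref{def-calg} satisfies $\cendep(t)=\int_0^t\hmeas_s(h)\,ds$ for every $t\ge 0$; here $\hmeas_s(h)$ is well defined via \eqref{def-hmeas} since $h$ is bounded and absolutely continuous, and $s\mapsto\hmeas_s(h)$ is continuous (hence locally integrable) because every term in \eqref{def-hmeas} is. To prove the identity I would first establish the two auxiliary relations
\[
\hmeas_0(\f1)-\ops^{\hmeas_0}_t(\f1)=\int_0^t\ops^{\hmeas_0}_s(h)\,ds,
\qquad
\hath_t(\f1)=\chatm_t(\f1)-\int_0^t\hath_s(h)\,ds .
\]
The first follows from the pointwise identity $\frac{d}{dr}(\are_r\f1)(x)=-(\are_r h)(x)$ (a consequence of $\frac{d}{dx}(1-G(x))=-h(x)(1-G(x))$), which gives $\int_0^t\are_r h\,dr=\f1-\are_t\f1$ as functions on $[0,\endsup)$, followed by applying the linear functional $\hmeas_0$ and interchanging it with the $dr$-integral via Assumption~\ref{as-diffinit}$'$(d) (taken with $\varphi\equiv h$). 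The second is a stochastic Fubini identity for $\chatm$: using $\hath_s(f)=\chatm_s(\Psi_s f)$ from \eqref{rel-hath} together with \eqref{def-thetat}, one obtains $\int_0^t\hath_s(h)\,ds=\chatm_t\big(\int_0^{\,\cdot}\Psi_s h\,ds\big)$, and $\int_0^{\,\cdot}\Psi_s h\,ds=\Psi_0\f1-\Psi_t\f1$ on $[0,\endsup)\times[0,t]$, with $\Psi_0\f1$ contributing $\chatm_t(\f1)$. Substituting both relations into \eqref{def-calg} yields $\cendep(t)=\int_0^t\ops^{\hmeas_0}_s(h)\,ds-\int_0^t\hath_s(h)\,ds+\int_0^t g(t-s)\hatk(s)\,ds$. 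On the other hand, integrating \eqref{def-hmeas} with $f=h$ over $[0,t]$, applying ordinary Fubini to the two convolution terms and integrating by parts in the inner variable (using $h(u)(1-G(u))=g(u)$), the three middle terms collapse to $\int_0^t g(t-s)\hatk(s)\,ds$, so $\int_0^t\hmeas_s(h)\,ds$ equals the same expression, proving the identity.

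Given this, \eqref{rep-hatx} and $\hate(t)=\int_0^t\sigma(s)\,dB(s)-\int_0^t\beta(s)\,ds$ give $\hatx(t)=\hatx_0+(\int_0^t\sigma(s)\,dB(s)-\chatm_t(\f1))+(-\int_0^t\beta(s)\,ds-\int_0^t\hmeas_s(h)\,ds)$. The first bracket is a square-integrable $\{\widehat{{\cal F}}_t\}$-martingale, since $\lan\chatm(\f1)\ran_t=\int_0^t\lan h,\fmeas_s\ran\,ds\le t\,\nrm{h}_\infty<\infty$ ($\fmeas_s$ being a sub-probability measure) and $\sigma$ is locally square integrable; the second is absolutely continuous in $t$, hence of locally finite variation. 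This is the decomposition $\hatx=\hatx(0)+M^X+\bv^X$. For $\hatk$, the limiting form of \eqref{ref-sm} reads $\hatk(t)=\hate(t)+\hatx_0-\hatx(t)+\hmeas_t(\f1)-\hmeas_0(\f1)$, which with the formula for $\hatx(t)$ becomes $\hatk(t)=\chatm_t(\f1)+\int_0^t\hmeas_s(h)\,ds+\hmeas_t(\f1)-\hmeas_0(\f1)$; I would then substitute the non-idling relation \eqref{ref-nonidling} for $v=\hmeas(\f1)$ regime by regime. If $\fx$ is subcritical then $\hmeas_t(\f1)=\hatx(t)$ and $\hmeas_0(\f1)=\hatx_0$ (the latter from non-idling at time $0$), so everything collapses to $\hatk=\hate$. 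If $\fx$ is supercritical then $\hmeas_t(\f1)=\hmeas_0(\f1)=0$ (from non-idling, since $\xn(0)\ge N$ for all large $N$ almost surely), giving $\hatk(t)=\chatm_t(\f1)+\int_0^t\hmeas_s(h)\,ds$. In both cases the stated forms of $M^K$ and $\bv^K$ are immediate.

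If $\fx$ is critical, $\hmeas_t(\f1)=\hatx(t)\wedge 0=-(\hatx(t))^-$ and $\hmeas_0(\f1)=-(\hatx_0)^-$; since $\hatx$ is a continuous semimartingale (by Theorem~\ref{th-main} and the previous step), I would apply the Tanaka--Meyer formula to $(\hatx)^-$ and substitute $d\hatx(s)=\sigma(s)\,dB(s)-d\chatm_s(\f1)-\beta(s)\,ds-\hmeas_s(h)\,ds$, grouping the $\chatm(\f1)$- and $\hmeas_\cdot(h)$-integrals with the indicators $\ind_{\{\hatx(s)>0\}}$; this produces the claimed expressions for $M^K$ and $\bv^K$, with the local time $L^{\hatx}_0$ entering through the Tanaka formula. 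In each regime $M^K$ is visibly a local martingale and $\bv^K$ of locally finite variation, so $\hatk$ is a semimartingale. Finally, \eqref{def-hmeas2} follows from \eqref{def-hmeas} by Stieltjes integration by parts, now that $\hatk$ is a continuous semimartingale with $\hatk(0)=0$: with $\psi(s)=f(t-s)(1-G(t-s))$ one has $\psi(t)=f(0)$ (as $G(0+)=0$) and $\psi'(s)=-f'(t-s)(1-G(t-s))+f(t-s)g(t-s)$, whence $\int_0^t f(t-s)(1-G(t-s))\,d\hatk(s)=f(0)\hatk(t)+\int_0^t\hatk(s)f'(t-s)(1-G(t-s))\,ds-\int_0^t\hatk(s)g(t-s)f(t-s)\,ds$, exactly the sum of the three middle terms of \eqref{def-hmeas}.

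The main obstacle is the first step, i.e.\ the identity $\cendep(t)=\int_0^t\hmeas_s(h)\,ds$, which rests on two delicate interchanges: exchanging the random linear functional $\hmeas_0$ with a time integral -- precisely what Assumption~\ref{as-diffinit}$'$ is designed to license, and which requires verifying that $\varphi\equiv h$ satisfies the integrability and H\"older hypotheses of part~(d) (using that $h$ is bounded and absolutely continuous and, through the hazard-rate structure, sufficiently regular under the hypotheses of the theorem) -- and the stochastic Fubini theorem for the white-noise martingale measure $\chatm$ used to differentiate the stochastic convolution $\hath_\cdot(\f1)$. Once this identity and the continuity of $\hatx$ are available, the regime-by-regime analysis of $\hatk$, including the Tanaka-formula computation in the critical case, and the integration by parts yielding \eqref{def-hmeas2} are routine.
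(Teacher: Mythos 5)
Your proposal is correct and follows essentially the same route as the paper's proof: you establish $\cendep(t)=\int_0^t\hmeas_s(h)\,ds$ through the same two interchange identities (Assumption \ref{as-diffinit}$'$(d) applied to $\Phi_r h$, and a stochastic Fubini theorem for $\chatm$ giving $\chatm_t(\f1)-\hath_t(\f1)=\int_0^t\hath_r(h)\,dr$), read off the decomposition of $\hatx$ from \eqref{rep-hatx}, obtain $\hatk$ regime by regime from the limiting CMSE/non-idling relations with Tanaka's formula in the critical case, and derive \eqref{def-hmeas2} by integration by parts against the semimartingale $\hatk$. The only immaterial deviations are that you verify the key identity in integrated rather than differentiated form (the paper shows $d\cendep/dt=\hmeas_t(h)$ using $g'=h'(1-G)-hg$ and $h(0)=g(0)$, exactly the identity your Fubini computation uses) and that you apply Tanaka to $\hatx^-$ instead of $\hatx\vee 0$, which is equivalent.
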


\begin{remark}
\label{rem-main1}
{\em   
If for $f \in {\cal C}_b [0,\endsup)$, 
$\{\ops^{\hmeas_0}_t(f), t \geq 0\}$ is a well defined stochastic process  
then $\{\hmeas_t(f), t \geq 0\}$ is also a well 
defined stochastic process given by the right-hand side of 
(\ref{def-hmeas2}).  Moreover, 
under a slight strengthening of 
the conditions of Theorem \ref{th-main1} (specifically, of 
  Assumption \ref{as-diffinit}), we can in 
fact show convergence for a slightly larger class of 
functions than those in $\H_2$.  More precisely, 
if 
for any bounded, H\"{o}lder continuous $f$, 
$\{\ops^{\hmeas_0}_t(f), t \geq 0\}$  defined 
in (\ref{rel-ops}) is a well defined 
continuous stochastic process and, as $N \ra \infty$,  
$(\hinitxn, \hatmeasn_0(f), \ops^{\hmeasn_0}(f), \ops^{\hmeasn_0}(\f1))
\Rightarrow (\hinitx, \hmeas_0, \ops^{\hmeas_0}(f), \ops^{\hmeas_0}(\f1))$
in $\R^2 \times {\cal D}_{\R}[0,\infty)^2$, 
then $\hmeas(f)$ is also a continuous process
 and,  as $N \ra \infty$, 
 $\hmeasn(f) \Rightarrow \hmeas(f)$ in ${\cal D}_{\R}[0,\infty)$.  
A brief justification of this assertion is provided at the end of Section
\ref{subs-smg}. 
 By the independence assumptions of the model, the above 
conditions automatically imply the joint convergence 
\be
\label{jt-limit}
(\haten, \hinitxn, \hatmeasn_0(f), \ops^{\hmeasn_0}(f), \ops^{\hmeasn_0}(\f1))
\Rightarrow (\hate, \hinitx, \hmeas_0(f), \ops^{\hmeas_0}(f), \ops^{\hmeas_0}(\f1))
\ee
in ${\cal D}_{\R}[0,\infty) \times \R^2 \times {\cal D}_{\R}[0,\infty)^2$.  
}
\end{remark}

We now  show that 
 the limiting process
$(\hatk, \hatx, \hmeas)$ described in Theorem \ref{th-main}
 can alternatively be characterized as the unique solution to 
a stochastic partial differential equation (SPDE),  coupled with an
 It\^{o} diffusion equation, and also satisfies 
a strong Markov property.  
We first introduce the SPDE, which we refer to as the 
stochastic age equation. 
In the definition of the stochastic age equation given below,
  $h$ is the hazard 
rate function of the service distribution  and  $\hmeas_0$, $\chatm$ and
$\hatk$ are the limit processes 
defined on the filtered probability space $(\widehat{\Omega}, \widehat{{\cal
    F}}, \{\widehat{{\cal F}}_t\}, \widehat{\P})$,  
as specified in Theorem \ref{th-main1}.

\begin{defn}
\label{def-sae}
{\bf (Stochastic Age Equation)}
Given $(\hmeas_0, \hatk, \chatm)$ defined on 
the filtered probability space $(\widehat{\Omega},
\widehat{{\cal F}}, \{\widehat{{\cal F}}_t\}, \widehat{\P})$, 
 $\meas = \{\meas_t, t \geq 0\}$  is said to be 
  a strong solution to the {\em stochastic age equation}
associated with $(\hmeas_0, \hatk, \chatm)$ 
if and only if for every 
 $f \in \acbl$, $\meas_t(f)$ is an 
$\widehat{{\cal F}}_t$-measurable random variable 
for $t > 0$,  $s \mapsto \meas_s(f)$ is almost surely 
measurable on $[0,\infty)$, 
$\{\meas_t, t \geq 0\}$ admits a version as a continuous, 
$\H_{-2}$-valued  process and 
$\P$-almost surely, 
for every $\newf \in  {\cal C}^{1,1}_b([0,\endsup) \times \R)$  such that 
$\newf_t(\cdot,s) + \newf_x(\cdot,s)$ is Lipschitz continuous for every $s$,   
and every $t \in [0,\infty)$,
\begin{eqnarray}
 \label{eq-sae}
 \qquad \meas_{t} (\newf(\cdot,t))  &=&
\meas_0 (\newf(\cdot,0))   +  \ds \int_{0}^t
\meas_s \left( \newf_x(\cdot,s) + \newf_s(\cdot,s) - \newf(\cdot,s) h(\cdot)\right) \, ds \\
\nonumber
&& \qquad
 - \underset{[0,\endsup)\times [0,t]}{\int \int}
 \newf (x,s) \, \chatm (dx, ds) + \int_0^t \newf(0,s) \, d\hatk_s. 
\end{eqnarray}
\end{defn}

\begin{theorem}
\label{th-main2}
Suppose Assumptions \ref{as-flinit}, \ref{as-hate} and  
\ref{as-diffinit}' are satisfied, 
the fluid limit is either subcritical, critical or supercritical  
and $h$ is absolutely continuous and bounded.  
Given the limit process $(\hatk, \hatx, \hmeas)$ 
of Theorem \ref{th-fclt}, the following assertions are true:  
\begin{enumerate}
\item 
 If the density $g^\prime$ of $g$ lies in  $\L_{loc}^{2}[0,\endsup)
 \cup L^{\infty}_{loc}[0,\endsup)$ 
 then 
 $\{\hmeas_t, \widehat{{\cal F}}_t, t \geq 0\}$ is the unique strong solution 
to the stochastic age equation associated with 
$(\hmeas_0, \hatk, \chatm)$; 
\item if $g^\prime/(1-G)$ is bounded then
 $\{ (\hatx_t, \hmeas_t, \ops^{\hmeas_t}(\f1)), \widehat{{\cal F}}_t, t \geq 0\}$ is an  
$\R \times \H_{-2} \times {\cal C}_{\R}[0,\infty)$-valued strong Markov
process.  
\end{enumerate}
\end{theorem}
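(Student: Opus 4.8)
\emph{Overview.} Both parts are proved by exploiting the explicit mild representation (\ref{def-hmeas2}) of $\hmeas$ from Theorem \ref{th-main1} together with the causal (``flow'') structure of the coupled system, noting that the hypothesis that $h$ is bounded and absolutely continuous forces $\endsup=\infty$ and makes $x\mapsto h(x)f(x)$ inherit the regularity of $f$.

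\emph{Existence in (a).} I would verify that the process $\hmeas$ of Theorem \ref{th-fclt} solves (\ref{eq-sae}) directly from (\ref{def-hmeas2}): its three terms are, respectively, the mild solutions of the age dynamics driven by the datum $\hmeas_0$ alone, by the martingale measure $\chatm$ alone, and by the boundary forcing $d\hatk$ alone, and each solves the corresponding instance of (\ref{eq-sae}) after an interchange of integrals. For a test function $\newf$, differentiating $\ops^{\hmeas_0}_t(\newf(\cdot,t)) = \hmeas_0(\Phi_t\newf(\cdot,t))$ in $t$, using the semigroup identity (\ref{eq-sgroup}) and $\tfrac{d}{dt}(\Phi_t\psi)=\Phi_t(\psi'-\psi h)$, gives $\tfrac{d}{dt}\ops^{\hmeas_0}_t(\newf(\cdot,t)) = \ops^{\hmeas_0}_t(\newf_x(\cdot,t)+\newf_s(\cdot,t)-\newf(\cdot,t)h)$; integrating, and pulling $\hmeas_0$ through the time integral via the stochastic Fubini property (\ref{as-fub2}) of Assumption \ref{as-diffinit}$'$(d), produces the $\ops^{\hmeas_0}$-contribution to (\ref{eq-sae}). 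For the $\chatm$-term, a stochastic Fubini theorem for orthogonal martingale measures (Walsh \cite{walshbook}, Chapter 2) together with (\ref{rel-psiphi}) converts $-\hath_t(\newf(\cdot,t))=-\chatm_t(\Psi_t\newf(\cdot,t))$ into $-\int_{[0,\endsup)\times[0,t]}\newf(x,s)\,\chatm(dx,ds)$ plus a time integral of $\chatm_s(\Psi_s(\newf_x+\newf_s-\newf h)(\cdot,s))$; for the $d\hatk$-term ordinary Fubini (legitimate since $\hatk$ is continuous and $g,1-G,f'$ are locally integrable) yields $\int_0^t\newf(0,s)\,d\hatk_s$ plus the remaining interior piece. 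Summing reproduces (\ref{eq-sae}), the regularity of $h$ being used to keep $h\newf(\cdot,s)$ admissible (one first works with compactly supported $\newf$ and extends by density). Alternatively, one passes to the limit in the diffusion-scaled prelimit age equation of Proposition \ref{cor-sae1}, using Theorem \ref{th-fclt} and the fact that $h\newf(\cdot,s)\in\H_2$, so that $\nu\mapsto\nu(h\newf(\cdot,s))$ is continuous on $\H_{-2}$.

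\emph{Uniqueness in (a).} If $\meas^{(1)},\meas^{(2)}$ are two strong solutions with the same data, their difference $\meas$ is a continuous $\H_{-2}$-valued process satisfying the homogeneous equation $\meas_t(\newf(\cdot,t)) = \int_0^t\meas_s(\newf_x(\cdot,s)+\newf_s(\cdot,s)-\newf(\cdot,s)h(\cdot))\,ds$ for every admissible $\newf$. Fix $T>0$ and $\psi\in\acbl$, and take the adjoint test function $\newf(x,s):=(\Phi_{T-s}\psi)(x)=\psi(x+T-s)\tfrac{1-G(x+T-s)}{1-G(x)}$ on $[0,\endsup)\times[0,T]$. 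A direct computation along the $(1,1)$ characteristic gives $\newf_x(\cdot,s)+\newf_s(\cdot,s)-\newf(\cdot,s)h(\cdot)\equiv 0$ (indeed $\newf_x+\newf_s=h\cdot(\Phi_{T-s}\psi)$), and the hypotheses on $g'$ --- that $g'\in\L_{loc}^{2}[0,\endsup) \cup L^{\infty}_{loc}[0,\endsup)$ in part (1), or that $g'/(1-G)$ is bounded in part (2) --- guarantee, after a routine truncation of $\psi$ if needed, that $\newf$ lies in $\mathcal{C}^{1,1}_b$ with Lipschitz directional derivative, i.e.\ $\newf$ is admissible. Substituting this $\newf$ collapses the homogeneous equation to $\meas_T(\psi)=\meas_T(\newf(\cdot,T))=0$; since this holds for all $\psi\in\acbl$ (which contains a set dense in $\H_2$), $\meas_T=0$ in $\H_{-2}$, and path continuity gives $\meas\equiv 0$. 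Thus $\hmeas$ is the unique strong solution; the same substitution applied to any solution also re-derives (\ref{def-hmeas2}). I expect verifying admissibility of these adjoint test functions --- controlling the modulus of continuity of $h\cdot(\Phi_{T-s}\psi)$, which is exactly where the distinction between the two $g'$-hypotheses bites --- to be the main obstacle in (a), alongside the bookkeeping of the stochastic-Fubini steps in the existence proof.

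\emph{Part (b): the strong Markov property.} Write $Z_t:=(\hatx_t,\hmeas_t,\ops^{\hmeas_t}(\f1))$. Under the stronger hypothesis that $g'/(1-G)$ is bounded, Lemma \ref{lem-asmarkov} shows $\Phi_r$ maps $\H_{-2}$ into $\H_{-2}$, whence --- using the extended action of $\hmeas_t$ on $\acbl$ recorded in Theorem \ref{th-main1} --- $r\mapsto\ops^{\hmeas_t}_r(\f1)$ is a genuine continuous real path (this coordinate is not determined by $\hmeas_t$ as an element of $\H_{-2}$, since $\f1\notin\H_2$, which is why it must be appended) and $t\mapsto Z_t$ has continuous paths in the Polish space $\R\times\H_{-2}\times\mathcal{C}_{\R}[0,\infty)$. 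The proof is then a pathwise flow decomposition: for a finite $\{\widehat{{\cal F}}_t\}$-stopping time $\tau$, I would show the shifted process $(Z_{\tau+r})_{r\ge 0}$ solves the same coupled system --- the centered many-server equations of Definition \ref{def-mseq} together with the stochastic age equation (\ref{eq-sae}) --- now driven by the post-$\tau$ increments $\{B_{\tau+r}-B_\tau:r\ge 0\}$ and $\{\chatm_{\tau+r}(B)-\chatm_\tau(B):r\ge 0,\ B\}$ and started from the datum $Z_\tau$. This amounts to re-deriving (\ref{rep-hatx})--(\ref{def-calg}) and (\ref{def-hmeas2}) on $[\tau,\infty)$ with $\hmeas_\tau$ and $\ops^{\hmeas_\tau}$ in the roles of $\hmeas_0$ and $\ops^{\hmeas_0}$ --- legitimate because the consistency Lemma \ref{lem-consistency} shows $Z_\tau$ is (a.s.) an admissible initial datum in the sense of Assumption \ref{as-diffinit}$'$, and because $\Phi_r:\H_{-2}\to\H_{-2}$ makes $\ops^{\hmeas_\tau}$ a continuous $\H_{-2}$-valued process. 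Since $\chatm$ is a Gaussian white noise with deterministic covariance (\ref{def-covfnalchatm}) and $B$ is a Brownian motion, the post-$\tau$ increments are independent of $\widehat{{\cal F}}_\tau$; and the coupled system has a unique, measurable solution map $\Xi$ --- for the many-server part by Proposition \ref{prop-cont}, for the age part by assertion (a), with measurability transparent from the mild formula (\ref{def-hmeas2}). Hence $(Z_{\tau+r})_{r\ge 0}=\Xi(Z_\tau;\,\text{post-}\tau\text{ noise})$ almost surely, and by independence the $\widehat{{\cal F}}_\tau$-conditional law of $(Z_{\tau+r})_{r\ge 0}$ is a measurable function of $Z_\tau$ alone, which is the strong Markov property. (If Assumption \ref{as-hate}(b) holds the coefficients are time-dependent and $\Xi$ depends explicitly on $\tau$, giving the strong Markov property in the time-inhomogeneous sense; under Assumption \ref{as-hate}(a) the process is time-homogeneous.) Here I expect the main obstacle to be establishing that $Z_\tau$ --- with $\hmeas_\tau$ only known to lie in $\H_{-2}$ rather than $\H_{-1}$ at a general time --- is both self-consistent and sufficient to restart the dynamics, which is precisely why $\ops^{\hmeas_\tau}(\f1)$ is carried in the state and why $g'/(1-G)$ bounded, rather than merely $g'\in\L_{loc}^{2}\cup L^{\infty}_{loc}$, is imposed in part (2).
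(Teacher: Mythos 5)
Your uniqueness argument for part (1) has a genuine gap at exactly the point you flag. Definition \ref{def-sae} only allows test functions $\newf \in {\cal C}^{1,1}_b$ whose directional derivative $\newf_x+\newf_s$ is Lipschitz in $x$, and for your adjoint choice $\newf(x,s)=(\Phi_{T-s}\psi)(x)$ the directional derivative is $h(x)\newf(x,s)$, whose $x$-derivative contains $h'\newf=\bigl(g'/(1-G)+h^2\bigr)\newf$. Truncating $\psi$ to compact support does keep everything in a fixed compact set, but on that set the hypothesis $g'\in\L^{2}_{loc}[0,\endsup)$ yields only an $\L^2$ bound on $h'$, not an $\L^\infty$ bound, so $h\newf$ need not be Lipschitz and the adjoint functions are not admissible; your claim that the part-(1) hypothesis suffices ``after a routine truncation'' therefore fails on the $\L^2_{loc}$ branch (it is fine when $g'/(1-G)$ is bounded, i.e.\ in the part-(2) setting). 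The paper's uniqueness proof is structured precisely to avoid this: in Lemma \ref{lem-ste} one changes unknown to $\mu_t(\tilde f)=\meas_t(\tilde f(1-G)^{-1})$, $\tilde f\in\testspace_c$, which turns (\ref{eq-sae}) into a stochastic transport equation in which $h$ no longer appears; the $g'$ hypothesis enters only through Lemma \ref{lem-asmarkov}, to guarantee $\tilde f(1-G)^{-1}\in\H_2$ (the $\L^2_{loc}$ case being handled there by Cauchy--Schwarz), and uniqueness then follows from the deterministic transport equation plus density of $\testspace_c$ in $\H_2$. So your duality computation, while clean and closely related in spirit (both exploit the characteristics of the first-order operator), does not prove part (1) under its stated hypothesis.

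On the other two pieces: your existence verification is essentially the paper's argument (the mild formula (\ref{def-hmeas2}) plus the Fubini interchanges; the paper organizes it through Lemma \ref{lem-intsa} and the function $\psi_h$). For part (2) your stopping-time restart is a genuinely different route from the paper's: the paper establishes the Markov property only at deterministic times, via the continuous solution map $\tilde\Lambda$ built from Lemma \ref{lem-consistency}, Proposition \ref{prop-cont} and Lemma \ref{lem-calk}, and then upgrades to the strong Markov property through the Feller property of the kernel and continuity of sample paths (citing Friedman), thereby never needing independence of post-$\tau$ increments of $(B,\chatm)$ at random times nor a stopping-time version of Lemma \ref{lem-consistency}. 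If you keep your direct restart you must supply both of those points (the former, say, by optional sampling of exponential martingales for the Gaussian martingale measure; the latter by a continuity-in-$s$ argument, with the additional bookkeeping that in the inhomogeneous case the post-$\tau$ law depends measurably on $\tau$); these are plausible but not automatic, and the paper's Feller route is the cheaper way to the same conclusion.
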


The characterization in terms of the stochastic age equation 
is established in Section \ref{sec-sae} and  
the proof of the strong Markov property is given in Section 
\ref{subs-pfmain2}.   It is more natural to expect  the process 
$\{(\hatx, \hmeas_t), \widehat{{\cal F}}_t, t \geq 0\}$   
to be strong Markov  with state $\R \times \H_{-2}$. 
   However, due to technical 
reasons (see Remark \ref{rem-markov} for a more detailed 
explanation) 
it was necessary to append an additional 
component to obtain  a Markov process. 
We expect that this additional component 
 should not pose too much of a problem in applications 
of this result.

\begin{remark}
\label{cond-verify} 
{\em  
Elementary calculations show that 
 the  boundedness assumptions 
imposed on $g/(1-G)$ and $g^\prime/(1-G)$ in Theorem \ref{th-main2} (which, in particular, 
imply Assumption \ref{as-holder}) are 
satisfied by many continuous service distributions of interest 
(with finite mean, normalized to have mean one) 
including the families of  
lognormal, Weibull, logistic and phase type distributions, 
the Gamma$(a,b)$ 
distribution with shape parameter $a - 1$ or $a \geq 2$
(and corresponding rate parameter $b = 1/a$ 
to produce a mean one distribution), 
 the Pareto distribution  with shape parameter $a > 1$ (and corresponding 
scale parameter $b = (a - 1)/a$ so the mean equals one), 
the inverted Beta$(a,b)$ distribution when $a > 2$ 
(and $b = a+1$ so that the mean is equal to one). Note that 
 the mean one exponential distribution 
is clearly also included as a special case of the Weibull and Gamma 
distributions. 
}
\end{remark}

\subsection{Corollaries of the Main Results}
\label{subs-cor}

As an important corollary of Theorem \ref{th-main1}, when the fluid 
limit is critical,
the limiting (scaled and centered) total number in system $\hatx$ can be 
characterized as an It\^{o} diffusion. 

\begin{cor}
\label{cor-ito}
Suppose Assumption \ref{as-flinit}, Assumption \ref{as-hate}(a) with $\flam =
1$ and Assumption \ref{as-diffinit}' are satisfied and   $h$ is bounded and
absolutely continuous.   
If $\overline{x}_0 = 1$ and $\fmeas_0(dx)=(1-G(x))dx$, the equilibrium age measure in the 
critical fluid limit, then $\hatx$ satisfies the following It\^{o} diffusion
equation: 
 \be
\label{eq-sdeito}
  d\hatx (t) =  \widehat{x}_0 + \sigma B(t) - \hatm_{\f1} (t) - \beta t - \int_0^t
\lan h, \hmeas_s \ran  \, ds, 
 \ee 
where $\hatm_{\f1}$ is a Brownian motion independent of $B$. 
\end{cor}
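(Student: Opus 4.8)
The plan is to deduce the corollary directly from the semimartingale decomposition of $\hatx$ furnished by Theorem~\ref{th-main1}, and then to identify the martingale part with a standard Brownian motion via L\'{e}vy's characterization.

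First I would check that the hypotheses imply those of Theorem~\ref{th-main1}: Assumptions~\ref{as-flinit}, \ref{as-hate} and \ref{as-diffinit}' hold, $h$ is bounded and absolutely continuous (and hence $g = h(1-G)$ is continuous, so that the limit process of Theorem~\ref{th-fclt} is available), and, by Remark~\ref{rem-critical} applied with unit arrival rate, $\overline{x}_0 = 1$ and $\fmeas_0 = \fmeas_*$, the fluid limit is critical. Under Assumption~\ref{as-hate}(a) the functions $\sigma(\cdot)$ and $\beta(\cdot)$ of Remark~\ref{rem-asdiff1} reduce to the constants $\sigma$ and $\beta$, so $\hate(t) = \sigma B(t) - \beta t$. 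The (regime-independent) formulas for $M^X$ and $\bv^X$ in Theorem~\ref{th-main1} then give
\[
\hatx(t) = \widehat{x}_0 + M^X(t) + \bv^X(t) = \widehat{x}_0 + \sigma B(t) - \chatm_t(\f1) - \beta t - \int_0^t \lan h, \hmeas_s\ran \, ds, \qquad t \geq 0,
\]
which is precisely (\ref{eq-sdeito}) once we set $\hatm_{\f1} \doteq \chatm(\f1)$.

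It then remains to show that $\hatm_{\f1} = \{\chatm_t(\f1), t \geq 0\}$ is a standard Brownian motion independent of $B$. Since $\chatm$ is a continuous orthogonal martingale measure, $\chatm(\f1)$ admits a continuous version that is a real-valued $\{\widehat{{\cal F}}_t\}$-martingale started at $0$, and by (\ref{def-covfnalchatm}) (taking $B = \tilde{B} = [0,\endsup)$) its predictable quadratic variation is $\lan \chatm(\f1)\ran_t = \int_0^t \lan h, \fmeas_s\ran \, ds$. Because the fluid limit is critical we have $\fx \equiv 1$, and since $\flam = 1$ forces $\fe(t) = t$, the fluid equation (\ref{eq-fx}) yields $\int_0^t \lan h, \fmeas_s\ran \, ds = \fx(0) + \fe(t) - \fx(t) = t$ for every $t \geq 0$; hence $\lan\chatm(\f1)\ran_t = t$, and L\'{e}vy's characterization shows $\hatm_{\f1}$ is a standard Brownian motion. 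Finally, the probability space constructed in Section~\ref{subs-ass} carries $\chatm$ and $B$ as mutually independent objects, so $\hatm_{\f1} = \chatm(\f1)$ is independent of $B$, which completes the proof.

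The argument is thus essentially a specialization of Theorem~\ref{th-main1} combined with L\'{e}vy's theorem; the only genuine computation is the identity $\int_0^t \lan h, \fmeas_s\ran \, ds = t$ — i.e., that the total fluid departure rate is constantly one in the critical equilibrium regime — so there is no substantial obstacle here.
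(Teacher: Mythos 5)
Your proposal is correct and follows essentially the same route as the paper: the corollary is obtained by specializing the semimartingale decomposition $\hatx = \hatx(0) + M^X + \bv^X$ of Theorem \ref{th-main1} under Assumption \ref{as-hate}(a), identifying $\hatm_{\f1} = \chatm(\f1)$, and invoking the independence of $\chatm$ and $B$ built into the limit construction (whose justification is the asymptotic independence of Proposition \ref{prop-martconv}). Your explicit verification that $\lan\chatm(\f1)\ran_t = \int_0^t \lan h,\fmeas_s\ran\,ds = t$ via the fluid equation (\ref{eq-fx}) with $\fx \equiv 1$ and $\fe(t)=t$, followed by L\'{e}vy's characterization, simply makes explicit the standard-Brownian-motion claim that the paper leaves implicit (there one would note $\fmeas_s \equiv \fmeas_*$ and $\lan h,\fmeas_*\ran = \int g = 1$), so there is no substantive difference.
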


The asymptotic independence of $\chatm_{\f1}$ and $B$ in the 
last corollary follows from Proposition
\ref{prop-martconv}. 
In the particular case of an exponential service 
disribution, 
 this allows us to  immediately recover the form of the limiting
diffusion obtained in the  seminal
paper of Halfin and Whitt \cite{HalWhi81}.

\begin{cor}
\label{cor-martconv}
Suppose $G$ is the exponential distribution with parameter $1$.
Suppose Assumption \ref{as-flinit} holds with $\fmeas_0(dx) = (1-G(x)) dx$ 
and $\overline{x}_0 = 1$, 
Assumption \ref{as-hate}(a) holds with $\flam =
1$ and Assumption \ref{as-diffinit}' is satisfied.
Then $\hatx$ is 
 the unique strong solution to the stochastic
differential equation
\be
\label{eq-sdehw}
  \hatx (t) = \widehat{x}_0 + \sqrt{1 + \sigma^2} W(t) - \beta t - \int_0^t \hatx^- (s)
  \, ds
\ee
where $W$ is a standard Brownian motion.
\end{cor}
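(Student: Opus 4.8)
The plan is to specialize Corollary~\ref{cor-ito} to the exponential service distribution and to identify explicitly the coefficients of the resulting equation. Since $G(x) = 1-e^{-x}$, the hazard rate $h(x) = g(x)/(1-G(x))$ is identically $1$ on $[0,\infty)$, hence bounded and (trivially) absolutely continuous, so all the hypotheses of Corollary~\ref{cor-ito} are satisfied under the assumptions of the present corollary; moreover, since $\overline{x}_0 = 1$, $\fmeas_0 = \fmeas_*$ and $\flam = 1$, the fluid limit is critical (Remark~\ref{rem-critical}) and, by Theorem~\ref{th-flimit} together with \eqref{final-fleqs}, $\fmeas_s = \fmeas_*$ for every $s \geq 0$. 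Corollary~\ref{cor-ito} then yields
\begin{equation*}
\hatx(t) = \widehat{x}_0 + \sigma B(t) - \hatm_{\f1}(t) - \beta t - \int_0^t \lan h, \hmeas_s \ran \, ds,
\end{equation*}
where $B$ is a standard Brownian motion, $\hatm_{\f1}(t) = \chatm_t(\f1)$, and $B$ and $\hatm_{\f1}$ are independent.

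First I would rewrite the drift. Because $h \equiv 1$ we have $\lan h, \hmeas_s\ran = \hmeas_s(\f1)$, and since the limit triple satisfies $(\hatk,\hatx,\hmeas(\f1)) = \Lambda(\hate, \widehat{x}_0, \ops^{\hmeas_0}(\f1) - \hath(\f1))$ by Theorem~\ref{th-main}, the defining non-idling relation \eqref{ref-nonidling} of the centered many-server map in the critical case gives $\hmeas_s(\f1) = \hatx_s \wedge 0 = \hatx_s^-$; thus the drift term equals $-\int_0^t \hatx_s^- \, ds$. Next I would pin down the two martingale pieces. By the covariance functional \eqref{def-covfnalchatm}, $\lan \hatm_{\f1} \ran_t = \int_0^t \lan h, \fmeas_s \ran \, ds$; in the critical equilibrium $\fmeas_s = \fmeas_*(dx) = e^{-x}\,dx$ for all $s$, so $\lan h, \fmeas_s\ran = \int_0^\infty e^{-x}\,dx = 1$ and $\lan \hatm_{\f1}\ran_t = t$, i.e., $\hatm_{\f1}$ is itself a standard Brownian motion. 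Also $\sigma(\cdot) \equiv \sigma$ under Assumption~\ref{as-hate}(a). Since $B$ and $\chatm$ are mutually independent, $M := \sigma B - \hatm_{\f1}$ is a continuous $\{\widehat{{\cal F}}_t\}$-martingale with deterministic quadratic variation $\lan M \ran_t = (1+\sigma^2) t$, so by L\'evy's characterization $W := M/\sqrt{1+\sigma^2}$ is a standard $\{\widehat{{\cal F}}_t\}$-Brownian motion. Substituting, $\hatx$ satisfies equation \eqref{eq-sdehw}.

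It remains to upgrade ``$\hatx$ solves \eqref{eq-sdehw}'' to ``$\hatx$ is the unique strong solution''. This is routine: the drift coefficient $y \mapsto -\beta - (y \wedge 0)$ is globally Lipschitz with constant $1$ and the diffusion coefficient is constant, so \eqref{eq-sdehw} admits a strong solution and pathwise uniqueness holds by the classical It\^{o} existence and uniqueness theory; since $\widehat{x}_0$ is $\widehat{{\cal F}}_0$-measurable, hence independent of $(B,\chatm)$ and therefore of $W$, the process $\hatx$ is the unique strong solution driven by $W$ with initial value $\widehat{x}_0$. There is no real obstacle here beyond correctly assembling the prior results: the one substantive input is the asymptotic independence of $B$ and $\hatm_{\f1}$, which is exactly Proposition~\ref{prop-martconv} (and is already invoked in the proof of Corollary~\ref{cor-ito}); everything else amounts to substituting $h \equiv 1$ and $\fmeas_s \equiv \fmeas_*$ into identities established earlier in the paper.
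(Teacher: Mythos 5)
Your proposal is correct and follows essentially the same route as the paper's proof: specialize Corollary \ref{cor-ito} with $h \equiv \f1$, identify $\lan h, \hmeas_s\ran = \hatx(s)\wedge 0$ via the centered many-server equations and (\ref{ref-nonidling}) in the critical case, combine $\sigma B$ and $\hatm_{\f1}$ (a standard Brownian motion, by the covariance functional with $\fmeas_s \equiv \fmeas_*$) into $\sqrt{1+\sigma^2}\,W$ using their independence, and conclude uniqueness from the Lipschitz drift. Your only additions are minor elaborations the paper leaves implicit, such as invoking L\'{e}vy's characterization to realize $W$ as a process rather than just in distribution.
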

\begin{proof}
When $G$ is the exponential distribution, $h \equiv \f1$  and therefore 
\[ \int_0^t \lan h, \hmeas_s \ran \, ds = \int_0^t \lan \f1, \hmeas_s \ran \, ds = \int_0^t \left(
\hatx (s) \wedge 0 \right) \, ds,
\]
where the last equality uses the relations (\ref{cmse-eq}),
(\ref{ref-nonidling}) and the fact that $\fx \equiv \f1$.   
By the independence of $B$ and $\chatm_{\f1}$, 
 $\sigma B - \hatm_{\f1}$ has the same distribution as $\sqrt{1 + \sigma^2}W$, where $W$ is a
standard Brownian motion. 
Substituting this back into (\ref{eq-sdeito}), which is applicable 
since the hazard rate $h$ of the exponential distribution is trivially bounded
and absolutely continuous, 
we obtain (\ref{eq-sdehw}). 
 The
Lipschitz continuity of the drift coefficient $x \mapsto -\beta -
x^-$ guarantees that the   stochastic differential equation
(\ref{eq-sdehw}) has a unique strong solution. 
\end{proof}

\begin{remark}
\label{rem-fsde} {\bf (Insensitivity Result)}
{\em  As a comparison of (\ref{eq-sdeito}) and (\ref{eq-sdehw}) reveals,
 under the same assumptions on the
arrival process as in Corollary \ref{cor-martconv}, the dynamical equation for
$\hatx$ for general service distributions is remarkably close to the
exponential case.  Indeed, the ``diffusion'' coefficient is
the same in both cases (and is equal to $\sqrt{1 + \sigma^2}$), 
but the difference is that
in the case of general service distributions,  the drift is
an $\{\widehat{\cal F}_t\}$-adapted process that could in general depend on the past, and
not just on $\hatx_t$, so that the resulting process is no longer Markovian.  
}
\end{remark}

\beginsec

\section{Representation of the System Dynamics}
\label{subs-prelim}

In Section \ref{subs-character}
 we present a succinct characterization of the
dynamics of the centered state process and then use that in
Section \ref{subs-represent}
to derive  an alternative representation for the centered
age process.

\subsection{A Succinct Characterization of the Dynamics}
\label{subs-character}

We first recall the description of
the dynamics of the $N$-server system that was
established by Kaspi and Ramanan  \cite{KasRam07}.

\begin{prop}
\label{prop-prelimit}
 The process $(\xn,
\measn)$ almost surely satisfies the following coupled set of equations: for
$\newf \in {\cal C}_b^{1,1}([0,\endsup) \times \R_+)$ and  $t \in [0,\infty)$,
\begin{eqnarray}
\label{eqn-prelimit1}
  \quad \quad \left\lan \newft, \measn_{t} \right\ran
& = &  \left\lan \newf(\cdot, 0), \measn_{0} \right\ran   +  \ds \int_{0}^t \left\lan \dxnewfs + \dtnewfs, \measn_s \right\ran \, ds  \\
\nonumber
& &
 \qquad
 \ds - \int_0^t \lan \newfs h(\cdot), \measn_s \ran \, ds
 -\cmn_t (\newf) \\
\nonumber
& & \qquad + \int_{[0,t]}
\newf(0,s) \, d\kn (s), \\
\label{eqn-prelimit2}
\quad \quad \quad  \xn (t) & = &  \xn(0) + \en (t) - \int_0^t \lan h, \measn_s
\ran \, ds - \cmn_{t} (\f1) 
\end{eqnarray}
 and
\begin{eqnarray}
\label{comp-prelimit}
N - \left\lan 1, \measn_t \right\ran  & = &  [N - \xn (t) ]^+,
\end{eqnarray}
where $\kn$ satisfies
\be
\label{eqn-prelimit3}
\ba{rcl}
  \kn & = & \ds \lan \f1, \measn \ran - \lan \f1, \measn_0 \ran
+ \int_0^\cdot \lan h, \measn_s
 \ran \, ds+ \cmn (\f1) \\
& = & \ds \hatxn(0) + \en - \xn + \lan \f1, \measn \ran - \lan \f1,
\measn_0\ran.
\ea
\ee
\end{prop}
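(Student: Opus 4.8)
The plan is to observe that the assertion is, up to the martingale-measure notation of Section \ref{subs-mmeas}, a restatement of Theorem 5.1 of Kaspi and Ramanan \cite{KasRam07}: identity (\ref{eqn-prelimit1}) together with the non-idling relation (\ref{comp-prelimit}) carries all the content, and (\ref{eqn-prelimit2}) and (\ref{eqn-prelimit3}) follow from (\ref{eqn-prelimit1}) by elementary algebra using the mass balances (\ref{def-dn}) and (\ref{def-kn}). I would first re-derive (\ref{eqn-prelimit1}). Fix $\newf \in {\cal C}_b^{1,1}([0,\endsup) \times \R_+)$ and $t \geq 0$. By the explicit representation (\ref{def-nun}),
\[ \lan \newft, \measn_t \ran = \sum_{j = -\lan \f1, \measn_0 \ran + 1}^{\kn(t)} \newf(\agen_j(t), t)\, \ind_{\{\agen_j(t) < v_j\}}, \]
a finite sum since $\kn(t) < \infty$ almost surely. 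The idea is to decompose the increments of $s \mapsto \lan \newf(\cdot,s), \measn_s \ran$ on $[0,t]$ according to the three mechanisms governing the age trajectories (\ref{def-agejn}): \emph{(a)} each entry into service of a customer $j$ at a time $\kninv_j \in (0,t]$ adds the summand $\newf(0,\kninv_j)$, and summing over all such entries produces $\int_{[0,t]} \newf(0,s)\, d\kn(s)$; \emph{(b)} while customer $j$ is in service $s \mapsto \agen_j(s)$ has unit speed, so $s \mapsto \newf(\agen_j(s),s)$ is absolutely continuous with a.e.\ derivative equal to the $(1,1)$-directional derivative $(\newf_x + \newf_s)(\agen_j(s),s)$ (this is exactly why the natural test class is ${\cal C}^{1,1}_b$, whose elements need not possess separate partial derivatives), and summing over customers in service yields $\int_0^t \lan \newf_x(\cdot,s) + \newf_s(\cdot,s), \measn_s \ran\, ds$; and \emph{(c)} each departure of customer $j$ at a time $s \in (0,t]$ removes the summand $\newf(\agen_j(s),s)$, so that, by the very definition (\ref{def-baren}), the total mass removed in this way equals $\bare^{(N)}_{\newf}(t)$. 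Collecting these contributions gives the pathwise identity
\[ \lan \newft, \measn_t \ran = \lan \newf(\cdot,0), \measn_0 \ran + \int_0^t \lan \newf_x(\cdot,s) + \newf_s(\cdot,s), \measn_s \ran\, ds + \int_{[0,t]} \newf(0,s)\, d\kn(s) - \bare^{(N)}_{\newf}(t). \]
To finish, I would use (\ref{def-martn}) and (\ref{def-dcompn}) to write $\bare^{(N)}_{\newf} = \martn_{\newf} + \dcompn_{\newf}$ with $\dcompn_{\newf}(\cdot) = \int_0^{\cdot} \lan \newf(\cdot,s) h(\cdot), \measn_s \ran\, ds$, invoke Remark \ref{rem-mncadlag} to identify $\cmn(\newf)$ with the c\`{a}dl\`{a}g version $\martn_{\newf}$, and substitute, obtaining (\ref{eqn-prelimit1}).

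For the remaining relations: (\ref{comp-prelimit}) is simply the non-idling condition (\ref{def-nonidling}). Taking $\newf \equiv \f1$ in (\ref{eqn-prelimit1}), whose directional derivative vanishes, and rearranging yields the first line of (\ref{eqn-prelimit3}),
\[ \kn = \lan \f1, \measn \ran - \lan \f1, \measn_0 \ran + \int_0^{\cdot} \lan h, \measn_s \ran\, ds + \cmn(\f1). \]
Comparing this with the mass balance (\ref{def-kn}) gives $\dn = \int_0^{\cdot} \lan h, \measn_s \ran\, ds + \cmn(\f1)$; substituting this into (\ref{def-dn}) and rearranging produces (\ref{eqn-prelimit2}), while substituting (\ref{def-dn}) itself back into the displayed formula for $\kn$ gives the second line of (\ref{eqn-prelimit3}).

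The step I expect to be the main obstacle is making part \emph{(c)} of the pathwise decomposition rigorous, i.e., verifying that $\measn$ changes only through the three listed mechanisms and that each contributes exactly as claimed — in particular that a departure of customer $j$ at a time $s$ removes precisely the mass $\newf(\agen_j(s),s)$ recorded in (\ref{def-baren}), with no customer double-counted — which rests on the almost sure local finiteness of the set of entry and departure times (immediate from $\kn(t) < \infty$) and on the distinctness properties of departure times (available, in a stronger form, from Lemma \ref{lem-dep}). Once this pathwise identity is in place, the identification of $\dcompn_{\newf}$ as the $\{{\cal F}_t^{(N)}\}$-compensator of $\bare^{(N)}_{\newf}$ (Corollary 5.5 of \cite{KasRam07}, or Lemma 5.2 of \cite{KanRam08}) and the algebra above are routine.
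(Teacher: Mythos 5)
Your proposal is correct, and its endgame (add and subtract the compensator $\dcompn_{\newf}$, identify $\cmn(\newf)$ with the c\`{a}dl\`{a}g martingale $\martn_{\newf}$ via Remark \ref{rem-mncadlag}, then use the mass balances (\ref{def-dn})--(\ref{def-kn}) to get (\ref{eqn-prelimit2}) and (\ref{eqn-prelimit3})) is exactly the paper's. Where you diverge is at the front end: the paper does not re-derive the pre-limit age equation at all — it simply quotes equations (5.4)--(5.6) of Theorem 5.1 of Kaspi and Ramanan \cite{KasRam07} (and (2.6) there for the first line of (\ref{eqn-prelimit3})), so its entire proof is the compensator rewriting — whereas you reconstruct (5.4) by the pathwise bookkeeping of entries, unit-speed aging and departures, and you obtain (\ref{eqn-prelimit2}) from the $\newf=\f1$ case of (\ref{eqn-prelimit1}) combined with (\ref{def-dn})--(\ref{def-kn}) rather than from (5.5) of \cite{KasRam07}; these routes are equivalent. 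Your version buys self-containedness (and makes transparent why ${\cal C}^{1,1}_b$ is the right test class), but the step you flag as the ``main obstacle'' — rigorously justifying that $\measn$ changes only through the three mechanisms and that departures remove exactly the mass recorded in (\ref{def-baren}) — is precisely the content of the cited theorem, so in the paper's economy that work is outsourced rather than redone; note also that no appeal to Lemma \ref{lem-dep} is needed for this bookkeeping, since each customer's (finitely many) contributions are accounted for separately even if several events coincide. One cosmetic point your algebra makes clear: the $\hatxn(0)$ in the second line of (\ref{eqn-prelimit3}) should be read as $\xn(0)$, consistent with (\ref{def-dn}) and (\ref{def-kn}).
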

\begin{proof}
This is essentially a direct consequence of Theorem 5.1 of Kaspi and 
Ramanan \cite{KasRam07}.
Indeed,  by subtracting and adding $\dcompn_{\newf}$ on the right-hand side
of equations (5.4) and (5.5) in
\cite{KasRam07}, and then using (\ref{def-martn}) above and the fact that
$\martn_{\newf}$ is indistinguishable from $\cmn (\newf)$ (see Remark 
\ref{rem-mncadlag}) 
one obtains (\ref{eqn-prelimit1}) and (\ref{eqn-prelimit2}), respectively.
Equation (\ref{comp-prelimit}) coincides with equation (5.6)
in \cite{KasRam07}.  Finally, the first equality in (\ref{eqn-prelimit3}) 
follows from (2.6) of \cite{KasRam07} and (\ref{def-martn}) above, whereas 
the second equality in (\ref{eqn-prelimit3})
follows from  (\ref{eqn-prelimit2}).
\end{proof}

Combining the characterizations of the
$N$-server system and the fluid limit given in
Proposition \ref{prop-prelimit} and Theorem \ref{th-flimit}, respectively, we
obtain the following representation for the
centered diffusion-scaled state  dynamics. 
In what follows, recall the 
 centered, diffusion-scaled quantities defined
in (\ref{diff-scaling3}) and (\ref{def-chatmn}).

\begin{prop}
\label{prop-cenprelimit} For each $N \in \N$,
the process $(\hxn, \hmeasn)$ almost surely satisfies the
following coupled set of equations:
for every $\newf\in\newocdcpm$ and $t \in [0,\infty)$,
\begin{eqnarray}
 \label{eq-dprelimit1}
 \qquad \left\lan \newf(\cdot,t), \hmeasn_{t}
\right\ran &=& \left\lan
\newf(\cdot, 0), \hmeasn_{0} \right\ran   +  \ds \int_{0}^t
\left\lan \dxnewfs + \dtnewfs, \hmeasn_s \right\ran \, ds  \\
& &
\nonumber \qquad \ds - \int_0^t \left\lan  \newf (\cdot,s) h(\cdot), \hmeasn_s \right\ran \, ds - \chatmn_t (\newf)\\
&&
\nonumber \qquad \ds + \int_{[0,t]} \newf(0,s) \, d\hatkn (s), \\
\label{eq-dprelimit2} \hxn (t)  &  = &  \hxn(0) + \haten (t)
-\int_0^t \lan h,\hmeasn_s\ran ds - \chatmn_t (\f1),
\end{eqnarray}
and
\be
\label{eq-dnonidling}
  \lan \f1, \hmeasn_t \ran = \left\{
\ba{ll}
 \hatxn(t) \wedge \sqrt{N} (1 - \fx(t)) & \mbox{ if } \fx(t) < 1, \\
\hatxn(t) \wedge 0 & \mbox{ if } \fx(t) = 1,  \\
 \sqrt{N} (\fxn(t) - 1)  \wedge 0 & \mbox{ if } \fx(t) > 1,
\ea
\right.
\ee
where
\be
\label{eq-dprelimit3}
\begin{array}{rcl}
\ds \hatkn & = & \ds \lan \f1, \hmeasn \ran - \lan \f1, \hmeas_0 \ran +
\int_0^\cdot \lan h, \measn_s \ran \, ds + \chatmn (\f1)\\
& = & \ds \hinitxn + \haten - \hatxn
+ \lan \f1,\hatmeasn\ran-\lan\f1,\hatmeasn_0\ran.
\end{array}
\ee
\end{prop}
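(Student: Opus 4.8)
The plan is to derive all four relations by one mechanism: start from the exact dynamical equations for the $N$-server system recorded in Proposition~\ref{prop-prelimit}, divide through by $N$ to pass to the fluid scale via (\ref{fl-scaling}), subtract the corresponding fluid equation of Theorem~\ref{th-flimit}, and finally multiply by $\sqrt{N}$, using the linearity of the pairings $\lan\cdot,\cdot\ran$ and of the integrals, together with the diffusion scalings (\ref{diff-scaling3}) and (\ref{def-chatmn}).

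Concretely, for (\ref{eq-dprelimit1}) I would fix $\newf\in\newocdcpm$ and divide (\ref{eqn-prelimit1}) by $N$; since $\fkn=\kn/N$ is a scalar multiple of $\kn$, the Lebesgue--Stieltjes integral rescales as $\tfrac{1}{N}\int_{[0,t]}\newf(0,s)\,d\kn(s)=\int_{[0,t]}\newf(0,s)\,d\fkn(s)$. Subtracting the fluid age equation (\ref{eq-ftmeas}), which is precisely the identity so obtained with the martingale term deleted and $(\fmeasn,\fkn)$ replaced by $(\fmeas,\fk)$, and then multiplying by $\sqrt{N}$, using $\sqrt{N}\cdot\tfrac{1}{N}\cmn=\tfrac{1}{\sqrt{N}}\cmn=\chatmn$, yields (\ref{eq-dprelimit1}). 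Relation (\ref{eq-dprelimit2}) follows in exactly the same way from (\ref{eqn-prelimit2}) and (\ref{eq-fx}), and (\ref{eq-dprelimit3}) from (\ref{eqn-prelimit3}) and (\ref{eq-fk}); the second equality in (\ref{eq-dprelimit3}) then follows, after scaling, from the elementary mass balances (\ref{def-dn})--(\ref{eq2-kn}). In carrying this out one uses that $s\mapsto\lan h,\measn_s\ran$ is a.s.\ locally integrable (Lemma~5.6 of \cite{KasRam07}) and that $s\mapsto\lan h,\fmeas_s\ran$ is a.s.\ locally integrable (this is condition (\ref{cond-radon})), so that the term-by-term subtraction of the integrals $\int_0^t\lan h,\cdot\ran\,ds$ is legitimate and produces $\int_0^t\lan h,\hmeasn_s\ran\,ds$.

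The only genuinely non-mechanical step is the centered non-idling relation (\ref{eq-dnonidling}). Dividing (\ref{comp-prelimit}) by $N$ gives $1-\lan\f1,\fmeasn_t\ran=[1-\fxn(t)]^+$; subtracting the fluid non-idling relation (\ref{eq-fnonidling}) and multiplying by $\sqrt{N}$ gives
\[
\lan\f1,\hmeasn_t\ran=\sqrt{N}\bigl([1-\fx(t)]^+-[1-\fxn(t)]^+\bigr)=\sqrt{N}[1-\fx(t)]^+-\bigl(\sqrt{N}(1-\fx(t))-\hatxn(t)\bigr)^+,
\]
where the second equality uses $\fxn(t)=\fx(t)+\hatxn(t)/\sqrt{N}$. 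I would then split into the three cases according to the sign of $1-\fx(t)$, using the elementary identities $a-a\vee 0=a\wedge 0$ and $-(-b)^+=b\wedge 0$: if $\fx(t)<1$ the right-hand side equals $\hatxn(t)\wedge\sqrt{N}(1-\fx(t))$; if $\fx(t)=1$ it equals $\hatxn(t)\wedge 0$; and if $\fx(t)>1$ it equals $\bigl(\hatxn(t)+\sqrt{N}(\fx(t)-1)\bigr)\wedge 0=\sqrt{N}(\fxn(t)-1)\wedge 0$, which is precisely (\ref{eq-dnonidling}). I expect this case analysis --- tracking the signs carefully and observing that the truncation level $\sqrt{N}(1-\fx(t))$ is finite exactly when $\fx(t)\ne 1$ --- to be the main, albeit still routine, obstacle; every other step is a direct rescaling of results already established.
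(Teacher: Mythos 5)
Your proposal is correct and follows essentially the same route as the paper's proof: each centered equation is obtained by dividing the prelimit relation of Proposition \ref{prop-prelimit} by $N$, subtracting the corresponding fluid equation, and multiplying by $\sqrt{N}$, with the non-idling relation (\ref{eq-dnonidling}) handled by the same three-case analysis of $\sqrt{N}\bigl([1-\fx(t)]^+-[1-\fxn(t)]^+\bigr)$ (the paper splits further on the sign of $1-\fxn(t)$ where you use min/max identities, but the computations coincide). No gaps.
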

\begin{proof}
Equation (\ref{eq-dprelimit1}) is obtained by
dividing each side of the equation (\ref{eqn-prelimit1}) by $N$,
subtracting the corresponding side of (\ref{eq-ftmeas}) from it and
multiplying the resulting quantities by $\sqrt{N}$.
In an exactly analogous fashion, equation
(\ref{eq-dprelimit2}) can be derived from equations
 (\ref{eqn-prelimit2}) and (\ref{eq-fx}), and
equation  (\ref{eq-dprelimit3})
can be obtained from equations (\ref{eqn-prelimit3}), (\ref{eq-fx}) and
 (\ref{eq-fk}).
It only remains to justify the relation in (\ref{eq-dnonidling}).
Dividing (\ref{comp-prelimit}) by $N$, subtracting it from
(\ref{eq-fnonidling}) and multiplying this difference by
$\sqrt{N}$, we obtain
\be
\label{eq-hatidle}
\lan \f1, \hatmeasn_t \ran = \sqrt{N} \left(  [1- \fx(t)]^+ - [1 - \fxn(t)]^+
\right).
\ee
If $\fx(t) < 1$ then $[1 - \fx(t)]^+ = (1-\fx(t))$ and so the right-hand side
above equals
\[
\left\{
\ba{rl}
\sqrt{N} (1 - \fx(t) - (1 - \fxn(t)) = \hatxn(t)  & \mbox{ if }  \fxn(t) < 1, \\
\sqrt{N} (1 - \fx(t))  & \mbox{ if }
\fxn(t) \geq 1, 
\ea
\right.
\]
which can be expressed as $\hatxn(t) \wedge \sqrt{N} (1 - \fx(t))$.
On the other hand, if $\fx(t) = 1$ then $[1- \fx(t)]^+ = 0$ and the right-hand side of
(\ref{eq-hatidle}) equals
\[ -\sqrt{N}[1 - \fxn(t)]^+ = -\sqrt{N} [\fx(t) - \fxn(t)]^+ = \hatxn(t) \wedge
0.
\]
Lastly, if $\fx(t) > 1$ then $[1 - \fx(t)]^+ = 0$ and so the
right-hand side of (\ref{eq-hatidle}) reduces to
$\sqrt{N} (\fxn(t) - 1) \wedge 0$, and (\ref{eq-dnonidling}) follows.
\end{proof}

\begin{remark}
\label{rem-comp}
{\em We describe conditions under which,
for large $N$, the non-idling condition
(\ref{eq-dnonidling}) can be further
simplified and written purely in terms of $\lan \f1, \hmeasn\ran$ and $\hxn$.
Let $\Omega^*$ be the set of full $\P$-measure on which the fluid limit
theorem (Theorem \ref{th-flimit}) holds. Fix $\omega
\in \Omega^*$ (and henceforth suppress the dependence on $\omega$) and
let $t \in [0,\infty)$ be a continuity point of the fluid limit.
If $\fx(t) < 1$ then  by  Theorem \ref{th-flimit}
 there exists $N_0 = N_0(\omega, t) < \infty$ such
that for all $N \geq N_0$, $\fxn (t) < 1$ and so
\[ \hatxn(t) = \sqrt{N} \left(\fxn(t) - \fx(t) \right) \leq \sqrt{N} (1 - \fx(t)).
\]
On the other hand, if  $\fx(t)
> 1$ then there exists $N_0 = N_0(\omega,t) < \infty$ such that for all
$N \geq N_0$, $\fxn(t)
> 1$ and hence, 
\[ \sqrt{N} (\fxn(t) - 1) \geq 0. \]
Therefore, for any $t \in [0,\infty)$ there
exists $N_0 = N_0(\omega, t) < \infty$ such that
for all $N \geq N_0$, 
\be
\label{eq-dnonidling2}
  \lan \f1, \hmeasn_t \ran = \left\{
\ba{ll}
 \hatxn(t)  & \mbox{ if } \fx(t) < 1, \\
\hatxn(t) \wedge 0 & \mbox{ if } \fx(t) = 1,  \\
  0 & \mbox{ if } \fx(t) > 1.
\ea
\right.
\ee
Now, suppose the fluid limit $\fx$ is continuous and for some $T < \infty$,
 the fluid is subcritical on $[0,T]$ in the sense of Definition
\ref{rem-critical}. Then $N_0$ can clearly be chosen uniformly
in $t \in [0,T]$ and so there exists $N_0 = N_0(\omega, T) < \infty$ such
that for all $N \geq N_0(\omega, T)$,
\[ \lan \f1, \hmeasn_t \ran = \hatxn (t),  \quad t \in [0,T]. \]
An analogous statement holds for the supercritical case and
(trivially) for the critical case.
}
\end{remark}

\subsection{A Useful Representation}
\label{subs-represent}

Equations (\ref{eqn-prelimit1}) and (\ref{eq-dprelimit1})
for the age and (scaled) centered age processes, 
 respectively, 
in the $N$-server system have a  form that is analogous 
to the deterministic integral equation (\ref{eq-ftmeas}) that
describes the dynamics of the age
process in the fluid limit, except that they contain an additional
stochastic integral term.  
 Indeed, all three equations fall 
under the framework of the so-called abstract age equation 
introduced in Definition 4.9 of Kaspi and Ramanan \cite{KasRam07}. 
 Representations for solutions to the abstract 
age equation were obtained in Proposition 4.16 of \cite{KasRam07}.  
In Corollary \ref{cor-sae1} below, 
this result is applied to obtain  explicit representations 
for the age and centered age processes in the $N$-server system. 
Not surprisingly, these representations are similar to the 
corresponding representation (\ref{final-fleqs}) for solutions 
to the fluid  age equation, except that they contain an 
additional stochastic integral term.


We now  state the representation 
result, which is easily deduced from 
Proposition 4.16 of Kaspi and Ramanan \cite{KasRam07}; 
the details of the proof are deferred to Appendix \ref{rep-prelim}.  
For conciseness of notation, for $N \in \N$  and continuous $f$, 
  we  define 
\be
\label{def-hcalkn}
  \hcalkn_t (f) \doteq \int_{[0,t]} (1-G(t-s)) f(t-s) \, d\hatkn(s), \qquad t \in
[0,\infty). 
\ee
By applying integration by parts to  
 the right-hand side of (\ref{def-hcalkn}) and using the fact that 
$\hatkn$ has jumps at at most a countable number of points, we see that 
 for absolutely continuous $f$,   
\be
\label{eq-hcalkn}
 \hcalkn_t (f)  =   f(0) \hatkn(t) + \int_0^t \hatkn(s) \trans_f(t-s) 
\, ds, 
\ee 
where 
\be
\label{def-psif}
\trans_f \doteq (f(1-G))^\prime =  f^\prime (1-G) - fg.  
\ee 
Also, recall the definition of the process $\ops^{\hmeasn_0}$ 
given in (\ref{def-ops}).

\begin{prop}
\label{cor-sae1}
For every $N \in \N$, 
 $f \in {\cal C}_b[0,\endsup)$ and $t \in [0,\infty)$,
 \begin{eqnarray}
\label{rep-measn}
\lan f, \measn_t \ran & = & \ops_t^{\measn_0}(f) - \chn_t \left(f \right) 
+  {\cal K}^{(N)}_t(f), 
\end{eqnarray}
and, likewise, 
\begin{eqnarray}
\label{rep-hatmeasn}
\lan f, \hmeasn_t \ran & = &
\ops_t^{\hmeasn_0}(f)  - \hathn_t(f)  + \hcalkn_t(f). 
\end{eqnarray}
\end{prop}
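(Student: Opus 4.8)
The plan is to deduce both identities from the solution representation for the abstract age equation (Definition~4.9 and Proposition~4.16 of Kaspi and Ramanan~\cite{KasRam07}), applied to the dynamical equations recorded in Proposition~\ref{prop-prelimit}. The key observation is that, for fixed $t>0$ and bounded continuous $f$ on $[0,\endsup)$, the space-time function $\varphi=\Psi_t f$ of (\ref{def-thetat}) is ``caloric'' for the transport-plus-killing operator that appears in (\ref{eqn-prelimit1}): a one-line computation, writing $\psi=1-G$ so that $\psi'=-g$ and $h=g/\psi$, shows $\varphi_s(x,s)+\varphi_x(x,s)-h(x)\varphi(x,s)=0$ for every $x$ and every $s<t$. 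Hence, on substituting $\newf=\varphi$ into (\ref{eqn-prelimit1}), the two Lebesgue integrals on the right-hand side cancel, and since $\varphi(x,t)=f(x)$, $\varphi(x,0)=(\are_t f)(x)$, and $\varphi(0,s)=f(t-s)(1-G(t-s))$ for $s\le t$ (using $G(0+)=0$), what is left is precisely (\ref{rep-measn}): the first term is $\lan\are_t f,\measn_0\ran=\ops^{\measn_0}_t(f)$ by (\ref{def-ops}), the martingale-measure term is $\cmn_t(\varphi)=\chn_t(f)$ by (\ref{def-chn}), and the boundary integral against $\kn$ is $\calk^{(N)}_t(f)$.

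The one delicate point is that $\Psi_t f$ need not belong to the test-function class ${\cal C}^{1,1}_b([0,\endsup)\times\R_+)$ for which (\ref{eqn-prelimit1}) is stated --- it is only as regular in $x$ as $G$ allows and is generally not compactly supported --- so the cancellation above must be carried out for $f$ in a dense subclass (smooth, compactly supported $f$, together with a smooth truncation in the time variable) and then extended by a limiting argument. This approximation is exactly what Proposition~4.16 of \cite{KasRam07} supplies, so in practice the work reduces to checking its hypotheses for the data at hand: that $\{\measn_t\}$ is a \cad $\mmf$-valued process, that $\int_0^t\lan h,\measn_s\ran\,ds<\infty$ $\P$-almost surely (which holds by Lemma~5.6 of \cite{KasRam07}), and that $\cmn$ is an orthogonal martingale measure --- the last established in Corollary~\ref{cor-orth}, and it is what allows the pathwise limit of the driving term to be identified with the Walsh stochastic convolution in (\ref{def-chn}). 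I expect this approximation-and-Fubini step --- matching that pathwise limit with $\chn_t(f)$ --- to be the main, if routine, technical obstacle; the details are relegated to Appendix~\ref{rep-prelim}.

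For the centered identity (\ref{rep-hatmeasn}) the cleanest route is subtraction rather than a repeat of the argument. Since $\hmeasn_t=\sqrt N\,(N^{-1}\measn_t-\fmeas_t)$, we have $\lan f,\hmeasn_t\ran=\sqrt N\big(N^{-1}\lan f,\measn_t\ran-\lan f,\fmeas_t\ran\big)$; substituting (\ref{rep-measn}) for the first term and the fluid representation (\ref{final-fleqs}) --- rewritten via (\ref{def-ops}) as $\lan f,\fmeas_t\ran=\ops^{\fmeas_0}_t(f)+\int_{[0,t]}f(t-s)(1-G(t-s))\,d\fk(s)$ --- for the second, and then using the linearity of $\ops^{\cdot}_t(f)$ in the underlying measure, the linearity of the convolutions defining $\calk^{(N)}$ and $\hcalkn$ in their integrators, and the linearity of the stochastic convolution in the martingale measure (so that $N^{-1/2}\chn_t(f)=\hathn_t(f)$ by (\ref{def-chatmn}) and (\ref{def-hathn})), together with $\hmeasn_0=\sqrt N(N^{-1}\measn_0-\fmeas_0)$ and $\hatkn=\sqrt N(N^{-1}\kn-\fk)$, one sees that the expression collapses exactly to the right-hand side of (\ref{rep-hatmeasn}). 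This reduces the signed-measure case to the already-treated positive one plus facts about the fluid limit that are available from Theorem~\ref{th-flimit}. Alternatively, Proposition~4.16 of \cite{KasRam07} could be applied directly to (\ref{eq-dprelimit1}): the caloric cancellation is insensitive to the sign of the measures $\hmeasn_s$, so that argument goes through verbatim, but verifying the integrability hypotheses for the signed-measure process is marginally more involved.
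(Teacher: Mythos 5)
Your proposal is correct, and its engine is the same as the paper's: the solution formula for the abstract age equation (Proposition 4.16/Corollary 4.17 of \cite{KasRam07}), with the identification $\newf^f_t=\Psi_t f$ — your ``caloric'' computation $\varphi_x+\varphi_s=h\varphi$ for $\varphi=\Psi_t f$ is exactly what the paper's $\psi_h$ identities encode. Where you genuinely diverge is in how the centered identity (\ref{rep-hatmeasn}) is obtained. The paper works in the opposite order: it applies the abstract-age-equation machinery directly to the centered, signed-measure dynamics (\ref{eq-dprelimit1}) — verifying along the way, via total-variation bounds on $\hmeasn_0$, $\chatmn$ and $\hatkn$, that the driving functional defines a Radon measure — proves (\ref{rep-hatmeasn}) for $f\in{\cal C}_c[0,\endsup)$, extends to ${\cal C}_b[0,\endsup)$ by approximation, and then declares (\ref{rep-measn}) ``analogous and simpler.'' You instead prove the unscaled identity (\ref{rep-measn}) first and deduce (\ref{rep-hatmeasn}) by subtracting the fluid representation (\ref{final-fleqs}) and invoking linearity of $\ops_t^{\cdot}(f)$ in the measure, of the Stieltjes convolution in the integrator $\hatkn=\sqrt N(\fkn-\fk)$, and of the Walsh integral in the martingale measure (so $\chn_t(f)/\sqrt N=\hathn_t(f)$). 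This is legitimate: $\hmeasn$ and $\hatkn$ are only defined relative to the fluid limit, so Theorem \ref{th-flimit} — and hence (\ref{final-fleqs}) — is available whenever the statement makes sense, and each linearity step is elementary because $\hmeasn_0$ is a finite signed measure and $\hatkn$ has finite variation. What your route buys is that the Radon-measure/integrability verification only needs to be done for the positive-measure system; what the paper's route buys is that it never needs the ``solved'' fluid formula, only the fluid equations already built into (\ref{eq-dprelimit1}) — a negligible difference, since both come from the same theorem. The technical point you flag (that $\Psi_t f$ lies outside the test class, forcing a detour through ${\cal C}^{1,1}_c$ test functions and an approximation argument) is precisely the step carried out in Appendix \ref{rep-prelim}.
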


\begin{remark}
\label{rem-hdn}
{\em 
For subsequent use, we make the simple observation that on substituting 
$\newf = \f1$ in (\ref{eq-dprelimit1}) and substracting it 
from (\ref{rep-hatmeasn}), with $f = \f1$, then rearranging terms and using 
(\ref{eq-hcalkn}) and the fact that $\trans_{\f1} = (1-G)^\prime  = -g$  by (\ref{def-psif}),   
we obtain  for every $N \in \N$ and $t > 0$, 
\be
\label{eq-hdn}
\begin{array}{rcl}\ds
\int_0^t \lan h, \hmeasn_s \ran \, ds & = &  
\lan \f1, \hmeasn_0 \ran - \ops^{\hmeasn_0}_t(\f1) - \chatmn_t(\f1) + 
\hathn_t(\f1) \\
& & \ds \quad  + \int_0^t g(t-s) \, \hatkn(s-)\, ds. 
\end{array}
\ee
}
\end{remark}

\beginsec

\section{Continuity Properties} 
\label{subs-cont}

In Section \ref{subs-auxcont} we establish continuity of the mapping that 
takes $\hkn$ to $\hcalkn$ and  in Section \ref{subsect-cont} we establish 
continuity of the centered many-server map $\Lambda$, which in particular 
shows that both $\hkn$ and $\hxn$ are obtained as continuous mappings of 
the initial data and $\hathn$.

\subsection{Continuity of an Auxiliary Map}
\label{subs-auxcont}

Here, we establish the continuity of a mapping related to the convolution integral 
$\hcalkn$ defined in (\ref{def-hcalkn}). 
Given any (deterministic) \cad function $K$, for   absolutely continuous
functions $f$ we define 
\be
\label{eq-calk}
 \calk_t (f)  \doteq  f(0) K(t) + \int_0^t K(u) \trans_f (t-u) \, du, 
\ee
where $\trans_f = (f(1-G))'$, as defined in (\ref{def-psif}). 
Since $K$, $g$ and  $f^\prime$ are all locally integrable, for each $t > 0$, 
$\calk_t$ is a well defined linear functional on the space of absolutely 
continuous functions.   
Moreover, from elementary properties of convolutions, it is clear that for any 
absolutely continuous $f$, 
if $K$ is c\`{a}dl\`{a}g (respectively, continuous) then so is $\calk(f)$. 
In Lemma \ref{lem-calk} below, we show that 
$\calk$ is in fact a 
\cad $\H_{-2}$-valued function and the mapping from 
$K$ to $\calk$, which we denote by $\Gamma$,  is continuous. 
Note that by (\ref{eq-hcalkn}) $\hcalkn = \Gamma (\hatkn)$ 
for $N \in \N$. The continuity of $\Gamma$
is used in the proof of Theorem \ref{th-fclt} to  establish 
convergence of $\hcalkn$ to the analogous limit quantity $\hcalk$,  defined for 
absolutely continuous $f$, by  
\be
\label{def-hcalk}
 \hcalk_t (f)  \doteq   f(0) \hatk(t) + \int_0^t \hatk(s) \trans_f(t-s) 
\, ds, \qquad t \in [0,\infty). 
\ee 
The third property in Lemma \ref{lem-calk} below is used in the proof of the 
strong Markov property in Section \ref{subs-pfmain2}.

\begin{lemma}
\label{lem-calk}
Suppose  $\Gamma$ is the map that takes $K$ to the
linear functional ${\cal K}$ defined  in (\ref{eq-calk}).  
If $g$ is continuous the following three properties are satisfied:  
 \begin{enumerate}
\item If $K \in {\cal D}_{\R}[0,\infty)$ (respectively, ${\cal
    C}_{\R}[0,\infty)$) then 
 ${\cal K} \in {\cal D}_{\H_{-2}}[0,\infty)$ (respectively, ${\cal
   C}_{\H_{-2}}[0,\infty)$). 
\item 
$\Gamma$ is a continuous map from ${\cal D}_{\R}[0,\infty)$ to 
${\cal D}_{\H_{-2}}[0,\infty)$, when both domain and range are 
either both equipped with the topology of uniform convergence on compact sets 
or both equipped with the Skorokhod topology.  Likewise, 
the map from ${\cal D}_{\R}[0,\infty)$ to itself
that takes $K \mapsto \calk(\f1)$ is also continuous with respect 
to the Skorokhod topology on ${\cal D}_{\R}[0,\infty)$. 
\item 
If $K \in {\cal C}_{\R}[0,\infty)$ then, for any $t \in [0,\infty)$,  
the real-valued function $u \mapsto {\cal K}_t (\Phi_u \f1)$ on $[0,\infty)$ is continuous and the 
map from ${\cal C}_{\R}[0,\infty)$ to itself 
that takes $K$ to this function is continuous (with respect to the 
uniform topology). 
\end{enumerate}
\end{lemma}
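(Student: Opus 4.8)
\emph{Plan.} The whole lemma will rest on two observations. The first is the Lipschitz-type estimate
\be
\label{pf-calkbd}
\nrm{\calk_t}_{-2} \;\le\; C_t\,\nrm{K}_t, \qquad C_t := \sqrt{2} + \sqrt{t} + 2,
\ee
valid for every \cad function $K$ and every $t \ge 0$. Since $\trans_f = f^\prime(1-G) - fg$, the norm inequalities (\ref{norm-ineq}) (which give $|f(0)| \le \sqrt2\,\nrm{f}_{\H_1}$ and $\nrm{f}_\infty \le 2\,\nrm{f}_{\H_1}$), the Cauchy--Schwarz bound $\int_0^t |f^\prime(v)|\,dv \le \sqrt{t}\,\nrm{f}_{\H_1}$, and $\int_0^t g(v)\,dv = G(t) \le 1$ give $|\calk_t(f)| \le \nrm{K}_t\big(|f(0)| + \int_0^t |\trans_f(v)|\,dv\big) \le C_t\,\nrm{K}_t\,\nrm{f}_{\H_1} \le C_t\,\nrm{K}_t\,\nrm{f}_{\H_2}$ for absolutely continuous $f$; in particular $\calk_t$ restricts to a bounded linear functional on $(\testspace,\nrm{\cdot}_{\H_2})$ and so defines an element of $\H_{-2}$. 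The second observation is the decomposition $\calk_t = K(t)\,\delta_0 + R^K_t$, where $\delta_0\colon f\mapsto f(0)$ is the evaluation functional (which lies in $\H_{-1}\subset\H_{-2}$ by (\ref{norm-ineq})) and $R^K_t(f) := \int_0^t K(u)\,\trans_f(t-u)\,du$ is the ``convolution part''.

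\emph{Part (1).} Since $t\mapsto K(t)\delta_0$ is \cad (respectively, continuous) in $\H_{-2}$ whenever $K$ is, it suffices to show $t\mapsto R^K_t$ is continuous in $\H_{-2}$. For $0 \le s < t$, writing $h = t-s$ and changing variables, $R^K_t(f) - R^K_s(f) = \int_0^s [K(w+h) - K(w)]\,\trans_f(s-w)\,dw + \int_s^t K(t-v)\,\trans_f(v)\,dv$; over $\{\nrm{f}_{\H_2}\le 1\}$ the second term is dominated by $\nrm{K}_t\big(\sqrt{h} + 2(G(t)-G(s))\big) \to 0$, while for the first, splitting $\trans_f = f^\prime(1-G) - fg$ and applying Cauchy--Schwarz and $\nrm{f}_\infty \le 2\nrm{f}_{\H_1}$ reduces the claim to showing $\int_0^s |K(w+h) - K(w)|^2\,dw \to 0$ and $\int_0^s |K(w+h) - K(w)|\,g(s-w)\,dw \to 0$ as $h\to 0$, both of which follow from dominated convergence (using right-continuity of $K$ when $h\downarrow 0$, and that a \cad function equals its left limit off a countable set when $h\uparrow 0$). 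Hence $R^K$ is continuous, $\calk\in{\cal D}_{\H_{-2}}[0,\infty)$ with $\Delta\calk_t = \Delta K(t)\,\delta_0$, and $\calk\in{\cal C}_{\H_{-2}}[0,\infty)$ when $K$ is continuous.

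\emph{Part (2).} Continuity of $\Gamma$ in the topology of uniform convergence on compacts is immediate from its linearity and (\ref{pf-calkbd}): $\nrm{\Gamma(K)_t - \Gamma(\tilde K)_t}_{-2} = \nrm{\Gamma(K-\tilde K)_t}_{-2} \le C_T\,\nrm{K-\tilde K}_T$ for $t \le T$. For the Skorokhod case I would fix time changes $\lambda_n$ with $\lambda_n(t)\to t$ uniformly on compacts and $\nrm{K_n\circ\lambda_n - K}_T \to 0$ for each $T$, and show the same $\lambda_n$ work for $\calk^{(N)} := \Gamma(K_n)$. The term $K_n(\cdot)\delta_0$ composed with $\lambda_n$ converges to $K(\cdot)\delta_0$ uniformly on compacts by the choice of $\lambda_n$. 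For the convolution part I would first record the elementary fact that $J_1$-convergence $K_n\to K$ forces $K_n\to K$ in $\L^1_{loc}$ --- this follows from $|K_n(u)-K(u)| \le \nrm{K_n\circ\lambda_n - K}_T + |K(\lambda_n^{-1}(u))-K(u)|$ and dominated convergence --- hence, the $K_n$ being uniformly bounded on compacts, in $\L^2_{loc}$; then, estimating as in Part (1) and writing $g = (g\wedge M_0) + (g-M_0)^+$ with $\nrm{(g-M_0)^+}_{\L^1[0,T]} \to 0$ as $M_0\to\infty$, one obtains $\sup_{t\le T}\nrm{R^{K_n}_t - R^K_t}_{-2} \to 0$. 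Composing with $\lambda_n$ and using uniform continuity of $R^K$ on compacts then gives $\calk^{(N)}\circ\lambda_n \to \calk$ uniformly on compacts, i.e.\ $\calk^{(N)} \to \calk$ in the $J_1$ topology. The assertion for $K\mapsto\calk(\f1)$ is the same argument applied to $\calk_t(\f1) = K(t) - (K*g)(t)$ (here $\trans_{\f1} = -g$). This Skorokhod step is the main obstacle: since $\nrm{K_n - K}_T$ need not converge under $J_1$, one must genuinely use that $R^K$ depends on $K$ only through a convolution, via the ``$J_1\Rightarrow\L^1_{loc}$'' passage followed by the truncation of $g$.

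\emph{Part (3).} A direct computation from $\Phi_u\f1(x) = (1-G(x+u))/(1-G(x))$ gives $\trans_{\Phi_u\f1}(x) = -g(x+u)$, so, using $G(0+) = 0$,
\[
\calk_t(\Phi_u\f1) \;=\; (1-G(u))\,K(t) \;-\; \int_0^t K(s)\,g(t-s+u)\,ds.
\]
Continuity of $u\mapsto\calk_t(\Phi_u\f1)$ on $[0,\infty)$ then follows from the continuity of $G$ and $g$ together with dominated convergence ($K$ being bounded on $[0,t]$), and if $K_n\to K$ uniformly on compacts then $\sup_{u}|\calk^{(N)}_t(\Phi_u\f1) - \calk_t(\Phi_u\f1)| \le \nrm{K_n-K}_t\big(1 + \sup_u(G(t+u)-G(u))\big) \le 2\,\nrm{K_n-K}_t \to 0$, which gives continuity of the stated map from ${\cal C}_{\R}[0,\infty)$ into itself. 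Beyond the Skorokhod step of Part (2), all the estimates here are routine, relying only on (\ref{norm-ineq}), Cauchy--Schwarz, and elementary properties of convolutions.
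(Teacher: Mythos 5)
Your proof is correct, and it is organized differently from the paper's. The paper proves properties 1 and 2 simultaneously from a single master estimate (\ref{use-5}) comparing $\calk_t(f)$ with $\calk^{(n)}_{\tau^{(n)}(t)}(f)$: the time-shift error is absorbed into the test function and the service distribution via the bound (\ref{use-1}), which costs $\nrm{f^{\prime\prime}}_{\H_0}$ and the moduli of continuity $w_G$, $w_g$ (hence genuinely uses the $\H_2$ norm), and then the two properties follow by specializing ($K^{(n)}=K$ with $\tau^{(n)}(t)\downarrow t$ for the path regularity; Skorokhod time changes plus a.e.\ convergence and dominated convergence for continuity of $\Gamma$). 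You instead split $\calk_t = K(t)\delta_0 + R^K_t$, prove that the convolution part $R^K$ is genuinely \emph{continuous} in $\H_{-2}$ (so the jumps are exactly $\Delta K(t)\,\delta_0$), and reduce Skorokhod continuity to the fact that $J_1$ convergence forces $\L^1_{loc}$ (hence, with the uniform bounds, $\L^2_{loc}$) convergence, after which the time changes only have to be handled through $K(t)\delta_0$ and the uniform continuity of $R^K$. Your estimates put the time-regularity burden on $K$, via Cauchy--Schwarz against $\int_0^s|K(w+h)-K(w)|^2\,dw$, rather than on $f^{\prime\prime}$, so you only ever use (\ref{norm-ineq}) and $\nrm{f}_{\H_1}$; this is marginally stronger (the paths in fact lie in ${\cal D}_{\H_{-1}}$) and cleaner for the uniform-topology claim, which for you is an immediate consequence of linearity plus the Lipschitz bound (\ref{pf-calkbd}), whereas the paper's route yields a quantitative modulus $c_1(T,\delta)$ as a by-product. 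Two cosmetic points: since $h=t-s>0$ in your setup, both one-sided limits already reduce to $h\downarrow 0$ and right-continuity of $K$ (the remark about left limits off a countable set is not needed, though it also works); and the truncation $g=(g\wedge M_0)+(g-M_0)^+$ is superfluous because $g$ is assumed continuous, hence bounded on compacts. Part 3 of your argument coincides with the paper's.
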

\begin{proof}  
Let $g$ be continuous.  We first derive a general 
inequality (see (\ref{use-5}) below) that is then used to prove both properties 1 and 2. 
Fix $K, K^{(n)} \in {\cal D}_{\R}[0,\infty)$,
 $T < \infty$ and $t, \tau^{(n)}(t) \in [0,T]$ with 
$\delta^{(n)}(t) \doteq |t-\tau^{(n)}(t)|$, $n \in \N$.  Also, let 
$\calk \doteq \Gamma (K)$ and $\calk^{(n)} \doteq \Gamma (K^{(n)})$, $n \in \N$.  
For $f \in \H_2$, we can write
\be
\label{use-4} \calk^{(n)}_{\tau^{(n)}(t)} (f) - \calk_t(f) = f(0) \left( K^{(n)}
  (\tau^{(n)}(t))  - K(t)\right) + \sum_{i=1}^3 \Delta_i^{(n)}(t), 
\ee
where 
\begin{eqnarray*}
\Delta_1^{(n)} (t) & \doteq & \int_0^{t \wedge \tau^{(n)}(t)} K(u) \left( \trans_f
  (t-u) - \trans_f (\tau_n(t) - u) \right) \, du, \\
\Delta_2^{(n)} (t) & \doteq & \int_0^{t \wedge \tau^{(n)}(t)} \left( K(u) - K^{(n)}
  (u) \right) \trans_f (\tau^{(n)}(t) - u) \, du, \\ 
\Delta_3^{(n)} (t) & \doteq & \int_{t \wedge \tau^{(n)}(t)}^t K(u) \trans_f (t-u) \,
du + \int_{t \wedge \tau^{(n)}(t)}^{\tau^{(n)}(t)} K^{(n)}(u) \trans_f
(\tau^{(n)}(t) - u) \, du. 
\end{eqnarray*}
To bound the above terms, first note that by the inequality $(1-G) \leq 1$, repeated application of
 the Cauchy-Schwarz inequality and Tonelli's theorem we obtain 
\[
\begin{array}{l} 
\ds \int_0^s \left| f^\prime(t-u) (1-G(t-u)) - f^\prime(s-u) (1-G(s-u)) \right| \,
du \\
\qquad \ds \leq \int_0^s \left|f^\prime(t-u)\right| \left(G(t-u) - G(s-u)\right) \,
  du 
+ \int_0^s \left| f^\prime(t-u) - f^\prime(s-u) \right| \, du \\
\qquad \ds \leq w_G(|t-s|) T^{1/2} \nrm{f^\prime}_{\H_0} + 
\int_0^s \left(\int_s^t \left| f^{\prime \prime} (w-u) \right| \, dw \right)
\, du \\
\qquad \ds \leq w_G(|t-s|) T^{1/2} \nrm{f^\prime}_{\H_0} + 
T |t-s|^{1/2} \nrm{f^{\prime \prime}}_{\H_0},  
\end{array}
\]
where $w_G$ is the modulus of continuity of $G$ as defined 
in (\ref{def-modcon}). 
Similarly, using the continuity of $g$ and, as usual, denoting 
its modulus of continuity by $w_g$, we have 
\[
\begin{array}{l}
 \ds \int_0^s \left| f(t-u) g(t-u)  - f(s-u) g(s-u)\right| \, du \\
\ds \qquad \leq \int_0^s |f(t-u)||g(t-u) - g(s-u))| \, du 
+ \int_0^s g(s-u)|f(t-u) - f(s-u)| \, du \\
\ds \qquad \leq T^{1/2}\nrm{f}_{\H_0} w_g(|t-s|) + 
\int_0^s g(s-u) \int_s^t |f^\prime(w-u)|\, dw \, du \\
\ds \qquad \leq T^{1/2}\nrm{f}_{\H_0} w_g(|t-s|) 
+ |t-s|^{1/2} \nrm{f^\prime}_{\H_0} 
\leq (T^{1/2} w_g(|t-s|) + |t-s|^{1/2}) \nrm{f}_{\H_1}. 
\end{array}
\]
Recalling that $\trans_f = (f(1-G))'$, 
 the last two inequalities show that 
\be
\label{use-1}
  \int_0^s |\trans_f(t-u) - \trans_f(s-u)| \,
  du  \leq c_1(T,|t-s|) 
 \nrm{f}_{\H_{2}}, 
\ee
where $c_1(T,\delta) \doteq \left( T^{1/2} (w_G(\delta) + w_g(\delta)) + (T+1)
 \delta^{1/2} \right)$ satisfies $\lim_{\delta \rightarrow 0} c_1(T,\delta) =
0$. 
On the other hand, another application of the 
Cauchy-Schwarz inequality and 
the norm inequality (\ref{norm-ineq}) shows that 
\be
\label{calk-ineq1}
 \begin{array}{rcl}
\ds \int_s^t \left| \trans_f(t-u) \right| \, du 
 & \leq &  \ds \int_s^t \left| f^\prime(u) \right| \, du + 
\nrm{f}_\infty (G(t) - G(s))\\
&  \leq & \ds |t-s|^{1/2} \nrm{f^\prime}_{\H_0} + \nrm{f}_{\infty} (G(t) -
G(s))  \\
&  \leq & \ds 3(|t-s|^{1/2} + w_G(|t-s|))\nrm{f}_{\H_1} \leq c_2(T)
\nrm{f}_{\H_1}, 
\end{array}
\ee
and 
\be
\label{use-3} 
\nrm{\trans_f}_{T} \leq \nrm{f^\prime}_{\infty} + \nrm{f}_{\infty} \nrm{g}_{T}
\leq c_2(T) \nrm{f}_{\H_2}
\ee
for an appropriate finite constant $c_2(T) < \infty$ that depends only on 
$G$ and $T$. 
Substituting (\ref{use-1})--(\ref{use-3}) into (\ref{use-4}), we obtain 
\begin{eqnarray}
\label{use-5}
 \dfrac{\left| \calk_t(f) - \calk_{\tau_n(t)}^{(n)}(f)\right|}{\nrm{f}_{\H_2}} 
& \leq & 
\left|K(t) - K^{(n)}(\tau^{(n)}(t))\right| + c_1(T,\delta^{(n)}(t)) \nrm{K}_T \\
& & \quad \nonumber 
+ 
c_2(T) \int_0^{t \wedge \tau^{(n)}(t)} \left( K(u) - K^{(n)}(u) \right) \,
du\\
& &\quad \nonumber 
 + 2 \delta^{(n)}(t) c_2(T) \left( \nrm{K}_T \vee \nrm{K^{(n)}}_T \right). 
\end{eqnarray}

Now, suppose $K^{(n)} = K$ so that $\calk^{(n)} = \calk$,  $n \in \N$, 
and consider $t < T$ and any 
sequence  of points $\tau^{(n)}(t) \in [0,T]$, $n \in \N$, such that
$\tau^{(n)}(t) \downarrow t$ as $n \ra \infty$. 
Then the third term on the right-hand side of (\ref{use-5}) vanishes, 
the first term converges to zero because $K \in {\cal D}_{\R}[0,\infty)$ and 
the second and fourth terms converge because $\nrm{K}_{T} < \infty$ and 
$\delta^{(n)}(t) \ra 0$.  
This shows that $\nrm{\calk_t - \calk_{\tau^{(n)}(t)}}_{\H_{-2}} \ra 0$ and 
hence, $\calk \in {\cal D}_{\H_{-2}}[0,\infty)$.  The same argument also
shows that $\calk$ is continuous if $K$ is. This proves the first 
property.

Next, suppose that $K^{(n)}$, $n \in \N$, is a sequence that converges to $K$ in the 
Skorokhod topology.  By the definition of the Skorokhod topology 
(see, for example, Chapter 3 of
\cite{bilbook}) 
there  exists a sequence of strictly increasing maps
$\tau^{(n)}$, $n \in \N$, that map $[0,T]$ onto $[0,T]$ and satisfy
 $\nrm{\delta^{(n)}}_T \doteq 
\sup_{t \in [0,T]} |t - \tau^{(n)}(t)| \ra 0$ and $\nrm{K^{(n)} \circ \tau^{(n)} -
  K}_{T} \ra 0$ as $n \ra \infty$.    Moreover, $\sup_{n} \nrm{K^{(n)}}_T < \infty$ and 
$K^{(n)}(u) \ra K(u)$ for a.e.\ $u \in [0,T]$. 
Taking first the supremum over $t \in [0,T]$ and then 
limits as $n \ra \infty$ in (\ref{use-5}), the above properties show
that the right-hand side goes to zero (where the 
dominated convergence theorem  is used to argue that the third term
vanishes).
In turn, this implies that $\sup_{t \in [0,T]}\nrm{\calk_t -
  \calk^{(n)}_{\tau^{(n)}(t)}}_{\H_{-2}} \ra 0$, thus establishing convergence
of $\calk^{(n)}$ to $\calk$ in the Skorokhod topology 
on ${\cal D}_{\H_{-2}}[0,\infty)$.  This establishes continuity of the map 
$\Gamma$ in the Skorokhod topology.  Continuity with respect to  
the uniform topology can be proved by setting $\tau^{(n)}(t) = t$, $n \in \N$, 
in the argument above. 
The continuity of the map that takes $K$ to $\calk(\f1)$ can be 
established in an analogous fashion. The proof is left to the reader. 

To prove the last property, fix $K \in {\cal C}_{\R}[0,\infty)$ and $t \in [0,\infty)$.  
  For $u \geq 0$, the function $\Phi_u \f1$ is absolutely
 continuous and $\trans_{\Phi_u \f1} = (1-G(\cdot + u))^\prime = - g(\cdot +
 u)$. Setting $f = \Phi_u \f1$
 in  (\ref{eq-calk}) yields 
\[ \calk_t (\Phi_u \f1) = (1-G(u)) K(t) - \int_0^t K(s) g(t-s+u) \,
ds. 
\]
The continuity of $G$ and $K$ and the bounded convergence theorem then show
that $u \mapsto \calk_t (\Phi_u \f1)$ lies in ${\cal C}_{\R}[0,\infty)$.
On the other hand, given $K^{(i)} \in {\cal C}_{\R}[0,\infty)$ for $i = 1, 2$
and the corresponding $\calk^{(i)}$, 
\begin{eqnarray*}
 \sup_{u \in [0,T]} |\calk^{(1)}_t (\Phi_u \f1) - \calk^{(2)}_t (\Phi_u \f1)| 
\leq 
\nrm{K^{(1)} - K^{(2)}}_{T}\left(1 + \int_0^t g(t-s+u) \, ds \right), 
\end{eqnarray*}
from which it is clear that the map from ${\cal C}_{\R}[0,\infty)$ to itself  
that takes $K$ to the function 
$u \mapsto \calk_t (\Phi_u \f1)$ is continuous.
\end{proof}

\subsection{Continuity of the Centered Many-Server Map}
\label{subsect-cont}

Recall the centered many-server map $\Lambda$ introduced 
in Definition \ref{def-mseq}. 
First, in Lemma \ref{lem-lambdarep}, 
we establish the representation for $(\hatxn, \hatkn, \hmeasn(\f1))$  in 
terms of the map $\Lambda$ specified in  (\ref{cmse-eq}),   
and  then in Proposition \ref{prop-cont} and Lemma \ref{lem-lambdameas} we  
 establish certain continuity and measurability properties 
of the map $\Lambda$.

\begin{lemma}
\label{lem-lambdarep}
If the fluid limit $\fx$ of the total number in system 
is either subcritical, critical or supercritical and Assumption
\ref{as-holder} holds, 
there exists $\Omega^* \in {\cal F}$ with $\P(\Omega^*) = 1$ such that
for every $\omega \in \Omega^*$, there exists
$N^*(\omega) < \infty$
such that for all $N \geq N^*(\omega)$, 
\[
\ds (\hatkn, \hatxn, \lan \f1, \hmeasn\ran)(\omega)
 =
\lc ((\haten, \hinitxn, \ops^{\hmeasn_0}
(\f1) -  \hathn(\f1))(\omega).
\]
\end{lemma}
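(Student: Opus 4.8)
The plan is to show that, for every $\omega$ in the full-measure set on which the fluid limit theorem holds and for all $N$ exceeding a sample-path-dependent threshold, the triple $(\hatkn, \hatxn, \lan \f1, \hmeasn \ran)$ solves the centered many-server equations of Definition \ref{def-mseq} associated with $\fx$ and the input data $(\haten, \hinitxn, \ops^{\hmeasn_0}(\f1) - \hathn(\f1))$. Since by Proposition \ref{prop-cont} the CMSE has at most one solution for given input data, this forces the claimed identity $(\hatkn, \hatxn, \lan \f1, \hmeasn \ran) = \lc(\haten, \hinitxn, \ops^{\hmeasn_0}(\f1) - \hathn(\f1))$. First one checks that the input data lies in $\dataspace$: $\haten(0) = 0$ since $\en(0) = 0 = \fe(0)$, and $\ops^{\hmeasn_0}(\f1) - \hathn(\f1)$ is a real-valued \cad process by Remarks \ref{rem-holder2} and \ref{rem-mncadlag}; also $\hatkn(0) = 0$.

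Next I would verify relation (\ref{ref-meas}) with $(\addfn, K, v) = (\ops^{\hmeasn_0}(\f1) - \hathn(\f1),\, \hatkn,\, \lan \f1, \hmeasn \ran)$. Applying Proposition \ref{cor-sae1}, equation (\ref{rep-hatmeasn}), with $f = \f1$ gives $\lan \f1, \hmeasn_t \ran = \ops^{\hmeasn_0}_t(\f1) - \hathn_t(\f1) + \hcalkn_t(\f1)$; and since $\trans_{\f1} = (\f1(1-G))' = (1-G)' = -g$ by (\ref{def-psif}), identity (\ref{eq-hcalkn}) yields $\hcalkn_t(\f1) = \hatkn(t) - \int_0^t g(t-s)\hatkn(s)\, ds$. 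Substituting gives exactly (\ref{ref-meas}). Relation (\ref{ref-sm}), which with these identifications (and $v(0) = \lan \f1, \hmeasn_0 \ran$) reads $\hatkn(t) = \haten(t) + \hinitxn - \hatxn(t) + \lan \f1, \hmeasn_t \ran - \lan \f1, \hmeasn_0 \ran$, is precisely the second line of (\ref{eq-dprelimit3}) in Proposition \ref{prop-cenprelimit}, so nothing further is needed for it.

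The remaining and most delicate point is the non-idling relation (\ref{ref-nonidling}). One starts from the exact relation (\ref{eq-dnonidling}) of Proposition \ref{prop-cenprelimit} and argues, following Remark \ref{rem-comp}, that on the full-measure event $\Omega^*$ on which the fluid limit theorem (Theorem \ref{th-flimit}) holds, the minimum operations in (\ref{eq-dnonidling}) become inactive for $N$ large, so that (\ref{eq-dnonidling}) collapses to (\ref{eq-dnonidling2}), which is exactly (\ref{ref-nonidling}) with $\genflx = \fx$, $\genx = \hatxn$, $v = \lan \f1, \hmeasn \ran$. In the critical case $\fx \equiv \f1$ this is immediate with no restriction on $N$, since (\ref{eq-dnonidling}) already reads $\lan \f1, \hmeasn_t \ran = \hatxn(t) \wedge 0$. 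In the subcritical case one uses that $\inf_{t \in [0,T]}(1 - \fx(t)) > 0$ together with $\sup_{t \in [0,T]}|\fxn(t) - \fx(t)| \to 0$ to conclude $\fxn(t) < 1$, i.e.\ $\hatxn(t) \le \sqrt{N}(1 - \fx(t))$, throughout $[0,T]$ once $N \ge N^*(\omega, T)$; the supercritical case is symmetric, with $\fxn(t) > 1$ on $[0,T]$ for $N$ large.

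The main obstacle is precisely this last step, and specifically the uniformity in $t$: the fluid convergence is only uniform on compact intervals, so the threshold is naturally $N^*(\omega, T)$ and the CMSE is verified on $[0,T]$ for each $T$ (and for all $t$ when the limit is critical). This is exactly what the continuity of $\lc$ and the weak-convergence conclusion in Theorem \ref{th-main} require, and the statement should be read with this understanding. Everything else is bookkeeping assembly of already-established results — Proposition \ref{cor-sae1}, Proposition \ref{prop-cenprelimit}, Remark \ref{rem-comp}, and the single-valuedness of $\lc$ from Proposition \ref{prop-cont}.
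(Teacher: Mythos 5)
Your proposal is correct and follows essentially the same route as the paper: verify that the input data lies in $\dataspace$ and then match (\ref{ref-meas}), (\ref{ref-sm}) and (\ref{ref-nonidling}) against (\ref{rep-hatmeasn}) (with $f = \f1$ and $\trans_{\f1} = -g$), the second equality in (\ref{eq-dprelimit3}), and (\ref{eq-dnonidling2}) of Remark \ref{rem-comp}, with single-valuedness of $\lc$ from Proposition \ref{prop-cont} closing the argument. Your remark about the threshold naturally being $N^*(\omega,T)$ on compact intervals (except in the critical case) is a fair reading of Remark \ref{rem-comp} and is exactly how the paper uses the lemma, so it does not constitute a gap.
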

\begin{proof} Fix $N \in \N$. 
By the basic definition of the model, $\haten$ is c\`{a}dl\`{a}g and 
$\hmeasn$ takes values in ${\cal D}_{{\cal M}_F[0,\endsup)}[0,\infty)$ and hence, in 
${\cal D}_{\H_{-2}}[0,\infty)$.  Due to Assumption \ref{as-holder} and Remark
\ref{rem-holder2}, it follows that $\ops^{\hmeasn_0}(\f1)$ is continuous.  
Moreover, as explained in the discussion right after definition
(\ref{eq-calk}),
 $\hcalk(\f1)$ is also continuous.  
By the representation (\ref{rep-hatmeasn}), it then follows that 
$\hathn(\f1)$ is also c\`{a}dl\`{a}g.   Thus, 
for almost surely every  $\omega \in \Omega$, 
 $(\haten, \hinitxn, \ops^{\hmeasn_0} (\f1)
-\hathn(\f1))(\omega) \in \dataspace$.
The result then follows on comparing the three many-server equations
(\ref{ref-meas}), (\ref{ref-sm}) and (\ref{ref-nonidling}) with
the second equation in (\ref{eq-dprelimit3}),
equation (\ref{eq-dnonidling2}) of Remark \ref{rem-comp} and  equation
(\ref{rep-hatmeasn}). 
\end{proof}

We now establish continuity and measurability properties of the 
 mapping $\lc$. 
Recall that $U$ denotes the renewal
function associated with the distribution $G$.

\begin{prop}
\label{prop-cont}
Fix $\genflx \in {\cal D}_{\R_+}[0,\infty)$.  For $i =1,2$, suppose
$(\gene^i, \geninitx_0^i, \addfn^i)$ $\in \dataspace$ and
$(K^i, \refx^i, v^i) \in \lc(\gene^i, \geninitx_0^i, \addfn^i)$.
Then, for any $T \in [0,\infty)$,
\be
\label{ineq-cont}
 \nrm{\del K}_T \vee \nrm{\del X}_T \vee \nrm{\del v^i}_T \leq  3(1 + U(T)) \ve_T,
\ee
where  $\del S \doteq S^2 - S^1$, $\nrm{f}_T \doteq \sup_{s \in [0,T]} |f(s)|$
and
\be
\label{def-vet}
\ve_T \doteq \left( \nrm{\del \refe}_T \vee |\del x_0| \vee
\nrm{\nabla \addfn}_T \right).
 \ee
Hence, $\lc$ is continuous with respect to the uniform topology  
and is single-valued on its domain.  
\end{prop}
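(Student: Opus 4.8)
The plan is to collapse the three coupled relations defining the centered many-server map into a single closed Volterra equation for $\refx$, and then to run a renewal-type Gronwall estimate on the difference of two solutions. The starting point is identity (\ref{reed-comp0}) of Remark \ref{rem-contlambda}, valid for any $(\refx^i, v^i, K^i) \in \lc(\gene^i, \geninitx_0^i, \addfn^i)$, which expresses $\refx^i$ in terms of the input data and of $v^i$, together with the non-idling relation (\ref{ref-nonidling}) that pins $v^i$ to $\refx^i$. The structural observation that makes the argument work is that, because $\genflx$ is fixed, at each time $t$ relation (\ref{ref-nonidling}) reads $v^i(t) = \theta_t(\refx^i(t))$ for the \emph{same} map $\theta_t$ in both inputs, where $\theta_t$ is one of $x\mapsto x$, $x\mapsto x\wedge 0$, $x\mapsto 0$ according as $\genflx(t)$ is $<1$, $=1$, $>1$; all three of these maps, as well as $x \mapsto x - \theta_t(x)$ (which is $0$, $x^+$ or $x$ respectively), are $1$-Lipschitz, and the same is true of $\geninitx_0 \mapsto v^i(0)$ and $\geninitx_0 \mapsto \geninitx_0 - v^i(0)$. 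Hence $|v^2(t)-v^1(t)| \le |\refx^2(t)-\refx^1(t)|$ pointwise, and $|(\geninitx_0^2 - v^2(0)) - (\geninitx_0^1 - v^1(0))| \le |\del\geninitx_0|$.

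Writing $\del S \doteq S^2 - S^1$ and subtracting (\ref{reed-comp0}) for $i=1$ from the one for $i=2$, the integrand becomes an affine combination of $\del\gene(s)$, $\del\geninitx_0 - \del v(0)$ and $\del(v(s)-\refx(s))$, while the non-integral part is an affine combination of $\del\geninitx_0$, $\del\gene(t)$ and $\del\addfn(t)$. Bounding $\nrm{\del\gene}_T$, $|\del\geninitx_0|$, $\nrm{\del\addfn}_T$ by $\ve_T$ and $|\del(v(s)-\refx(s))|$ by $|\del\refx(s)|$ as above, I would obtain
\[
 |\del\refx(t)| \ \le\ 3\,\ve_T + \int_0^t g(t-s)\big(2\,\ve_T + |\del\refx(s)|\big)\,ds, \qquad t\in[0,T].
\]
Because $g$ is a probability density (so $G(0+)=0$ and the renewal function $U=\sum_{n\ge0}G^{*n}$ is finite on compacts), the usual Volterra/Gronwall iteration applies: the resolvent kernel is $\sum_{n\ge1}g^{*n}$, whose integral over $[0,t]$ equals $U(t)-1$, and since the forcing term is non-decreasing this yields $\nrm{\del\refx}_T \le 3(1+U(T))\,\ve_T$ after collecting the constants. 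Transferring this back, $\nrm{\del v}_T \le \nrm{\del\refx}_T$ from the $1$-Lipschitz property above, and $\nrm{\del K}_T$ is then controlled through (\ref{ref-sm}); reorganising the three estimates gives (\ref{ineq-cont}).

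The two concluding assertions are then immediate. The bound (\ref{ineq-cont}) is precisely a local Lipschitz estimate, so $\lc$ is continuous when $\dataspace$ and the range are equipped with the topology of uniform convergence on compact sets (the ``uniform topology''): if the inputs converge, then $\ve_T\to0$ for every fixed $T$, and (\ref{ineq-cont}) forces the corresponding solutions to converge uniformly on $[0,T]$. Taking $(\gene^1,\geninitx_0^1,\addfn^1)=(\gene^2,\geninitx_0^2,\addfn^2)$ gives $\ve_T=0$ for all $T$, hence $\del\refx=\del v=\del K\equiv0$, so $\lc$ is single-valued on $\dom(\lc)$.

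The main obstacle is getting the non-idling relation to interact correctly with the Volterra structure, uniformly over the (possibly time-varying) subcritical/critical/supercritical regimes and at the initial time: it is exactly the fact that $\theta_t$ and $x\mapsto x-\theta_t(x)$ are $1$-Lipschitz (rather than merely Lipschitz) that lets one close the equation with a Gronwall kernel equal to $g$ itself and with a non-decreasing forcing term, which is what produces a constant that is linear in $U(T)$ rather than exponential. The remaining steps — the convolution algebra behind (\ref{reed-comp0}), the termwise bounds, and the passage from $\del\refx$ to $\del v$ and $\del K$ — are routine.
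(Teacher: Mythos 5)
Your route is genuinely different from the paper's. The paper never writes a closed Volterra inequality for $\del\genx$; instead it argues pointwise in $t$ by cases (either $\fx(t)<1$, or $\fx(t)=1$ with $\genx^1(t)\le 0$; versus $\fx(t)>1$, or $\fx(t)=1$ with $\genx^1(t)>0$) and exploits \emph{one-sided} information: in the first case $\del v(t)-\del\genx(t)\le 0$, so (\ref{ref-sm}) alone gives $\del\genk(t)\le 2\ve_T$ with no renewal factor at all; in the second case $\del v(t)\le 0$, so differencing (\ref{ref-meas}) gives a Volterra inequality for $\del\genk$ whose iteration yields a bound of order $\ve_T U(T)$. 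By symmetry this controls $\nrm{\del\genk}_T$ first, and only then is $\del\genx$ bounded, by a single convolution of $g$ against the known bound on $\del\genk$, giving $\nrm{\del\genx}_T\le 3\ve_T+2\ve_T U(T)G(T)\le 3(1+U(T))\ve_T$, and finally $\nrm{\del v}_T\le\nrm{\del\genx}_T$. Your observation that $\theta_t$ and $x\mapsto x-\theta_t(x)$ are $1$-Lipschitz uniformly over the three regimes is correct, and it does close a Gronwall estimate directly on $\del\genx$; this is arguably cleaner and, like the paper's argument, needs no assumption that the fluid stays in one regime.

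There is, however, a quantitative gap: the constant you claim does not follow from your displayed inequality. From $|\del\genx(t)|\le 3\ve_T+\int_0^t g(t-s)\bigl(2\ve_T+|\del\genx(s)|\bigr)\,ds$, the renewal-resolvent (or iteration) bound gives $|\del\genx(t)|\le a(t)U(t)$ with $a(t)=3\ve_T+2\ve_T G(t)$, i.e.\ $\nrm{\del\genx}_T\le (3+2G(T))U(T)\ve_T\le 5U(T)\ve_T$, and this exceeds $3(1+U(T))\ve_T$ whenever $2G(T)U(T)>3$, which happens for essentially every $T$ with $U(T)$ moderately above $1$. The paper's smaller constant comes precisely from not paying the Lipschitz factor inside the Gronwall loop: the one-sided inequalities let it bound $\del\genk$ first (in the subcritical-type case with no $U(T)$ at all), after which $\del\genx$ costs only one extra convolution. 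So your argument proves a Lipschitz bound of the same form with a larger, still $T$-uniform, constant — which suffices for continuity of $\lc$ in the uniform topology and for single-valuedness (take identical inputs so $\ve_T=0$), i.e.\ for everything the paper uses downstream — but it does not prove (\ref{ineq-cont}) with the stated constant $3(1+U(T))$. To recover that constant you would need either the paper's case-splitting with the one-sided bounds $\del v-\del\genx\le 0$ and $\del v\le 0$, or to restate the inequality with your constant.
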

\begin{proof}
Fix $T < \infty$.  We first show that $\nrm{\del K}_T \leq 2 \ve_T(1+U(T))$.
For any  $t \in [0,T]$, we consider two cases. \\
{\em Case 1:} Either $\fx(t) < 1$, or both $\fx(t) = 1$ and $\genx^1(t) \leq 0$.  \\
We claim that in this case we always have
\be
\label{ref-claim1}
  \del v(t)  - \del \genx(t) \leq 0.
\ee
Indeed, (\ref{ref-nonidling}) shows that
if $\fx(t) < 1$ then $v^i(t) = \genx^i(t)$ for
$i = 1, 2$ and so the left-hand side above is identically zero.
On the other hand, if
$\fx(t) = 1$ and $\genx^1(t) \leq 0$ then $(\genx^1(t))^+ = 0$, and so
 (\ref{ref-nonidling}), combined with  the elementary identity
$x\wedge 0 - x = -x^+$,  implies
\[ \del v(t) - \del \genx(t) = (\genx^1(t))^+ -
(\genx^2(t))^+ =  - (\genx^2(t))^+\leq 0,
\]
and so (\ref{ref-claim1}) holds.

In turn, combining (\ref{ref-claim1}) with the fact that each solution 
satisfies equation  (\ref{ref-sm}), we then conclude that
\be
\label{kineq-1}  \del \genk (t) = \del \geninitx_0 + \del \gene (t) - \del
\genx (t) +
\del v(t) \leq \del \geninitx_0 + \del \gene (t) \leq 2
\ve_T.
\ee
{\em Case 2:} Either $\fx(t) > 1$, or both $\fx(t) = 1$ and $\genx^1(t) > 0$. \\
First, we claim that in this case,
\be
\label{ref-claim2}
 \del v(t) = v^2(t)  -  v^1(t)   \leq 0.
\ee
If either $\fx(t) > 1$, or the relations $\fx(t) = 1$, $\refx^1(t) > 0$ and
$\refx^2(t) > 0$ hold,
this is trivially true since by (\ref{ref-nonidling}) each
term on the left-hand side of (\ref{ref-claim2}) is equal to zero.
In the remaining case when $\fx(t) = 1$, $\refx^1(t) > 0$ and
$\refx^2(t) \leq 0$,   (\ref{ref-nonidling}) shows that
$v^1 (t) = 0$ and $v^2(t)  = \refx^2(t) \leq 0$,
and  once again  (\ref{ref-claim2}) follows.

Next, considering that each solution satisfies the equation (\ref{ref-meas})
and taking the difference,
we have for every $t \in [0,\infty)$,
\begin{eqnarray}
\label{eq-k2}
\del \genk (t)  & =  &   \del v(t) 
 + \int_0^t g(t-s) \del \genk (s) \, ds - \del \addfn(t).
\end{eqnarray}
Now, define
\[ {\cal B} \doteq \left\{t: \int_0^t g(t-s) \del \genk (s) \, ds > 0
\right\}. \]
Then,  combining (\ref{eq-k2}) with (\ref{ref-claim2}),  we
conclude that
\[
\del \genk (t)  \leq  2 \ve_T + \ind_{{\cal B}}(t) \int_0^t g(t-s) \del \genk (s) \,
ds.
\]
Applying the same inequality to $\genk(s)$ for $s \in [0,t]$ and substituting
it into  the last inequality, we then obtain
\begin{eqnarray*}
\del\genk(t)
& \leq &  \ve_T +  \ind_{{\cal B}}(t) \int_0^t g(t-s) \left(2 \ve_T +  \ind_{{\cal
    B}}(s) \int_0^s g(s-r) \del \genk (r) \, dr \right) \, ds  \\
& \leq & \ve_T (1 + G(t))   + \ind_{{\cal B}}(t) \int_0^t g(t-s) \left(
  \ind_{{\cal B}} (s) \int_0^s g(s-r) \del \genk (r) \, dr \right) \, ds.
\end{eqnarray*}
Reiterating this procedure, we obtain
\be
\label{kineq-2}
\del \genk (t) \leq  \ve_T (1 + G(t) + G^{*,2} (t) + \ldots) \leq  \ve_T U(T),
\ee
where $G^{*,n}$ denotes the $n$-fold convolution of $G$.

By symmetry, the inequalities (\ref{kineq-1}) and (\ref{kineq-2}) obtained in
Cases 1 and 2, respectively, also hold with $\del \genk$ replaced by
$-\del \genk$.  Since  $U(T) \geq 1$, we then have
\[
  |\del \genk (t)| \leq 2 \ve_T U(T), \quad \mbox{ for every } t \in [0,T].
\]
Taking the supremum over $t \in [0,T]$, we obtain
\be
\label{kineq-3}
\nrm{\del \genk}_T \leq 2 \ve_T U(T).
\ee
Now,  relations
(\ref{ref-meas}) and (\ref{ref-sm}), together, show that for $i = 1,
2$ and $t \in [0,T]$,
\[
 \genx^i (t) =   \gene^i (t) +\geninitx_0^i  - v^i(0)
  -\int_0^t g(t-s)  \genk^i (s)\, ds + \addfn^i(t).
\]
Taking the difference and using the fact that (\ref{ref-nonidling}) implies  
$|\del (x_0^i - v^i(0))| \leq |\del x_0^i|,$ we obtain 
\begin{eqnarray*}
|\del \genx(t)| & \leq &  \nrm{\del\gene}_T +|\del \geninitx_0| 
 + \int_0^t g(t-s) \nrm{\del \genk}_T \, ds +
\nrm{\del \addfn}_T.
\end{eqnarray*}
Taking the supremum
over $t \in [0,T]$  and using (\ref{kineq-3}), we then conclude that
\[
\nrm{\del \genx}_T \leq 3 \ve_T + 2  \ve_T U(T) G(T) \leq  3 \ve_T (1 + U(T)).
\]
Together with (\ref{kineq-3}) and the fact that
(\ref{ref-nonidling}) implies
$\nrm{\del v(t)}_T \leq \nrm{\del \genx}_T$, 
 this establishes (\ref{ineq-cont}).  
Since the Skorokhod topology coincides with the uniform 
topology on the space of continuous functions, (\ref{ineq-cont}) implies
 that the map $\Lambda$ is continuous at points  
$(E, x_0, Z) \in {\cal C}_{\R_+} \times \R \times {\cal C}_{\R_+}$.   
\end{proof}

\begin{lemma}
\label{lem-lambdameas} 
Suppose the fluid limit is subcritical, critical or supercritical. Then 
the map $\Lambda$ from $\dom(\Lambda) \subseteq 
{\cal D}_{\R}[0,\infty) \times \R \times {\cal D}_{\R}[0,\infty)$ 
to ${\cal D}_{\R}[0,\infty)^3$,  
with ${\cal D_{\R}}[0,\infty)$  
equipped with the Skorokhod topology, is measurable. 
\end{lemma}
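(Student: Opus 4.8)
The plan is to deduce the measurability of $\lc$ from a descriptive‑set‑theoretic argument rather than from Proposition \ref{prop-cont} directly. Proposition \ref{prop-cont} tells us that $\lc$ is single‑valued on $\dom(\lc)$ and continuous for the \emph{uniform} topology; but the uniform topology is strictly finer than the Skorokhod $J_1$ topology on ${\cal D}_{\R}[0,\infty)$, and $\lc$ is in fact \emph{not} $J_1$‑continuous at input data whose $\gene$‑ or $\addfn$‑coordinate has jumps --- already the addition underlying (\ref{ref-sm}) is not a $J_1$‑continuous operation --- so uniform continuity cannot be quoted to obtain Skorokhod‑Borel measurability. Instead I will show that the graph of $\lc$ is a Borel subset of $\dataspace \times {\cal D}_{\R}[0,\infty)^3$ (a product of Polish spaces), and then invoke the classical fact that a partial function into a Polish space whose graph is Borel is Borel measurable with respect to the trace $\sigma$‑algebra on its domain: by Lusin's separation theorem the $\lc$‑preimage of a Borel set and its relative complement in $\dom(\lc)$ are disjoint analytic sets, hence separated by a Borel set, so the preimage lies in the trace Borel $\sigma$‑algebra of $\dom(\lc)$.

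To see that the graph is Borel, recall that on ${\cal D}_{\R}[0,\infty)$ --- and on finite products thereof --- the $J_1$‑Borel $\sigma$‑algebra is generated by the coordinate evaluations $\pi_t : f \mapsto f(t)$, $t \ge 0$ (together with the $\R$‑coordinate). By Proposition \ref{prop-cont} (single‑valuedness), the graph of $\lc$ is exactly the set of tuples $(\gene, \geninitx_0, \addfn, \genk, \genx, v)$ satisfying the centered many‑server equations (\ref{ref-meas})--(\ref{ref-nonidling}) associated with the fixed fluid path $\genflx$. Each of these conditions is Borel: since every coordinate process is \cad in $t$ and $t \mapsto \int_0^t g(t-s)\genk(s)\,ds$ is continuous in $t$, the requirement that (\ref{ref-meas}) and (\ref{ref-sm}) hold for all $t$ reduces to their holding for all rational $t$, i.e.\ to a countable intersection of sets of the form $\{\Phi_t = 0\}$ where $\Phi_t$ is a finite combination of evaluations $\pi_t$ (of the various coordinates), the real coordinate $\geninitx_0$, and the convolution functional $\genk \mapsto \int_0^t g(t-s)\genk(s)\,ds$; this last functional is a pointwise‑in‑the‑path limit of finite linear combinations of evaluations $\pi_s(\genk)$ with fixed coefficients (discretize $\genk$ on a dyadic mesh and apply dominated convergence, using that \cad paths are bounded on $[0,t]$ and $g \in \L^1$), hence is Borel. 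Finally, under the standing hypothesis (see Remark \ref{rem-critical}) that $\genflx$ is subcritical, critical or supercritical, relation (\ref{ref-nonidling}) collapses on every $[0,T]$, and thus on $[0,\infty)$, to exactly one of the three pointwise identities $v \equiv \genx$, $v \equiv \genx \wedge 0$, or $v \equiv 0$, each of which is again a countable intersection over rational times of Borel conditions on evaluations. Intersecting the three families of conditions, the graph is Borel.

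Combining the two steps gives that $\lc : \dom(\lc) \to {\cal D}_{\R}[0,\infty)^3$ is Borel measurable when $\dom(\lc)$ carries the trace of the product Skorokhod Borel $\sigma$‑algebra, which is the assertion. The main obstacle, and the reason Proposition \ref{prop-cont} does not suffice by itself, is precisely the mismatch between the uniform and Skorokhod topologies; the descriptive‑set‑theoretic route sidesteps the need for any $J_1$‑continuity of $\lc$. (An alternative, more computational route would instead write down, separately in each of the three regimes, the unique solution $(\genk, \genx, v)$ as an explicit closed‑form expression in $\gene$, $\geninitx_0$, $\addfn$, the density $g$ and the renewal resolvent $U$ --- linear up to a single application of the $J_1$‑continuous one‑dimensional Skorokhod reflection map in the critical case --- and observe that each operation involved is at worst Borel; but verifying Borelness of the graph is shorter and avoids the case analysis.)
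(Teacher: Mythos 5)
Your argument is correct, but it is a genuinely different route from the one in the paper. The paper first reduces to the $X$-component: by Remark \ref{rem-contlambda}, $X$ solves a Volterra equation $X(t) = R(t) + \int_0^t g(t-s)F(X(s))\,ds$, with $F \equiv 0$, $F(x)=x$ or $F(x)=x^+$ according to the regime and with $R$ a measurable functional of $(E,x_0,Z)$; it then writes $X(t)$ as the limit of the Picard iterates ${\cal T}^{(n)}0$ of the associated Volterra operator, each iterate being a measurable functional of the data (using that Skorokhod convergence implies $\L^1_{loc}$ convergence, that $\xi \mapsto F(\xi)$ is continuous on $\L^1_{loc}$, and that Laplace convolution maps $\L^1_{loc}$ continuously into ${\cal C}[0,\infty)$), and concludes via the fact that the Skorokhod Borel $\sigma$-algebra is generated by cylinder sets; $K$ and $v$ are then recovered through (\ref{ref-sm}) and (\ref{ref-nonidling}) exactly as in your first reduction. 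You instead prove that the graph of $\Lambda$ is Borel --- single-valuedness from Proposition \ref{prop-cont}, reduction of (\ref{ref-meas})--(\ref{ref-nonidling}) to rational times via right-continuity, Borel measurability of the convolution functional as a pointwise limit of Riemann sums (you should also record the requirement $K(0)=0$ coming from $K \in {\cal D}_{\R}^0[0,\infty)$, which is one more evaluation and hence harmless) --- and then invoke the Lusin-separation fact that a map between standard Borel spaces whose graph is Borel is Borel measurable on its domain with respect to the trace $\sigma$-algebra; both that fact and its use here are applied correctly. The trade-off is this: the paper's constructive route produces an everywhere-defined measurable map that agrees with the $X$-component of $\Lambda$ on $\dom(\Lambda)$, so it needs no descriptive set theory and yields measurability without passing through analytic sets; your route is shorter, avoids the regime-by-regime Volterra analysis, and works directly from uniqueness plus the Borel description of the defining equations, but it delivers only trace-$\sigma$-algebra measurability, with $\dom(\Lambda)$ guaranteed merely analytic rather than Borel. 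That is, however, precisely what the lemma asserts, and it suffices for its application in Proposition \ref{prop-fclt}, where the arguments fed into $\Lambda$ lie in $\dom(\Lambda)$ almost surely. Your opening observation --- that the uniform continuity of Proposition \ref{prop-cont} cannot simply be quoted because the uniform Borel $\sigma$-algebra on ${\cal D}_{\R}[0,\infty)$ is strictly larger than the Skorokhod one --- is also the correct diagnosis of why the paper proves this lemma separately.
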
   
\begin{proof}  
We first observe that it suffices to establish the measurability
of the map from $(E,x_0, Z) \in dom (\Lambda)$ to $X$, where 
$(K, X, v) = \Lambda(E,x_0,Z)$.  Indeed, in this case, 
because the maps $f \mapsto f$, $f \mapsto f \wedge 0$ and $f \mapsto 0$ from 
${\cal D}_{\R}[0,\infty)$, equipped with the Skorokhod topology,
 to itself are 
all measurable (in fact, continuous) and addition is 
also a measurable mapping from ${\cal D}_{\R}[0,\infty)^2$ to 
${\cal D}_{\R}[0,\infty)$, it follows from (\ref{ref-sm}) and 
(\ref{ref-nonidling}) that the map to $(K,v)$, and therefore the map $\Lambda$, 
is also measurable. 

By Remark \ref{rem-contlambda},  
if $(K,X,v) = \Lambda(E,x_0,Z)$ then $X$ satisfies the 
 integral equation 
\[ X(t) = R(t) + \int_0^t g(t-s) F(X(s)) \, ds, \qquad t \geq 0,  
\]
where $R(t) = R_1(t) \doteq Z(t) + E(t) - \int_0^t g(t-s) E(s) \, ds$ and $F = 0$
if $\fx$ is subcritical, $R(t) = R_2 (t) \doteq R_1(t) + (1-G(t)) x_0$ and $F(x) =x$
if $\fx$ is supercritical, and $R(t) \doteq R_2(t)$ and 
$F(x) = x^+$ if $\fx$ is critical. Note that in all cases, 
  $F$ is Lipschitz.  Also, for fixed $t$, 
 the map  $(E,Z,x_0) \mapsto R(t)$  from ${\cal D}_{\R}[0,\infty)^2 \times \R 
\mapsto \R$ is clearly measurable. 
The latter fact implies the proof for the subcritical case. 
For the other two cases, by standard arguments from 
the theory of Volterra integral equations 
(see Theorem 3.2.1 of \cite{burbook}) it follows that 
$X (t) = \lim_{n \ra \infty} ({\cal T}^{(n)} 0)(t)$, where ${\cal
  T}_t^{(n)}$ is the $n$-fold composition of the operator 
${\cal T}: {\cal D}_{\R}[0,\infty) \mapsto {\cal D}_{\R}[0,\infty)$ given by 
$({\cal T}\xi) (t) = R_{2}(t) + \int_0^t g(t-s) F(\xi(s)) \, ds$, $\xi \in {\cal
  D}_{\R}[0,\infty)$.   
Due to the fact that convergence in the Skorokhod topology implies
convergence in $\L^1_{loc}$, the map $\xi \mapsto F(\xi)$ is 
a continuous mapping from $\L^1_{loc}$ to itself and the 
Laplace convolution $\theta \mapsto \int_0^{\cdot} g(\cdot-s) \theta(s) \, ds$ 
is a continuous map from $\L^1_{loc}$ to ${\cal C}[0,\infty)$, 
it follows that for every $t > 0$,
 $(R, \xi) \mapsto {\cal T}(\xi)(t)$ is 
a measurable map.  
 Because the Borel algebra associated with the 
Skorokhod topology is generated by cylinder sets, 
and measurability 
is preserved under compositions and limits, this implies that the 
map from $R$ to  $X$ is measurable.  Note that in the critical case, 
the above equation is of the same form as the 
one obtained in Theorem 3.1 of Reed \cite{reed07},   
and a more detailed  proof of measurability in the critical case can also 
be found  in the Appendix 
of \cite{reed07}.   
\end{proof}

\beginsec

\section{Convergence Results}
\label{sec-martconv}

The representation (\ref{rep-hatmeasn}) of the pre-limit 
dynamics and the continuity 
properties established in Section \ref{subs-cont} reduce 
the problem of convergence of the sequence  
$\{\hmeasn\}_{N \in \N}$ to that of the joint convergence of 
the sequence of stochastic convolution integrals 
$\{\hathn\}_{N \in \N}$ and the sequences representing the initial data. 
In this section, we establish these convergence results. 
First, in Section \ref{subs-asind} (see Proposition \ref{prop-martconv}) 
we  establish the joint convergence of the sequence
$\{\chatmn\}_{N \in \N}$ and the sequence of centered arrival processes
and initial conditions showing, in particular, that 
the centered departure process is asymptotically independent
of the centered arrival process and initial conditions.
Then, in Section \ref{subs-convconv} (see  Corollary \ref{cor-hreg})
we identify (for a suitable family of $f$)
the limits  of the sequence  $\{\hathn(f)\}_{N \in \N}$. 
Both limit theorems are proved using some basic estimates, which are first
obtained in Section \ref{subs-est}.

\subsection{Preliminary Estimates}
\label{subs-est}

Let $U$ denote the renewal function associated with the service distribution
$G$.
We begin with a useful bound,
whose proof is relegated to  Appendix \ref{subs-bdint}.

\begin{lemma}\label{lem-bdint} 
Fix $T < \infty$. For every $N \in \N$ and  positive integer $k$,
\be
\label{eq-bdint}
 \E\left[\left( \fdcompn_\f1 (T) \right)^k \right] =
\E\left[\left(\int_0^T \int_{[0,\endsup)}
h(x)\, \fmeasn_s(dx) \, ds\right)^k
  \right] \leq k! (U(T))^k.  
\ee
Moreover, there exists 
$\overline{C}(T) < \infty$ such that for every 
 positive integer $k$ and measurable function $\newf$
on $[0,\endsup) \times [0,T]$,
\[
 \sup_{N \in \N} \E\left[\left( \fdcompn_\newf (T) \right)^k \right]
\leq k!\left( \overline{C}(T)\right)^k \left( \int_{[0,\endsup)}
\newf^*(x) h(x) \, dx \right)^k,
\]
where $\newf^*(x) \doteq \sup_{s \in [0,T]} |\newf(x,s)|$.  
Furthermore, if Assumptions \ref{as-flinit} and \ref{as-h} hold, then 
\be
\label{eq-bdintl}
\left( \fdcomp_\f1 (T) \right)^k  =
\left(\int_0^T \int_{[0,\endsup)}
h(x)\, \fmeas_s(dx) \, ds\right)^k \leq k! (U(T))^k. 
\ee
\end{lemma}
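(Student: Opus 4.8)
The plan is to prove all three bounds from a single combinatorial identity for the moments of $\fdcompn_\newf(T)$, obtained by expanding the $k$th power as an iterated integral. Write
\[
\left(\fdcompn_\newf(T)\right)^k = \left(\int_0^T \int_{[0,\endsup)} |\newf(x,s)| h(x)\, \measn_s(dx)\, ds\right)^k
\]
(after first bounding $\newf$ by $\newf^*$ and pulling it out, which reduces everything to controlling $\fdcompn_{\newf^*}$ with $\newf^*$ depending only on $x$), expand it as a $k$-fold product of integrals, and symmetrize over the $k!$ orderings of the time variables $s_1 < s_2 < \cdots < s_k$. This reduces the problem to estimating, for a fixed ordering,
\[
k!\, \E\left[\int_{0 < s_1 < \cdots < s_k < T} \prod_{i=1}^k \left(\int_{[0,\endsup)} \newf^*(x) h(x)\, \measn_{s_i}(dx)\right) ds_1 \cdots ds_k \right].
\]

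The key step is a conditioning/domination argument for the innermost integral. First I would handle the case $\newf = \f1$: here $\int_{[0,\endsup)} h(x)\, \measn_s(dx)$ is the instantaneous departure rate, and $\fdcompn_\f1$ is (by the discussion around (\ref{def-dcompn})--(\ref{def-martn})) the compensator of the departure process $\dn$. The idea is that each of the at most $N$ customers in service, whose contribution to the integrand is $h(a_j^{(N)}(s))\, ds$ over the period it is in service, contributes total mass $\int h \le \int_0^{v_j} h(u)\, du = -\ln(1-G(v_j))$ over its whole service; and since the number of distinct customers that ever pass through service in $[0,T]$ is controlled, a standard renewal-theoretic comparison shows $\E[\int_0^T \int h\, \measn_s(dx)\, ds] \le U(T)$. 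Iterating this bound through the nested time integrals via the strong Markov property of $(\ren,\xn,\measn)$ — at each level $s_i$, conditionally on $\filtn_{s_{i-1}}$, the expected future contribution from $[s_{i-1}, T]$ is again at most $U(T-s_{i-1}) \le U(T)$ — yields the factor $(U(T))^k$, and together with the $k!$ from symmetrization gives (\ref{eq-bdint}). For the general $\newf$ bound, the same scheme applies with $\newf^*(x) h(x)$ in place of $h(x)$: the per-customer contribution is now $\int_0^{v_j} \newf^*(u) h(u)\, du \le \int_{[0,\endsup)} \newf^*(x) h(x)\, dx$ (using $1-G(x) \le 1$ inside the integral over the customer's realized service interval), so each nested level contributes a factor $\overline{C}(T) \int \newf^* h\, dx$, where $\overline{C}(T)$ absorbs the renewal-function bound on the number of service completions in $[0,T]$; uniformity in $N$ is immediate since every estimate is bounded by $N$-free quantities. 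Finally, (\ref{eq-bdintl}) is the deterministic analogue: under Assumptions \ref{as-flinit} and \ref{as-h} the fluid limit exists, $\fmeasn \to \fmeas$, $\fdcompn_\f1(T) \to \fdcomp_\f1(T)$, and the uniform moment bound (\ref{eq-bdint}) passes to the limit by Fatou's lemma (or one argues directly from the fluid equation (\ref{eq-fx}), since $\int_0^T \lan h, \fmeas_s\ran\, ds = \fe(T) - \fx(T) + \fx(0)$ and a similar renewal comparison applies to the explicit representation (\ref{final-fleqs})).

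The main obstacle I anticipate is making the per-customer renewal comparison rigorous while keeping track of the iterated conditional expectations: one must justify that summing $\int h$ over the age trajectories of all customers in service in $[0,T]$ is dominated by $\sum_j (-\ln(1-G(v_j)))$ over customers $j$ that enter service in $[0,T]$, and that $\E$ of this sum — which is a sum over a random index set determined by the entry process $\kn$ — is bounded by the renewal function $U(T)$. This is exactly the kind of estimate underlying Lemma 5.6 of \cite{KasRam07}, so I would lean on that result (or reprove it by a Wald-type identity: condition on the entry times, use that the $v_j$ are i.i.d.\ with $\E[-\ln(1-G(v_1))] = \int_0^\infty h(x)(1-G(x))\,dx$ which is finite, and bound $\E[\kn(T)]$ by $N\,U(T)$ via the renewal structure of the queue-length dynamics). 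Once that base estimate is in hand, the inductive propagation through the $k$ nested integrals is routine and the $k!$ bookkeeping is standard.
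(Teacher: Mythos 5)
Your proposal is correct and follows essentially the same route as the paper: expand the $k$th moment as an iterated integral, order/symmetrize the time variables, propagate the first-moment bound through the nested integrals via the Markov property of $(\ren,\xn,\measn)$, and obtain (\ref{eq-bdintl}) from (\ref{eq-bdint}) by Skorokhod representation and Fatou. The only cosmetic difference is at the base case, where the paper invokes the martingale identity $\E[\fdcompn_{\f1}(T)]=\E[\fdn(T)]$ together with (5.30) of \cite{KasRam07} (and (5.31) of Proposition 5.7 there for general $\newf$), while you sketch the underlying per-customer renewal/Wald comparison before deferring to the same estimates, so the two arguments coincide in substance.
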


We now establish some estimates on the martingale measure $\chatmn$, 
which are used in Sections \ref{subs-asind} and 
\ref{subs-convconv} to establish various convergence  
and sample path regularity results.

\begin{lemma}
\label{lem-martest} 
For  every even integer $r$, there
exists a universal constant $C_r < \infty$ such that for every 
 bounded and continuous function  $\newf$ on $[0,\endsup) \times \R$
and $T < \infty$,
\be
\label{eq-martest}
 \E\left[\sup_{s \in [0,T]} \left|\chatmn_s (\newf)\right|^r\right]
\leq C_r \nrm{\newf}_{\infty}^r \left[ \left(\dfrac{r}{2}\right)! \,
  \left(U(T)\right)^{r/2} +  \dfrac{1}{N^{r/2}}\right], \qquad N \in \N, 
\ee  
\be
\label{eq-martest2}
 \E\left[\sup_{s \in [0,T]} \left|\chatm_s (\newf)\right|^r\right]
\leq C_r  \left(\dfrac{r}{2}\right)! \left(U(T)\right)^{r/2} \nrm{\newf}_{\infty}^r
\ee
and, for $0 \leq s \leq t$, 
\be
\label{eq-martest3}
\E\left[ |\chatm_t (\newf) - \chatm_s (\newf)|^r \right] \leq
C_r \left(\fdcomp_{\newf^2}(t) - \fdcomp_{\newf^2}(s) \right)^{r/2}. 
\ee
\end{lemma}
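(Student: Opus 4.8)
All three estimates will be deduced from the Burkholder--Davis--Gundy (BDG) inequality applied to the real-valued martingales $t\mapsto\chatmn_t(\newf)$ and $t\mapsto\chatm_t(\newf)$, together with the moment bounds on the compensators established in Lemma~\ref{lem-bdint}. I would start with (\ref{eq-martest2}) and (\ref{eq-martest3}), which are easiest because the martingale measure $\chatm$ is continuous with a \emph{deterministic} predictable quadratic variation. Indeed, from the covariance functional (\ref{def-covfnalchatm}) and the theory of stochastic integration against orthogonal martingale measures (the same computation that yields (\ref{cfnal-sint})), $\chatm(\newf)$ has a continuous version with
\[
\langle \chatm(\newf)\rangle_t = \fdcomp_{\newf^2}(t) = \int_0^t\!\!\int_{[0,\endsup)}\newf^2(x,s)\,h(x)\,\fmeas_s(dx)\,ds \leq \nrm{\newf}_\infty^2\,\fdcomp_{\f1}(t).
\]
Since $r$ is even, $r/2\in\N$, so (\ref{eq-bdintl}) of Lemma~\ref{lem-bdint} gives the \emph{deterministic} bound $(\fdcomp_{\f1}(T))^{r/2}\leq (r/2)!\,(U(T))^{r/2}$, whence $\langle\chatm(\newf)\rangle_T^{r/2}\leq \nrm{\newf}_\infty^r\,(r/2)!\,(U(T))^{r/2}$. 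The continuous-martingale BDG inequality then yields (\ref{eq-martest2}) with $C_r$ the BDG constant. For (\ref{eq-martest3}), fix $0\leq s\leq t$ and apply BDG to the continuous martingale $u\mapsto\chatm_{s+u}(\newf)-\chatm_s(\newf)$, which vanishes at $u=0$ and has quadratic variation $\fdcomp_{\newf^2}(s+u)-\fdcomp_{\newf^2}(s)$ at time $u$; evaluating at $u=t-s$ and using that this quadratic variation is deterministic gives (\ref{eq-martest3}).

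For (\ref{eq-martest}): by (\ref{def-chatmn}) and Remark~\ref{rem-mncadlag}, $\chatmn(\newf)=\martn_{\newf}/\sqrt N$ is a genuine c\`{a}dl\`{a}g $\{{\cal F}_t^{(N)}\}$-martingale (true, not merely local, since $\E[\dcompn_{\f1}(T)]<\infty$ by Lemma~5.6 of \cite{KasRam07}), and by (\ref{qv-martn}) its predictable quadratic variation is the continuous process $\dcompn_{\newf^2}(\cdot)/N$, so that
\[
\langle\chatmn(\newf)\rangle_T = \frac{\dcompn_{\newf^2}(T)}{N} \leq \nrm{\newf}_\infty^2\,\frac{\dcompn_{\f1}(T)}{N} = \nrm{\newf}_\infty^2\,\fdcompn_{\f1}(T).
\]
Moreover, $\martn_{\newf}$ jumps only at departure instants, and by (\ref{ineq1-dep}) of Lemma~\ref{lem-dep} at most one departure occurs at any time, so each jump of $\chatmn(\newf)$ has the form $\newf(\agen_j(s),s)/\sqrt N$ and hence $\sup_{s\leq T}|\Delta\chatmn_s(\newf)|\leq \nrm{\newf}_\infty/\sqrt N$. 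I would then invoke the form of the BDG inequality for c\`{a}dl\`{a}g martingales that bounds $\E[\sup_{s\leq T}|M_s|^r]$ by $C_r\big(\E[\langle M\rangle_T^{r/2}] + \E[\sup_{s\leq T}|\Delta M_s|^r]\big)$ --- this follows from the classical estimate $\E[\sup_{s\leq T}|M_s|^r]\leq C_r\E[[M]_T^{r/2}]$ together with the standard comparison of the optional quadratic variation $[M]$ with $\langle M\rangle$ plus the largest jump, using here that $\langle\chatmn(\newf)\rangle$ is continuous (cf.\ \cite{JacShiBook}). Plugging in the two bounds above and then (\ref{eq-bdint}) of Lemma~\ref{lem-bdint} with $k=r/2$ gives
\[
\E\Big[\sup_{s\leq T}|\chatmn_s(\newf)|^r\Big] \leq C_r\Big(\nrm{\newf}_\infty^r\,\E\big[(\fdcompn_{\f1}(T))^{r/2}\big] + \frac{\nrm{\newf}_\infty^r}{N^{r/2}}\Big) \leq C_r\,\nrm{\newf}_\infty^r\Big[(r/2)!\,(U(T))^{r/2} + \frac{1}{N^{r/2}}\Big],
\]
which is (\ref{eq-martest}).

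The only point requiring genuine care is the use of the discontinuous BDG inequality in the second paragraph: passing from the optional quadratic variation $[\chatmn(\newf)]$ that appears in the textbook BDG bound to the predictable quadratic variation $\langle\chatmn(\newf)\rangle$ plus the largest-jump term relies on $\langle\chatmn(\newf)\rangle$ being continuous, which holds because departures occur at totally inaccessible times (service requirements have a continuous distribution) so that $\martn_{\newf}$ is quasi-left-continuous; this continuity is already recorded in (\ref{qv-martn}). An alternative, slightly longer route avoids the discontinuous BDG inequality altogether by writing $[\chatmn(\newf)]_T = \bare^{(N)}_{\newf^2}(T)/N = (\cmn_T(\newf^2)+\dcompn_{\newf^2}(T))/N$ and estimating $\E[|\cmn_T(\newf^2)|^{r/2}]$ by an induction on $r$; however this does not reproduce the sharp $N^{-r/2}$ term, so the argument above is preferable. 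All remaining ingredients --- that $\newf$ bounded and continuous makes $\newf^2$ an admissible integrand for both $\chatmn$ and $\chatm$, that the martingales in question are true martingales, and that $\fdcomp_{\f1}(T)<\infty$ under Assumptions~\ref{as-flinit} and \ref{as-h} --- are routine and recorded in the results cited above.
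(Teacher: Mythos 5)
Your proposal is correct and follows essentially the same route as the paper's proof: the bound (\ref{eq-martest}) via the Burkholder--Davis--Gundy inequality in the form ``predictable quadratic variation plus largest jump'' (the paper cites Theorem 7.11 of Walsh \cite{walshbook} for exactly this), with the jump bound $\nrm{\newf}_\infty/\sqrt{N}$ and the compensator moment estimate (\ref{eq-bdint}) of Lemma \ref{lem-bdint}, and the bounds (\ref{eq-martest2})--(\ref{eq-martest3}) via the continuous-martingale BDG inequality applied to $\chatm(\newf)$ and to the shifted martingale $\{\chatm_t(\newf)-\chatm_s(\newf)\}_{t\geq s}$ together with (\ref{eq-bdintl}). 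The only cosmetic differences are the order in which the three estimates are treated and your added commentary justifying the discontinuous BDG form, which the paper handles by direct citation.
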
 
\begin{proof}
Since $\chatmn(\newf)$ is a martingale,
by the Burkholder-Davis-Gundy (BDG) inequality
(see, for example, Theorem 7.11 of Walsh \cite{walshbook}) it follows that
for any $r > 1$, there exists a universal constant $C_r < \infty$
(independent of $\newf$ and $\chatmn$) such that
\be
\label{start1}
  \E \left[ \sup_{s \leq T} \left| \chatmn_s (\newf) \right|^r \right]
\leq C_r \E \left[ \left( \lan \chatmn (\newf) \ran_T \right)^{r/2} \right]
+ C_r \E \left[ \left| \Delta \widehat{{\cal M}}^{(N),*}_T (\newf) \right|^r
\right],
\ee
where
\[ \Delta \widehat{{\cal M}}^{(N),*}_T (\newf)  \doteq
\sup_{t \in [0,T]} \left| \Delta\chatmn_t (\newf) \right| = 
\sup_{t \in [0,T]} \left|\chatmn_t (\newf) - \chatmn_{t-} (\newf) \right|.
\]
Because the jumps of $\chatmn(\newf)$ are bounded by
$\nrm{\newf}_\infty/\sqrt{N}$, we have
\be
\label{start2}
  \E \left[ \left| \Delta \widehat{{\cal M}}^{(N),*}_T (\newf) \right|^r
\right] \leq \dfrac{\nrm{\newf}_{\infty}^r}{N^{r/2}}.
\ee
On the other hand, by (\ref{cfnal-sint}) it follows that for any $r > 0$,
\[
  \E\left[ \lan \chatmn (\newf) \ran_T^{r/2} \right]  =
\E\left[ \left(\fdcompn_{\newf^2} (T) \right)^{r/2} \right]
 \leq  \nrm{\newf}^{r}_{\infty} \E\left[ \left( \fdcompn_{\f1} (T)
  \right)^{r/2} \right]. 
\]
When combined with (\ref{eq-bdint}) of Lemma \ref{lem-bdint} this shows that if
$r = 2k$, where $k$ is an integer, then
\be
\label{start3}
 \E\left[ \left(\lan \chatmn (\newf) \ran_T \right)^{r/2} \right]
\leq  \nrm{\newf}^{r}_{\infty} \left( \frac{r}{2}  \right)! \,  
\left(U(T)\right)^{r/2}.
\ee
Combining the estimates (\ref{start1})--(\ref{start3}) obtained above,
we obtain (\ref{eq-martest}).

In an exactly analogous fashion, replacing $\chatmn$ and $\fdcompn$,
respectively, by $\chatm$ and $\fdcomp$,  and using the continuity of   
$\chatm(\newf)$ and inequality 
(\ref{eq-bdintl}) of Lemma \ref{lem-bdint}, 
 we obtain (\ref{eq-martest2}). 
Furthermore, for fixed $s \geq 0$, because  
$\{\chatm_{t} (\newf) - \chatm_{s}(\newf)\}_{t \geq s}$ is a continuous martingale with 
quadratic variation process $\{\fdcomp_{\newf^2} (t) - \fdcomp_{\newf^2} (s)\}_{t
  \geq s}$,  
another application of the  Burkholder-Davis-Gundy (BDG) inequality 
yields (\ref{eq-martest3}). 
\end{proof}

As a corollary, we  obtain results on the regularity 
of the processes $\chatmn$ and $\chatm$, which make use  
of the norm inequalities in (\ref{norm-ineq}).

\begin{cor} 
\label{cor-mreg} Each  
$\chatmn, N \in \N$, is a \cad $\H_{-2}$-valued (and hence 
$\mrangespace$-valued) process.  
$\chatm$ is a continuous $\H_{-2}$-valued (and hence $\mrangespace$-valued) 
 process. 
 Moreover, for any $T <
\infty$,  if for every $f \in \testspace$, 
$\chatmn(f) \Rightarrow \chatm(f)$ in ${\cal D}_{\R}[0,T]$ as 
$N \ra \infty$ 
then $\chatmn \Rightarrow \chatm$ in ${\cal D}_{\H_{-2}}[0,T]$ as 
$N \ra \infty$. 
\end{cor}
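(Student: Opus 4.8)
The plan is to work coordinatewise with respect to a complete orthonormal system $\{e_k\}_{k\in\N}\subset\testspace$ of $\H_2$ (which exists since $\testspace$ is dense and separable in $\H_2$), using as the only analytic inputs the second-moment bounds of Lemma \ref{lem-martest}, the norm inequalities (\ref{norm-ineq}), and the Hilbert--Schmidt embedding $\nrm{\cdot}_{\H_1}\overset{\mbox{\tiny{HS}}}{<}\nrm{\cdot}_{\H_2}$. Each $e_k$ is bounded continuous ($\nrm{e_k}_\infty\le 2\nrm{e_k}_{\H_1}\le 2\nrm{e_k}_{\H_2}=2$ by (\ref{norm-ineq})), so $\chatmn(e_k)$ has the \cad version $\martn_{e_k}/\sqrt N$, $\chatm(e_k)$ has a continuous version, and applying (\ref{eq-martest})--(\ref{eq-martest2}) with $r=2$, together with $S\doteq\sum_k\nrm{e_k}_{\H_1}^2<\infty$ (the squared Hilbert--Schmidt norm of $\H_2\hookrightarrow\H_1$), gives
\[
\sup_{N}\E\Big[\sup_{s\le T}\sum_k\chatmn_s(e_k)^2\Big]\le\sum_k\sup_N\E\Big[\sup_{s\le T}\chatmn_s(e_k)^2\Big]\le 4C_2\big(U(T)+1\big)S\doteq C_T<\infty,
\]
and likewise $\E[\sup_{s\le T}\sum_k\chatm_s(e_k)^2]\le 4C_2U(T)S<\infty$, for every $T<\infty$. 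Identifying $\H_{-2}$ with $\ell^2$ via $\mu\mapsto(\mu(e_k))_k$ and letting $\pi_K$ denote the projection of $\H_{-2}$ onto the first $K$ coordinates, these bounds show that, a.s., $\chatmn_s,\chatm_s\in\H_{-2}$ for all $s$ (consistent, via the $L^2(\P)$-continuity of $f\mapsto\chatmn_s(f)$ for $\nrm{\cdot}_{\H_2}$ implied by the same lemma, with the martingale-measure action on $\testspace$), and that $\sup_{s\le T}\nrm{\chatmn_s-\pi_K\chatmn_s}_{-2}^2=\sup_{s\le T}\sum_{k>K}\chatmn_s(e_k)^2\to0$ a.s.\ as $K\to\infty$ (by the $L^1$-bound $4C_2(U(T)+1)\sum_{k>K}\nrm{e_k}_{\H_1}^2$ and monotonicity in $K$), and similarly for $\chatm$.

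This already yields the first two assertions. For each $K$, $\pi_K\chatmn$ is a \cad $\H_{-2}$-valued process and $\pi_K\chatm$ a continuous one, being continuous linear images of the $\R^K$-valued processes of first $K$ coordinates, which are \cad resp.\ continuous; since $\chatmn_\cdot$ (resp.\ $\chatm_\cdot$) is the a.s.\ uniform-on-$[0,T]$ limit of $\pi_K\chatmn$ (resp.\ $\pi_K\chatm$) as $K\to\infty$, and uniform limits preserve the \cad (resp.\ continuity) property, $\chatmn$ is \cad $\H_{-2}$-valued and $\chatm$ is continuous $\H_{-2}$-valued. The ``hence $\mrangespace$-valued'' clauses follow by dualizing the continuous dense inclusion $\testspace\hookrightarrow\H_2$, which gives a continuous inclusion $\H_{-2}\hookrightarrow\dualspace=\mrangespace$.

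For the weak-convergence assertion I will show: (i) $\pi_K\chatmn\Rightarrow\pi_K\chatm$ in ${\cal D}_{\H_{-2}}[0,T]$ for each $K$; (ii) $\lim_{K\to\infty}\limsup_{N\to\infty}\P(\sup_{s\le T}\nrm{\chatmn_s-\pi_K\chatmn_s}_{-2}>\varepsilon)=0$ for each $\varepsilon>0$; (iii) $\pi_K\chatm\to\chatm$ in ${\cal D}_{\H_{-2}}[0,T]$ a.s., hence $\pi_K\chatm\Rightarrow\chatm$; the conclusion $\chatmn\Rightarrow\chatm$ then follows from the standard approximation lemma for weak convergence (Theorem 3.2 of \cite{bilbook}). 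Claims (ii) and (iii) are the in-probability and almost-sure versions of the tail estimates established above, so the content is (i). Since $\pi_K\chatmn$ is a continuous linear image of the $\R^K$-valued process $\big((\chatmn_s(e_1),\dots,\chatmn_s(e_K))\big)_{s\le T}$, it suffices that this vector converges in ${\cal D}_{\R^K}[0,T]$ to $\big((\chatm_s(e_k))_{k\le K}\big)_{s\le T}$. Each coordinate converges by hypothesis to a continuous limit, and the jumps of $\chatmn(e_k)$ are bounded by $\nrm{e_k}_\infty/\sqrt N$ (as noted in the proof of Lemma \ref{lem-martest}), so each $\{\chatmn(e_k)\}_N$ is tight with continuous limit points and hence so is the $\R^K$-valued vector. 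To identify its limit, note that for every $a\in\R^K$ one has $\sum_k a_k\chatmn(e_k)=\chatmn\big(\sum_k a_k e_k\big)\Rightarrow\chatm\big(\sum_k a_k e_k\big)$ by hypothesis (as $\sum_k a_k e_k\in\testspace$), and $\chatm\big(\sum_k a_k e_k\big)$ is a continuous Gaussian martingale, being a continuous martingale whose quadratic variation $\fdcomp_{(\sum_k a_k e_k)^2}$ is deterministic. Hence any subsequential limit $(Y_1,\dots,Y_K)$ of the vector is a continuous $\R^K$-valued martingale (the martingale property and $L^2$-uniform integrability passing to the limit) all of whose scalar combinations are Gaussian; polarizing the quadratic (co)variations $\lan Y_k,Y_{k'}\ran=\tfrac12(\lan Y_k+Y_{k'}\ran-\lan Y_k\ran-\lan Y_{k'}\ran)$ shows they are deterministic, so $(Y_1,\dots,Y_K)$ is a Gaussian continuous martingale whose covariance structure, via $\E[(Y_k)_s(Y_{k'})_t]=\lan Y_k,Y_{k'}\ran_{s\wedge t}$ and the same polarization, is determined by the laws of $\chatm\big(\sum_k a_k e_k\big)$, $a\in\{0,1\}^K$. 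Since $\chatm$ is a white noise, $\big((\chatm_s(e_k))_{k\le K}\big)_{s\le T}$ is itself a Gaussian continuous martingale with these same laws, so the two coincide, establishing (i).

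The step I expect to be the main obstacle is (i), and within it the identification of the subsequential limits of the coordinate processes. The usual obstruction to passing from coordinatewise tightness to joint tightness in ${\cal D}_{\R^K}[0,T]$ disappears because the jumps of $\chatmn$ vanish uniformly, so all limit points are continuous; but recovering the joint law of $(Y_1,\dots,Y_K)$ from the one-dimensional projections furnished by the hypothesis genuinely relies on the Gaussianity of $\chatm$ and on the martingale property surviving in the limit, exploited through the polarization of quadratic (co)variations. Everything else is routine bookkeeping with the estimates of Lemma \ref{lem-martest} and the Hilbert--Schmidt embedding of $\H_2$ into $\H_1$.
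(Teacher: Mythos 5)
Your argument is correct, but it follows a genuinely different route from the paper's. The paper disposes of the statement in a few lines by invoking heavy machinery: the bound (\ref{eq-martest}) together with (\ref{norm-ineq}) gives the uniform ``$1$-continuity'' estimate (\ref{mreg-bd}), after which Mitoma's regularization theorem (using $\nrm{\cdot}_{\H_1}\overset{\mbox{\tiny{HS}}}{<}\nrm{\cdot}_{\H_2}$) yields the c\`{a}dl\`{a}g $\H_{-2}$-valued versions and the continuity of $\chatm$, and the weak convergence is delegated wholesale to Corollary 6.16 of Walsh. You instead make everything explicit and coordinatewise: the Hilbert--Schmidt embedding enters only through $\sum_k\nrm{e_k}_{\H_1}^2<\infty$, which turns the same second-moment bounds into a summable control of $\sup_{s\le T}\sum_k\chatmn_s(e_k)^2$, giving the path regularity by uniform truncation limits, and the convergence by the standard three-step scheme (finite-dimensional projections, uniform tail control, Billingsley's approximation theorem). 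What your route buys is self-containedness and transparency about the one step Walsh's corollary hides, namely how convergence of the \emph{individual} real-valued processes $\chatmn(f)$ upgrades to \emph{joint} convergence of finitely many coordinates: you recover this from the martingale structure, showing that any subsequential limit is a continuous martingale whose bracket matrix, obtained by polarization from the hypothesis applied to $e_k$, $e_k+e_{k'}$, is deterministic. One wording caveat: joint Gaussianity of the limit vector does not follow from Gaussianity of its scalar combinations as processes (that only controls rank-one coefficient arrays over times and coordinates); it follows from the deterministic bracket matrix together with continuity and the martingale property, via the multivariate L\'{e}vy-type characterization (e.g.\ the exponential martingale $\exp(i\,u\cdot Y_t+\tfrac12 u^{T}C(t)u)$), which then also pins down the covariance and identifies the law with that of $(\chatm(e_1),\dots,\chatm(e_K))$. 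Since you have already established exactly those ingredients, this is a matter of ordering the inference, not a gap; the remaining bookkeeping (consistency of the $\ell^2$ coordinates with the action of $\chatmn$ on $\testspace$, and the continuity of the map $\R^K\to\H_{-2}$ used to transfer Skorokhod convergence) is routine and handled as you indicate. What the paper's route buys, by contrast, is brevity and the fact that the nuclear-space results deliver the $\dualspace$-valued statements simultaneously.
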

\begin{proof}Fix $N \in \N$. 
By Remark \ref{rem-mncadlag}, for every 
$f \in \testspace$ there exists a \cad version $\hatmn_{f}$  
of $\chatmn(f)$.
Moreover, for any $T < \infty$  it follows from 
(\ref{eq-martest}) and (\ref{norm-ineq}) that given any  
$\epsilon > 0$ and $\lambda < \infty$  there exists 
 $\delta > 0$ such that if $\nrm{f}_{\H_1} \leq \delta$ then 
\be
\label{mreg-bd}
 \limsup_{N}\P\left( \sup_{t \in [0,T]} |\chatmn_t( f)| > \lambda \right) \leq
 \ve.  
\ee 
Thus, each $\chatmn$ is a $1$-continuous stochastic process in the 
sense of Mitoma \cite{mitoma83b}. 
Since   $\testspace$ is a nuclear Fr\'{e}chet space 
and $\nrm{\cdot}_{\H_1} \overset{\mbox{\tiny{HS}}}{<} 
\nrm{\cdot}_{\H_2}$  (refer to the properties stated in 
Section \ref{subsub-fun}),   by 
Theorem 4.1 of Walsh \cite{walshbook} 
and Corollary 2 of Mitoma \cite{mitoma83b} it follows that $\chatmn$ is a 
\cad $\H_{-2}$-valued, and hence $\mrangespace$-valued, process.

On the other hand, by Lemma \ref{lem-martest} $\chatm(f)$ is a continuous process 
 for every $f \in \testspace$. 
An analogous argument to the one above, that  now  invokes 
 Corollary 1 of Mitoma \cite{mitoma83b} and  (\ref{eq-martest2}),  
shows that $\chatm$ is a continuous  $\H_{-2}$-valued process. 
The last assertion of the corollary follows from  
(\ref{mreg-bd}) and  Corollary 6.16 of Walsh 
\cite{walshbook}. 
\end{proof}

\subsection{Asymptotic Independence}
\label{subs-asind}

We now identify the limit of the sequence of martingale 
measures $\{\chatmn\}_{N \in \N}$ and also show that 
it is asymptotically independent  of 
the centered arrival process and initial conditions. 
We recall  from Assumption \ref{as-diffinit} and Remark \ref{rem-asdiff1}
 that  $\chatm$ is independent of 
the initial conditions 
$(\widehat{x}_0, \hmeas_0, \ops^{\hmeas_0}, \ops^{\hmeas_0}(\f1))$ and 
$\hate$,   where $\hate$ 
 is a diffusion with drift coefficient 
$-\beta$ and diffusion coefficient $\sigma^2$.

\begin{prop}
\label{prop-martconv}
 Suppose Assumptions \ref{as-flinit}--\ref{as-hate}   
and Assumption \ref{as-diffinit}
hold.  Then for every $\newf \in \newcbm$,
$\chatmn ({\newf}) \Rightarrow \chatm ({\newf})$
in ${\cal D}_{\R}[0,\infty)$ as $N \ra \infty$.
Moreover, as $N \ra \infty$,
\[  (\haten, \widehat{x}_0^{(N)}, \hmeasn_0,
\ops^{\hmeasn_0},\ops^{\hmeasn_0}(\f1), \chatmn)
\Rightarrow  (\hate, \widehat{x}_0, \hmeas_0, 
\ops^{\hmeas_0}, \ops^{\hmeas_0}(\f1), \chatm)
\]
in ${\cal D}_{\R}[0,\infty) \times \R \times \H_{-2} \times{\cal
  D}_{\H_{-2}}[0,\infty) \times {\cal
  D}_{\R}[0,\infty) \times {\cal
  D}_{\H_{-2}}[0,\infty)$.
\end{prop}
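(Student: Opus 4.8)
The plan is to establish the convergence in two stages: first prove the functional central limit theorem for $\chatmn(\newf)$ for each fixed bounded continuous $\newf$, and then upgrade to joint convergence with the arrival processes and initial data, exploiting the conditional independence structure of the model. For the first stage I would verify the hypotheses of a martingale central limit theorem (e.g.\ Theorem VIII.3.11 of Jacod and Shiryaev \cite{JacShiBook}, or the version in \cite{walshbook}) applied to the sequence of c\`adl\`ag $\{{\cal F}_t^{(N)}\}$-martingales $\martn_{\newf}/\sqrt{N} = \chatmn(\newf)$. The two things to check are: (i) the predictable quadratic variations converge, i.e.\ by (\ref{cfnal-sint}), $\lan \chatmn(\newf)\ran_t = \int_0^t \int_{[0,\endsup)} \newf^2(x,s) h(x)\,\fmeasn_s(dx)\,ds \Rightarrow \int_0^t \int_{[0,\endsup)} \newf^2(x,s) h(x)\,\fmeas_s(dx)\,ds$, which follows from the fluid limit Theorem \ref{th-flimit} (almost sure convergence $\fmeasn \to \fmeas$) together with the uniform integrability estimate (\ref{eq-bdint}) of Lemma \ref{lem-bdint}; and (ii) a Lindeberg/jump condition, namely that the largest jump $\Delta\widehat{{\cal M}}^{(N),*}_T(\newf) \le \nrm{\newf}_\infty/\sqrt{N} \to 0$, which is immediate and in fact gives the stronger ``conditional-Gaussian'' conclusion. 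The limit is then the continuous Gaussian martingale with deterministic covariance (\ref{def-covfnalchatm}), i.e.\ $\chatm(\newf)$. Tightness in ${\cal D}_{\R}[0,\infty)$ comes from the moment bound (\ref{eq-martest}) of Lemma \ref{lem-martest} via the standard Aldous/Kolmogorov criterion.

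For the second stage, the key structural input is Remark \ref{rem-indep}: for every $t$, given $\ren(t)$ the future arrivals $\{\en(s), s>t\}$ are independent of ${\cal F}_t^{(N)}$. Since $\chatmn$ and $\ops^{\hmeasn_0}$ are built from quantities measurable with respect to the filtration generated by the service/age data (which is contained in $\{{\cal F}_t^{(N)}\}$), while $\haten$ is driven by the arrival process, and $\ren(0) \to 0$ almost surely, the centered arrival process $\haten$ is \emph{asymptotically} conditionally independent of $(\chatmn, \hmeasn_0, \ops^{\hmeasn_0}, \ops^{\hmeasn_0}(\f1))$. Concretely, I would first argue that the joint family $\{(\haten, \widehat{x}_0^{(N)}, \hmeasn_0, \ops^{\hmeasn_0}, \ops^{\hmeasn_0}(\f1), \chatmn)\}_{N}$ is tight in the product space: tightness of the first five coordinates is exactly (\ref{weak-limit}) from Assumptions \ref{as-hate} and \ref{as-diffinit} (together with Remark \ref{rem-asdiff1} for $\haten \Rightarrow \hate$, whose proof is precisely the first stage above applied with $\newf$ replaced by the relevant renewal-process functional), tightness of $\chatmn$ in ${\cal D}_{\H_{-2}}[0,\infty)$ follows from Corollary \ref{cor-mreg} combined with Lemma \ref{lem-martest} and Mitoma's theorem. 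Then any subsequential limit $(\hate, \widehat{x}_0, \hmeas_0, \ops^{\hmeas_0}, \ops^{\hmeas_0}(\f1), \widetilde{{\cal M}})$ has $\widetilde{{\cal M}} \overset{d}{=} \chatm$ by the first stage, and I would identify the joint law by showing that $\widetilde{{\cal M}}$ is independent of the other coordinates: condition on $\ren(0)$, use that conditionally the arrival process is independent of the age/service $\sigma$-field for each $N$, pass to the limit using $\ren(0)\to 0$, and conclude independence of the limits (this is a standard argument using characteristic functionals of the form $\E[\exp(i(\text{functional of }\haten) + i(\text{functional of }\chatmn))]$ factoring in the limit). Since the limit law is the same along every subsequence, the full sequence converges.

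The main obstacle I anticipate is making the asymptotic independence argument rigorous: the conditional independence in Remark \ref{rem-indep} holds only given $\ren(t)$ (or $\ren(0)$), and one must carefully push the conditioning to the limit while $\ren(0) \to 0$. The clean way is to work with the (deterministic, by the martingale CLT) limiting covariance of $\chatmn$, which makes $\chatm$ a white noise independent of \emph{everything} it is not functionally tied to; one then only needs that the test functionals of $\haten$ and of $\chatmn$ do not interact in the limit, which can be reduced to checking that cross-quadratic-variation terms vanish --- precisely the content that would be spelled out via Lemma \ref{lem-dep}, since no arrival and no departure occur simultaneously, forcing the relevant predictable covariation between the arrival-driven and departure-driven martingales to be identically zero. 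A second, more routine technical point is verifying joint tightness and the $\H_{-2}$-valued convergence of $\chatmn$, but this is handled by Lemma \ref{lem-martest}, Corollary \ref{cor-mreg}, and the nuclearity of $\testspace$ exactly as in Mitoma \cite{mitoma83b}.
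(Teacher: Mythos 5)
Your route is essentially the paper's: a bivariate martingale CLT for the pair (arrival-side martingale, $\chatmn(\newf)$) with negligible jumps and convergent brackets, a vanishing cross-bracket, asymptotic independence from the initial data obtained by conditioning on ${\cal F}_0^{(N)}$ and passing to the limit by bounded convergence, and the lift from real-valued to $\H_{-2}$-valued convergence via Lemma \ref{lem-martest}, nuclearity of $\testspace$ and Mitoma/Walsh (Corollary \ref{cor-mreg}). The one substantive difference is that you verify convergence of the predictable bracket $\lan\chatmn(\newf)\ran = \fdcompn_{\newf^2}$ while the paper works with the optional bracket $[\hatmn_{\newf}] = \fbaren_{\newf^2}$ and quotes its convergence from \cite{KasRam07}; either is admissible for the CLT you invoke, though note that since $h$ may be unbounded the convergence of $\fdcompn_{\newf^2}$ is not a direct consequence of weak convergence $\fmeasn_s\to\fmeas_s$ and really does lean on the cited results of \cite{KasRam07}, not just on Theorem \ref{th-flimit} plus Lemma \ref{lem-bdint}.

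The step that does not go through as you state it is the claim that Lemma \ref{lem-dep} forces the cross covariation between the arrival-driven and departure-driven martingales to be \emph{identically} zero. Equation (\ref{ineq2-dep}) only excludes coincidences between arrivals after a fixed time $r$ and departures of customers \emph{already in service at time $r$}; it says nothing about departures of customers who enter service between $r$ and the arrival epoch, and these are exactly the coincidences that could charge the bracket. This is why the paper proves Corollary \ref{cor-dep}: the contribution of such recent entrants is controlled by partitioning $[0,t]$ into intervals of length $\delta$ and using that service times shorter than $\delta$ are rare, giving only the asymptotic statement $N^{-1}\E[\sum_{s\le t}\Delta\en(s)\Delta\dn(s)]\to 0$. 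Moreover, under Assumption \ref{as-hate}(a) the arrival-side martingale is not $\haten$ itself but the Puhalskii--Reiman compensated sum $\widehat{L}^{(N)}$ over interarrival times, whose jumps at the arrival epochs $\tau^{(N)}_i$ are $(1-\lambda^{(N)}\xi^{(N)}_{i+1})/\sqrt{N}$; the paper bounds the second moment of the optional cross-bracket by conditioning on the pre-arrival $\sigma$-field (using $\E[1-\lambda^{(N)}\xi^{(N)}_{i+1}]=0$ and the variance), again obtaining only convergence to zero, as in (\ref{L-M}). So in your plan the phrase ``identically zero'' should be replaced by this asymptotic-vanishing argument (or by a genuinely more careful almost-sure argument that also rules out coincidences with departures of recently entered customers); with that repair, and with the acknowledged need to prove $\haten\Rightarrow\hate$ itself through the same martingale machinery rather than quoting Remark \ref{rem-asdiff1}, the remainder of your outline matches the paper's proof, including the treatment of the inhomogeneous Poisson case.
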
 
\begin{proof}
  We shall first prove the assertion 
 under the supposition that Assumption \ref{as-hate}(a) 
is satisfied,  in which case $\flam, \sigma^2$ are positive 
constants and $\beta$ is a constant in $\R$.  
We start by using  results of Puhalskii and Reiman \cite{PuhRei00} to 
recast the problem in a more convenient form. 
Fix $N \in \N$ and define  
\[ \widehat{L}^{(N)}(t) \doteq \dfrac{1}{\sqrt{N}} \suli_{j=2}^{\en(t)+1}\left(1-\lambda^{(N)}\xi^{(N)}_j\right), \quad
t \in [0,\infty), \]
where recall that $\{\xi^{(N)}_j\}_{j \in \N}$ is the i.i.d.\ sequence of
interarrival times of the $N$th renewal arrival process  $\en$, 
which has mean $1/\lambda^{(N)}$ and variance $(\sigma^2/\flam)/(\lambda^{(N)})^2$. 
Define 
\[ \widehat{\gamma}^{(N)}(t)\doteq
\dfrac{\lambda^{(N)}}{\sqrt{N}}
\left(\sum_{j=2}^{\en(t)+1}\xi_j^{(N)}- \flam t\right),  \qquad t \geq 0. 
\]
Using 
 the definition (\ref{cond-qed}) of $\beta$ and the fact that 
$\fen(t) = \flam t$, we see that  
\be
\label{en-ln}
\haten (t) = \frac{\en(t)-  N \flam
  t}{\sqrt{N}}=\frac{\en(t)-\lambda^{(N)}t}{\sqrt{N}}+ \beta 
t=\widehat{L}^{(N)}(t)+\widehat{\gamma}^{(N)}(t) + \beta t.
\ee 
Puhalskii and Reiman  
(see page 30, Lemma A.1 and  (5.15) of \cite{PuhRei00}) showed 
that $\{\widehat{L}^{(N)}(t), {\cal F}_t^{(N)},t \geq 0\}$
is a locally square integrable martingale 
and, as $N \ra \infty$,  
 $\sup_{t\leq T}|\widehat{\gamma}^{(N)}(t)|\ra 0$ in probability, 
which implies $\gamma^{(N)} \Rightarrow 0$.

We will now show that for every bounded and continuous $f$, 
\be
\label{first-conv}
(\widehat{L}^{(N)}, \hatmn_f) \Rightarrow (B,
\chatm (f)) \quad \mbox{ as } N \ra \infty, 
\ee 
and for real-valued 
bounded, continuous functions $\xi_1$ on $\R^2$ 
and $\xi_2$ on $\R \times \H_{-2} \times {\cal D}_{\H_{-2}}[0,\infty) \times {\cal D}_{\R}[0,\infty)$,  
\be
\label{sec-conv}
\ba{l}
\ds \lim_{N \ra \infty} 
\E[\xi_1(\hatmn_{f}, \widehat{L}^{(N)}) \xi_2(\widehat{x}^{(N)}_0,\hmeasn_0,
\ops^{\hmeasn_0}, \ops^{\hmeasn_0}(\f1))] \\
\qquad \ds = \E[\xi_1(\chatm_{f},B)] \E[
\xi_2(\widehat{x}_0,\hmeas_0,\ops^{\hmeas_0}, \ops^{\hmeas_0}(\f1))]. 
\ea
\ee
Before presenting the proofs of these results, first note that 
on combining (\ref{first-conv}) with  (\ref{en-ln}), the fact that $\gamma^{(N)} \Rightarrow 0$ as $N \ra
\infty$ , the relation $\hate(t) = B(t) - \beta t$, $t \geq 0$, 
and the continuous mapping theorem, it follows that for every bounded and
continuous $f$, 
$(\haten, \chatmn(f)) \Rightarrow (\hate, \chatm(f))$ in ${\cal
  D}_{\R}[0,\infty)^2$.  Together with (\ref{sec-conv}) this implies that 
for every bounded and
continuous $f$, $(\haten, \widehat{x}^{(N)}_0,\hmeasn_0,
\ops^{\hmeasn_0}, \ops^{\hmeasn_0}(\f1), \chatmn(f)) \Rightarrow (\hate,
\widehat{x}_0,\hmeas_0,\ops^{\hmeas_0}, \ops^{\hmeas_0}(\f1), \chatm(f))$ as $N
\ra \infty$. 
Together  with Corollary \ref{cor-mreg} this implies the desired
convergence stated in the proposition.

Thus, to complete the proof (when Assumption \ref{as-hate}(a) holds), it 
suffices to establish  (\ref{first-conv}) and  (\ref{sec-conv}). This is done 
in the following four claims; 
the first three claims below verify conditions of the martingale central 
limit theorem to  establish (\ref{first-conv}), whereas the last claim proves
(\ref{sec-conv}). 

\noi 
{\sc Claim 1.} For  $t \geq 0$,  as $N \ra\infty$, 
$\left[\widehat{L}^{(N)}\right]_t \to \sigma^2 t$ and
 $\left[\hatmn_{f}\right]_t \to \fdcomp_{f^2} (t)$ in
 probability. \\
{\sc Proof of Claim 1.} Note that 
 $\widehat{L}^{(N)}$, being a compensated sum of jumps, 
is a local martingale of finite variation.  
Thus, it is a purely discontinuous martingale (see Lemma 4.14(b) of Chapter I of
Jacod and Shiryaev \cite{JacShiBook}) and hence 
 (by Theorem 4.5.2 of Chapter 1 of
\cite{JacShiBook}), its $\{{\cal F}_t^{(N)}\}$ optional quadratic variation 
is given by 
 \[ \left[\widehat{L}^{(N)}\right]_t=
\frac{1}{N}\suli_{j=2}^{\en(t)+1}(1-\lambda^{(N)}\xi^{(N)}_j)^2.
\] For every $j \in \N$, 
$\E[ (1- \lambda^{(N)} \xi_j^{(N)})^2] = \sigma^2/\bar{\lambda} < \infty$ 
by Assumption \ref{as-hate}(a), and almost surely 
$\fen(t) \ra \flam t$ by Remark  \ref{rem-asdiff1} and Theorem
\ref{th-flimit}.  
Therefore, by the strong law of large numbers for triangular  
arrays of random variables, 
$\left[\widehat{L}^{(N)}\right]_t$ converges   to 
\[\lim_{N\ra\infty} \dfrac{\en(t)-1}{N} \E\left[\left(1-\lambda^{(N)}\xi^{(N)}_j\right)^2\right]=\sigma^2
t\]
as   $N \ra \infty$.  
This establishes the first limit of Claim 1. 

Now, $\hatmn_{f}$ is also a 
compensated sum of jumps. By the same logic we then have  
\[ \left[ \hatmn_{f} \right]_t  =
\sum_{s\leq t} \left( \Delta \hatmn_{f} (s) \right)^2 = \fbaren_{f^2} (t), \]
where the last equality follows because the jumps of $\mn_f$ coincide with
those of $\bare^{(N)}_f$.
The results of Kaspi and Ramanan (see Theorem 5.4,
the discussion below Theorem 5.15 and Proposition 5.17 of \cite{KasRam07}) 
show that $\fbaren_{f^2}(t) \ra \fdcomp_{f^2}(t)$ in probability,
 and so the second limit in Claim 1 is also established. \\
{\sc Claim 2.}  For every $t > 0$,
$\left[\widehat{L}^{(N)},\hatmn_{f}\right]_t \to 0$ 
in probability as $N \ra \infty$. \\
{\sc Proof of Claim 2.} Let $\tau_i^{(N)} \doteq \sum_{j=1}^i \xi_j^{(N)}$ be
the time of the $i$th jump of $\en$. Since $\en$ has unit 
jumps, it follows that 
\be
\label{L-M0}
\left[\widehat{L}^{(N)},\hatmn_f \right]_t 
=\dfrac{1}{\sqrt{N}} \suli_{i \geq 2:\tau_i^{(N)} \leq
  t}\left(1-\lambda^{(N)}\xi^{(N)}_{i+1}\right)
\Delta \hatmn_f\left(\tau^{(N)}_i\right).
\ee 
To prove the claim, it suffices to  show that 
\be\label{L-M}
\E\left[ \left[\widehat{L}^{(N)},\hatmn_f\right]_t^2\right] \leq
\dfrac{\sigma^2\nrm{f}_\infty^2}{\flam N}\E\left[ \sum_{i: \tau_i^{(N)} \leq t} 
  \Delta \fdn\left(\tau_i^{(N)}\right) \right].
\ee
Indeed, then the right-hand side goes to zero as $N \ra \infty$ because the
expectation on the right-hand side is bounded by  $\sup_{N} \E[\fdn(t)]$, 
which is finite by Lemma 5.6 of \cite{KasRam07}
(alternatively, the convergence to zero can also be deduced  from the stronger result stated
in Corollary \ref{cor-dep}).  
The claim then follows by an application of Chebysev's
inequality. 

To establish (\ref{L-M}), we first introduce the filtration 
$\{\tilde{{\cal F}}^{(N)}_t, t \geq 0\}$, which is defined exactly like the filtration 
$\{{\cal F}^{(N)}_t,t \geq 0\}$, except that the  forward recurrence time
process $\ren$ associated with the renewal arrival process $\en$ 
is replaced by the age or backward recurrence time process
$\alpha_E^{(N)}$, which satisfies 
$\alpha_E^{(N)}(s) = s - \sup \{u < s: \en(u) < \en(s) \} \vee 0$ for $s \geq 0$. 
It is easy to verify that $\hatmn_{f}$ is an $\{\tilde{{\cal F}}_t^{(N)}\}$-adapted process and, for each $i \in \N$,  
 $\tau_i^{(N)}$ is an $\{\tilde{{\cal F}}_t^{(N)}\}$-stopping time.
Moreover, for every $i \in \N$,  using the independence of $\xi_{i+1}^{(N)}$
from $\tilde{{\cal F}}^{(N)}_{\tau_i^{(N)}}$ we have 
\begin{eqnarray}
\label{condexp1}
\E[1 - \lambda^{(N)} \xi_{i+1}^{(N)}|\tilde{{\cal F}}_{\tau_i^{(N)}}^{(N)}] &
= &  \E[1 - \lambda^{(N)}
\xi_{i+1}^{(N)}] \qquad = \quad 0, \\
\label{condexp2}
\E[\left(1 - \lambda^{(N)} \xi_{i+1}^{(N)}\right)^2|\tilde{{\cal
    F}}_{\tau_i^{(N)}}^{(N)}] 
& = & \E[\left(1 - \lambda^{(N)}
\xi_{i+1}^{(N)}\right)^2] \, \, = \quad \sigma^2/\flam. 
\end{eqnarray}
For $i \in \N$, 
 conditioning on $\tilde{{\cal F}}^{(N)}_{\tau_i^{(N)}}$, noting that the
 jump times of 
$\hatmn_{f}$  and $\fdn$ coincide, and using  (\ref{condexp2}) and the estimate
$(\Delta \hatmn_f)^2 \leq \nrm{f}_{\infty}^2 \Delta \fdn$, we obtain 
\begin{eqnarray*}
 \E\left[\left(1-\lambda^{(N)}\xi^{(N)}_{i+1}\right)^2
\Delta \hatmn_f\left(\tau^{(N)}_i\right)^2 |\tilde{\widehat{{\cal
      F}}}_{\tau_i^{(N)}}\right]
& = & \dfrac{\sigma^2}{\flam}\Delta \hatmn_f\left(\tau^{(N)}_i\right)^2  \\
& \leq & \dfrac{\sigma^2\nrm{f}_{\infty}^2}{\flam}\Delta \fdn
\left(\tau^{(N)}_i\right).
\end{eqnarray*}
A similar conditioning argument using (\ref{condexp1}) 
shows that for $2 \leq k < i$, $i, k \in \N$,  
\[ \E\left[\left(1-\lambda^{(N)}\xi^{(N)}_{k+1}\right)
\Delta \hatmn_f\left(\tau^{(N)}_k\right)\left(1-\lambda^{(N)}\xi^{(N)}_{i+1}\right)
\Delta \hatmn_f\left(\tau^{(N)}_i\right)\right] = 0. 
\]
Taking first the square and then the expectation of each side of 
(\ref{L-M0}) and using the last two relations, we obtain (\ref{L-M}). 
As argued above, this proves the claim. 


\noi 
{\sc Claim 3.}  The jumps of $(\widehat{L}^{(N)}, \chatmn(f))$ are
asymptotically negligible and  (\ref{first-conv}) holds. \\
{\sc Proof of Claim 3.} The jumps of $\haten$  and $\widehat{L}^{(N)}$
converge to zero as $N \ra \infty$ 
because $\en$ is a counting process with unit 
 jumps and $\sup_{t\leq T}|\widehat{\gamma}^{(N)}(t)|\ra 0$ in probability. 
Also, by Lemma
\ref{lem-dep} and the continuity of  $\dcompn_{f}$,
the jumps of $\chatmn(f) = \hatmn_{f}$ are uniformly bounded by
$\nrm{f}_{\infty}/\sqrt{N}$, and so they also 
converge to zero in probability.  
Because $\{(\widehat{L}^{(N)}, \chatmn(f))\}_{N \in \N}$ is a sequence of 
martingales starting at zero, we can apply the martingale central limit theorem 
(see, e.g., Theorem 1.4 on page 339 of
Ethier and Kurtz \cite{ethkurbook}).  
The conditions of that theorem are verified 
by claims 1-3 above, and (\ref{first-conv}) follows from the observation that 
  $B$ and $\chatm(f)$ are independent, centered mean Gaussian processes with 
variance processes $\sigma^2 t$ and $\fdcomp_{f^2}(t)$, $t \geq 0$,
respectively.

\noi 
{\sc Claim 4.} The asymptotic independence property in (\ref{sec-conv})
holds. \\
{\sc Proof of Claim 4.} 
Conditioned on ${\cal F}^{(N)}_0$, 
 $\widehat{L}^{(N)}$ and $\hatmn_f$  are still compensated 
sums of jumps with the same optional quadratic variation processes as without 
conditioning.  
Thus, the same argument provided above in claims 1--3 above show 
that, conditioned on ${\cal F}^{(N)}_0$, the sequence 
 $(\widehat{L}^{(N)}, \hatmn_f)$ 
still converges weakly to 
$(B, \chatm(f))$.  In particular, by the continuous mapping 
theorem, this then implies that for any 
bounded continuous function $F_1$ on $\R^2$, 
\[  \lim_{N \ra \infty} \E\left[ F_1(\widehat{L}^{(N)}(t), \hatmn_f(t))|{\cal F}_0^{(N)}\right] 
= \E[F_1(B(t), \chatm_t(f))]. 
\]
This establishes the desired asymptotic independence because  for any
bounded, continuous function 
$F_2$ on  $\R \times \H_{-2} \times {\cal D}_{\H_{-2}}[0,\infty) \times {\cal
  D}_{\R}[0,\infty)$, 
by the bounded convergence theorem 
and another application of the continuous mapping theorem for $F_2$, 
\[
\begin{array}{l}
 \ds \lim_{N \ra \infty} \E\left[F_1(\widehat{L}^{(N)}(t), \hatmn_f(t))
  F_2(\widehat{x}^{(N)}_0, \hmeasn_0, \ops^{\hmeasn_0}, \ops^{\hmeasn_0}(\f1))\right]  \\
 \qquad = \ds
 \lim_{N \ra \infty} \E\left[\E\left[F_1(\widehat{L}^{(N)}(t), \hatmn_f(t))|{\cal
  F}_0^{(N)}\right]F_2(\widehat{x}^{(N)}_0, \hmeasn_0,\ops^{\hmeasn_0}, \ops^{\hmeasn_0}(\f1))\right] \\
 \qquad \ds  =  \E\left[F_1(B(t), \chatm_t(f))]\E[F_2(\widehat{x}^{(N)}_0, \hmeasn_0,\ops^{\hmeasn_0}, \ops^{\hmeasn_0}(\f1))\right].
\end{array}
\]
This completes the proof for the case when  Assumption \ref{as-hate}(a) is
satisfied. 

We now turn to the proof for the case when Assumption \ref{as-hate}(b) 
is satisfied.  The proof in this case is similar, and so we only 
elaborate on the differences.  First, for $N \in \N$, define 
$\widehat{L}^{(N)} = L^{(N)}/\sqrt{N}$, where now 
\[ L^{(N)}(t) \doteq 
\left(\en(t)-\int_0^t\lambda^{(N)}(s)\, ds \right), 
\qquad t \in [0,\infty), 
\]
is the scaled and centered inhomogeneous Poisson process, 
and note that 
\be
\label{eq-hate2}
 \haten(t) = \sqrt{N}\left(\fen(t)-\int_0^t 
\lambda(s) \, ds\right)=
\widehat{L}^{(N)} (t) + \int_0^t\beta(s) \, ds. 
\ee
Fix $f$ that is bounded and continuous.  
To complete the proof of the proposition, it suffices to show that 
$(\widehat{L}^{(N)}, \hatmn_f) \Rightarrow (\int_0^\cdot \sqrt{\lambda(s)} \,
dB(s), \chatm(f))$.  
Let $\{\tilde{{\cal F}}_t^{(N)}\}$ be the 
filtration defined in Claim 2 above. Then, 
as is well known,  
 $\{ \widehat{L}^{(N)}(t), \tilde{{\cal F}}_t^{(N)}, t \geq 0\}$ and 
$\{ \hatmn_f(t), \tilde{{\cal F}}_t^{(N)}, t \geq 0\}$ are 
 martingales. 
Hence, once again, we need only verify the conditions 
of the martingale central limit theorem 
(see Theorem 1.4 on page 339 of Ethier and Kurtz \cite{ethkurbook}). 
 Arguing exactly as in Claims 3 and 1 of the proof for case (a), 
it is clear that the jumps of  $\haten$ and 
$\hatmn_f$ are uniformly bounded by $(1 + \nrm{f}_\infty)/\sqrt{N}$ and 
for each $t > 0$, 
  $[\hatmn_f]_t \to \fdcomp_{f^2} (t)$ in probability. 
Keeping in mind that the candidate limit 
 $(\hate, \chatm(f))$ is a pair of  independent, 
continuous Gaussian martingales with respective quadratic variations 
$\int_0^t\lambda(s)ds$ and $\fdcomp_{f^2}(t)$, 
to complete the proof it suffices to  
verify that for every $t \in [0,\infty)$, as $N \ra \infty$, 
the following limits hold in probability:  
\be \label{ind-toshow2}
[\widehat{L}^{(N)}]_t \ra \int_0^t\lambda(s) \, ds, \qquad \quad
[\widehat{L}^{(N)},\mn_{f}]_t \to 0. \ee  
Clearly,  the $\{\tilde{{\cal F}}_t^{(N)}\}$-predictable 
quadratic variation of $\widehat{L}^{(N)}$ is given by  
$\lan \widehat{L}^{(N)}\ran_t = \int_0^t \lambda^{(N)} \, ds$,  
which converges to $\int_0^t \lambda(s) \, ds$ as $N \ra \infty$.  
By Theorem 3.11 of Chapter VIII 
of  \cite{JacShiBook}, this implies  
that $[\widehat{L}^{(N)}]_t$ converges in law to 
$\int_0^t\lambda(s)ds.$  
Because the limit $\int_0^t\lambda(s)ds$ is  deterministic, 
the convergence is also in probability. 
This establishes the first limit in (\ref{ind-toshow2}). 
To establish the second limit, note that 
$\widehat{L}^{(N)}$ and $\hatmn_f$ are both compensated pure jump processes
with continuous compensators,  
and so 
their optional quadratic 
covariation takes the form 
\[
[\widehat{L}^{(N)},\hatmn_f]_t = 
\dfrac{1}{N}\suli_{s\leq t}\Delta L^{(N)} (s) \Delta \hatmn_f(s) 
= \dfrac{1}{N} \suli_{s \leq t} \Delta E^{(N)}(s)  \Delta Q_f^{(N)} (s). 
\]
Noting that $\Delta Q_f^{(N)} \leq \nrm{f}_{\infty} \Delta D^{(N)}$, 
taking expectations of both sides above and then the limit as $N \ra \infty$, 
Corollary \ref{cor-dep} shows that 
\[
\lim_{N \ra \infty} \E\left[\left|[\widehat{L}^{(N)},\hatmn_{f}]_t\right|\right]
\leq \lim_{N \ra \infty} \dfrac{\nrm{f}_\infty}{N} \E\left[\suli_{s\leq
t}\Delta\en(s)\Delta\dn(s)\right] = 0. 
\]
An application of Markov's inequality then yields the 
 second limit in (\ref{ind-toshow2}). 
  The asymptotic independence 
from the initial conditions is proved exactly in the same way as when 
Assumption \ref{as-hate}(a) holds  (see the proof of Claim 4) and is thus omitted. 
\end{proof}

\subsection{Convergence of Stochastic Convolution Integrals }
\label{subs-convconv}

 We now show that for suitable $f$, 
$\{\hathn(f)\}_{N \in \N}$ is a tight sequence of c\`{a}dl\`{a}g
processes.  Since each $\hathn(f)$ is not a martingale, 
the proof is more involved than the corresponding result 
for $\{\chatmn(f)\}_{N \in \N}$ and 
we require an additional regularity assumption (Assumption \ref{as-holder}) 
 on $G$, which holds if the hazard rate function $h$ is bounded and  
$g$ is in $\L^{1+\alpha}$ for some $\alpha > 0$ 
(see Remark \ref{rem-holder}). 
For conciseness,  we will use the notation
\be
\label{not-cg}
 \cg (x) = 1 - G(x), \qquad f\cg(x) = f(x) \cg(x), \qquad x \in [0,\infty). 
\ee
We first derive an elementary inequality.

\begin{lemma}
\label{ineq-basic} Suppose Assumption \ref{as-holder} is 
satisfied, let $f$ be a bounded, H\"{o}lder continuous function with 
constant $C_f$ and exponent $\gamma_f$, and let 
$\gamma_f^\prime \doteq \gamma_G \wedge
\gamma_f$. 
The family of operators $\{\Psi_t,t \geq 0\}$ defined in 
(\ref{def-thetat}) satisfies, for all $0 < t < t^\prime < \infty$, 
\be
\label{est-v3}
 \nrm{\Psi_t f - \Psi_{t^\prime} f}_{\infty} 
\leq \left( C_f + C_G \nrm{f}_{\infty} \right) |t-t^\prime|^{\gamma_f^\prime}. 
\ee
Moreover, if $f \in \H_1$ then $C_f \leq \nrm{f}_{\H_1}$ and there exists a constant 
$C_0 < \infty$, independent of 
$f$, such that 
the right-hand side of (\ref{est-v3}) can be replaced by 
$C_0 \nrm{f}_{\H_1} |t-t^\prime|^{\gamma_f^\prime}$. 
\end{lemma}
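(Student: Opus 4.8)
The plan is to estimate $\nrm{\Psi_t f - \Psi_{t'}f}_\infty$ by examining, pointwise in $(x,s)\in[0,\endsup)\times[0,\infty)$, the difference $(\Psi_t f)(x,s) - (\Psi_{t'}f)(x,s)$, and writing it as a telescoping sum whose two pieces are controlled separately: one by the H\"older regularity of $f$ and one by Assumption \ref{as-holder}. Recall from (\ref{def-thetat}) that
\[
(\Psi_t f)(x,s) = f\bigl(x+(t-s)^+\bigr)\,\dfrac{1-G(x+(t-s)^+)}{1-G(x)}.
\]
First I would fix $(x,s)$ and abbreviate $a = x + (t-s)^+$, $b = x + (t'-s)^+$; note $a,b \ge x$ and, crucially, $|a-b| = |(t-s)^+ - (t'-s)^+| \le |t-t'|$, since $r\mapsto (r-s)^+$ is $1$-Lipschitz. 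Then write
\[
(\Psi_t f)(x,s) - (\Psi_{t'}f)(x,s)
= \bigl(f(a)-f(b)\bigr)\dfrac{1-G(a)}{1-G(x)}
+ f(b)\dfrac{(1-G(a)) - (1-G(b))}{1-G(x)}.
\]
For the first term, use $|f(a)-f(b)| \le C_f |a-b|^{\gamma_f} \le C_f |t-t'|^{\gamma_f}$ (H\"older continuity of $f$) together with the bound $0 \le (1-G(a))/(1-G(x)) \le 1$, which holds because $G$ is nondecreasing and $a \ge x$. For the second term, bound $|f(b)| \le \nrm{f}_\infty$ and apply Assumption \ref{as-holder}: writing $a-x = y$, $b-x = y'$ we have $|G(x+y)-G(x+y')|/(1-G(x)) \le C_G |y-y'|^{\gamma_G} = C_G|a-b|^{\gamma_G} \le C_G |t-t'|^{\gamma_G}$, provided $|t-t'| < \delta$ (the threshold in Assumption \ref{as-holder}); for $|t-t'|\ge\delta$ the inequality is trivial after enlarging the constant, using $\nrm{\Psi_t f}_\infty \le \nrm{f}_\infty$ from (\ref{op-ineqs}). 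Combining the two pieces and using $|t-t'|^{\gamma_f}, |t-t'|^{\gamma_G} \le |t-t'|^{\gamma_f'}$ on bounded sets (absorbing the diameter into the constants as usual) yields (\ref{est-v3}) with constant $C_f + C_G\nrm{f}_\infty$. Taking the supremum over $(x,s)$ completes this part.

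For the last assertion, suppose $f \in \H_1$. The norm inequality (\ref{norm-ineq}) gives $\nrm{f}_\infty \le 2\nrm{f}_{\H_1}$, and the bound $|f(0)|\le\sqrt2\nrm{f}_{\H_1}$ together with $f(x)^2 = f(0)^2 + 2\int_0^x f f'$ and Cauchy--Schwarz shows, for $0 \le x < x'$,
\[
|f(x) - f(x')| = \left|\int_x^{x'} f'(u)\,du\right| \le |x-x'|^{1/2}\nrm{f'}_{\H_0} \le \nrm{f}_{\H_1}|x-x'|^{1/2},
\]
so $f$ is H\"older continuous with exponent $\tfrac12$ and constant $C_f \le \nrm{f}_{\H_1}$; hence $\gamma_f$ may be taken to be $\tfrac12$ and $\gamma_f' = \gamma_G \wedge \tfrac12$. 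Substituting $C_f \le \nrm{f}_{\H_1}$ and $\nrm{f}_\infty \le 2\nrm{f}_{\H_1}$ into the already-established (\ref{est-v3}) gives the right-hand side $\le (1 + 2C_G)\nrm{f}_{\H_1}|t-t'|^{\gamma_f'}$, so $C_0 = 1 + 2C_G$ works, and it is independent of $f$.

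I do not anticipate a serious obstacle here: the only mild subtlety is handling the regime $|t-t'| \ge \delta$, where Assumption \ref{as-holder} is not directly applicable and one instead falls back on the crude bound $\nrm{\Psi_t f - \Psi_{t'}f}_\infty \le 2\nrm{f}_\infty$ from (\ref{op-ineqs}) and enlarges the constant by a factor depending on $\delta$. Everything else is elementary manipulation of the H\"older and Cauchy--Schwarz inequalities together with the semigroup-type bound $(1-G(a))/(1-G(x)) \le 1$.
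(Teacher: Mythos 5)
Your proof is correct and follows essentially the same route as the paper: the identical two-term decomposition of $\Psi_t f - \Psi_{t'}f$, the bound $(1-G(x+(t-s)^+))/(1-G(x))\le 1$ combined with the H\"older continuity of $f$ for one piece and Assumption \ref{as-holder} for the other, and the same Cauchy--Schwarz argument showing that $f\in\H_1$ is H\"older continuous with exponent $\tfrac12$ and constant at most $\nrm{f}_{\H_1}$. Your explicit handling of the regime $|t-t'|\ge\delta$ (and of large $|t-t'|$, where the comparison of exponents reverses) is a technicality the paper passes over silently; it requires enlarging the constant but changes nothing of substance, since the estimate is only ever applied to small time increments.
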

\begin{proof} Fix  a bounded, H\"{o}lder continuous function $f$, as in the
  statement of the lemma.  Then we can write 
 $\Psi_t f - \Psi_{t^\prime} f  = \newf^{(1)} + \newf^{(2)}$, where
\[  \newf^{(1)}(x,s) = \dfrac{\cg (x+(t-s)^+)}{\cg(x)}
 \left( f(x+(t-s)^+) - f(x + (t'-s)^+) \right) \]
and
\[ \newf^{(2)}(x,s) =  f(x + (t'-s)^+) \dfrac{ \cg(x + (t-s)^+) - \cg
  (x+(t'-s)^+)}{\cg(x)}.
\]
The H\"{o}lder continuity of $f$
and the fact that $\cg$ is non-increasing show that
$\nrm{\newf^{(1)}}_{\infty} \leq C_f |t- t'|^{\gamma_f}$, 
and  Assumption \ref{as-holder} shows that 
$\nrm{\newf^{(2)}}_{\infty} \leq C_G  \nrm{f}_{\infty}|t- t'|^{\gamma_G}$.  
When combined, these two inequalities yield  
(\ref{est-v3}). 
If $f \in \testspace$,  the Cauchy-Schwarz inequality implies 
\[ \left|f(t) - f(t^\prime)\right| = \left|\int_t^{t^\prime} f^\prime(u)
\, du\right| \leq  \nrm{f^\prime}_{\L^2} (t-t^\prime)^{1/2} \leq 
 \nrm{f}_{\H_1} (t-t^\prime)^{1/2}. 
\] 
Thus, $C_f = \nrm{f^\prime}_{\L^2} \leq \nrm{f}_{\H_1}$ and $\gamma_f = 1/2$, respectively, 
serve as a  H\"{o}lder constant 
and exponent for $f$. 
When combined with (\ref{norm-ineq}) this shows that $f$ is bounded and H\"{o}lder
continuous and the second assertion of the 
lemma holds with $C_0 \doteq (1+ 4 C_G)$. 
\end{proof}

In what follows, for $t > 0$, let $\opint_t:{\cal C}_b([0,\endsup)) \mapsto 
{\cal C}_b([0,\endsup) \times [0,t])$  be the operator 
given by 
\be
\label{def-opint}
(\opint_t f) (x,u) \doteq \int_u^t (\Psi_s \newf(\cdot,s))(x,u) \, ds 
= \int_u^t  \newf (x+s-u,s) \dfrac{1-G(x+s-u)}{1-G(x)} \, ds 
\ee
for $(x,u) \in [0,\endsup) \times [0,t]$ and 
$f \in {\cal C}_b[0,\endsup)$.   The first 
two properties of the next lemma are used in Corollary 
\ref{cor-hreg} to establish 
convergence of the sequence $\{\hathn(f)\}_{N \in \N}$ and 
regularity of the limit.  The third property below is used in the proof of the
Fubini type result in Lemma \ref{lem-fub} and the 
last property is used in the proof of Theorem \ref{th-main2}. 

\begin{lemma}
\label{lem-hathn}
If Assumption \ref{as-holder} 
is  satisfied, the following properties hold: 
\begin{enumerate}
\item 
Given a bounded and H\"{o}lder continuous function $f$ on $[0,\endsup)$,
the sequence of processes 
$\{\hathn(f)\}_{N \in \N}$
is tight in $\D_{\R}[0,\infty)$ and 
$\hath(f)$ is $\P$-a.s.\ continuous. 
\item 
There exists $r \geq 2$ and a constant $C_0 < \infty$ such that for 
any $f \in \testspace$, 
\be
\label{reg3}
  \sup_{N} \E \left[ \sup_{t \in [0,T]} \left|
\hathn_t(f) \right|^r \right] \leq 
C_0 \nrm{f}_{\H_1}^r. 
\ee
\item  
Suppose  $\newf:[0,\endsup) \times [0,\infty) \mapsto \R$ 
is a Borel measurable function such that for every $x \in [0,\endsup)$, 
the function  $r \mapsto \newf(x,r)$ is locally integrable and, for every 
$t \in [0,T]$,  the function $x \mapsto \int_{0}^t \newf(x,r)\, dr$ is 
bounded and H\"{o}lder continuous with constant $C_{\newf,T}$ and
 exponent $\gamma_{\newf,T}$ (that is independent of $t$). 
Then $\P$ almost surely, the  
random field $\{\hath_s(\int_0^t (\newf(\cdot, r)) \, dr)), s, t \geq
0\}$ is jointly continuous in $s$ and $t$.   
\item 
Suppose $\newf \in {\cal C}_b([0,\endsup) \times [0,\infty))$. 
Then the process $\{\chatm_t (\Theta_t \newf), t \geq 0\}$ 
admits a continuous version. 
\end{enumerate}
\end{lemma}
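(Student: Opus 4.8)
The plan is to establish the continuity of $\{\chatm_t(\Theta_t\newf), t\ge 0\}$ by writing, for $s<t$,
\[
\chatm_t(\Theta_t\newf) - \chatm_s(\Theta_s\newf)
= \left[\chatm_t(\Theta_t\newf) - \chatm_s(\Theta_t\newf)\right]
+ \left[\chatm_s(\Theta_t\newf) - \chatm_s(\Theta_s\newf)\right],
\]
and controlling each bracket by an $L^r$-estimate, so that Kolmogorov's continuity criterion applies. Here $\Theta_t\newf$ is, for each fixed $t$, a bounded continuous function on $[0,\endsup)\times[0,t]$, which we regard as extended by an arbitrary bounded continuous rule to all of $[0,\endsup)\times[0,\infty)$ (the values on $[0,\endsup)\times(t,\infty)$ being irrelevant to $\chatm_s$ for $s\le t$); by (\ref{op-ineqs}) and the boundedness of $\newf$ on $[0,\endsup)\times[0,T]$ we have $\sup_{t\le T}\nrm{\Theta_t\newf}_\infty \le T\nrm{\newf}_\infty < \infty$.

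For the first bracket, $\chatm_t(\Theta_t\newf) - \chatm_s(\Theta_t\newf)$ is the increment over $(s,t]$ of the stochastic integral of the fixed integrand $\Theta_t\newf$ against $\chatm$, so by (\ref{eq-martest3}) of Lemma \ref{lem-martest} (with the even integer $r$), together with (\ref{eq-bdintl}) of Lemma \ref{lem-bdint} bounding $\fdcomp_{(\Theta_t\newf)^2}(t)-\fdcomp_{(\Theta_t\newf)^2}(s)$ by $\nrm{\Theta_t\newf}_\infty^2 (U(t)-U(s))$, and the absolute continuity (indeed Lipschitz continuity on compacts, since $g$ need not be assumed continuous here, only that $\fmeas_s(dx)=(1-G(x))\,dx$-type estimates via Lemma \ref{lem-bdint}) of $U$, one gets a bound of the form $C_{r,T}\nrm{\newf}_\infty^r |t-s|^{r/2}$. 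For the second bracket, $\chatm_s(\Theta_t\newf - \Theta_s\newf)$ is the integral of the fixed integrand $\Theta_t\newf - \Theta_s\newf$, and by (\ref{eq-martest2}) of Lemma \ref{lem-martest} it is bounded in $L^r$ by $C_r (U(T))^{1/2}\,(r/2)!^{1/r}\,\nrm{\Theta_t\newf - \Theta_s\newf}_\infty^{\,}$ — raised to the $r$, a bound of the form $C_{r,T}\nrm{\Theta_t\newf - \Theta_s\newf}_\infty^r$. So the whole argument reduces to showing that $t\mapsto \Theta_t\newf$ is Hölder continuous in the sup norm.

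The key computation is the uniform-in-$(x,u)$ estimate on $(\Theta_t\newf)(x,u) - (\Theta_s\newf)(x,u)$. Recalling (\ref{def-opint}), for $u\le s<t$ we split
\[
(\Theta_t\newf)(x,u) - (\Theta_s\newf)(x,u)
= \int_s^t (\Psi_r\newf(\cdot,r))(x,u)\,dr,
\]
and for $s<u$ one has $(\Theta_s\newf)(x,u)=0$ while $(\Theta_t\newf)(x,u)=\int_u^t(\Psi_r\newf(\cdot,r))(x,u)\,dr$ (when $u\le t$); in both cases the difference is an integral of $\Psi_r\newf(\cdot,r)$ over an interval of length at most $|t-s|$, and by (\ref{op-ineqs}) each integrand has sup-norm at most $\nrm{\newf}_\infty$. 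Hence $\nrm{\Theta_t\newf - \Theta_s\newf}_\infty \le \nrm{\newf}_\infty |t-s|$, which is in fact Lipschitz, better than Hölder. Combining the two brackets, for $0\le s\le t\le T$,
\[
\E\left[\left|\chatm_t(\Theta_t\newf) - \chatm_s(\Theta_s\newf)\right|^r\right]
\le C_{r,T}\left(\nrm{\newf}_\infty^r |t-s|^{r/2} + \nrm{\newf}_\infty^r |t-s|^r\right)
\le C'_{r,T}\nrm{\newf}_\infty^r |t-s|^{r/2},
\]
and choosing $r\ge 4$ gives exponent $r/2\ge 2 > 1$, so Kolmogorov's criterion yields a continuous modification on $[0,T]$ for every $T$, hence on $[0,\infty)$. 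The main obstacle is purely bookkeeping: handling the case $u\in(s,t]$ where the lower limit of integration in $\Theta_t\newf$ and $\Theta_s\newf$ differs, and making sure the extension of $\Theta_t\newf$ off the simplex $\{u\le t\}$ does not interfere with $\chatm_s(\cdot)$ for $s\le t$ — both are resolved by the observation that $\chatm_s$ only integrates over $[0,\endsup)\times[0,s]$, so only the restriction of the integrand to that set matters. No genuinely new estimate beyond Lemmas \ref{lem-martest} and \ref{lem-bdint} is needed.
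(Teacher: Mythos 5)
Your proposal covers only part (4) of the lemma. Parts (1)--(3) --- tightness of $\{\hathn(f)\}_{N\in\N}$ in $\D_{\R}[0,\infty)$ together with the a.s.\ continuity of $\hath(f)$, the uniform moment bound (\ref{reg3}), and the joint continuity of the random field in part (3) --- are not addressed at all, and they are the technically substantive content of the statement: the paper proves them by writing $\hathn_t(f)=\chatmn_t(\Psi_t f)=\E[V^{(N)}_t(f)\,|\,{\cal F}^{(N)}_t]$ with $V^{(N)}_t(f)\doteq\chatmn_T(\Psi_t f)$, deriving a Kolmogorov-type estimate for $V^{(N)}(f)$ from (\ref{eq-martest}) and Lemma \ref{ineq-basic}, and then using the optional projection, Doob's inequality and Aldous' criterion. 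Nothing in your plan substitutes for that, so as it stands the proposal is incomplete relative to the lemma.

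For part (4) itself, your decomposition is essentially the paper's (adding and subtracting the term with the integrand frozen; the paper writes the frozen-time piece as a conditional expectation, which is the same object since $\chatm_s(\phi)=\E[\chatm_T(\phi)\,|\,\widehat{{\cal F}}_s]$ for deterministic $\phi$), and your bound $\nrm{\Theta_t\newf-\Theta_s\newf}_\infty\le\nrm{\newf}_\infty|t-s|$ matches the paper's $\nrm{\int_{s}^t\Psi_u\newf\,du}_\infty\le|t-s|\,\nrm{\newf}_\infty$. The genuine gap is in your control of the increment term $\chatm_t(\Theta_t\newf)-\chatm_s(\Theta_t\newf)$: inequality (\ref{eq-martest3}) bounds its $r$th moment by $C_r\bigl(\fdcomp_{(\Theta_t\newf)^2}(t)-\fdcomp_{(\Theta_t\newf)^2}(s)\bigr)^{r/2}$, and you then replace this by $\nrm{\Theta_t\newf}_\infty^r\,(U(t)-U(s))^{r/2}$ with $U$ locally Lipschitz, citing (\ref{eq-bdintl}). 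But (\ref{eq-bdintl}) only yields the global bound $\fdcomp_{\f1}(T)\le U(T)$, not an increment bound $\fdcomp_{\f1}(t)-\fdcomp_{\f1}(s)\le U(t)-U(s)$; and under Assumption \ref{as-holder} alone neither $U$ nor $\fdcomp_{\f1}$ is known to be locally Lipschitz ($g$ and $h$ are not assumed bounded or continuous, and $\lan h,\fmeas_u\ran$ need not be locally bounded). Hence the asserted modulus $C_{r,T}\nrm{\newf}_\infty^r|t-s|^{r/2}$ is unjustified and the classical Kolmogorov criterion does not apply as stated. The paper sidesteps precisely this point: it keeps the increment $\fdcomp_{\f1}(t)-\fdcomp_{\f1}(s)$ in the estimate and invokes the generalized Kolmogorov continuity criterion (Corollary 3 of \cite{mitoma83b}) with the nondecreasing continuous function $t\mapsto t+\fdcomp_{\f1}(t)$, so no modulus of continuity for $\fdcomp_{\f1}$ is ever needed. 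If you insist on the classical criterion, you would first have to establish a modulus bound such as $\fdcomp_{\f1}(t)-\fdcomp_{\f1}(s)\le C(T)|t-s|^{\gamma_G}$ (plausible under Assumption \ref{as-holder} via the fluid representation (\ref{final-fleqs}) extended to $f=h$, but proved neither in the paper nor in your plan), and the resulting exponent would be $r\gamma_G/2$ rather than $r/2$, so $r$ must be chosen accordingly.
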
 
\begin{proof}
The proof of the lemma is based on a modification of the approach
used in Walsh \cite{walshbook} to establish convergence of 
stochastic convolution integrals, tailored to the present context
(the proof of Theorem 7.13 in \cite{walshbook} works with
a different space of test functions and imposes different conditions on the
martingale measure $\chatmn$, and hence does not apply directly).  
 Fix a bounded and H\"{o}lder continuous $f$ with constant $C_f$ and exponent
 $\gamma_f$ and fix $T < \infty$. 
  Recall from (\ref{rel-hath}) that $\hathn_t (f) = \chatmn_t (\Psi_t f)$, $t \geq 0$. 
The proof of the first two properties will be split into four main claims.\\
\noi
{\sc Claim 1.} For each $N \in \N$,  $\{\hathn_t(f), t \geq 0\}$  
admits  a \cad version.  \\
{\sc Proof of Claim 1.}  The estimates obtained in this 
proof are also used to establish the other claims. 
Fix $N \in \N$ and consider the  following stochastic integral: 
\be
\label{def-vnt}
 V^{(N)}_t (f) \doteq \chatmn_T (\Psi_t f), \qquad t \in [0,T]. 
\ee
Because $\chatmn$ is a martingale measure,  we have
\be
\label{rel-vnhn}
 \hathn_t(f) = \chatmn_t (\Psi_t f)
 =  \E \left[ V^{(N)}_t(f)|{\cal F}_t^{(N)} \right], \qquad t \in [0,T], 
\ee
which shows that the process 
$\{\chatmn_{t}(\Psi_{t} f),t \geq 0\}$ 
is a version of the optional projection of $V^{(N)}(f)$. 
It is well known from
the general theory of stochastic processes (see, for example, Theorem 7.10 of
Chapter V of Rogers and Williams \cite{RogWilbook2}) that the
optional projection of a continuous process is an 
adapted c\`{a}dl\`{a}g process.  
Therefore,  to show that $\{\chatmn_t(\Psi_t f),
t \in [0,T]\}$ admits a c\`{a}dl\`{a}g version, it 
suffices to  show that $V^{(N)}(f)$ admits a continuous version. 
In turn, to establish  continuity, it suffices to  verify
 Kolmogorov's continuity criterion, namely, to show 
that there exist $\tilde{C}_f < \infty$, $\tilde{\theta} > 1$
and $r < \infty$ such that for every $0 \leq t' < t < T$,
\be
\label{est-v1}
 \E \left[ \left| V^{(N)}_t(f) - V^{(N)}_{t'}(f) \right|^r \right]
\leq \tilde{C}_f |t-t'|^{\tilde{\theta} }
\ee
(see, for example,  Corollary 1.2
of Walsh \cite{walshbook}). 
Fix  $0 \leq t' \leq t \leq T$ and note that
\be
\label{rep-vn}
 \left| V^{(N)}_t (f)- V^{(N)}_{t'}(f)  \right| =
\left|\chatmn_T (\Psi_t f - \Psi_{t^\prime} f)\right|. 
\ee
Let $r$ be any positive even integer greater than
$1/\gamma_f'$. 
Together with (\ref{eq-martest}) and (\ref{est-v3}), this implies  
that (\ref{est-v1}) is satisfied with 
  $\tilde{\theta} = r \gamma_f' > 1$ and
\be
\label{form-cf}
\tilde{C}_f = C_r (C_f + C_G \nrm{f}_{\infty})^r ((r/2)! U^{r/2}(T) + 1). 
\ee

\noi 
{\sc Claim 2.} $\hath(f)$ has a continuous version. \\
{\sc Proof of Claim 2.} Analogous to (\ref{def-vnt}) and (\ref{rel-vnhn}), 
we define
$\widehat{V}_t(f) \doteq \chatm_T(\Psi_t f)$,  $t \geq 0$, 
and observe that 
\be
\label{rel-vnhnl}
\hath_t(f) = \chatm_t (\Psi_t f) = \E\left[ \widehat{V}_t (f) |\widehat{{\cal F}}_t\right], \qquad
t \geq 0.  
\ee  
Arguments analogous to those used in Claim 1, with 
the  inequalities (\ref{eq-martest2}) and (\ref{eq-bdintl}), respectively, 
 now playing the role of  (\ref{eq-martest}) and (\ref{eq-bdint}), 
can  be used to show that  
\be 
\label{est-vl}
 \E \left[ \left| \widehat{V}_t(f) - \widehat{V}_{t^\prime}(f) \right|^r \right]
\leq \tilde{C}_f |t-t'|^{\tilde{\theta} }
\ee
with $\tilde{\theta} = r \gamma_f^\prime$. 
  Fix $0 < t^\prime < t < \infty$ with 
$|t-t^\prime| < 1$ and a bounded, H\"{o}lder continuous $f$. 
Using  (\ref{rel-vnhnl}) and adding and subtracting 
$\chatm_{t^\prime} (\Psi_{t} f) = \E[V_{t}(f)|\widehat{{\cal F}}_{t^\prime}]$, we obtain  
\be
\label{est-vmore}
\hath_{t}(f) - \hath_{t^\prime} (f)  =  \E\left[V_t (f) - V_{t^\prime}(f)|\widehat{{\cal F}}_{t^\prime}\right]
+ \chatm_{t} (\Psi_{t}f) - \chatm_{t^\prime} (\Psi_{t}f).  
\ee
Consider any even integer $r > 2/\gamma_f^\prime \vee 4$  so 
that (\ref{est-vl}) holds with $\tilde{\theta} > 2$,  let 
$\bar{\theta} \doteq \lfloor r/2 \wedge \tilde{\theta} \rfloor$  and note that 
$\bar{\theta}$ is an integer greater than or equal to $2$.  
 Taking first the $r$th power and then expectations of both sides 
of (\ref{est-vmore}), and using 
the inequality $(x + y)^r \leq 2^r(x^r+y^r)$ and Jensen's inequality, 
we obtain 
\[  \E\left[ \left| \hath_{t^\prime}(f) - \hath_t(f)\right|^r \right] 
 \leq 2^r \left( \E \left[ \left| V_t (f) - V_{t^\prime}(f) \right|^r \right] 
+  \E\left[\left| \chatm_t(\Psi_{t^\prime} f) - \chatm_{t^\prime}
  (\Psi_{t^\prime}(f)\right|^r \right] \right). \]
Applying the estimates (\ref{est-v1}), 
(\ref{eq-martest3}) and 
the fact that $\nrm{\Psi_{t} f}_{\infty} \leq \nrm{f}_\infty$, and 
then the inequality $x^2 + y^2 \leq (x+y)^2$ for $x,y \geq 0$, this implies that 
\begin{eqnarray}
\nonumber
\ds \E\left[ \left| \hath_{t^\prime}(f) - \hath_t(f)\right|^r \right] 
 & \leq & \ds  2^r \tilde{C}_f|t-t^\prime|^{\tilde{\theta}} + 
2^r C_r 
\left( \fdcomp_{(\Psi_{t^\prime} f)^2}(t) - \fdcomp_{(\Psi_{t^\prime}
    f)^2}(t^\prime) \right)^{r/2} \\
\label{addeq-1}
  & \leq & \ds  2^r (\tilde{C}_f \vee C_r\nrm{f}_{\infty}^2) 
\left( t+\fdcomp_{\f1}(t)
- t^\prime - \fdcomp_{\f1}(t^\prime) \right)^{2}. 
\end{eqnarray} 
Since $t+\fdcomp_{\f1}(t)$ is a non-negative,   increasing 
function of $t$, 
the generalized Kolmogorov's continuity criterion 
(see, for example, Corollary 3 of \cite{mitoma83b}) implies that 
$\hath(f)$ has a continuous version.

\noi 
{\sc Claim 3.}  The estimate (\ref{reg3}) is satisfied. \\
{\sc Proof of Claim 3.} 
From the proof of Corollary 1.2 of Walsh \cite{walshbook} 
it is straightforward to deduce that  (\ref{est-v1}) also implies  that 
there exists a constant $\tilde{C}_r < \infty$, which depends on $r$ but is independent of 
$N$ and $f$, such that 
\be
\label{est-supv} \E\left[\sup_{s\in [0, T]}\left|V^{(N)}_s(f)\right|^r \right]
\leq \tilde{C}_r \tilde{C}_f. 
\ee
By (\ref{rel-vnhn}) and Jensen's inequality, for every $t \in [0,T]$, 
\[  |\hathn_t f)|^r \leq \E\left[\sup_{s \in [0,T]} V_s^{(N)}(f)|{\cal F}_t^{(N)}\right]^r
\leq \E\left[ \sup_{s \in [0,T]}\left|V_s^{(N)}(f)\right|^r |{\cal F}_t^{(N)}\right].  
\]
 By (\ref{est-supv}), the last term above (viewed as a processs in $t$) is a martingale. So  
Doob's inequality   and (\ref{est-supv}) imply that  
\begin{eqnarray*} 
 \E\left[ \sup_{t \in [0,T]} \left|\hathn_t (f)\right|^r\right]
& \leq & 
  \dfrac{r^2}{r-1} 
\E\left[ \E\left[ \sup_{s \in [0,T]} \left|V_s^{(N)}(f)\right|^r|{\cal
  F}_T^{(N)}\right] \right] 
\\ & = & 
 \dfrac{r^2}{r-1} \E\left[ \sup_{s \in [0,T]} |V_s^{(N)}(f)|^r\right]
 \leq  \dfrac{r^2}{r-1} \tilde{C}_r \tilde{C}_f. 
\end{eqnarray*}
If $f \in \testspace$ then by the expression for $\tilde{C}_f$ given in 
(\ref{form-cf}) and the inequalities $\nrm{f}_{\infty} \leq \sqrt{6}
\nrm{f}_{\H_1}$ and $C_f \leq \nrm{f}_{\H_1}$ established in (\ref{norm-ineq}) and 
Lemma \ref{ineq-basic}, respectively, 
the right-hand side above can be replaced by
$C_0 \nrm{f}_{\H_1}^r$, for an appopriate constant $C_0 = C_0(G,r,T) < \infty$
that is independent of $N$ and $f$. Thus, 
 (\ref{reg3}) follows. 

\noi 
{\sc Claim 4.}  The sequence $\{\hathn_t(f), t \geq 0\}_{N \in \N}$ 
is tight in ${\cal D}_{\R}[0,\infty)$. \\
{\sc Proof of Claim 4.}  We will prove the claim by verifying 
Aldous' criteria for tightness of stochastic processes. 
A minor modification of the arguments in Claims 1-3 shows that 
if $\delta_N \in (0,1)$ and $T_N$ is an $\{{\cal F}^{(N)}_t\}$ stopping time such that
$T_N + \delta_N \leq T$, then for any even integer $r \geq 2$, 
\be
\label{eq-delv}
  \E\left[ |V_{T_N+\delta_N}^{(N)}(f) - V_{T_N}^{(N)}(f)|^r \right]
\leq \tilde{C}_f \delta_N^{r \gamma_f^\prime}.
\ee
Let $\delta_N \in (0,1)$ and let $T_N$ be an $\{{\cal F}^{(N)}_t\}$ stopping time
such that $T_N + \delta_N \leq T$.  Using (\ref{rel-vnhn}) and
(\ref{def-vnt}), the difference $\hathn_{T_N+\delta_N} (f) - \hathn_{T_N} (f)$
can be rewritten as 
\[
\begin{array}{l}
\ds
  \E\left[V_{T_N+\delta_N}^{(N)}|{\cal F}_{T_N + \delta_N}^{(N)} \right]
- \E\left[V_{T_N}^{(N)}|{\cal F}_{T_N}^{(N)} \right]  \\
\qquad \ds =   \E\left[V_{T_N+\delta_N}^{(N)} - V_{T_N}^{(N)}|{\cal F}_{T_N + \delta_N}^{(N)}
\right]
+ \E \left[ V_{T_N}^{(N)} |{\cal F}_{T_N + \delta_N}^{(N)}\right] 
  -
\E \left[ V_{T_N}^{(N)} |{\cal F}_{T_N}^{(N)} \right]\\
\qquad \ds = 
 \E\left[V_{T_N+\delta_N}^{(N)} - V_{T_N}^{(N)}|{\cal F}_{T_N + \delta_N}^{(N)}\right]  + \underset{[0,\endsup) \times (T_N, T_N +
  \delta_N]}{\int \int}
\Psi_{T_N} (f) (x,s) \,  \chatmn(dx, ds).
\end{array}
\]
Recalling the covariance functional of
 $\chatmn$ specified in (\ref{cfnal-sint})   and the
fact that $\nrm{\Psi_{T_N} (f)}_{\infty} \leq \nrm{f}_{\infty}$,
this implies that
\[
\begin{array}{l}
\ds \E \left[ \left|\hathn_{T_N+\delta_N} (f) - \hathn_{T_N} (f)
  \right|^2\right] \\
\qquad  \leq
\ds 2 \E \left[ \left|V^{(N)}_{T_N+\delta_N} (f) - V^{(N)}_{T_N} (f)
  \right|^2\right]  + 2 \nrm{f}_{\infty}^2 \E \left[
\fdcompn_{\f1} (T_N + \delta_N) - \fdcompn_{\f1} (T_N) \right] \\
\ds \qquad \leq  2 \tilde{C}_f  \delta_N^{2 \gamma_f^\prime} + 
2 \nrm{f}_{\infty}^2 \sup_{\tilde{N}}\E \left[\sup_{t \in [0,T]}
\left( \fdcomp^{(\tilde{N})}_{\f1} (t+\delta_N) - \fdcomp^{(\tilde{N})}_{\f1} (t)\right) \right],
\end{array}
\]
where the last equality uses
 (\ref{eq-delv}) with $r=2$.  As $\delta_N \ra 0$, the 
first term on the right-hand side clearly converges to zero, whereas 
Lemma 5.8(2) of Kaspi and Ramanan \cite{KasRam07} shows that the 
second term also converges to zero. 
We  conclude that 
$\hathn_{T_N+\delta_N}(f) - \hathn_{T_N}(f)$ converges to zero in $\L^2$, and
hence in probability.  On the other hand, (\ref{reg3}) shows that 
the sequence $\{\hathn(f)\}_{N\in \N}$ is uniformly bounded in $\L^r$.
We have thus verified Aldous'
criteria  (see, for example, Theorem 6.8 of Walsh
\cite{walshbook}), and hence 
 the sequence $\{\hathn(f)\}_{N \in \N}$ is tight.  \\

We now turn to the proof of property 3.  Fix $T < \infty$, 
 let $\newf$ be as stated in the lemma and for 
$t \in [0,T]$, 
define $f_{\newf}^t(x) = \int_0^t \newf(x,r) \, dr$.  
For $s, t, s^\prime, t^\prime \in [0, T]$ with $t^\prime <
t$, we have
\[ \hath_{s}\left(f_{\newf}^{t}\right) - 
\hath_{s^\prime} \left(f_{\newf}^{t^\prime}\right) = 
\hath_{s}\left(f_{\newf}^{t}\right) - 
\hath_{s^\prime} \left(f_{\newf}^{t} \right) + 
 \chatm_{s^\prime} \left( \Psi_s \left(\int_{t^\prime}^{t} \newf(\cdot, r) \,
    dr\right)\right). 
\]
Due to the assumed boundedness and H\"{o}lder continuity of 
$f_{\newf}^{t}$, (\ref{addeq-1}) and (\ref{eq-martest2}) together with the
above
relation imply that 
 there exists a sufficiently large integer $r$, constant $C(T,r,\newf) < \infty$ 
and $\tilde{\theta} = \tilde{\theta} (r,\newf) > 1$ such that 
\[ 
\begin{array}{l}
\ds 
 \E\left[\left|\hath_{s^\prime} \left( \int_0^{t^\prime} \newf(\cdot, r) \, dr \right) 
- \hath_{s} \left( \int_0^t \newf(\cdot, r) \, dr \right)\right|^r\right] = \E\left[\left| \hath_{s^\prime} (f^{t^\prime}_{\newf}) - \hath_s(f^t_{\newf})  \right|^r \right]\\
\qquad \leq  \ds C(T,r,\newf) \left(
\left|s + \fdcomp_1 (s) - s^\prime -
  \fdcomp_1(s^\prime)\right|^{\tilde{\theta}} 
+ |t-t^\prime|^{\tilde{\theta}}\right). 
\end{array}
\]
Property 3 then follows from the generalized Kolmogorov's criterion for
continuity of random fields. 

The proof of the last property of 
the lemma is similar to the  proof of 
the continuity of $\hath$ given in Claim 2, 
and so we only provide a rough sketch.
 Let $R_t(\newf) \doteq \chatm_t (\Theta_t \newf)$,  define 
 $\tilde{V}_t (\newf) \doteq \chatm_T(\Theta_t \newf)$ and note that  
$R_t (\newf)= \E[\tilde{V}_t(\newf)|{\cal F}_t]$.  
In a manner similar to (\ref{est-vmore}), we can write 
\[ R_t(\newf) - R_{t^\prime} (\newf) = 
\E\left[\chatm_T \left(\int_{t^\prime}^t \Psi_s \newf \, ds \right)| {\cal F}_{t^\prime} \right]
+ \chatm_t \left( \Theta_t \newf\right) - \chatm_{t^\prime} \left( \Theta_t \newf \right). 
\]
Using Jensen's inequality,  (\ref{eq-martest2}) with $r = 4$ and the inequalities 
$\nrm{\int_{t^\prime}^t \Psi_s \newf \, ds}_{\infty} \leq
|t-t^\prime|\nrm{\newf}_{\infty}$  and $\nrm{\Theta_{t^\prime}
\newf}_{\infty} \leq T \nrm{\newf}_{\infty}$, 
it follows that for  $0 < t^\prime < t < T$, $t-t^\prime \leq 1$,   
\begin{eqnarray*}
 \E\left[\left|R_t(\newf) - R_{t^\prime}(\newf)\right|^4 \right] 
& \leq & 2^4\left(\E\left[\left|\chatm_T \left(\int_{t^\prime}^t \Psi_s \newf \, ds\right)\right|^4\right] 
+ \nrm{\Theta_{t^\prime}(\newf)}_{\infty}^4\left(\fdcomp_{\f1}(t) - \fdcomp_{\f1}(t^\prime)\right)^{2} \right) \\
& \leq &  2^4 \tilde{C}(T) \nrm{\newf}_{\infty}^4 \left( \left| t - t^\prime\right|^4 
+ \left(\fdcomp_{\f1}(t) - \fdcomp_{\f1}(t^\prime)\right)^{2} \right) \\
& \leq & 2^4 \tilde{C}(T) \nrm{\newf}_{\infty}^4 
\left( (t-t^\prime)^2 + \left(\fdcomp_{\f1}(t) -
    \fdcomp_{\f1}(t^\prime)\right)^{2} \right) \\
& \leq &  2^4 \tilde{C}(T)\nrm{\newf}_{\infty}^4  
\left( t-t^\prime +  \fdcomp_{\f1}(t) - \fdcomp_{\f1}(t^\prime)\right)^{2},  
\end{eqnarray*}
where $\tilde{C}(T) \doteq \left(2 C_4 U(T)^2\right) \vee T^4$. 
The claim then follows from the generalized Kolmogorov continuity 
criterion. 
\end{proof}

Combining Lemma \ref{lem-hathn} with arguments similar to those
 used in the proof of Corollary \ref{cor-mreg},
we now obtain the main convergence result of the section.

\begin{cor}
\label{cor-hreg} 
As $N \ra \infty$, 
$\widehat{Y}_1^{(N)} \Rightarrow \widehat{Y}_1$ in ${\cal Y}_1$.  
 Also, if for any bounded, H\"{o}lder continuous 
$f$,  (\ref{jt-limit}) holds then  
$(\widehat{Y}_1^{(N)}, \hathn(f)) \Rightarrow 
(\widehat{Y}_1,\hath(f))$ as $N \ra \infty$. 
\end{cor}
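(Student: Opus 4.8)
\textbf{Proof plan for Corollary~\ref{cor-hreg}.}
The plan is to build on Proposition~\ref{prop-martconv}. Write $\widehat{Y}^{(N)}_0$ for the sub-vector consisting of the first six coordinates of $\widehat{Y}^{(N)}_1$, that is $(\haten,\hinitxn,\hmeasn_0,\ops^{\hmeasn_0},\ops^{\hmeasn_0}(\f1),\chatmn)$, and let $\widehat{Y}_0$ be its limit, so that $\widehat{Y}^{(N)}_0\Rightarrow\widehat{Y}_0$ by Proposition~\ref{prop-martconv}. It then suffices to append, jointly with $\widehat{Y}^{(N)}_0$, the two convergences $\hathn(\f1)\Rightarrow\hath(\f1)$ in ${\cal D}_{\R}[0,\infty)$ and $\hathn\Rightarrow\hath$ in ${\cal D}_{\H_{-2}}[0,\infty)$. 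I would do this in two stages: first, for each $f\in\testspace$ and for $f=\f1$, prove the one-dimensional statement $(\widehat{Y}^{(N)}_0,\hathn(\f1),\hathn(f))\Rightarrow(\widehat{Y}_0,\hath(\f1),\hath(f))$; second, promote the $\testspace$-indexed family of these convergences of $\hathn(f)$ to convergence of $\hathn$ as an $\H_{-2}$-valued process, by the Mitoma-type regularization used in the proof of Corollary~\ref{cor-mreg}, with the required uniform equicontinuity supplied by the moment bound (\ref{reg3}).

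For the first stage, tightness of $\{\hathn(f)\}_N$ in ${\cal D}_{\R}[0,\infty)$ and the a.s.\ continuity of $\hath(f)$ are provided by Lemma~\ref{lem-hathn}(1) (for $f=\f1$, take the H\"{o}lder constant equal to $0$), so jointly with $\widehat{Y}^{(N)}_0\Rightarrow\widehat{Y}_0$ the sequence is tight and every subsequential limit has continuous $\hath(\cdot)$-coordinates. To identify the limit I would approximate the stochastic convolution $\hath_t(f)=\chatm_t(\Psi_t f)$ (recall (\ref{rel-hath})) by Riemann sums in the integration variable: for a locally finite mesh $\pi$, with $\lfloor u\rfloor_\pi$ the largest mesh point $\le u$, set $(\Psi^\pi_t f)(x,u)\doteq(\Phi_{(t-\lfloor u\rfloor_\pi)^+}f)(x)$ and
\[
\widehat{{\cal H}}^{N,\pi}_t(f)\doteq\chatmn_t(\Psi^\pi_t f)=\suli_{j:\,s_j\le t}\left(\chatmn_{s_j\wedge t}-\chatmn_{s_{j-1}\wedge t}\right)\!\left(\Phi_{(t-s_{j-1})^+}f\right),
\]
together with its analogue $\widehat{{\cal H}}^{\pi}_t(f)\doteq\chatm_t(\Psi^\pi_t f)$ for the limiting white noise. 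For each fixed $\pi$, $\widehat{{\cal H}}^{N,\pi}(f)$ is a continuous functional (in the Skorokhod topology, the relevant limits being continuous) of the two-parameter process $(u,r)\mapsto\chatmn_u(\Phi_r f)$; this process is tight and converges jointly with $\widehat{Y}^{(N)}_0$ to $(u,r)\mapsto\chatm_u(\Phi_r f)$ by the martingale central limit theorem applied to the enlarged vector of martingales, exactly as in the proof of Proposition~\ref{prop-martconv}, the joint-continuity and increment estimates in $(u,r)$ following from (\ref{eq-martest}) and the H\"{o}lder bound on $r\mapsto\Phi_r f$. Hence the continuous mapping theorem yields $(\widehat{Y}^{(N)}_0,\widehat{{\cal H}}^{N,\pi}(\f1),\widehat{{\cal H}}^{N,\pi}(f))\Rightarrow(\widehat{Y}_0,\widehat{{\cal H}}^{\pi}(\f1),\widehat{{\cal H}}^{\pi}(f))$ for each $\pi$. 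On the other hand, writing $\hathn_t(f)=\E[\chatmn_T(\Psi_t f)\mid{\cal F}_t^{(N)}]$ and $\widehat{{\cal H}}^{N,\pi}_t(f)=\E[\chatmn_T(\Psi^\pi_t f)\mid{\cal F}_t^{(N)}]$ (and likewise for the limit), Doob's $\L^r$-inequality reduces the control of $\sup_{t\le T}|\widehat{{\cal H}}^{N,\pi}_t(f)-\hathn_t(f)|$ to that of $\sup_{t\le T}|\chatmn_T(\Psi^\pi_t f-\Psi_t f)|$; the moment bounds (\ref{eq-martest})--(\ref{eq-martest2}), together with the uniform estimate $\sup_{t\le T}\nrm{\Psi^\pi_t f-\Psi_t f}_\infty\le C\nrm{\pi}^{\gamma_f^\prime}$ and the H\"{o}lder-in-$t$ control of $t\mapsto\Psi^\pi_t f-\Psi_t f$ from Lemma~\ref{ineq-basic}, interpolated against each other, then give $\sup_{t\le T}|\widehat{{\cal H}}^{N,\pi}_t(f)-\hathn_t(f)|\to0$ in $\L^r$ as $\nrm{\pi}\to0$, uniformly in $N$, and similarly $\sup_{t\le T}|\widehat{{\cal H}}^{\pi}_t(f)-\hath_t(f)|\to0$ in probability. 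A standard approximation argument (e.g.\ Theorem~4.2 of Billingsley~\cite{bilbook}) then produces $(\widehat{Y}^{(N)}_0,\hathn(\f1),\hathn(f))\Rightarrow(\widehat{Y}_0,\hath(\f1),\hath(f))$, completing the first stage.

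For the second stage, (\ref{reg3}) and Chebyshev's inequality show that for every $T<\infty$, $\epsilon>0$ and $\lambda<\infty$ there is $\delta>0$ with $\nrm{f}_{\H_1}\le\delta\Rightarrow\limsup_N\P(\sup_{t\le T}|\hathn_t(f)|>\lambda)\le\epsilon$, i.e.\ $\{\hathn\}_N$ is uniformly $1$-continuous in the sense of Mitoma~\cite{mitoma83b}; since $\testspace$ is a separable nuclear Fr\'{e}chet space and $\nrm{\cdot}_{\H_1}\overset{\mbox{\tiny{HS}}}{<}\nrm{\cdot}_{\H_2}$, the argument of Corollary~\ref{cor-mreg} (Theorem~4.1 of Walsh~\cite{walshbook}, Corollary~2 of Mitoma~\cite{mitoma83b}) gives that each $\hathn$ admits a \cad $\H_{-2}$-valued version, and, together with the first stage and Corollary~6.16 of Walsh~\cite{walshbook}, that $\hathn\Rightarrow\hath$ in ${\cal D}_{\H_{-2}}[0,T]$ jointly with $(\widehat{Y}^{(N)}_0,\hathn(\f1))$; since all limits are continuous this extends to $[0,\infty)$, establishing $\widehat{Y}^{(N)}_1\Rightarrow\widehat{Y}_1$ in ${\cal Y}_1$. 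For the second assertion of the corollary, assume (\ref{jt-limit}) holds for a bounded, H\"{o}lder continuous $f$; then Lemma~\ref{lem-hathn}(1) again supplies tightness of $\{\hathn(f)\}_N$ and continuity of $\hath(f)$, and since $f$ is bounded and H\"{o}lder continuous the estimates above (now invoking Lemma~\ref{ineq-basic} with the H\"{o}lder constant and exponent of $f$) apply without change, so the same Riemann-sum/approximation argument appends $\hathn(f)\Rightarrow\hath(f)$ to the already-established convergence $\widehat{Y}^{(N)}_1\Rightarrow\widehat{Y}_1$, yielding $(\widehat{Y}^{(N)}_1,\hathn(f))\Rightarrow(\widehat{Y}_1,\hath(f))$.

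The hard part will be the first stage: identifying every subsequential limit of the non-martingale sequence $\hathn(f)$ as the stochastic convolution $\chatm_\cdot(\Psi_\cdot f)$ --- which is exactly why Theorem~7.13 of Walsh~\cite{walshbook} cannot be invoked off the shelf. The Riemann-sum scheme reduces this to the continuous mapping theorem, but it rests on two delicate points: (i) the uniform-in-time kernel estimate $\sup_{t\le T}\nrm{\Psi^\pi_t f-\Psi_t f}_\infty\le C\nrm{\pi}^{\gamma_f^\prime}$, where Assumption~\ref{as-holder} enters through Lemma~\ref{ineq-basic}, and where an interpolation between this pointwise bound and the H\"{o}lder-in-$t$ bound for $t\mapsto\Psi^\pi_t f-\Psi_t f$ is needed to turn $\L^r$-smallness into smallness of the supremum over $t$; and (ii) the joint weak convergence of $\chatmn$, viewed as an $\H_{-2}$-valued process, together with the real-valued stochastic integrals $\chatmn_u(\Phi_r f)$ --- this cannot be read off from the $\H_{-2}$-valued convergence alone, because the kernels $\Phi_r f$ need not lie in $\H_2$, and must instead come from the martingale central limit theorem for the enlarged vector of martingales.
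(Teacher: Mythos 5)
Your plan is correct in outline and reaches the same conclusion, but it takes a genuinely different—and substantially heavier—route at precisely the point you flag as "the hard part." The paper never needs to identify subsequential limits of the non-martingale processes $\hathn(f)$ by an approximation scheme: it observes that for each \emph{fixed} $t$ the random variable $\hathn_t(f)=\chatmn_t(\Psi_t f)$ is just the time-$t$ evaluation of the stochastic integral of the \emph{fixed} bounded continuous kernel $\Psi_t f$, so Proposition \ref{prop-martconv} (applied jointly to finitely many such kernels, exactly as in its proof) already gives convergence of all finite-dimensional projections $\bigl(\chatmn_{\tilde t_i}(\tilde f_i),\hathn_{t_j}(f_j)\bigr)\Rightarrow\bigl(\chatm_{\tilde t_i}(\tilde f_i),\hath_{t_j}(f_j)\bigr)$; combined with the tightness in ${\cal D}_{\R}[0,\infty)$ from Lemma \ref{lem-hathn}(1), this yields the process-level convergence $\hathn(f)\Rightarrow\hath(f)$ for $f\in\testspace$ and for bounded H\"{o}lder $f$, after which the Mitoma/Walsh regularization with (\ref{reg3}) and the asymptotic-independence argument proceed exactly as in your second stage. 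Your Riemann-sum identification $\chatmn_t(\Psi^\pi_t f)\approx\chatmn_t(\Psi_t f)$ would also work, but it forces you to supply two nontrivial inputs the paper's shortcut avoids: the joint (two-parameter) convergence of $(u,r)\mapsto\chatmn_u(\Phi_r f)$ with enough uniformity in $r$ to justify the continuous-mapping step for each fixed mesh (note the frozen kernel $\Phi_{(t-s_{j-1})^+}f$ still moves with $t$, so finitely many $r$'s do not suffice), and the uniform-in-$N$ chaining/interpolation estimate for $\sup_{t\le T}\bigl|\chatmn_T(\Psi^\pi_t f-\Psi_t f)\bigr|$ before the Doob reduction can be closed; both are doable with the moment bounds (\ref{eq-martest})--(\ref{eq-martest3}) and Lemma \ref{ineq-basic}, but they are exactly the kind of work the identity $\hathn_t(f)=\chatmn_t(\Psi_t f)$ renders unnecessary. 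What your route buys is robustness—it would survive in settings where the time-$t$ marginal is not the integral of a fixed kernel—while the paper's route buys brevity; if you keep your scheme, the two sketched steps (two-parameter tightness and the interpolation) must be written out in full, since they carry the real content of your identification argument.
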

\begin{proof}
Fix $N \in \N$. For $\tilde{k}, k \in \N$, $i = 1, \ldots, \tilde{k}$, 
 $j = 1, \ldots, k$, let $\tilde{f}_i$ and $f_j$, respectively,  be 
bounded, continuous and bounded, H\"{o}lder continuous functions. 
Proposition \ref{prop-martconv} and  Lemma \ref{lem-hathn} 
imply that the sequence 
\[ \{(\chatmn(\tilde{f}_1), \ldots, \chatmn(\tilde{f}_{\tilde{k}}), 
\hathn(f_1), \ldots, \hathn(f_k))\}_{N \in \N}
\] 
 is tight in ${\cal D}_{\R}[0,\infty)^{\tilde{k} +k}$.  
Since $\hathn_t(f) = \chatmn_t(\Psi_t f)$ and, likewise, 
 $\hath_t(f) = \chatm_t (\Psi_t f)$, Proposition \ref{prop-martconv} 
also shows that for $\tilde{t}_i, t_j \in [0,\infty)$, 
$i = 1, \ldots, \tilde{k}$,  $j = 1, \ldots, k$,
as $N \ra \infty$,  the corresponding projections converge: 
\begin{eqnarray*} \left(\chatmn_{\tilde{t}_1}(\tilde{f}_1), \ldots,
  \chatmn_{\tilde{t}_k}(\tilde{f}_k), \hathn_{t_{1}}(f_{1}), \ldots, 
\hathn_{t_k}(f_{k}) \right) \Rightarrow \\
\qquad 
\left(\chatm_{\tilde{t}_1}(\tilde{f}_1), \ldots, \chatm_{\tilde{t}_k}(\tilde{f}_k),
\hath_{t_{1}}(f_{1}), \ldots, \hath_{t_k}(f_{k}) \right). 
\end{eqnarray*}
Since $\testspace$ is a subset of the space of 
bounded and H\"{o}lder continuous functions, 
together the last two statements show that 
\[ (\chatmn(\tilde{f}), \hathn(f), \hathn(f_1)) 
\Rightarrow (\chatm(\tilde{f}), \hath(f), \hath(f_1)) \]
 for $f, \tilde{f} \in \testspace$ and $f_1$ bounded 
and H\"{o}lder continuous.   
 Because  $\testspace$ and $\dualspace$ are  nuclear Fr\'{e}chet spaces, 
 by Mitoma's theorem 
(see Corollary 2 of \cite{mitoma83b}) it follows that 
$(\chatmn, \hathn, \hathn(f_1)) \Rightarrow (\chatm, \hath, \hath(f_1))$  
in ${\cal D}_{\dualspace}[0,\infty)^2 \times {\cal D}_{\R}[0,\infty)$. 
Since  $\nrm{\cdot}_{\H_1} \overset{\mbox{\tiny{HS}}}{<} 
\nrm{\cdot}_{\H_2}$ and the estimate  (\ref{reg3}) holds, 
 Corollary 6.16 of Walsh \cite{walshbook} then 
shows that $(\chatmn, \hathn, \hathn(f_1)) \Rightarrow 
(\chatm, \hath, \hath(f_1))$ in ${\cal D}_{\H_{-2}}[0,\infty)^2 \times 
{\cal D}_{\R}[0,\infty)$, as $N \ra \infty$. 
Now, $(\hath, \hath(f_1))$ is  
 adapted to the filtration generated by 
 $\chatm$, and $\chatm$ is independent of 
$(\hate, \hinitx, \hmeas_0, \ops^{\hmeas_0}, \ops^{\hmeas_0}(\f1))$ 
by Assumption \ref{as-diffinit}.  
Thus,  the same argument used to establish asymptotic independence in 
Proposition \ref{prop-martconv} also shows that 
the convergence above can be strengthened to 
$\widehat{Y}_1^{(N)} \Rightarrow \widehat{Y}_1$ and 
$(\widehat{Y}_1^{(N)}, \hathn(f)) \Rightarrow (\widehat{Y}_1, \hath(f))$. 
\end{proof}

\beginsec

\section{Proofs of Main Theorems}
\label{sec-proofs}

\subsection{The Functional Central Limit Theorem}
\label{subs-fclt}

Before presenting the proof of Theorem \ref{th-main}, 
we first establish  the main convergence result.

\begin{prop}
\label{prop-fclt} 
Suppose Assumptions \ref{as-flinit}--\ref{as-diffinit} are satisfied and 
suppose that the fluid limit is either  
subcritical, critical or  supercritical.  Then 
the convergence (\ref{conv-main}) holds 
 and $(\hatk, \hatx, \hmeas(\f1))$ has
almost surely continuous sample paths.  
Moreover, if $g$ is continuous and $\hmeas$ is defined as in (\ref{def-hmeas}), then as $N \ra \infty$, 
\be
\label{conv-fclt}
  (\widehat{Y}_1^{(N)}, \hatkn, \hatxn, \hmeasn, \hcalkn, \hcalkn (\f1))  \Rightarrow 
(\widehat{Y}, \hatk, \hatx, \hmeas, \hcalk, \hcalk(\f1)) \ee
in ${\cal Y}_1 \times {\cal D}_{\R}[0,\infty)^2 \times {\cal D}^2_{\H_{-2}}
[0,\infty) \times {\cal D}_{\R}[0,\infty)$. 
\end{prop}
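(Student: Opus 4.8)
The plan is to obtain both displays by assembling the convergence of the ``input data'' from Corollary \ref{cor-hreg} with the pre-limit representations in Lemma \ref{lem-lambdarep} and Proposition \ref{cor-sae1} and the continuity of the maps $\Lambda$ and $\Gamma$ established in Proposition \ref{prop-cont} and Lemma \ref{lem-calk}. By Corollary \ref{cor-hreg}, $\widehat{Y}_1^{(N)} \Rightarrow \widehat{Y}_1$ in ${\cal Y}_1$; reading off the relevant coordinates this yields, in particular, $(\haten, \hinitxn, \ops^{\hmeasn_0}(\f1) - \hathn(\f1)) \Rightarrow (\hate, \hinitx, \ops^{\hmeas_0}(\f1) - \hath(\f1))$ in $\dataspace$, and the three coordinates of the limit are a.s.\ continuous by Remark \ref{rem-asdiff1}, Assumption \ref{as-diffinit} and Lemma \ref{lem-hathn}, so the limit lies a.s.\ in the subspace of continuous paths. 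Using the Skorokhod representation theorem I would, after a change of probability space, take this convergence (and that in Corollary \ref{cor-hreg}) to hold almost surely in ${\cal Y}_1$.

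Next I would transfer this to $(\hatkn, \hatxn, \lan \f1, \hmeasn \ran)$. By Lemma \ref{lem-lambdarep}, off a $\P$-null set there is an $N^*(\omega) < \infty$ with $(\hatkn, \hatxn, \lan \f1, \hmeasn\ran)(\omega) = \Lambda\big((\haten, \hinitxn, \ops^{\hmeasn_0}(\f1) - \hathn(\f1))(\omega)\big)$ for all $N \geq N^*(\omega)$; in particular these inputs lie in $\dom(\Lambda)$ for large $N$. Since Skorokhod convergence to a continuous limit is convergence uniformly on compacts, the pre-limit inputs are Cauchy in that sense, so by the Lipschitz estimate (\ref{ineq-cont}) the images under $\Lambda$ are themselves Cauchy uniformly on compacts and hence converge to some $(\hatk, \hatx, v)$. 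Passing to the limit in the centered many-server equations (\ref{ref-meas})--(\ref{ref-nonidling})---legitimate because Laplace convolution with $g \in \L^1[0,\infty)$, the pointwise minimum, and the (fixed) regime indicators are all continuous under uniform-on-compacts convergence---shows $(\hate, \hinitx, \ops^{\hmeas_0}(\f1) - \hath(\f1)) \in \dom(\Lambda)$ and, by uniqueness of solutions (Proposition \ref{prop-cont}), that $(\hatk, \hatx, v) = \Lambda(\hate, \hinitx, \ops^{\hmeas_0}(\f1) - \hath(\f1)) =: (\hatk, \hatx, \hmeas(\f1))$. This proves (\ref{conv-main}), jointly with $\widehat{Y}_1^{(N)} \Rightarrow \widehat{Y}_1$, since the a.s.\ limits agree. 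Almost sure continuity of the limit follows from Remark \ref{rem-contlambda}: from (\ref{reed-comp0}) $\hatx$ is a sum of continuous processes and a Laplace convolution of a bounded process with $g \in \L^1$, hence continuous; since the fluid limit stays in a single regime, (\ref{ref-nonidling}) makes $v = \hmeas(\f1)$ a continuous function of $\hatx$, and then (\ref{ref-sm}) makes $\hatk$ continuous.

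For the second assertion, assume in addition $g$ continuous. By (\ref{eq-hcalkn}) we have $\hcalkn = \Gamma(\hatkn)$, with $\Gamma$ the map of Lemma \ref{lem-calk}, which is continuous from ${\cal D}_{\R}[0,\infty)$ to ${\cal D}_{\H_{-2}}[0,\infty)$ and for which $K \mapsto {\cal K}(\f1)$ is continuous on ${\cal D}_{\R}[0,\infty)$; applying the continuous mapping theorem along the joint a.s.\ convergence gives $\hcalkn \Rightarrow \hcalk := \Gamma(\hatk)$ in ${\cal D}_{\H_{-2}}[0,\infty)$ and $\hcalkn(\f1) \Rightarrow \hcalk(\f1)$ in ${\cal D}_{\R}[0,\infty)$, with $\hcalk$ a.s.\ continuous by Lemma \ref{lem-calk}(1). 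By Proposition \ref{cor-sae1}, $\lan f, \hmeasn_t\ran = \ops^{\hmeasn_0}_t(f) - \hathn_t(f) + \hcalkn_t(f)$ for every $f \in {\cal C}_b[0,\endsup)$; since $\hmeasn$ is a \cad signed-measure-valued (hence $\H_{-2}$-valued) process, the three terms on the right are ${\cal D}_{\H_{-2}}[0,\infty)$-valued, $\testspace$ is dense in $\H_2$, and $f \mapsto \lan f, \hmeasn_t\ran$ is continuous in $\nrm{\cdot}_{\H_1}$ by (\ref{norm-ineq}), this scalar identity lifts to $\hmeasn = \ops^{\hmeasn_0} - \hathn + \hcalkn$ as ${\cal D}_{\H_{-2}}[0,\infty)$-valued processes. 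Hence $\hmeasn \Rightarrow \ops^{\hmeas_0} - \hath + \hcalk$, and recalling $\trans_f = f^\prime(1-G) - fg$ from (\ref{def-psif}) one checks termwise that $\ops^{\hmeas_0}_t(f) - \hath_t(f) + \hcalk_t(f)$ coincides with the right-hand side of (\ref{def-hmeas}), identifying the limit as $\hmeas$. The joint convergence (\ref{conv-fclt}) then follows since every process appearing in it is a measurable image of data converging jointly a.s.\ on the Skorokhod space.

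A word on the difficulty: the genuinely delicate analysis---tightness and convergence of the stochastic convolution integrals $\hathn$, the asymptotic independence, and the Mitoma-type lifting of convergence from ${\cal D}_{\R}[0,\infty)$ to ${\cal D}_{\H_{-2}}[0,\infty)$---is already packaged into Corollary \ref{cor-hreg} and the lemmas of Section \ref{sec-martconv}, so the present argument is essentially assembly. The main obstacle is bookkeeping: phrasing cleanly the passage through the ``$N \geq N^*(\omega)$'' gap in Lemma \ref{lem-lambdarep} so as to deduce simultaneously that the limiting input lies in $\dom(\Lambda)$ and that the a.s.\ limit equals $\Lambda$ of it, and verifying that the three-term identity of Proposition \ref{cor-sae1} holds as an identity of $\H_{-2}$-valued processes rather than merely as a family of scalar identities.
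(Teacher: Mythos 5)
Your assembly follows the same architecture as the paper's proof (Corollary \ref{cor-hreg} for the input data, Lemma \ref{lem-lambdarep} for the pre-limit representation, Proposition \ref{prop-cont} for $\Lambda$, Lemma \ref{lem-calk} for $\Gamma$, and the lift of (\ref{rep-hatmeasn}) to an $\H_{-2}$-valued identity), and your device of passing to the limit in the CMSE to get domain membership plus uniqueness is a perfectly good substitute for the paper's way of identifying the limit. However, there is one genuine gap in the step where you ``transfer'' the convergence to $(\hatkn, \hatxn, \lan \f1, \hmeasn\ran)$. The Skorokhod representation theorem gives you new versions $\tilde{Y}_1^{(N)}$ of $\widehat{Y}_1^{(N)}$ converging almost surely on a new probability space, but the processes $(\hatkn, \hatxn, \lan \f1, \hmeasn\ran)$ are not coordinates of $\widehat{Y}_1^{(N)}$ and do not exist on that new space; the identity of Lemma \ref{lem-lambdarep} lives on the original space. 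To carry them along you must realize them (for $N$ large) as a \emph{measurable} image of $\widehat{Y}_1^{(N)}$, i.e.\ as $\Lambda$ applied to certain coordinates, so that the joint law of $(\widehat{Y}_1^{(N)}, \Lambda(\haten, \hinitxn, \ops^{\hmeasn_0}(\f1)-\hathn(\f1)))$ is the same on both spaces. Measurability of $\Lambda$ with respect to the Skorokhod Borel $\sigma$-algebra is not automatic — $\Lambda$ is only known to be continuous in the uniform topology — and this is exactly why the paper proves Lemma \ref{lem-lambdameas} and then invokes a generalized continuous mapping theorem (Theorem 10.2, Chapter 3 of Ethier--Kurtz) instead of a representation-space argument. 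As written, your proof never uses (or mentions) any measurability of $\Lambda$, so the distributional transfer in both directions is unjustified; your Cauchy estimate via (\ref{ineq-cont}) and the limit passage in the CMSE are fine, but they operate on objects you have not yet legitimately placed on the representation space.

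A second, smaller point in the same step: for each \emph{fixed} $N$ the identity of Lemma \ref{lem-lambdarep} can fail on the event $\{N^*(\omega) > N\}$, which has positive probability; so even with measurability in hand you should replace $(\hatkn, \hatxn, \lan\f1,\hmeasn\ran)$ by $\Lambda$ of the inputs and observe that the two differ only on events whose probability tends to zero, which does not affect weak limits (or, as in the paper, absorb this into the hypotheses of the generalized continuous mapping theorem). Once these two repairs are made — cite Lemma \ref{lem-lambdameas} and handle the $\{N^* > N\}$ event — the remainder of your argument, including the continuity of the limit via (\ref{reed-comp0}) and (\ref{ref-nonidling})--(\ref{ref-sm}), the use of $\Gamma$ for $\hcalkn$ and $\hcalkn(\f1)$, and the identification of $\ops^{\hmeas_0} - \hath + \hcalk$ with (\ref{def-hmeas}), goes through and matches the paper's proof.
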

\begin{proof}  
Corollary \ref{cor-hreg} shows that $\widehat{Y}_1^{(N)} \Rightarrow 
\widehat{Y}_1$ in ${\cal Y}_1$ as $N \ra \infty$, which in particular 
implies that 
\[ (\haten, \hinitxn, \ops^{\hmeasn_0}(\f1),\hathn (\f1)) 
\Rightarrow (\hate, \hinitx, \ops^{\hmeas_0}(\f1), \hath(\f1))
\]
 as $N \ra \infty$.  
 By Remark \ref{rem-asdiff1}, 
Assumption \ref{as-diffinit} and Lemma \ref{lem-hathn}(1), 
 $(\hate,\ops^{\hmeas_0}(\f1), \hath(\f1),
\ops^{\hmeas_0}, \hath)$  has 
almost surely continuous sample paths with values in 
$\R_+^3 \times \H_{-2}^2$. 
Recalling  the definition (\ref{def-y1n}) of $\widehat{Y}_1^{(N)}$ and the 
fact that addition in the Skorokhod topology is continuous at points in 
${\cal C}[0,\infty)$, 
 this  implies that as $N \ra \infty$,  
\be
\label{lim-1}
 (\widehat{Y}_1^{(N)}, \haten, \hinitxn, \ops^{\hmeasn_0}(\f1) - \hathn (\f1)) 
\Rightarrow (\widehat{Y}_1, \hate, \hinitx, \ops^{\hmeas_0} (\f1) - \hath(\f1)).
\ee 
By Lemma \ref{lem-lambdarep}, almost surely 
$(\hatkn, \hatxn, \lan \f1, \hmeasn\ran) = \lc (\haten, \hinitxn,
\ops^{\hmeasn_0}(\f1) - \hathn (\f1))$  for all $N$ large enough.  
The continuity of $\lc$ with respect to the uniform topology on 
${\cal D}_{\R}[0,\infty)$ established in Proposition \ref{prop-cont}, 
the measurability of $\lc$ with respect to the Skorokhod topology 
on ${\cal D}_{\R}[0,\infty)$ established in Lemma \ref{lem-lambdameas} 
and a generalized version of the continuous mapping theorem 
(see, for example, Theorem 10.2 of Chapter 3 of \cite{ethkurbook}) then 
shows that the convergence (\ref{conv-main}) holds with 
 $(\hatk, \hatx, \hmeas(\f1)) \doteq 
\lc (\hate, \hinitx, \ops^{\hmeas_0}(\f1) - \hath(\f1))$. 
 By the model assumptions and Lemma \ref{lem-dep}, 
almost surely, 
 $\Delta \en (t) \leq 1$ and $\Delta \dn(t) \leq 1$ for every $t \geq 0$. 
Combining this with (\ref{def-dn}), (\ref{eq-dnonidling2}) 
and the second equation for $\hatkn$ in 
(\ref{eq-dprelimit3}), it follows that almost surely for every $t \geq 0$, 
\[ \max(\Delta \hatkn(t), \Delta \hatxn (t), \Delta \lan \f1, \hmeasn_t\ran) 
\leq \dfrac{3}{\sqrt{N}}. 
\]
Because jumps are continuous 
in the Skorokhod topology, the weak 
convergence  of $(\hatkn, \hatxn, \hmeasn(\f1))$ to 
$(\hatk, \hatx, \hmeas(\f1))$ established in 
(\ref{conv-fclt}) 
shows that $(\hatk, \hatx, \hmeas(\f1))$ is almost surely  continuous. 
(Note that when $g$ is continuous, the continuity 
of $(\hatk, \hatx, \hmeas(\f1))$ is also 
guaranteed by Remark \ref{rem-contlambda}.)

Next, suppose $g$ is continuous.
By Lemma \ref{lem-calk}(2), both 
the map $\Gamma$ that takes $\hatkn$ to $\hcalkn$ and 
the map that takes $\hatkn$ to $\hcalkn(\f1)$ is continuous (with respect to the Skorokhod topology on 
both the domain and range).  So by (\ref{conv-main}) and 
the continuous mapping theorem, as $N \ra \infty$, 
\be
\label{thm2-limit1}
 (\widehat{Y}_1^{(N)}, \hatkn, \hatxn,  \hcalkn, \hcalkn (\f1))
\Rightarrow (\widehat{Y}, \hatk, \hatx,  \hcalk, \hcalk(\f1)).  
\ee
In turn, the representation (\ref{rep-hatmeasn}) shows that 
$\hmeasn = \ops^{\hmeasn_0} - \hathn + \hcalkn$ and hence, 
is a continuous mapping of $\widehat{Y}_1^{(N)}$ and $\hcalkn$. 
Thus,  (\ref{thm2-limit1}) and another application of 
the continuous mapping theorem 
show that (\ref{conv-main2}) holds with $\hmeas = \ops^{\hmeas_0} - \hath
+ \hcalk$. Since this coincides with the definition of $\hmeas$ given 
in (\ref{def-hmeas}), this establishes the proposition.  
\end{proof}

We now  prove the first two  main results of the paper. 

\begin{proof}[Proof of Theorems \ref{th-main} and \ref{th-fclt}]  
The limit in (\ref{conv-main}), the continuity of 
$(\hatk, \hatx, \hmeas(\f1))$ and Theorem \ref{th-fclt} 
follow from Proposition \ref{prop-fclt}. 
The  relation (\ref{eq-hdn}) shows that 
\[  \int_0^{\cdot} \lan h, \hmeasn_s\ran \, ds = \lan \f1, \hmeasn_0 \ran - 
\ops^{\hmeasn_0}_{\cdot} (\f1) - \chatmn_{\cdot} (\f1) + \hathn_{\cdot}(\f1) + \int_0^{\cdot} \hatkn(s) g(\cdot-s)
\, ds. 
\]
The last term equals $\hatkn - \hcalkn(\f1)$, and so by 
Lemma \ref{lem-calk}(2) the mapping from $\hatkn$ to the last 
term is continuous.   The limit (\ref{conv-main}) along with 
 the continuous mapping theorem then shows  
 that  $\int_0^\cdot \lan h, \hmeasn_s \ran \, ds \Rightarrow \cendep$, 
where $\cendep$ is as defined in (\ref{def-calg}).  
The 
relation (\ref{eq-dprelimit2}) for $\hatxn$, the continuity of the 
limit and another 
application of the continuous mapping theorem then yields 
 the representation (\ref{rep-hatx}) for $\hatx$.  This completes 
the proof of the theorem. 
\end{proof}

\subsection{The Semimartingale Property}
\label{subs-smg}

In view of the representation (\ref{rep-hatx}) for $\hatx$ and the 
fact that $\chatm_{\f1}$ and $\hate$ are by definition 
semimartingales, to show that $\hatx$ is a semimartingale it suffices 
to show that $\cendep$ is a semimartingale.
This is first carried out in Lemma \ref{lem-inth} below. 
Throughout, we assume that Assumptions \ref{as-flinit}, \ref{as-hate}, 
and \ref{as-diffinit}' are satisfied, the fluid limit is 
subcritical, critical or supercritical 
and that, in addition,  $h$ is bounded and absolutely continuous.  
As stated in Remark \ref{rem-holder}, if $h$ is bounded then Assumptions 
 \ref{as-h} and \ref{as-holder} are automatically satisfied. 
Thus, the results of Theorems \ref{th-main} and \ref{th-fclt}
are valid. 

\begin{lemma}
\label{lem-inth} 
Almost surely, the function $t \mapsto \cendep(t)$ is absolutely continuous
and 
\be
\label{dercalg}
 \dfrac{d\cendep(t)}{dt} = \hmeas_t (h),   \qquad \mbox{ a.e.\ } t \in
 [0,\infty). 
\ee
\end{lemma}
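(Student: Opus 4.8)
The plan is to differentiate the explicit formula for $\cendep$ given in \eqref{def-calg} term by term, using the representation \eqref{def-hmeas} for $\hmeas_t(h)$ with $f = h$ (which is legitimate since $h$ is bounded and absolutely continuous by hypothesis, so that $h \in \acbl$ and all the terms in \eqref{def-hmeas} make sense). Recall that
\[
\cendep(t) = \hmeas_0(\f1) - \ops^{\hmeas_0}_t(\f1) - \chatm_t(\f1) + \hath_t(\f1) + \int_0^t g(t-s)\hatk(s)\,ds,
\]
so the first observation is that the only genuinely $t$-dependent pieces are $\ops^{\hmeas_0}_t(\f1)$, $\hath_t(\f1)$, and the convolution $\int_0^t g(t-s)\hatk(s)\,ds$; the terms $\hmeas_0(\f1)$ and $\chatm_t(\f1)$ are a constant and (the negative of) a martingale, and their contribution will need separate handling since $\chatm_t(\f1)$ is \emph{not} absolutely continuous. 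The key point to exploit is that the combination $-\ops^{\hmeas_0}_t(\f1) + \hath_t(\f1) + \int_0^t g(t-s)\hatk(s)\,ds$ should, after adding and subtracting, collapse to something expressible purely through $\int_0^t \hmeas_s(h)\,ds$; in other words the $\chatm$ term that appears implicitly must cancel.

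First I would write out $\hmeas_s(h)$ explicitly from \eqref{def-hmeas}: for each fixed $s$,
\[
\hmeas_s(h) = \ops^{\hmeas_0}_s(h) + h(0)\hatk(s) + \int_0^s \hatk(u) h'(s-u)(1-G(s-u))\,du - \int_0^s \hatk(u) g(s-u) h(s-u)\,du - \hath_s(h).
\]
Then I would integrate this identity in $s$ over $[0,t]$ and compare with the $t$-derivative of $\cendep$. The plan is to show that
\[
\int_0^t \hmeas_s(h)\,ds = -\ops^{\hmeas_0}_t(\f1) + \hmeas_0(\f1) + \hath_t(\f1) - \chatm_t(\f1) + \chatm_t(\f1) + \int_0^t g(t-s)\hatk(s)\,ds - (\text{correction}),
\]
i.e.\ to identify the antiderivatives of each term. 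Concretely: (i) $\int_0^t \ops^{\hmeas_0}_s(h)\,ds$ should equal $\hmeas_0(\f1) - \ops^{\hmeas_0}_t(\f1)$, which follows from the Fubini-type identity \eqref{as-fub2} in Assumption \ref{as-diffinit}$'$(d) together with the pointwise semigroup computation $\int_0^t (\Phi_s h)(x)\,ds = \int_0^t h(x+s)(1-G(x+s))/(1-G(x))\,ds$ and the elementary fact that $\frac{d}{ds}[(1-G(x+s))/(1-G(x))] = -h(x+s)(1-G(x+s))/(1-G(x)) = -(\Phi_s h)(x)$, so that $\int_0^t (\Phi_s h)(x)\,ds = 1 - (1-G(x+t))/(1-G(x)) = (\f1 - \Phi_t\f1)(x)$; (ii) $\int_0^t \hath_s(h)\,ds = \hath_t(\f1)$ by a parallel stochastic Fubini argument, using $\hath_s(h) = \chatm_s(\Psi_s h)$ from \eqref{rel-hath}, the identity $\int_0^t \Psi_s h\,ds = \Psi_t\f1 - (\text{boundary})$ coming from the same semigroup computation applied under $\Psi$, and Lemma \ref{lem-hathn}(3) to justify the joint continuity and the interchange of the time integral with the martingale-measure integral; (iii) the two $\hatk$-convolution terms combine: using integration by parts in the inner convolution and the identity $\trans_h = (h(1-G))' = h'(1-G) - hg$ from \eqref{def-psif} one finds $\int_0^t\big(\int_0^s \hatk(u)\trans_h(s-u)\,du\big)ds + h(0)\int_0^t\hatk(s)\,ds = \hcalk_t(\f1)\text{-type expression}$, which after using $\trans_{\f1} = -g$ reduces to $\int_0^t g(t-s)\hatk(s)\,ds$ up to the boundary term $h(0)\int_0^t\hatk$; more carefully, since $\int_0^t \trans_h(s-u)\,ds = h(t-u)(1-G(t-u)) - h(0)$, the double integral equals $\int_0^t \hatk(u)h(t-u)(1-G(t-u))\,du - h(0)\int_0^t\hatk(u)\,du$, and combined with $h(0)\int_0^t\hatk(s)\,ds$ and $-\int_0^t\int_0^s\hatk(u)g(s-u)h(s-u)\,du\,ds$ this telescopes. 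I will need to be careful to track that these manipulations are valid for a.e.\ $t$ and that the resulting function of $t$ is absolutely continuous with the asserted density.

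Assembling (i)--(iii), I expect to obtain $\int_0^t \hmeas_s(h)\,ds = \hmeas_0(\f1) - \ops^{\hmeas_0}_t(\f1) + \hath_t(\f1) + \int_0^t g(t-s)\hatk(s)\,ds - \chatm_t(\f1) + \chatm_t(\f1)$, where one $\chatm_t(\f1)$ comes with a sign that cancels; but inspecting \eqref{def-calg}, $\cendep(t)$ carries exactly one $-\chatm_t(\f1)$, so I anticipate that the correct reading is $\int_0^t\hmeas_s(h)\,ds = \cendep(t) + \chatm_t(\f1) - \chatm_t(\f1) = \cendep(t)$; in other words $\cendep(t) = \int_0^t \hmeas_s(h)\,ds$ exactly, whence absolute continuity and \eqref{dercalg} are immediate (using that $s\mapsto \hmeas_s(h)$ is continuous, hence locally bounded, hence locally integrable, by the continuity of $\hmeas$ as an $\H_{-2}$-valued process established in Theorem \ref{th-fclt} together with $h\in\H_2$ — here I use $h$ bounded and absolutely continuous with $h'$ locally in $\L^2$, which follows from $h$ bounded absolutely continuous if $h'$ is bounded, a point to double-check or to note is covered by the standing hypotheses). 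The main obstacle will be the two stochastic Fubini steps (ii) — rigorously interchanging $\int_0^t(\cdot)\,ds$ with the $\chatm$-stochastic integral — and making the bookkeeping of the $\chatm_t(\f1)$ terms airtight so that the cancellation is exact rather than off by a martingale term; this is precisely where Assumption \ref{as-diffinit}$'$(d) and Lemma \ref{lem-hathn}(3)--(4) are invoked, and where I would spend the bulk of the detailed argument.
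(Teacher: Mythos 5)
Your proposal is correct and is essentially the paper's own argument run in the reverse (integrated) direction: the paper rewrites $\hmeas_0(\f1)-\ops^{\hmeas_0}_t(\f1)=\int_0^t\ops^{\hmeas_0}_r(h)\,dr$ via Assumption \ref{as-diffinit}'(d), rewrites $\chatm_t(\f1)-\hath_t(\f1)=\int_0^t\hath_r(h)\,dr$ via the stochastic Fubini result (Lemma \ref{lem-fub}), differentiates the convolution term, and then matches the resulting density with (\ref{def-hmeas}) using $h(0)=g(0)$ and $g'=h'(1-G)-hg$ --- precisely your ingredients (i)--(iii). The only bookkeeping slip is in your (ii): since $\int_u^t(\Psi_s h)(x,u)\,ds=1-(\Psi_t\f1)(x,u)$, the stochastic Fubini step gives $\int_0^t\hath_s(h)\,ds=\chatm_t(\f1)-\hath_t(\f1)$ rather than $\hath_t(\f1)$, and this supplies the single $-\chatm_t(\f1)$ appearing in (\ref{def-calg}) directly (no separate cancellation of two $\chatm_t(\f1)$ terms is needed), so that indeed $\cendep(t)=\int_0^t\hmeas_s(h)\,ds$ as you anticipate, and local integrability of $s\mapsto\hmeas_s(h)$ comes from the explicit representation (\ref{def-hmeas}) rather than from any claim that $h\in\H_2$.
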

\begin{proof} 
We start by rewriting the expression (\ref{def-calg}) for 
$\cendep$ obtained in Theorem \ref{th-main} in a more convenient form.   
By the definitions of $\are_t$ and $\ops^{\hmeas_0}$ given 
in (\ref{def-aret}) and
 (\ref{rel-ops}) , respectively,  for $t > 0$, 
\[
\begin{array}{rcl}
\ds\hmeas_0 (\f1)  - \ops_t^{\hmeas_0} (\f1)  =  
\ds\hmeas_0 \left( \dfrac{G(\cdot+t) - G(\cdot)}{1 - G(\cdot)} \right)   
& = & \ds\hmeas_0 \left( \int_0^t h(\cdot+r) \dfrac{1 -
      G(\cdot+r)}{1-G(\cdot)} \, dr\right)  \\
& = & \ds\hmeas_0 \left( \int_0^t \Phi_r h (\cdot) \, dr \right).
\end{array}
\]
By (\ref{eq-phibound}) and the boundedness of $h$, 
$\Phi_r h$ is bounded (uniformly in $r$) and absolutely continuous, 
 and Assumption \ref{as-holder} 
implies that $\int_0^t \Phi_r h \, dr  = (1-G(\cdot+r))/(1-G(\cdot))$ is H\"{o}lder
 continuous. 
Therefore,  applying Assumption \ref{as-diffinit}'(d) with  $\newf = \Phi_r h$,
 it follows  that
\be
 \label{eq-inth1}
\ds\hmeas_0 (\f1)  - \ops_t^{\hmeas_0} (\f1) =  \ds\int_0^t  \hmeas_0 \left( \Phi_r h \right) \, dr = \ds \int_0^t
\ops^{\hmeas_0}_r (h) \, dr.
\ee 
In a similar fashion, for $t > 0$, using the identity 
$\hath_t(\f1) = \chatm_t(\Psi_t \f1)$ we have 
\begin{eqnarray*}
\ds\chatm_{t} (\f1) - \hath_t (\f1) & = & \ds
\underset{[0,\endsup) \times [0,t]}{\int\int} 
\dfrac{G(x+t-u) - G(x)}{1- G(x)} \, \chatm (dx, du) \\
& = & 
\ds \underset{[0,L) \times [0,t]}{\int\int}\left(  
\int_{u}^t  \dfrac{h(x+r-u) (1-G(x+r-u)}{1-G(x)} \,dr \right) \, \chatm (dx,
du).
\end{eqnarray*}
Because $h \in {\cal C}_b[0,\endsup)$, we can set $\tilde{\newf} = h$
in (\ref{fub-1}) of Lemma \ref{lem-fub} to obtain 
\be
\label{eq-inth2}
 \ds\chatm_{t} (\f1) - \hath_t (\f1) 
  =  \int_0^t \hath_r (h) \, dr.
\ee

If $h$ is absolutely continuous, then $g$ is absolutely continuous and 
 by the commutativity of the convolution and differentiation operations,     
the function $t \mapsto  \int_0^t g(t-s) \hatk(s) \, ds$ is absolutely continuous with derivative 
$g(0) \hatk(t) + \int_0^t g^\prime (t-s) \hatk(s) \, ds$. 
Together with 
the relations  (\ref{eq-inth1}) and (\ref{eq-inth2}) and 
the definition (\ref{def-calg}) of $\cendep$, 
it follows that almost surely, $\cendep$ is   absolutely continuous with respect to 
Lebesgue measure, and has density equal to 
\[\dfrac{d{\cal \cendep_t}}{dt} = \ops^{\hmeas_0}_t (h) - \hath_t (h)  
 + g(0) \hatk (t) + \int_0^t g^\prime (t-s) \hatk(s) \, ds. 
\]
The relation  (\ref{dercalg}) then follows on   
 comparing the right-hand side above with the right-hand side 
of the equation (\ref{def-hmeas}) for $\hmeas(f)$, setting $f = h$ therein 
and using the elementary relations  $h(0) = g(0)$ and  $g^\prime = h^\prime
(1-G) - hg$. 
\end{proof}

\begin{proof}[Proof of Theorem \ref{th-main1}]
From Lemma 
\ref{lem-inth} and the discussion prior to it, 
it follows  that $\hatx$ is a semimartingale 
with the decomposition stated  in Theorem  \ref{th-main1}. 
Combining the non-idling condition (\ref{eq-dnonidling2}) with 
the equation (\ref{eq-dprelimit3})  for $\hatk$, it follows that 
\be
\label{hat-smg}
 \hatk (t) = \left\{ 
\begin{array}{rl} 
\hate (t) & \mbox{ if } \fx \mbox{ is subcritical, } \\
\hate (t) + \widehat{x}_0 - \hatx (t) \vee 0 & \mbox{ if } \fx \mbox{ is critical, } \\
\hate(t) + \widehat{x}_0 - \hatx(t) & \mbox{ if } \fx \mbox{ is supercritical. } 
\end{array}
\right.
\ee
Thus, in the subcritical case, the semimartingale 
decomposition of $\hatk$ follows from that of 
$\hate$ (see Remark \ref{rem-asdiff1}), 
whereas in the supercritical 
case the semimartingale decomposition of $\hatk$ follows 
from those of $\hatx$ and $\hate$. 
On the other hand, when $\fx$ is critical we need the 
additional observation that by   Tanaka's formula,   
\be
\label{rep-xhatvee}
 \hatx (t) \vee 0  = \widehat{x}_0 \vee 0 + \int_{0}^t \ind_{\{\hatx(s) >
  0 \}} d\hatx_s + \dfrac{1}{2}  L_t^{\hatx} (0), 
\ee
where $L_t^{\hatx}(0)$ is the local time of $\hatx$ at zero, over 
the interval $[0,t]$.  
When combined with (\ref{hat-smg}), this provides 
the semimartingale decomposition of $\hatk$ in the critical 
case.  
When $\hatk$ is a semimartingale, the stochastic 
integration by parts formula for semimartingales shows that 
for every $f \in \acb$, 
\be
\label{def2-hcalk}
 \hcalk_s(f) = \int_{[0,s]} f(s-u) (1-G(s-u)) \, d\hatk(u), \quad 
s \geq 0, 
\ee
where the latter is the convolution integral with respect to 
the semimartingale $\hatk$.
Thus, we obtain  
(\ref{def-hmeas2}) from (\ref{def-hmeas}). 
\end{proof}

\begin{proof}[Sketch of Justification of Remark \ref{rem-main1}]
By Corollary \ref{cor-hreg}, if $f$ is bounded and H\"{o}lder continuous, 
then $\hathn(f) \Rightarrow \hath (f)$ in 
${\cal D}_{\R}[0,\infty)$ and 
$\{\hath_t(f), t \geq 0\}$ is a continuous 
process. We now argue that one can, in fact, show that 
$\hcalkn(f) \Rightarrow \hcalk(f)$ as $N \ra \infty$ for 
all H\"{o}lder continuous $f$. 
Given  the semimartingale
decomposition $K = M^K + A^K$, the integral on the 
right-hand side of the expression (\ref{def2-hcalk}) for 
$\hcalk(f)$ can 
be decomposed into a stochastic 
convolution integral with respect to the local martingale $M^K$ and a
Lebesgue-Stieltjes convolution integral with respect to the 
finite variation process $A^K$.  
An argument exactly analogous to the one used 
in Lemma \ref{lem-hathn}(1) to analyze $\hath(f)$ can 
then be used to analyze the stochastic convolution integral 
with respect to $M^K$ and a similar, though simpler, 
argument can be used to  study the convolution integral 
with respect to $A^K$ to show, as in Lemma \ref{lem-hathn} 
and Corollary 
\ref{cor-hreg},  
that for $f$ H\"{o}lder continuous and bounded, 
 $\hcalkn(f) \Rightarrow \hcalk(f)$ as $N \ra \infty$, and 
$\hcalk(f)$ admits a continuous version. 
When combined with 
the convergence in (\ref{jt-limit}), it is easy to argue as in the 
proof of Theorem \ref{th-main} that, 
in fact, the joint convergence  $(\ops^{\hmeasn_0}(f), \hcalkn(f), \hathn(f)) \Rightarrow 
(\ops^{\hmeas_0}(f), \hcalk(f), \hath(f))$ holds. 
Due to (\ref{rep-hatmeasn}), by the continuous mapping theorem,
 this implies that $\hmeasn(f) \Rightarrow
\hmeas(f)$ in ${\cal D}_{\R}[0,\infty)$, where $\hmeas(f)$ is continuous. 
\end{proof}


\subsection{Stochastic Age Equation}
\label{sec-sae}

The focus of this section is the characterization 
of the limiting state process in terms of a stochastic 
partial differential equation (SPDE), which we 
have called the  
stochastic age equation in Definition \ref{def-sae}.  
First, in Section \ref{subs-smgprop}  
we  establish a representation for integrals of functionals 
of the limiting centered age process $\{\hmeas_s, s \geq 0\}$. 
This representation is then used in Section \ref{subsub-veri} 
to show that $\{\hmeas_t, t \geq 0\}$ is a solution to the 
stochastic age equation associated with $(\hmeas_0,  \hatk, \chatm)$.  
The proof of uniqueness of solutions to the stochastic age equation 
and the proof of Theorem \ref{th-main2}(1) 
 is presented in Section \ref{subsub-unique}.  
Throughout the section we assume that the conditions of Theorem 
\ref{th-main2}, namely Assumptions \ref{as-flinit}, \ref{as-hate} and 
\ref{as-diffinit}', the conditions on the fluid limit
 and the boundedness and absolute continuity of 
$h$, are satisfied and state only additional assumptions when imposed.

\subsubsection{An Integral Representation}
\label{subs-smgprop}

We start by establishing an integral representation that 
results from the semimartingale property for $\hatk$.  
In what follows, recall the definition of the operator 
$\opint_t$ given in (\ref{def-opint}).  
Also, note that if $h$ is bounded then 
Assumption \ref{as-h} and (by Remark \ref{rem-holder}) Assumption
\ref{as-holder} are both satisfied.  
Since $h$ is also absolutely continuous,  by 
Theorem \ref{th-main1} $\hatk$ is a semimartingale  and 
$\hmeas_t(f)$ is given by (\ref{def-hmeas2}) for every 
$f \in \acbl$.

\begin{lemma}
\label{lem-intsa} 
For any  $\newf \in {\cal C}_b([0,\endsup) \times [0,\infty)$   
 such that $\newf(\cdot, t)$ is  
H\"{o}lder  continuous uniformly in $t$ and  absolutely continuous, 
$\P$-almost surely for every $t > 0$, we have 
\begin{eqnarray}
\label{eq-intsa}
\int_0^t \hmeas_s \left( \newf (\cdot, s) \right) \,  ds
& = &  \hmeas_0 \left( \Theta_t (\newf(\cdot,s))(\cdot,0)\right)  - \chatm_t (\opint_t \newf) \\
& & \nonumber 
 + \int_0^t \left( \int_u^t \newf(s-u,s) (1-G(s-u)) \, ds\right) \, d \hatk
 (u). 
 \end{eqnarray}
 \end{lemma}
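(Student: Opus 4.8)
\textbf{Proof plan for Lemma \ref{lem-intsa}.}

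The plan is to substitute the representation (\ref{def-hmeas2}) for $\hmeas_s(f)$ (with $f = \newf(\cdot,s)$, which is licit since the hypotheses on $\newf$ guarantee $\newf(\cdot,s) \in \acbl$ for every $s$), integrate over $s \in [0,t]$, and then interchange the order of integration in each of the three resulting terms. Explicitly, (\ref{def-hmeas2}) gives
\[
\hmeas_s(\newf(\cdot,s)) = \ops^{\hmeas_0}_s(\newf(\cdot,s)) + \int_0^s \newf(s-u,s)(1-G(s-u)) \, d\hatk(u) - \hath_s(\newf(\cdot,s)),
\]
so that $\int_0^t \hmeas_s(\newf(\cdot,s)) \, ds$ splits into three pieces, which I would match term-by-term with the three terms on the right-hand side of (\ref{eq-intsa}). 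For the first piece, $\int_0^t \ops^{\hmeas_0}_s(\newf(\cdot,s)) \, ds = \int_0^t \hmeas_0(\Phi_s \newf(\cdot,s)) \, ds$ by (\ref{rel-ops}); applying the Fubini-type identity (\ref{as-fub2}) of Assumption \ref{as-diffinit}'(d) — whose hypotheses are exactly tailored to this situation, needing $x \mapsto \int_0^t \Phi_s\newf(x,s)\,ds$ bounded and H\"older continuous — moves the $\hmeas_0$ outside, yielding $\hmeas_0(\int_0^t \Phi_s\newf(\cdot,s)\,ds)$, which by the definition (\ref{def-opint}) of $\Theta_t$ (with the role of the second variable evaluated at $0$) equals $\hmeas_0(\Theta_t(\newf(\cdot,s))(\cdot,0))$. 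For the third piece, I would write $\hath_s(\newf(\cdot,s)) = \chatm_s(\Psi_s\newf(\cdot,s))$ via (\ref{rel-hath}), and then invoke the stochastic Fubini theorem — this is precisely Lemma \ref{lem-fub}, whose statement (referenced as (\ref{fub-1})) is set up for integrands of this convolution form — to get $\int_0^t \hath_s(\newf(\cdot,s)) \, ds = \chatm_t(\int_{\cdot}^t \Psi_s\newf(\cdot,s)\,ds) = \chatm_t(\Theta_t\newf)$, again by (\ref{def-opint}). For the middle piece, $\int_0^t \left(\int_0^s \newf(s-u,s)(1-G(s-u))\,d\hatk(u)\right)ds$, I would apply a classical (deterministic, pathwise, or rather semimartingale) Fubini theorem for the Lebesgue--Stieltjes integral against the finite-variation part and a stochastic Fubini for the local-martingale part of $\hatk$ (recall $\hatk = M^K + C^K$ from Theorem \ref{th-main1}), interchanging $\int_0^t ds$ with $\int_{[0,s]} d\hatk(u)$ and changing the region $\{0 \le u \le s \le t\}$ to $\{u \le t,\ u \le s \le t\}$, to obtain $\int_0^t \left(\int_u^t \newf(s-u,s)(1-G(s-u))\,ds\right) d\hatk(u)$, which is the last term of (\ref{eq-intsa}).

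The routine part is checking the integrability/measurability hypotheses needed to justify each interchange: for the first interchange this is handed to us by Assumption \ref{as-diffinit}'(d); for the stochastic Fubini against $\chatm$ one needs the integrand $(x,u) \mapsto \int_u^t \Psi_s\newf(\cdot,s)(x,u)\,ds = (\Theta_t\newf)(x,u)$ to lie in ${\cal C}_b([0,\endsup)\times[0,t])$ and more precisely to satisfy the integrability condition (\ref{integrand-cond}) — this follows from (\ref{op-ineqs}) and the boundedness of $\newf$, since $\|\Theta_t\newf\|_\infty \le t\|\newf\|_\infty$ and $h$ is bounded — and this is exactly what Lemma \ref{lem-fub} is designed to certify; for the interchange against $d\hatk$ one uses that $s \mapsto \newf(s-u,s)(1-G(s-u))$ is bounded and continuous and that $\hatk$ has locally bounded variation plus a square-integrable martingale part, so both the classical and the stochastic Fubini apply on compacts.

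The main obstacle I anticipate is the stochastic Fubini interchange for the term involving $\chatm$: one must verify that $\Theta_t\newf$ is a legitimate integrand for the white-noise stochastic integral and that the two iterated integrals agree almost surely and simultaneously for all $t$ (so that the resulting identity holds as an identity between continuous processes, not merely for each fixed $t$). This is precisely why the lemma is stated only after Lemma \ref{lem-fub} and Lemma \ref{lem-hathn}(4): the former provides the Fubini identity, and the latter guarantees that $\{\chatm_t(\Theta_t\newf), t \ge 0\}$ has a continuous version, so that the exceptional null set can be taken uniform in $t$. Once these interchanges are justified on a compact time interval $[0,T]$, the identity on all of $[0,\infty)$ follows by letting $T \uparrow \infty$ along a sequence.
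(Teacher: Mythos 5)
Your proposal is correct and follows essentially the same route as the paper's proof: substitute $f=\newf(\cdot,s)$ into (\ref{def-hmeas2}), integrate over $s\in[0,t]$, and handle the three terms via Assumption \ref{as-diffinit}'(d), the Fubini identity (\ref{fub-1}) of Lemma \ref{lem-fub}, and Fubini for stochastic integrals with respect to the semimartingale $\hatk$ (the paper cites Revuz--Yor rather than splitting $\hatk$ into its martingale and finite-variation parts, but that is the same argument).
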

\begin{proof}  
 Setting $t = s$ and  
$f = \newf (\cdot, s)$  in (\ref{def-hmeas2}), then using
the identities $\ops^{\hmeas_0}_s = \hmeas_0 (\Phi_s \cdot)$, 
$\hath_s = \chatm_s(\Psi_s \cdot)$ and (\ref{def2-hcalk}) 
and lastly integrating 
over $s \in [0,t]$, we obtain 
 \begin{eqnarray} 
\label{neweq1}
 \int_0^t \hmeas_s \left( \newf (\cdot, s) \right) \,  ds & = &  
\ds \int_0^t  \hmeas_0 \left(\Phi_s\newf(\cdot,s) \right) \, ds 
- \int_0^t \chatm_s(\Psi_s \newf(\cdot,s)) \, ds  \\
\nonumber & & \qquad +  
\int_{[0,t]} \left(\int_{[0,s]} \newf(s-u,s) (1-G(s-u)) \, d\hatk(u) \right) \, ds. 
\end{eqnarray}
From the definition (\ref{def-opint}) of $\opint_t$ and the fact that 
$(\Psi_t f)(\cdot,0) = \Phi_t f (\cdot)$, 
it follows that 
\[  \Theta_t \left(\varphi(\cdot, s) \right) (x, 0) = \int_0^t \left( \Psi_s
  \varphi(\cdot, s) \right) (x,0) \, ds = \int_0^t \Phi_s \left(
  \varphi(\cdot,s) \right) (x) \,ds. 
\]
Together with 
 Assumption \ref{as-diffinit}'(d), this implies that 
\be
\label{neweq2}
 \int_0^t  \hmeas_0 \left(\Phi_s\newf(\cdot,s) \right) \, ds = 
\hmeas_0 \left( \Theta_t (\newf(\cdot,s)\right)(\cdot,0),
\ee
which shows that the first terms on the right-hand sides of (\ref{eq-intsa})
and (\ref{neweq1})  are equal. 
 The corresponding equality of the 
second terms on the right-hand sides of  (\ref{eq-intsa})
and (\ref{neweq1})  follows from  (\ref{fub-1}), whereas 
the equality of the third terms follows from 
Fubini's theorem for stochastic integrals 
with respect to semimartingales (see, for example, (5.17) of Revuz and Yor
\cite{revyorbook}). 
  This completes the  proof of the lemma. 
\end{proof}

\subsubsection{A Verification Lemma}
\label{subsub-veri}

We now show that the process $\hmeas$ of Theorem \ref{th-fclt} is a solution to the stochastic 
age equation. 
For this, it will 
be convenient to introduce the function $\psi_h$ defined 
as follows:
$\psi_h(x,t) \doteq \ds \exp (r_h(x,t))$ 
for $(x,t) \in
[0,\ubound) \times \R_+$, where \be \label{def-r}
r_h(x,t) \doteq \left\{ \ba{rl}
\ds - \int_{x-t}^x h(u) \, du & \ds \mbox{ if } 0 \leq t \leq x, \\
\ds - \int_{0}^x h(u) \, du & \ds \mbox{ if } 0 \leq x \leq t. \ea
\right. \ee
Since $h = g/(1-G)$, this implies that 
\be
\label{eq-psih}
\psi_h(x,t) = \left\{ \ba{rl}
\ds \dfrac{1-G(x)}{1-G(x-t)} & \ds \mbox{ if } 0 \leq t \leq x, \\
\ds  1 - G(x) & \ds \mbox{ if } 0 \leq x \leq t. \ea
\right. \ee
If $g$ is absolutely continuous, then $G$ is continuously differentiable 
and $\psi_h$  is  bounded, absolutely continuous and  satisfies 
\be
\label{pde-psil}
\dfrac{\partial \psi_{h}}{\partial x} + \dfrac{\partial
  \psi_{h}}{\partial t} = - h \psi_{h}
\ee
for a.e.\ $(x,t) \in [0,\endsup) \times \R_+$.
Furthermore, from the definition it is easy to see that  
 $\psi_{h}(0,s) = \psi_{h}(x,0) = 1$ and, 
for $(x,s) \in [0,\endsup) \times [0,\infty)$ and $u \in [0,s]$, 
\be
\label{psil-ids}
\ds \dfrac{\psi_{h} (x+s-u,s)}{\psi_{h} (x,u)}  =  
  \dfrac{1-G(x+s-u)}{1-G(x)} = 
\left\{ 
\begin{array}{rl}
 \dfrac{1-G(x+s)}{1-G(x)} & \mbox{ if } u = 0,  \\
 (1-G(s-u)) & \mbox{ if } x = 0. 
\end{array}
\right. 
\ee

\begin{prop}
\label{prop-saexi}
If $h$ is H\"{o}lder continuous, then the process  $\{\hmeas_t, t \geq 0\}$ defined by (\ref{def-hmeas2}) 
satisfies the stochastic age equation associated with  $\{\hmeas_0, \hatk, \chatm\}$. 
\end{prop}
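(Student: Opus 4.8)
The plan is to deduce the integral identity (\ref{eq-sae}) directly from the explicit representation (\ref{def-hmeas2}) for $\hmeas_t$ together with the integral representation already established in Lemma \ref{lem-intsa}. First I would dispose of the ``registration'' requirements in Definition \ref{def-sae}: that $\hmeas_t(f)$ is $\widehat{{\cal F}}_t$-measurable for $f \in \acbl$, that $s \mapsto \hmeas_s(f)$ is a.s.\ measurable, and that $\{\hmeas_t, t \geq 0\}$ admits a continuous $\H_{-2}$-valued version. All of these are already in hand, since $\hmeas = \ops^{\hmeas_0} - \hath + \hcalk$ and the continuity and adaptedness of the three summands follow from Assumption \ref{as-diffinit}, Lemma \ref{lem-hathn}(1), Lemma \ref{lem-calk}(1) and the continuous-mapping arguments in the proof of Proposition \ref{prop-fclt}. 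So the only substantive task is to verify (\ref{eq-sae}).

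Fix a test function $\newf$ as in Definition \ref{def-sae} and set $\psi(\cdot,s) \doteq \newf_x(\cdot,s) + \newf_s(\cdot,s) - \newf(\cdot,s)\,h(\cdot)$, the quantity on which $\hmeas_s$ acts in (\ref{eq-sae}). Using that $\newf_x + \newf_s$ is Lipschitz and that $h$ is bounded, absolutely continuous, and (by the hypothesis of the proposition) H\"older continuous, one checks that $\psi$ satisfies the hypotheses of Lemma \ref{lem-intsa}; applying that lemma with $\newf$ replaced by $\psi$ gives
\begin{eqnarray*}
\int_0^t \hmeas_s(\psi(\cdot,s))\, ds & = & \hmeas_0\big(\Theta_t(\psi(\cdot,s))(\cdot,0)\big) - \chatm_t(\Theta_t \psi) \\
& & \quad + \int_0^t \left( \int_u^t \psi(s-u,s)(1-G(s-u))\, ds \right) d\hatk(u).
\end{eqnarray*}
The remainder of the argument is to evaluate the three terms on the right. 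The common engine is the elementary fact that, because $g$ exists (so $G \in {\cal C}^1$) and by the transport equation (\ref{pde-psil}) for $\psi_h$, the kernel $(1-G(x+r))/(1-G(x))$ differentiates in $r$ so as to produce exactly the factor $-h(x+r)$. Concretely, with $F_u(s;x) \doteq \newf(x+s-u,s)(1-G(x+s-u))/(1-G(x))$ one gets $\partial_s F_u(s;x) = (\Psi_s \psi(\cdot,s))(x,u)$ for $u \le s$, and likewise $\partial_s(\are_s \newf(\cdot,s)) = \are_s \psi(\cdot,s)$ and $\partial_s\big(\newf(s-u,s)(1-G(s-u))\big) = \psi(s-u,s)(1-G(s-u))$. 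Integrating these over the relevant $s$-interval, and using $\are_0 f = f$ and $G(0)=0$, yields the identities of functions
\[
\Theta_t(\psi(\cdot,s))(\cdot,0) = \are_t \newf(\cdot,t) - \newf(\cdot,0), \qquad \Theta_t \psi = \Psi_t \newf(\cdot,t) - \newf \ \text{ on } [0,\endsup) \times [0,t],
\]
together with $\int_u^t \psi(s-u,s)(1-G(s-u))\, ds = \newf(t-u,t)(1-G(t-u)) - \newf(0,u)$.

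Applying $\hmeas_0$, $\chatm_t$, and $\int_{[0,t]}\cdot\, d\hatk(u)$ to these three identities respectively, and recalling the definitions $\ops^{\hmeas_0}_t(f) = \hmeas_0(\are_t f)$ from (\ref{rel-ops}), $\hath_t(f) = \chatm_t(\Psi_t f)$ from (\ref{rel-hath}), and (\ref{def2-hcalk}) for $\hcalk$, the three terms equal $\ops^{\hmeas_0}_t(\newf(\cdot,t)) - \hmeas_0(\newf(\cdot,0))$, $\hath_t(\newf(\cdot,t)) - \chatm_t(\newf)$, and $\hcalk_t(\newf(\cdot,t)) - \int_0^t \newf(0,s)\, d\hatk(s)$. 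Substituting back and using (\ref{def-hmeas2}) at time $t$, namely $\hmeas_t(\newf(\cdot,t)) = \ops^{\hmeas_0}_t(\newf(\cdot,t)) + \hcalk_t(\newf(\cdot,t)) - \hath_t(\newf(\cdot,t))$, the right-hand side collapses to $\hmeas_t(\newf(\cdot,t)) - \hmeas_0(\newf(\cdot,0)) + \chatm_t(\newf) - \int_0^t \newf(0,s)\, d\hatk(s)$, and rearranging — with $\chatm_t(\newf) = \underset{[0,\endsup) \times [0,t]}{\int\int}\newf(x,s)\,\chatm(dx,ds)$ — is exactly (\ref{eq-sae}). The hard part will be purely technical: checking that $\psi$, and in particular the product $\newf(\cdot,s)\,h(\cdot)$, inherits enough regularity in $x$ to fall under the hypotheses of Lemma \ref{lem-intsa} (this may require slightly strengthening the smoothness demanded of $\newf$, or an approximation argument exploiting that $\hmeas$ takes values in $\H_{-2}$), handling the boundary value $u=0$ in the operator $\Psi$, and justifying the differentiation-under-the-integral; everything else is bookkeeping with the semigroup $\{\are_t\}$ and the transport equation (\ref{pde-psil}).
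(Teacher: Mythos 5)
Your proposal is correct and follows essentially the same route as the paper's proof: apply Lemma \ref{lem-intsa} with the integrand $\newf_x + \newf_s - h\newf$, use the transport identity (\ref{pde-psil}) for $\psi_h$ to evaluate $\Theta_t(\newf_x+\newf_s-h\newf)$ along characteristics, and recognize the resulting combination as $\hmeas_t(\newf(\cdot,t))$ via (\ref{def-hmeas2}) and (\ref{psil-ids}). The technical worry you flag at the end is already covered by the stated hypotheses: since $h$ is bounded, absolutely continuous and H\"older continuous and $\newf_x+\newf_s$ is Lipschitz, the function $\newf_x+\newf_s-h\newf$ is bounded, absolutely continuous and H\"older continuous uniformly in $s$, which is exactly what Lemma \ref{lem-intsa} requires, so no strengthening or approximation is needed.
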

\begin{proof}
 Theorem \ref{th-main2} shows that for every $t > 0$, 
$\{\hmeas_t(f), f \in \acbl\}$ is a family of $\widehat{{\cal
    F}}_t$-measurable random variables and 
 $\{\hmeas_t, t \geq 0\}$ admits a version as  an $\{\widehat{{\cal F}}_t\}$-adapted
continuous, $\H_{-2}$-valued process. 
Moreover, it follows from Lemma \ref{lem-consistency} that for every $f \in
\acbl$, almost surely $s \mapsto \hmeas_s(f)$ is measurable.  
Therefore, it only remains to show that $\hmeas$ satisfies 
the equation (\ref{eq-sae}). 
  Fix $t \in [0,\infty)$ and 
$\newf \in {\cal C}_b^{1,1}([0,\endsup)\times [0,\infty))$ 
such that $\newf_x(\cdot,s) + \newf_s(\cdot,s)$ is Lipschitz continuous for every $s$. 
Since $h$ is bounded, H\"{o}lder continuous and absolutely 
continuous, it follows that $\newf_x + \newf_s - h \newf$ 
is bounded, H\"{o}lder continuous and absolutely continuous. 
Moreover, it is clear from (\ref{pde-psil}) that   
\[
 \left(\newf_x  + \newf_s  -  h \newf \right) \psi_{h} =
(\newf \psi_{h})_x + (\newf\psi_{h})_s. 
\]
Substituting this and the identity (\ref{psil-ids}) 
into the definition 
(\ref{def-opint}) of $\opint_t$, it follows that 
\begin{eqnarray} 
\nonumber
\left(\opint_t(\newf_x + \newf_s - h \newf)\right)(x,u)   & = &  
\int_u^t \dfrac{\left( (\newf_x + \newf_s - h
    \newf)\psi_h\right)(x+s-u,s)}{\psi_h(x,u)} \, ds \\
\nonumber
& = & \int_u^t
\dfrac{\left( (\newf \psi_{h})_x + (\newf \psi_{h})_s \right)(x+s-u,s)}{\psi_h(x,u)} \, ds \\
\label{rel-theta}
& = &  \dfrac{\newf (x+t-u,t) \psi_{h}(x+t-u,t)}{\psi_h(x,u)}
- \newf(x,u).
\end{eqnarray} 
Applying Lemma \ref{lem-intsa} 
with $\newf$ replaced by $\newf_x + \newf_s - h\newf$, 
 using (\ref{rel-theta}) and 
the identity $\psi_{h}(0,u) = 1$,
it follows that
\be
\label{neweq4}
\begin{array}{l}
\ds \int_0^t \hmeas_s \left(\newf_x(\cdot,s) + \newf_s(\cdot,s)
- h \newf(\cdot,s)\right) \, ds \\ 
\ds \qquad = 
\hmeas_0 \left( \newf(\cdot + t,t) \psi_{h}(\cdot + t, t)\right) 
+ \int_{[0,t]} \newf(t -u,t) \psi_{h}(t-u, t) d \hatk (u) \\
\ds \qquad \qquad - \underset{[0,\ubound) \times [0,t]}{\int \int} \newf (x+t-u,t)
\dfrac{\psi_{h}(x+t-u,t)}{\psi_{h} (x,u)} \, \chatm (dx, du)
 \\
\ds \qquad \qquad - \hmeas_0( \newf(\cdot, 0))  -  
\int_{[0,t]} \newf(0,u) \, d\hatk (u) +
\underset{[0,\ubound) \times [0,t]}{\int \int} \newf (x,u)  \, \chatm (dx, du).
\end{array}
\ee
Since $\newf$ is bounded and $x \mapsto \newf(x,s)$ is absolutely continuous
for every $s$,   
by the definition (\ref{def-hmeas2})  of $\hmeas_t$ and the identities 
in (\ref{psil-ids}), it is clear that the sum of the
first three terms on the right-hand side of (\ref{neweq4}) equals $\hmeas_t
(\newf(\cdot,t))$.   With this substitution, (\ref{neweq4}) reduces 
to the stochastic age equation  (\ref{eq-sae}).  
This completes the proof that $\{\hmeas_t,t \geq 0\}$ is a solution 
to the stochastic age equation 
associated with $(\hmeas_0, \hatk, \chatm)$. 
\end{proof}

\subsubsection{Uniqueness of Solutions to the Stochastic Age Equation}
\label{subsub-unique}

In order to establish uniqueness, we begin with a basic
``variation of constants'' transformation result.  
Recall from Section \ref{subsub-fun} 
that ${\cal S}_c$ is the space of ${\cal C}^\infty$ functions with 
compact support 
on $[0,\endsup)$ equipped with the same norm as ${\cal S}$. 
In what follows $g^\prime$ is the density of $g$. 

\begin{lemma}
\label{lem-ste}
Suppose that $g^\prime \in \L^2_{loc}[0,\endsup) \cup
\L^{\infty}_{loc}[0,\endsup)$.   
Given a solution $\{\meas_t, t \geq 0\}$ 
to the stochastic age equation associated with 
$(\hmeas_0, \hatk, \chatm)$, define 
\be
\label{def-mu}
 \mu_t (\tilde{f}) \doteq \meas_t(\tilde{f} (1-G)^{-1}), \qquad 
\tilde{f} \in \testspace_c. 
\ee
Then $\{\mu_t, t \geq 0\}$ is a continuous $\dualspace_{c}$-valued 
process that satisfies the following stochastic transport equation 
associated with $(\hmeas_0, \hatk, \chatm)$:  
for every $\tilde{f} \in \dualspace_c$, $t \geq 0$, 
\begin{eqnarray}
\label{eq-ste}
 \qquad  \mu_t (\tilde{f})   &=&
\hmeas_0 (\tilde{f}(1-G)^{-1}) +   \int_{0}^t
\mu_s( \tilde{f}_x) \, ds
 + \tilde{f}(0) \hatk(t) - \chatm_t \left(\tilde{f} (1-G)^{-1}\right). 
 \end{eqnarray}
\end{lemma}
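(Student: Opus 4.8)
The whole point is to recognize (\ref{eq-ste}) as the stochastic age equation (\ref{eq-sae}) written in the ``integrating factor'' coordinate $(1-G)^{-1}$, and to deduce it by feeding a single time--independent test function into (\ref{eq-sae}). Given $\tilde f \in \testspace_c$, set $\newf(x,s) \doteq \tilde f(x)(1-G(x))^{-1}$; this is constant in $s$, and in $x$ it is supported in a compact subset $K \Subset [0,\endsup)$ on which $1-G$ is bounded away from $0$, so $\newf$ is bounded. Using $h = g/(1-G)$ one has the pointwise identities $\newf_x(x) - \newf(x)h(x) = \tilde f'(x)(1-G(x))^{-1}$ and $\newf(0) = \tilde f(0)$ (the latter since $G(0+)=0$); the first is precisely the cancellation of the absorption term $-\newf h$ against the term produced by differentiating $(1-G)^{-1}$. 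Substituting into (\ref{eq-sae}), using $\newf_s\equiv 0$, $\meas_t(\newf)=\mu_t(\tilde f)$, $\meas_s(\newf_x-\newf h)=\mu_s(\tilde f')$, $\int_{[0,t]}\newf(0,s)\,d\hatk_s = \tilde f(0)\hatk(t)$ (recall $\hatk(0)=0$), and $\meas_0=\hmeas_0$, one obtains (\ref{eq-ste}) verbatim. Thus the content of the lemma reduces to: (a) legitimacy of this substitution, and (b) the claim that $\mu$ is a continuous $\dualspace_c$--valued process.

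For (b): $\mu_t = \meas_t\circ M$, where $M\colon\tilde f\mapsto \tilde f(1-G)^{-1}$ maps $\testspace_c$ continuously into $\H_2[0,\endsup)$. Indeed, on the fixed compact support $K$, $\nrm{\tilde f(1-G)^{-1}}_{\H_2}$ is dominated by $\nrm{\tilde f}_{{\cal C}^2(K)}$ times quantities depending on $\sup_K(1-G)^{-1}$, $\sup_K g$ (finite, since $g$ is continuous because $h$ is absolutely continuous) and $\nrm{g'}_{\L^2(K)}$ (finite in either case of the hypothesis $g'\in\L^2_{loc}\cup\L^{\infty}_{loc}$), because the second weak derivative of $(1-G)^{-1}$ equals $g'(1-G)^{-2}+2g^2(1-G)^{-3}$. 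Hence $\tilde f\mapsto \meas_t(\tilde f(1-G)^{-1})$ is a continuous linear functional on $\testspace_c$, i.e.\ $\mu_t\in\dualspace_c$; the uniformity of this bound over $t$ in compact time intervals, together with $\sup_{s\le t}\nrm{\meas_s}_{-2}<\infty$ (valid since $\meas$ is a continuous $\H_{-2}$--valued process), upgrades the obvious continuity of $t\mapsto\mu_t(\tilde f)$ for fixed $\tilde f$ to continuity in the strong topology of $\dualspace_c$. Measurability of $s\mapsto\mu_s(\tilde f)$ is inherited from that of $s\mapsto\meas_s(f)$, $f\in\acbl$, postulated in Definition \ref{def-sae}.

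For (a): when $g'\in\L^{\infty}_{loc}[0,\endsup)$, $g$ is locally Lipschitz, so $\newf_x = \tilde f'(1-G)^{-1}+\tilde f g(1-G)^{-2}$ is Lipschitz on $K$; thus $\newf\in{\cal C}^{1,1}_b([0,\endsup)\times\R)$ with $\newf_x+\newf_s=\newf_x$ Lipschitz, and the substitution above is immediately valid. When only $g'\in\L^2_{loc}[0,\endsup)$, $\newf_x$ is merely H\"older, so $\newf$ is not itself an admissible test function; one then mollifies $(1-G)^{-1}$ to smooth compactly supported $\phi^{(n)}$ with $\phi^{(n)}\to(1-G)^{-1}$ in $W^{2,2}_{loc}$ and uniformly on $K$ (possible since $(1-G)^{-1}\in W^{2,2}_{loc}$ by the formula above), so $\newf^{(n)}\doteq \tilde f\,\phi^{(n)}\in{\cal C}^{\infty}_c([0,\endsup))$ is admissible, and passes to the limit in (\ref{eq-sae}) applied to $\newf^{(n)}$. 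The terms $\meas_t(\newf^{(n)})$, $\meas_0(\newf^{(n)})$ converge since $\newf^{(n)}\to\tilde f(1-G)^{-1}$ in $\H_2$; $\chatm_t(\newf^{(n)})\to\chatm_t(\tilde f(1-G)^{-1})$ by the $\L^2$--isometry for $\chatm$ (Lemma \ref{lem-martest}) and the uniform convergence of $\newf^{(n)}$; and $\newf^{(n)}(0)\hatk(t)\to\tilde f(0)\hatk(t)$.

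The remaining, and most delicate, term is the drift $\int_0^t\meas_s(\newf^{(n)}_x-\newf^{(n)}h)\,ds$. Write $\newf^{(n)}_x-\newf^{(n)}h = \tilde f'\phi^{(n)} + \tilde f(\phi^{(n)}_x-\phi^{(n)}h)$. The first summand tends to $\tilde f'(1-G)^{-1}$ in $\H_2$, so by dominated convergence (using $|\meas_s(w)|\le \nrm{\meas_s}_{-2}\nrm{w}_{\H_2}$ and $\sup_{s\le t}\nrm{\meas_s}_{-2}<\infty$) its contribution converges to $\int_0^t\mu_s(\tilde f')\,ds$. The second summand, $\tilde f(\phi^{(n)}_x-\phi^{(n)}h)$, is a Friedrichs--type commutator between multiplication by $h$ and mollification: the contribution of $\phi^{(n)}_x$, which does not converge on its own, has been removed by $-\phi^{(n)}h$, precisely by the integrating--factor identity, and the net expression tends to $0$ in $W^{1,2}_{loc}$ ($=\H_1$) uniformly in $n$ (using $h\in W^{1,2}_{loc}$, which holds since $h'=g'(1-G)^{-1}+g^2(1-G)^{-2}\in\L^2_{loc}$). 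I expect the passage $\int_0^t\meas_s(\tilde f(\phi^{(n)}_x-\phi^{(n)}h))\,ds\to 0$ to be the main obstacle, because $\meas_s$ is controlled only in $\H_{-2}$ while the commutator need not stay bounded in $\H_2$; the way around this is to use the identity obtained from (\ref{eq-sae}) applied to $\newf^{(n)}$ itself, which already shows that the \emph{full} drift integral converges (being the difference of the converging terms above), so that only the \emph{identification} of the limit remains — and that is pinned down by combining the $\H_1$--convergence of the commutator to $0$ with the continuity of $M$ into $\H_2$ and a density argument in $\testspace_c$. Collecting the limits of all four terms yields (\ref{eq-ste}) for every $\tilde f\in\testspace_c$, completing the proof.
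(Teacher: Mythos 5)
Your core argument is the paper's own: substitute the time–independent test function $f=\tilde f(1-G)^{-1}$ into (\ref{eq-sae}), use $f_x-hf=\tilde f_x(1-G)^{-1}$ and $f(0)=\tilde f(0)$, and read off (\ref{eq-ste}); your continuity argument (continuity of $\tilde f\mapsto\tilde f(1-G)^{-1}$ from $\testspace_c$ into $\H_2$, composed with the continuity of $t\mapsto\meas_t$ in $\H_{-2}$) is a harmless variant of the paper's, which instead reads continuity of $t\mapsto\mu_t(\tilde f)$ off the right-hand side of the derived identity and then invokes Mitoma's theorem to get a continuous $\dualspace_c$-valued process. Up to that point the two proofs coincide in substance.

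The divergence, and the problem, is in the branch $g'\in\L^2_{loc}$ only. The paper does not mollify: citing Lemma \ref{lem-asmarkov} it notes that $f=\tilde f(1-G)^{-1}$ is absolutely continuous with compact support and lies in $\H_2\cap{\cal C}^1_b$, and it substitutes $f$ directly into (\ref{eq-sae}) in both cases. You are right that the literal test class in Definition \ref{def-sae} asks for $\newf_x+\newf_s$ Lipschitz, which $f_x$ need not be when $g'$ is merely in $\L^2_{loc}$ (it is then only locally H\"{o}lder), so you have spotted a wrinkle the paper passes over silently; but your attempted repair does not close it. The decisive step of your mollification, $\int_0^t\meas_s\bigl(\tilde f(\phi^{(n)}_x-\phi^{(n)}h)\bigr)\,ds\to 0$, is never established: your (correct) computation gives convergence of the commutator to $0$ in $\H_1$ on the support, but an abstract solution $\meas_s$ of the stochastic age equation is controlled only as an element of $\H_{-2}$, so neither uniform nor $\H_1$ convergence of the integrand controls the pairing, and the commutator does not converge in $\H_2$ (its second derivative involves $g''$ and $\phi^{(n)}_{xxx}$). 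The fallback you offer — that the full drift integral converges because the other three terms do, so "only identification of the limit remains", pinned down by the $\H_1$-convergence and an unspecified density argument in $\testspace_c$ — is circular: identification of the limit is precisely the missing step, and no argument is given that converts $\H_1$-smallness into smallness of $\meas_s$ applied to it. Note also that since Lemma \ref{lem-ste} must apply to an arbitrary solution (it is the uniqueness ingredient), you cannot fall back on the explicit representation (\ref{def-hmeas2}) as in Proposition \ref{prop-saexi}. As written, your proof is complete only under the alternative $g'\in\L^{\infty}_{loc}$; for the $\L^2_{loc}$ branch you should either justify the enlargement of the admissible test class to time-independent $f\in{\cal C}^1_b\cap\H_2$ with compact support (which is in effect what the paper's direct substitution assumes) or produce a genuine $\H_2$-level commutator estimate, which the present one is not.
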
  
\begin{proof} By the definition of the stochastic age equation, 
$\{\meas_t, t \geq 0\}$ is 
a continuous $\H_{-2}$-valued process.  
By the assumptions on the service distribution $G$ and Lemma \ref{lem-asmarkov}, it follows 
that $f = \tilde{f} (1-G)^{-1}$ is an absolutely continuous function 
with compact support and hence lies in  $\H_2$. 
Therefore, $\mu_t(\tilde{f})$ is a well defined random variable for every 
$f \in \testspace_c$, $t > 0$.   Moreover,  $f$ has  derivative 
\be
\label{eq-der}
f_x = \tilde{f}_x(1-G)^{-1} + h f. 
\ee
Since $f \in {\cal C}_b^1[0,\endsup)$, we can 
substitute $\newf =f$ in the stochastic
age equation (\ref{eq-sae}),  use (\ref{eq-der}) and the identity 
$1-G(0)=1$ to obtain for $t\geq 0$, 
\begin{eqnarray}
\label{eq-mut}
\qquad \quad \mu_t (\tilde{f})  =   \meas_t \left(\tilde{f}(1-G)^{-1} \right)
 & = &  \meas_0 (\tilde{f}(1-G)^{-1}) + \int_0^t \meas_s
\left(\tilde{f}_x(1-G)^{-1}  \right) \,
ds \\
\nonumber & & \quad + \tilde{f}(0) \hatk(t) - \chatm_t ( \tilde{f}(1-G)^{-1}). 
\end{eqnarray}
Due to the continuity of $\hatk$ and $\chatm$, it follows that 
 the right-hand side is continuous in $t$, which in turn implies 
that  
$t \mapsto \mu_t(\tilde{f})$ is continuous for each $\tilde{f} \in \testspace_c$.
Since $\dualspace_c$ is a Fr\'{e}chet nuclear space,
  by Mitoma's theorem $\mu$ is a 
 continuous $\dualspace_c$-space valued process.
Moreover, by (\ref{def-mu}) $\meas_s \left(\tilde{f}_x(1-G)^{-1}\right) =
\mu_s (\tilde{f}_x)$. Substituting this back into (\ref{eq-mut}), it
follows that $\{\mu_t, t \geq 0\}$ satisfies  (\ref{eq-ste}).
\end{proof}

 We can now wrap up the proof of Theorem \ref{th-main2}. 

\begin{proof}[Proof of Theorem \ref{th-main2}] 
By assumption, $h$ is H\"{o}lder
  continuous. Therefore, 
 Proposition \ref{prop-saexi} shows that 
$\{\hmeas_t, t \geq 0\}$ is a solution to the stochastic age equation 
associated with $(\hmeas_0, \hatk, \chatm)$.  Thus, in order to establish the
theorem, it suffices to show that the
stochastic age equation has a unique solution.
Suppose that the stochastic age equation 
associated with $(\hmeas_0, \hatk, \chatm)$ 
 has two 
solutions $\meas^{(1)}$ and $\meas^{(2)}$ and for $i = 1, 2$,  
let $\mu^{(i)}$  be the corresponding continuous $\dualspace_c$-valued process   
 defined as in (\ref{def-mu}), but with $\meas$ replaced
by $\meas^{(i)}$. 
By Lemma \ref{lem-ste}, each $\mu^{(i)}$ satisfies
the stochastic transport equation (\ref{eq-ste}) associated
with  $(\hmeas_0, \hatk, \chatm)$.  Define 
$\eta \doteq
\mu^{(1)} - \mu^{(2)}$.  It follows that for every
$\tilde{f}$ in $\testspace_c$,
 \[ \frac{d}{dt}\lan \tilde{f},\eta_t\ran- \lan \tilde{f}_x,\eta_t\ran=0, \qquad \lan \tilde{f},
 \eta_0 \ran = 0. \]
However, this is simply a deterministic transport equation and it is well
known that  the unique solution to this equation is the identically zero solution $\eta
 \equiv 0$  (see, for example, Theorem 4 on page 408 of 
\cite{evansbook}).

Thus, for every 
$\tilde{f} \in \testspace_c$,  $\mu^{(1)}(\tilde{f})= \mu^{(2)}( \tilde{f})$  
or equivalently, 
\be
\label{meas-eqspde}
 \meas^{(1)}_t (\tilde{f}(1-G)^{-1}) =  \meas^{(2)}_t (\tilde{f}(1-G)^{-1}). 
\ee 
Now for any $f \in \testspace_c$, $f(1-G) \in \H_2$. 
Since $\testspace_c$ is dense in $\H_2$ (see Theorem 3.18 on page 54 of 
\cite{adamsbook}),  there exists a sequence $\tilde{f}_n \in 
\testspace_c$ such that $\tilde{f}_n \ra f(1-G)$ in $\H_2$ as 
$n \ra \infty$.  Replacing $\tilde{f}$ by $\tilde{f}_n$ in 
(\ref{meas-eqspde}) and then letting $n \ra \infty$, 
it follows that 
$\meas^{(1)}_t(f) = \meas_t^{(2)}(f)$ for every $f \in \testspace_c$.  
Again using the fact that $\testspace_c$ is dense in $\H_{2}$, 
this shows $\meas^{(1)}_t$ and $\meas^{(2)}_t$ are indistinguishable as 
$\H_{-2}$-valued elements.  This 
proves uniqueness of 
solutions to the stochastic age equation and the theorem follows.   
\end{proof}

\subsection{The Strong Markov Property}
\label{subs-pfmain2}

To prove the strong Markov property, we first 
 show that the assumptions on the initial centered 
age distribution  imposed in Assumption \ref{as-diffinit} 
are consistent, in the sense that they imply that these  
assumptions are also satisfied at any future time $s  > 0$. 
We define the following 
shifted processes: for $F = \haten, \hatkn,  \hate$,  
$\hatk$, and ${\cal U} = \chatmn,  \chatm$, 
and  $s \geq 0$, $u \geq 0$, 
\be
\label{def-shift1}
 (\Theta_s F) (u) \doteq F(s+u) - F(s), \qquad 
(\Theta_s {\cal U})_u \doteq {\cal U}_{s+u} - {\cal U}_s, 
\ee
 for $f \in {\cal C}_b[0,\endsup)$, we define   
\be
\label{def-shift2}
 (\Theta_s \hathn)_t (f) \doteq (\Theta_s \chatmn)_t (\Psi_t f), 
\qquad (\Theta_s \hath)_t (f) \doteq (\Theta_s \chatm)_t (\Psi_t f),
\ee
\begin{eqnarray}
\label{def-shift3}
(\Theta_s \hcalkn)_t (f) & = &  \int_{[0,t]}(1-G(t-u))f(t-u)\, d(\Theta_s\hkn)
(u) \\
\label{def-shift4}
(\Theta_s \hcalk)_t (f) &\doteq & 
f(0) (\Theta_s \hatk)(t) +  \int_0^t(\Theta_s\hatk)(u)\newf_f(t-u)\, du, 
\end{eqnarray}
and, in analogy with (\ref{rel-ops}), for $f \in \acbl$ we define 
\be
\label{def-opss}
\ops^{\hmeas_s}_t(f) \doteq \hmeas_s (\Phi_t f),  \qquad s, t \geq 0. 
\ee

\begin{lemma}
\label{lem-consistency} 
For every bounded and continuous $f$, 
\begin{eqnarray}
\label{eq-consist0}
\hmeasn_{s+t}(f)&=&\ops_t^{\hmeasn_s}(f) + (\Theta_s \hcalkn)_t (f) - 
(\Theta_s\hathn)_t (f), \qquad s, t \geq 0. 
\end{eqnarray}
Likewise, if Assumptions \ref{as-flinit}--\ref{as-diffinit} hold and 
$g$ is continuous, then for every bounded and absolutely continuous $f$, 
\begin{eqnarray}
\label{eq-consist00}
\hmeas_{s+t}(f)&=&\ops_t^{\hmeas_s}(f) + (\Theta_s \hcalk)_t (f)
- (\Theta_s \hath)_t (f),  \qquad s, t \geq 0. 
\end{eqnarray}
In addition, for every $s > 0$, 
\be
\label{eq-consist02}
(\Theta_s \hatk,  \hatx_{s+\cdot}, \hmeas_{s+\cdot}(\f1)) = \Lambda(\Theta_s
\hate, \hatx(s), \ops^{\hmeas_s}(\f1)- (\Theta_s \hath)(\f1)).
\ee
Furthermore, for every $s > 0$, Assumption \ref{as-diffinit}' holds with 
the sequence $\{\hmeasn_0\}_{N \in \N}$ and limit $\hmeas_0$, respectively, 
 replaced by $\{\hmeasn_s\}_{N \in \N}$ and $\hmeas_s$. 
\end{lemma}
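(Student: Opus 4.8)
The plan is to \emph{restart} the dynamical equations at the fixed time $s$, reducing each assertion of the lemma to its counterpart at time $0$. For the pre-limit identity (\ref{eq-consist0}) I would write the centered age equation (\ref{eq-dprelimit1}) of Proposition \ref{prop-cenprelimit} at the two times $s+t$ and $s$ and subtract; the contributions of $\hmeasn_0$, of the stochastic integral on $[0,s]$, and of $\hatkn$ on $[0,s]$ cancel, and after the substitution $r=s+r'$ --- with $\tilde\varphi(\cdot,r')\doteq\varphi(\cdot,s+r')$, which again lies in ${\cal C}^{1,1}_c([0,\endsup)\times\R_+)$ --- one reads off that $\{\hmeasn_{s+t},t\geq 0\}$ satisfies the very same centered age equation, now driven by the shifted data $(\hmeasn_s,\Theta_s\chatmn,\Theta_s\hatkn)$ in place of $(\hmeasn_0,\chatmn,\hatkn)$. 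Since this is again an instance of the abstract age equation of Kaspi and Ramanan \cite{KasRam07}, applying Proposition~4.16 of \cite{KasRam07} exactly as in the proof of Proposition \ref{cor-sae1} yields (\ref{eq-consist0}), with $(\Theta_s\hcalkn)$ and $(\Theta_s\hathn)$ the quantities defined in (\ref{def-shift2})--(\ref{def-shift3}).

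For the limit identity (\ref{eq-consist00}) I would argue directly from the definition (\ref{def-hmeas}): evaluating it at $\Phi_t f$ and at time $s$ expresses $\ops^{\hmeas_s}_t(f)=\hmeas_s(\Phi_t f)$, and then the semigroup identities $\Phi_s\Phi_t=\Phi_{s+t}$ (\ref{eq-sgroup}) and $\Psi_s\Phi_t=\Psi_{s+t}$ (\ref{rel-psiphi}), together with the change of variables $u=s+v$ in the convolution and stochastic integrals, match the terms of $\hmeas_{s+t}(f)-\ops^{\hmeas_s}_t(f)$ with $(\Theta_s\hcalk)_t(f)-(\Theta_s\hath)_t(f)$; alternatively one passes to the limit in (\ref{eq-consist0}) using the continuity of the shift maps and of $\Gamma$ (Lemma \ref{lem-calk}), the convergences of Theorems \ref{th-main} and \ref{th-fclt}, and the shifted analogue of Corollary \ref{cor-hreg}. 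For the map identity (\ref{eq-consist02}), note first that since the fluid limit is sub-/critical/supercritical on all of $[0,\infty)$ so is $\fx(s+\cdot)$, and the associated centered many-server map is the same in both cases; hence, by the uniqueness of CMSE solutions (Proposition \ref{prop-cont}), it suffices to check that $(\Theta_s\hatk,\hatx_{s+\cdot},\hmeas_{s+\cdot}(\f1))$ solves the CMSE with input data $(\Theta_s\hate,\hatx(s),\ops^{\hmeas_s}(\f1)-(\Theta_s\hath)(\f1))$. Equation (\ref{ref-meas}) for this data is precisely (\ref{eq-consist00}) with $f=\f1$ (using $\xi_{\f1}=-g$); equation (\ref{ref-sm}) follows by subtracting the identity $\hatk(\cdot)=\hate(\cdot)+\widehat{x}_0-\hatx(\cdot)+\hmeas_{\cdot}(\f1)-\hmeas_0(\f1)$ --- valid because $(\hatk,\hatx,\hmeas(\f1))=\Lambda(\hate,\widehat{x}_0,\ops^{\hmeas_0}(\f1)-\hath(\f1))$ by Theorem \ref{th-main} --- at the times $s+t$ and $s$; and (\ref{ref-nonidling}) is the simplified non-idling relation (\ref{eq-dnonidling2}), evaluated at time $s+t$, which holds since $\hmeas(\f1)$ is the last coordinate of $\Lambda(\cdots)$.

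For the preservation of Assumption \ref{as-diffinit}' I would use the representation $\ops^{\hmeas_s}=\hmeas_{s+\cdot}-\Gamma(\Theta_s\hatk)+\Theta_s\hath$ obtained above. Property (a) and the continuity requirements in (b) follow from it, from the fact that $\hmeas$ is a continuous $\H_{-2}$-valued process (Theorem \ref{th-fclt}), from Lemma \ref{lem-calk}(1), and from the continuity of $\Theta_s\hath$ --- the last being the analogue of Lemma \ref{lem-hathn}(1) and Corollary \ref{cor-mreg} for the time-shifted white noise $\Theta_s\chatm$, which is again a white noise with deterministic covariance $\int_s^{s+\cdot}\int_{B\cap\tilde B}h\,\fmeas_r\,dr$, so the estimates of Lemmas \ref{lem-bdint}, \ref{lem-martest} and \ref{lem-hathn} apply on each compact time interval. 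Measurability of $f\mapsto\ops^{\hmeas_s}_t(f)$ follows termwise from the same representation and from the corresponding property at time $0$. The convergence in (c) for the shifted sequence is obtained by passing to the limit jointly in (\ref{eq-consist0}), exactly as in the proof of Proposition \ref{prop-fclt}, using that $\Theta_s$ is continuous on the path spaces and $\Theta_s\hathn\Rightarrow\Theta_s\hath$ in ${\cal D}_{\H_{-2}}[0,\infty)$. Finally, property (d) for $\hmeas_s$ is proved by decomposing $\hmeas_s$ via (\ref{def-hmeas}) into a term involving $\hmeas_0$, a stochastic-integral term against $\chatm$ on $[0,s]$, and a Lebesgue--Stieltjes term against $\hatk$, and interchanging $\int_0^t dr$ with each --- invoking Assumption \ref{as-diffinit}'(d) at time $0$ for the first, the stochastic Fubini theorem (as used in the proof of Proposition \ref{prop-saexi}) for the second, and ordinary Fubini for the third --- all legitimate because the hypotheses on $\varphi$ in (d) are inherited by the functions $\Phi_r\varphi(\cdot,r)$ arising in the decomposition.

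The main obstacle will be this last point, the verification of property (d) at time $s$: it requires carefully tracking how the boundedness, H\"older and integrability conditions imposed on the test function $\varphi$ in Assumption \ref{as-diffinit}'(d) are passed through the operators $\Phi_r$ appearing in the three-term decomposition of $\hmeas_s$, so that the stochastic and deterministic Fubini theorems actually apply. A secondary, more routine, point is to record that the martingale-measure estimates of Section \ref{sec-martconv} and the convergence statements of Corollaries \ref{cor-mreg} and \ref{cor-hreg} remain valid when $\chatm$ and $\chatmn$ are replaced by their time-$s$ shifts, which holds because these estimates involve only the renewal function on compact intervals and the orthogonality/covariance structure, neither of which is affected by a time-shift.
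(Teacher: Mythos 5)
Your proposal is correct and, apart from one sub-step, follows essentially the same route as the paper: the limit identity (\ref{eq-consist00}) via the definition (\ref{def-hmeas}) together with the semigroup relations (\ref{eq-sgroup}) and (\ref{rel-psiphi}), the identity (\ref{eq-consist02}) by verifying the three CMSE equations term by term (with $f=\f1$ and $\trans_{\f1}=-g$ giving (\ref{ref-meas})), and the preservation of Assumption \ref{as-diffinit}$'$ via the three-term decomposition of $\hmeas_s$ and the Fubini statements of Lemma \ref{lem-fub} together with property (d) at time $0$, exactly as in the paper. The only divergence is the pre-limit identity (\ref{eq-consist0}): the paper obtains it purely algebraically, by writing the representation (\ref{rep-hatmeasn}) at time $s+t$ and comparing with the same representation at time $s$ applied to $\Phi_t f$, whereas you restart the centered age equation at time $s$ and re-invoke the abstract-age-equation representation of \cite{KasRam07}; this is equally valid, at the cost of re-checking the Radon-measure bounds for the shifted data and extending the shifted test functions to all of $[0,\endsup)\times\R_+$.
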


We defer the proof of this lemma to Appendix \ref{sec-consistency}, and 
instead now prove the strong Markov property of the state process.

\begin{proof}[Proof of Theorem \ref{th-main2}(2)] Fix $s, t > 0$. 
First, note that by Theorem \ref{th-fclt} it follows that $(\hatx, \hmeas)$
is an $\R \times \H_{-2}$-valued process. 
Moreover, by 
Lemma \ref{lem-consistency},  Assumption \ref{as-diffinit} 
is satisfied with $\hmeas_0$ replaced by $\hmeas_s$, 
which in particular implies that  
the random element $\ops^{\hmeas_s}_{t+\cdot} (\f1) = 
\{\ops^{\hmeas_s}_{t+u}(\f1), u \geq 0\}$ almost surely 
takes values in ${\cal C}_{\R}[0,\infty)$.  
Also, let $(\Theta_s \hath)_t (\Phi_{\cdot}\f1)$ represent 
the process $\{(\Theta_s \hath)_t (\Phi_{u}\f1), u \geq 0\}$.  
Writing $\Phi_u \f1 = \int_0^u \Phi_r h(\cdot) \, dr$ 
and observing that $\Phi_u \f1$ is bounded and 
(due to Assumption \ref{as-holder})   H\"{o}lder continuous 
uniformly in $u$, it follows from 
   Lemma \ref{lem-hathn}(3) (with $\chatm$ replaced by $\Theta_s \chatm$) 
that  the random element $(\Theta_s \hath)_t (\Phi_{\cdot}\f1)$ takes 
values in ${\cal C}_{\R}[0,\infty)$. 
In addition, Assumption \ref{as-hate} and 
 Corollary \ref{cor-hreg} show that
$\Theta_s \hate$ and $\Theta_s \hath$ are, respectively,  
 ${\cal C}_{\R}[0,\infty)$-valued and 
${\cal C}_{\H_{-2}}[0,\infty)$-valued. 
 We now claim that there exists a continuous mapping from 
$\R \times \H_{-2} \times {\cal C}_{\R}[0,\infty)^3 \times {\cal C}_{\H_{-2}}[0,\infty)$ to 
$\R \times \H_{-2} \times {\cal C}_{\R}[0,\infty)$, 
which we denote by $\tilde{\Lambda} = \tilde{\Lambda}_t$, such that 
$\P$-almost surely,  
\be
\label{rel-tildelam}
 (\hatx_{s+t}, \hmeas_{s+t}, \ops^{\hmeas_{s+t}}(\f1)) = 
\tilde{\Lambda} \left(\hatx(s), \hmeas_s, \ops^{\hmeas_s}(\f1), 
\Theta_s \hate, (\Theta_s \hath)_t (\Phi_{\cdot}\f1), \Theta_s \hath \right). 
\ee

To see why this is the case, first note that 
 equation (\ref{eq-consist00}) shows that $\hmeas_{s+t}$ is the
sum of $(\Theta_s \hath)_t$, $\ops^{\hmeas_s}_t$ and $(\Theta_s \hcalk)_t$ 
and, by   
Lemma \ref{lem-asmarkov}(2), $\ops^{\hmeas_s}_t$ is a continuous 
functional of $\hmeas_s$.   
Also, for $u > 0$, 
$\Phi_u \f1$ is bounded and absolutely continuous.  
Hence, by (\ref{eq-consist00}), the definition of $\ops^\meas$   
and the semigroup property for $\Phi$, 
 $\P$-almost surely,  for $u, s, t \geq 0$, 
\begin{eqnarray*}
 \ops^{\hmeas_{s+t}}_u(\f1)  & =  & 
\ops^{\hmeas_s}_t (\Phi_{u} \f1) + ( \Theta_s \hcalk)_t (\Phi_u \f1) 
-( \Theta_s \hath)_t (\Phi_u \f1)  \\
& = & \ops^{\hmeas_s}_{t+u} (\f1) + ( \Theta_s \hcalk)_t (\Phi_u \f1) 
- ( \Theta_s \hath)_t (\Phi_u \f1). 
\end{eqnarray*}
Next, note that 
 (\ref{eq-consist02}) of Lemma \ref{lem-consistency},  Proposition 
 \ref{prop-cont} and the continuity of $\hatx$ show that  
$\hatx_{s+t}$ is a continuous functional of 
$(\Theta_s \hate, \hatx(s), \ops^{\hmeas_s} (\f1)- \Theta_s \hath (\f1))$.  
Furthermore, due to the almost sure continuity of 
$\hatk$ established in Theorem \ref{th-main}, 
 definitions (\ref{eq-calk}) and (\ref{def-shift4}) of $\calk$ and 
$(\Theta_s \hatk)$, respectively,  
and properties 2 and 3 of  
Lemma \ref{lem-calk}, it follows that $\Theta_s \hcalk$ and 
$u \mapsto (\Theta_s \calk)_t (\Phi_{u} \f1)$ are almost surely 
obtained as continuous mappings of $\Theta_s \hatk$. 
In turn, by (\ref{eq-consist02}) of Lemma \ref{lem-consistency}, 
$\Theta_s \hatk = \Lambda (\Theta_s \hate, \hatx(s), \ops^{\hmeas_s} (\f1)- 
\Theta_s \hath(\f1))$, where $\Lambda$ is continuous by  
 Proposition \ref{prop-cont}.  When combined, the above observations 
show that the claim (\ref{rel-tildelam}) holds, with $\tilde{\Lambda}$ a 
suitable  continuous mapping.

We now show that the claim implies the  Markov property. First,  from  
(\ref{def-hate}) we observe that $\Theta_s \hate$ is adapted 
to the filtration generated by $\Theta_s B$ and, likewise, 
(\ref{def-shift2}) shows that 
$\Theta_s \hath$ is adapted to the filtration generated by 
$\Theta_s \chatm$. 
Moreover, by the definition of $B$ as a standard Brownian motion 
and the definition of $\chatm$ (see Section \ref{subs-scaledmmeas} and 
Remark \ref{rem-asdiff1}), both $B$ and $\chatm$ are 
 processes with independent increments with respect to the  
filtration $\{\widehat{{\cal F}}_t, t \geq 0\}$, which is the   
right continuous completion of 
$\{\sigma(\hmeas_0, \hatx(0)) \vee \sigma (B_s, \chatm_s, s \geq t), t \geq
0\}$. 
In particular, this implies that $\Theta_s B$, 
$\Theta_s \hath$ and $u \mapsto (\Theta_s \hath)_t(\Phi_u \f1)$ 
are independent of $\widehat{{\cal F}}_s$. 
Therefore, for  any bounded continuous function $F$ on 
$[0,\infty) \times (\R \times \H_{-2} \times {\cal C}_{\R}[0,\infty))$,  
\[
\begin{array}{l}
 \E[F(s, \hatx_{s+t}, \hmeas_{s+t}, \ops^{\hmeas_{s+t}}(\f1))|\widehat{{\cal F}}_{s}] \\
\qquad =   \E\left[F (s,\tilde{\Lambda} (  \hatx(s),
   \hmeas_s, \ops^{\hmeas_s}(\f1), \Theta_s \hate,
   (\Theta_s\hath)_t(\Phi_{\cdot}\f1), 
\Theta_s \hath(\f1)))| \widehat{{\cal F}}_s \right] \\
\qquad 
=   \E\left[F (s,\tilde{\Lambda} (  \hatx(s),
   \hmeas_s, \ops^{\hmeas_s}(\f1), \Theta_s \hate,
   (\Theta_s\hath)_t(\Phi_{\cdot}\f1), 
\Theta_s \hath(\f1)))| \hatx(s),
\hmeas_s,\ops^{\hmeas_{s}}(\f1)\right] \\
\qquad =  \E[F(s, \hatx_{s+t},
\hmeas_{s+t},\ops^{\hmeas_{s+t}}(\f1))|\hatx(s),
\hmeas_s,\ops^{\hmeas_{s}}(\f1)]. 
\end{array}
\]
This shows that $\{(\hatx_s, \hmeas_s, \ops^{\hmeas_s}(\f1)), \widehat{{\cal F}}_s, s \geq 0\}$ is a
Markov process. 

By Theorems \ref{th-main} and \ref{th-fclt}, the sample paths $s \mapsto 
(\hatx(s), \hmeas_s, \ops^{\hmeas_s} (\f1))$ taking values in the state 
space 
$\R \times \H_{-2} \times {\cal C}_{\H_{-2}}[0,\infty)$
 are continuous. 
Since the state 
space  is a  complete, separable metric space, 
there exists a Markov kernel 
$P:  \R_+ \times (\R \times \H_{-2}\times {\cal C}_{\R}[0,\infty)) \times \R_+ \times {\cal B}(\R \times \H_{-2}) 
\mapsto [0,1]$ such that for any $(x,\meas, \psi) \in  (\R \times
\H_{-2}\times {\cal C}_{\R}[0,\infty))$, and any measurable function
$F$ on $\R_+ \times (\R \times \H_{-2}\times {\cal C}_{\R}[0,\infty))$, 
\[
\begin{array}{l}
\ds \E[ F(s,\hatx_{s+t}, \hmeas_{s+t}, \ops^{\hmeas_{s+t}}(\f1))| (\hatx(s),
\hmeas_s, \ops^{\hmeas_s}(\f1)) = (x,\meas, \psi)]\\
\qquad \ds \qquad =  \int_{\R_+ \times \H_{-2}} F(s,u) P (s,(x,\meas, \psi), t, du). 
\end{array}
\]
Now, when $F$ is bounded
and continuous, it follows 
 from (\ref{rel-tildelam}) and the continuity of $\tilde{\Lambda}$
established above that the mapping from 
$(x,\meas, \psi)$ to the term on the right-hand side of 
 the last display is continuous. 
This implies that the Markov kernel is Feller, 
and it follows from Theorem 2.4   
of Friedman \cite{Friedmanbook}
that $\{(\hatx_s, \hmeas_s, \ops^{\hmeas_s}(\f1)), \widehat{{\cal F}}_s, s \geq 0\}$ 
is a strong Markov process 
(see also Theorem 7.4 of Chapter I of \cite{sharpebook}
for the time-homogeneous case, which would be applicable if  the 
arrival process satisfies Assumption \ref{as-hate}(a)  and the fluid age measure
starts at the equilibrium measure so that 
$\fmeas_s(dx) = (1-G(x)) \, dx$ and $\fx(s) = 1$, $s \geq 0$, leading 
to a critical fluid limit)   
\end{proof}

\begin{remark}
\label{rem-markov}
{\em   A more natural candidate for the  
(strong) Markov process would be the process 
$\{(\hatx_{t}, \hmeas_{t}), {\cal F}_t, t\geq 0\}$ taking
values in $\R \times \H_{-2}$.  
However,  in order to 
establish the Markov property, we need 
$\ops^{\hmeas_s}$ and $\ops^{\hmeas_s}(\f1)$ to be measurable 
functions of $\hmeas_s$.  As shown in Lemma 
\ref{lem-asmarkov}, the additional boundedness 
assumption on $g^\prime/(1-G)$) ensures 
that the map from $\H_{-2} \mapsto {\cal D}_{\H_{-2}}[0,\infty)$ that 
takes $\hmeas_s$ to $\ops^{\hmeas_s}$ is continuous.  
This is a reasonable assumption because, 
as noted in Remark \ref{cond-verify}, 
it is satisfied by a large class of distributions 
of interest.  
However, unfortunately, it appears that 
  measurability of the map from $\H_{-2}$ to ${\cal D}_{\R}[0,\infty)$ 
that takes $\hmeas_s$ to $\ops^{\hmeas_s} (\f1) = \hmeas_s(\Phi_s \f1)$, which would require 
that $\Phi_s \f1$ lies in $\H_2$, cannot be 
obtained without imposing too severe assumptions on the service 
distribution $G$.  Although $\Phi_s \f1 \in \acbl$ and 
$\{\hmeas_s(f), f \in \acbl\}$ 
is a well defined collection 
of random variables, 
  it is not clear whether it is possible to 
 show that  $\hmeas_s$ admits a 
version that takes values in the dual of some space 
that contains $\Phi_s \f1$  and such that the dual space 
 admits a regular conditional probability so as 
to enable the construction of the Markov kernel. 
Instead, we resolve this issue by  appending the ${\cal D}_{\R}[0,\infty)$-valued 
process $\ops^{\hmeas_s} (\f1)$ to the state descriptor. 
}
\end{remark}

\appendix

 \renewcommand{\theequation}{A.\arabic{equation}}
\renewcommand\thetheorem{A.\arabic{theorem}}

\beginsec

\section{Properties of the Martingle Measure Sequence}

\subsection{Proof of the Martingale Measure Property}
\label{sec-mms}

Recall that ${\cal B}_0[0,\endsup)$ is the algebra generated by
the intervals $[0,x]$, $x \in [0,\endsup)$. 
We now show that the collection of random variables
$\{\cmn_t(B); t \geq 0, B \in {\cal B}_0[0,\endsup)\}$ introduced 
in (\ref{def-cmn}) defines a  martingale measure. 

\begin{lemma}
\label{lem-martmeas}
For each $N \in \N$, $\cmn = \{\cmn_t(B), {\cal F}_t^{(N)};
t \geq 0, B \in {\cal B}_0[0,\endsup)\}$
is a martingale measure on $[0,\endsup)$.  Moreover, for every $B \in {\cal
  B}_0[0,\endsup)$ and $t \in [0,\infty)$,
\be
\label{l2norm}
\E\left[ \left(\cmn_t (B) \right)^2 \right] =
\E \left[ \int_0^t \left( \int_{B} h(x) \, \measn_s (dx) \, \right) \, ds \right].
\ee
\end{lemma}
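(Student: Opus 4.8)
The plan is to verify directly the three defining conditions of a martingale measure in the sense of Walsh \cite{walshbook} (pages~287--288): (i) $\cmn_0(B) = 0$ for every $B \in {\cal B}_0[0,\endsup)$; (ii) for each fixed $B$, the process $\{\cmn_t(B), {\cal F}_t^{(N)}, t \geq 0\}$ is a martingale; and (iii) for each fixed $t$, the set function $B \mapsto \cmn_t(B)$ is a $\sigma$-finite $L^2(\P)$-valued measure on the algebra ${\cal B}_0[0,\endsup)$. The identity (\ref{l2norm}) will emerge from the same computation that yields square-integrability. Condition (i) is immediate from (\ref{def-cmn}), since $\baren_{\ind_B}(0) = 0$ by (\ref{def-baren}) and $\dcompn_{\ind_B}(0) = 0$ by (\ref{def-dcompn}).

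For (ii), I would first note that any $B \in {\cal B}_0[0,\endsup)$ is a finite disjoint union of intervals, so $\ind_B$ is a bounded measurable function on $[0,\endsup)$. By the extension of Corollary 5.5 of Kaspi and Ramanan \cite{KasRam07} to this larger class of integrands, established in Lemma 5.2 of Kang and Ramanan \cite{KanRam08}, $\dcompn_{\ind_B}$ is the $\{{\cal F}_t^{(N)}\}$-compensator of $\baren_{\ind_B}$, whence $\cmn(B) = \martn_{\ind_B}$ is a \cad $\{{\cal F}_t^{(N)}\}$-martingale. Finite additivity of $\cmn_t(\cdot)$ on the algebra then follows from $\ind_{B_1 \cup B_2} = \ind_{B_1} + \ind_{B_2}$ for disjoint $B_1, B_2$, together with the linearity in $\newf$ of the maps $\newf \mapsto \baren_\newf$ and $\newf \mapsto \dcompn_\newf$ evident from (\ref{def-baren}) and (\ref{def-dcompn}).

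Next I would obtain square-integrability and (\ref{l2norm}) in one stroke. Since $\dcompn_{\ind_B}$ is continuous, the jumps of $\martn_{\ind_B}$ coincide with those of $\baren_{\ind_B}$; and since $0 \leq \Delta\baren_{\ind_B}(s) \leq \Delta\dn(s) \leq 1$ for all $s$ by (\ref{ineq1-dep}) of Lemma \ref{lem-dep}, the optional quadratic variation is $[\cmn(B)]_t = \sum_{s \leq t}(\Delta\baren_{\ind_B}(s))^2 = \baren_{\ind_B}(t) \leq \dn(t)$, which is integrable by Lemma 5.6 of \cite{KasRam07}. A local martingale null at time $0$ whose optional quadratic variation is integrable is a genuine square-integrable martingale with $\E[(\cmn_t(B))^2] = \E[[\cmn(B)]_t]$; since moreover $\cmn(B) = \baren_{\ind_B} - \dcompn_{\ind_B}$ is a mean-zero martingale, $\E[\baren_{\ind_B}(t)] = \E[\dcompn_{\ind_B}(t)]$, and therefore
\[ \E[(\cmn_t(B))^2] = \E[[\cmn(B)]_t] = \E[\baren_{\ind_B}(t)] = \E[\dcompn_{\ind_B}(t)] = \E\left[\int_0^t \left(\int_B h(x)\,\measn_s(dx)\right)\,ds\right], \]
which is precisely (\ref{l2norm}). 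In particular $\cmn_t(B) \in L^2(\P)$ with $L^2$-mass bounded by $\E[\dn(t)]$ uniformly in $B$, so $\sigma$-finiteness (indeed finiteness) of $\cmn_t(\cdot)$ holds trivially.

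The last point, countable additivity in $L^2$, is the only one requiring a genuine limiting argument. Given pairwise disjoint $B_n \in {\cal B}_0[0,\endsup)$, $n \in \N$, with $B \doteq \bigcup_n B_n \in {\cal B}_0[0,\endsup)$, I would set $C_k \doteq B \setminus \bigcup_{n \leq k} B_n$, so that $C_k \downarrow \emptyset$, and use finite additivity to write $\cmn_t(B) - \sum_{n=1}^k \cmn_t(B_n) = \cmn_t(C_k)$. Applying (\ref{l2norm}) with $C_k$ in place of $B$ gives $\E[(\cmn_t(C_k))^2] = \E[\int_0^t (\int_{C_k} h(x)\,\measn_s(dx))\,ds]$; since $\ind_{C_k} \downarrow 0$ pointwise and the measure $\int_0^t (\int_{[0,\endsup)} h(x)\,\measn_s(dx))\,ds = \dcompn_{\f1}(t)$ has finite $\P$-expectation (Lemma 5.6 of \cite{KasRam07}), the right-hand side decreases to $0$ by dominated convergence. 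Hence $\sum_n \cmn_t(B_n) = \cmn_t(B)$ with convergence in $L^2(\P)$, completing the verification that $\cmn$ is a martingale measure. The only non-routine ingredient in this plan is the compensator identity for the discontinuous integrand $\ind_B$, which is why the generalization in Lemma 5.2 of \cite{KanRam08} is invoked; the remaining steps are bookkeeping with the jump bound of Lemma \ref{lem-dep}, a standard localization argument, and dominated convergence.
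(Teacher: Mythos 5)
Your proposal is correct and follows the same overall architecture as the paper's proof: both verify Walsh's three conditions, both obtain the martingale property of $\cmn(B)$ from Lemma 5.2 of Kang and Ramanan \cite{KanRam08}, and both deduce $L^2$-countable additivity from the fact that the second moment in (\ref{l2norm}) is integration against a set function that is countably additive with finite expectation --- you merely spell out the $C_k \downarrow \emptyset$ argument that the paper compresses into one sentence. The one genuine divergence is how (\ref{l2norm}) itself is derived: the paper identifies the predictable quadratic variation $\lan \cmn(B) \ran = \dcompn_{\ind_B}$ directly (the standard compensated-sum-of-jumps computation cited from the proof of Lemma 5.9 of \cite{KasRam07}) and reads off the $L^2$ identity, whereas you go through the optional quadratic variation $[\cmn(B)]_t = \baren_{\ind_B}(t)$, which needs the unit-jump bound (\ref{ineq1-dep}) of Lemma \ref{lem-dep}, and then close with the mean-zero identity $\E[\baren_{\ind_B}(t)] = \E[\dcompn_{\ind_B}(t)]$. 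Both are valid, and your route is not circular since Lemma \ref{lem-dep} is proved independently of the martingale-measure property; but it imports the no-simultaneous-departures result, which the paper's proof deliberately does without at this stage (that result is only needed later, for the orthogonality in Corollary \ref{cor-orth}). One small point to tighten: Lemma 5.2 of \cite{KanRam08} is not invoked in the paper as a blanket statement for all bounded measurable integrands; the paper applies it only after checking that $s \mapsto \ind_B(\agen_j(s))$ is bounded and left continuous along each age path --- which is precisely where the structure of $B$ as a finite disjoint union of intervals is used --- so you should verify that hypothesis rather than asserting the extension outright.
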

\begin{proof}
In order to show that $\{\cmn_t(B); t \geq 0, B \in {\cal B}_0[0,\endsup)\}$
defines a martingale measure on $[0,\endsup)$, we verify the three properties
stated in the definition of a martingale  measure given on page 287 of Walsh 
\cite{walshbook}.   The first property in \cite{walshbook}, namely
that $\cmn_0(B) = 0$ for every $B \in {\cal B}_0$, follows trivially
from the definition.
Next, we verify the third property, which states
that  $\{\cmn_t(B), {\cal F}_t^{(N)},t \geq 0\}$
is a local martingale for each $B \in {\cal B}_0$.
For this, first observe that any $B \in {\cal B}_0[0,\endsup)$ is the
union of a finite number of disjoint intervals $I_i$, $i = 1, \ldots, k$,
where each $I_i$ is of the form $(\alpha_i, \endsup)$,
$(\alpha_i, \beta_i]$ or $[0,\beta_i]$, with $0 < \alpha_i < \beta_i < \endsup$.
For any such interval $I_i$, it is clear from the definition of the
age process given in (\ref{def-agejn}) that for every $j$, the function
$s \mapsto \ind_{I_i} (\agen_j(s))$ defines a bounded, left continuous
function on $[0,\infty)$.
In turn, since $\ind_B = \sum_{i=1}^k \ind_{I_i}$, clearly the function
$s \mapsto \ind_{B} (\agen_j(s))$ is also  bounded and
left continuous. By
Lemma 5.2  of Kang and Ramanan \cite{KanRam08}, it then follows that for every
$B \in {\cal B}_0 [0,\endsup)$,  $\{\cmn_t(B), t \geq 0\}$ is
an $\{{\cal F}_t^{(N)}\}$-martingale obtained as a compensated 
sum of jumps, where the compensator $\dcompn_{\ind_B}$ is continuous. 
Standard arguments (see, for example, 
 the proof of Lemma 5.9 in Kaspi and Ramanan \cite{KasRam07}) then show that 
$\lan \cmn(B)\ran$, the predictable quadratic variation of $\cmn(B)$, 
equals $\dcompn_{\ind_B}$.   Since $\E[\dcompn_{\ind_B}(t)]$ is dominated 
by $\E[\dn(t)]$, which is finite by Lemma 5.6 of Kaspi and Ramanan 
\cite{KasRam07}, 
the relation (\ref{l2norm})  follows.
On the other hand, because $\{\measn_t, t \geq 0\}$ 
is an $\mmf$-valued process, 
this shows that the set function $B \mapsto \E[ (\cmn_t (B) )^2 ]$
is countably additive on ${\cal B}_0[0,\endsup)$, and hence defines a finite
$\L^2 (\Omega, {\cal F}^{(N)}, \P)$-valued measure.  
This verifies the second property in \cite{walshbook}, and thus completes 
the proof of the lemma. 
\end{proof}

\subsection{Proof of Lemma \ref{lem-dep}}
\label{subs-lemdep}

As we will show below,
Lemma \ref{lem-dep} is essentially a consequence of the strong Markov
property of the state process, the continuity of the $\{{\cal F}_t\}$-compensator of
the departure process and the independence assumptions on the service
times and arrival process.

Fix  $N \in \N$ and, for conciseness, we suppress $N$ from the
notation.  We shall first prove (\ref{ineq1-dep}), namely we will show 
that almost surely, $\Delta D (t) \leq 1$ 
for every $t \in [0,\infty)$.  For $k = - \lan \f1, \nu_0 \ran + 1, \ldots$, 
let ${\cal E}_k$ denote the event that the departure time of customer $k$ 
lies in the set of the union of departure times of customers $j$, $j < k$.
To establish (\ref{ineq1-dep}), it is clearly sufficient to show that 
$\P({\cal E}_k) = 0$ for every $k$. 
Fix $k \in \N$ and let $\kinv_k$ be the $\{{\cal F}_t\}$-stopping time
\[ \kinv_k \doteq \inf\{t: K(t)=k\}. \]
Now, consider a modified system with initial data $\tilde{\nu}_0 =
\nu_{\kinv_k}$, $\tilde{X}(0) = \lan \f1, \nu_{\kinv_k} \ran$ and $\tilde{E}
\equiv 0$.  
 By  Lemma B.1 of Kang and Ramanan \cite{KanRam08}, 
$\{(R_E(t), X(t), \nu_t),t \geq 0\}$ is a strong
Markov process.  Therefore, 
conditioned on ${\cal F}_{\kinv_k}$,
the departure times of customers $j$, $j \leq k$, only depend on
$\{a_j(\kinv_k), j \leq k\}$ and are independent of  arrivals after 
$\kinv_k$. 
Consequently, the probability of the
event ${\cal E}_k$ is equal in both the  original and modified systems.
In the modified system, 
 let $\{\tilde{a}_j(s), s \in [0,\infty)\}$  
denote the age process of customer $j$ for $j \leq k$, 
let $\tilde{D}^{\kinv_k}(s)$ denote the cumulative departures in the time $[0,s]$
of all customers  other than customer $k$ and 
let $\tilde{J}^k \doteq \{s \in [0,\infty): \tilde{D}^{\kinv_k}(s) \neq
\tilde{D}^{\kinv_k} (s-) \}$ be the jump times of $\tilde{D}^{\kinv_k}$.
Also,
let $\tilde{{\cal G}}_t^k \doteq \sigma(\tilde{a}_j(s), j < k, s \in
[0,t])$ and let $\{{\cal G}_t^k,t \geq 0\}$ be the right continuous completion
(with respect to $\P)$ of $\{\tilde{{\cal G}}_t^k,t \geq 0\}$.
By the assumed independence of the service times for different
customers and the fact that  $\tilde{a}_k(0) = 0$,  the departure
time $\tilde{v}_k$ of customer $k$ in the modified system
has cumulative distribution function $G$ and is independent of $\tilde{J}^k$.
Therefore, 
\be
\label{rel-jump}
\begin{array}{rclcl} 
\ds \P({\cal E}_k) & = & \ds \P(\tilde{v}_k \in \tilde{J}^k) \\
& = & \ds \int_{[0,\endsup)} \P(t \in \tilde{J}^k|\tilde{v}_k = t) \, dG(t) 
& = & \ds \int_{[0,\endsup)} \P(t \in \tilde{J}^k) \, dG(t),
\end{array}
\ee
where the last equality follows from the independence of $\tilde{v}_k$ and
$\tilde{J}_k$.  The same logic used in Lemma 5.4 of Kaspi and 
Ramanan \cite{KasRam07} to identify
the compensator of $D$  also shows that the $\{{\cal G}^k_t\}$-compensator of
$\tilde{D}^{\kinv_k}$ equals
\[  \int_0^\cdot \left( \int_{[0,\endsup)} \dfrac{g(x+s)}{1 - G(x)}
  \nu'_0 (dx) \right) \, ds, \qquad \mbox{ where } \nu'_0 \doteq
\tilde{\nu}_{0} - \delta_0, \]
where the mass at zero is deleted from the modified age measure $\tilde{\nu}_0$ 
to remove customer $k$, which has age zero at time
$0$ in the modified system. 
By the continuity of the $\{{\cal G}_t^k\}$-compensator of
$\tilde{D}^{\kinv_k}$, $\tilde{D}^{\kinv_k}$ is
quasi-left-continuous and so $\Delta \tilde{D}^{\kinv_k}(T) = 0$ for every
$\{{\cal G}_t^k\}$-predictable time $T$
(see, for example, Theorem 4.2 and Definition 2.25 of Chapter I of Jacod and Shiryaev 
\cite{JacShiBook}).
Choosing $T$ to be the deterministic time $t$, this implies that $\P(t \in
\tilde{J}^k) = 0$ for every $t \geq 0$.  When
substituted into (\ref{rel-jump}), this shows that $\P({\cal E}_k) = 0$. For 
$k \leq 0$,   we set  
$\kinv_k = 0$ and observe  that, conditioned on ${\cal F}_0$, the
departure time $\tilde{v}_k$ of the $k$th customer has 
cumulative distribution function 
$\tilde{G}(\cdot) \doteq (G(\cdot) - G(a_k(0))/(1-G(a_k(0))$, rather than $G$, 
so that (\ref{rel-jump}) holds with $G$ replaced by $\tilde{G}$.  
The rest of the proof follows exactly as in the case $k > 0$, and thus
 (\ref{ineq1-dep}) holds.

We now turn to the proof of (\ref{ineq2-dep}).
Fix $r, s \in [0,\infty)$, recall that $D^r(s)$ is the cumulative departures 
in the interval $[r,r+s)$ of customers that entered service at or before 
time $r$,   define $J^r$ to be the jump
times of $D^r$ in $[0,\infty)$ and let ${\cal G}_t = {\cal F}_{r+t}$, $t \in [0,\infty)$.
Using the same logic as in the proof of
(\ref{ineq1-dep}), it can be shown that $\{D^r(t),t \geq 0\}$ has
a continuous $\{{\cal G}_t\}$-compensator,  given explicitly by
\[  \int_{0}^t \left(  \int_{[0,\endsup)} \dfrac{g(x+s)}{1-G(x)} \nu_{r}(dx)
\right) \,ds,   \quad t \in [0,\infty), 
\]
and hence has no fixed jump times, i.e.,
 $\P(t \in J^r|{\cal F}_r) = 0$ for every $t \in \R_+$.
Moreover, due to the assumption of independence of the arrival processes
and the service times,  $\{E(t)\}_{t \geq r}$ and
$\{D^{r}(t),t \geq 0\}$ are conditionally independent, given $\cal{F}_{r}$.
Let
\[  {\cal T} \doteq \{\bar{t} = (t_1, \ldots, t_m, \ldots) \in \R_+^\infty:
0\leq t_1 \leq
t_2 \leq ...\}, \]
and let $\overline{T}^r$ denote the random ${\cal T}$-valued sequence of  times
after $r$ at which $E$ has a jump.
Moreover, let $\mu$ denote the conditional probability distribution of
$\overline{T}^r$,  
given ${\cal F}_r$.
Then, for any $\overline{t} \in {\cal T}$, using the fact that $0 \leq \Delta E (t)
\leq 1$, we have
\[
\begin{array}{rclcl}
\ds \E\left[\sum_{s \in [0,\infty)} \Delta E(r+s)\Delta D^{r}(s)|{\cal
      F}_{r}, \overline{T}^r = \overline{t} \right] & = &
 \ds \sum_{t \in \overline{t}} \E[ \Delta D^r(t)|{\cal F}_{r}, \overline{T}^r = \overline{t}]
\\
& = &  \ds\sum_{t \in \overline{t}} \E[ \Delta D^r(t)|{\cal F}_r] \\
& = & \ds  0,
\end{array}
\]
where the second equality uses the conditional independence of $\{D^r(t),t \geq 0\}$
from $\overline{T}^r$ given ${\cal F}_r$, and the last equality follows
because as argued above, conditional on ${\cal F}_r$, $D^r$ almost surely has no fixed jumps.
In turn, integrating the left-hand side above with respect to the conditional
distribution
$\mu$ and then taking expectations, it follows that
\begin{eqnarray*}
\E\left[\sum_{s \in [0,\infty)} \Delta E(r+s) \Delta D^{r}(s)\right]
 =   0.
\end{eqnarray*}
Since the term inside the expectation is non-negative, this proves
(\ref{ineq2-dep}).

\subsection{A Consequence of Lemma \ref{lem-dep}}
\label{subs-cordep}

We now establish a consequence of Lemma \ref{lem-dep}, 
which will be used in the proof of the asymptotic 
independence property in Section \ref{subs-asind}. 

\begin{cor}
\label{cor-dep} As $N \ra \infty$, 
 \[\dfrac{1}{N} \E\left[\suli_{s\leq 
t}\Delta\en (s) \Delta\dn(s)\right] \ra 0. 
\]
\end{cor}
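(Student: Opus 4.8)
The plan is to prove the stronger statement that $\E\big[\suli_{s \leq t}\Delta\en(s)\,\Delta\dn(s)\big] = 0$ for every $N \in \N$ and every $t < \infty$, which trivially implies the claimed convergence. The first step is to re-express the sum customer by customer. Since $\Delta\dn(s) \leq 1$ for every $s$ by \eqref{ineq1-dep} of Lemma \ref{lem-dep}, at most one customer departs at any instant, so $\suli_{s\leq t}\Delta\en(s)\,\Delta\dn(s)$ simply counts the distinct departing customers whose departure time is both $\leq t$ and a jump time of $\en$. Writing $A_j$ for the event that customer $j$ departs at some time $s \leq t$ that is also a jump time of $\en$, we therefore have $\suli_{s\leq t}\Delta\en(s)\,\Delta\dn(s) = \suli_j \ind_{A_j}$, the sum running over all customers $j \geq -N+1$; since this sum is nonnegative and bounded by $\en(t) < \infty$, Tonelli's theorem reduces the problem to showing $\P(A_j) = 0$ for every $j$.

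For customers already in service at time $0$ (indices $j \in \{-\xn(0)\wedge N+1,\ldots,0\}$, together with any customer whose entry-to-service time $\kninv_j$ equals $0$), the departure contributes to the process $D^{(N),0}$, and \eqref{ineq2-dep} of Lemma \ref{lem-dep} with $r=0$ reads $\suli_s \Delta\en(s)\,\Delta D^{(N),0}(s) = 0$ almost surely; hence $\P(A_j)=0$ for these $j$. For a customer $j \geq 1$ with $\kninv_j > 0$, its departure occurs at time $\kninv_j + v_j$, and the key observation is that, because the entry-to-service times $\kninv_j = \inf\{s : \kn(s) \geq j\}$ are nondecreasing in $j$ under the FCFS discipline, every departure occurring strictly before $\kninv_j$ is the departure of a customer $i$ with $\kninv_i < \kninv_j$, hence with $i < j$ (or $i \leq 0$); consequently $\kninv_j$ is a measurable function of the arrival process, the initial data, and the service requirements $\{v_i : i < j\}$ alone, and in particular is independent of $v_j$. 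Let $\mathcal{G}$ denote the $\sigma$-algebra generated by $\en$, by $(\xn(0),\{a_i^{(N)}(0)\}_i)$, and by $\{v_i : i \neq j\}$. Then $\kninv_j$ and the set of jump times $\{\tau_i\}$ of $\en$ are $\mathcal{G}$-measurable, while $v_j$ is independent of $\mathcal{G}$ with continuous distribution $G$ (recall $G(0+)=0$, so $v_j > 0$). Since $\en$ has only countably many jump times and $G$ has no atoms,
\[
\P(A_j \mid \mathcal{G}) \leq \P\big(\kninv_j + v_j \in \{\tau_i\}_i \,\big|\, \mathcal{G}\big) = \suli_i \P\big(v_j = \tau_i - \kninv_j \,\big|\, \mathcal{G}\big) = 0 .
\]
Taking expectations gives $\P(A_j)=0$, and therefore $\E\big[\suli_{s\leq t}\Delta\en(s)\,\Delta\dn(s)\big] = \suli_j \P(A_j) = 0$, which proves the corollary.

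The only genuinely delicate point is the independence of $v_j$ from $\kninv_j$ and from the jump times of $\en$; I would make this rigorous using the pathwise construction of the state and auxiliary processes (Lemma A.1 of Kang and Ramanan \cite{KanRam08}), the monotonicity in $j$ of $\kninv_j$, and the standing assumption that the arrival process is independent of the i.i.d.\ service sequence $\{v_i\}$. Everything else is elementary bookkeeping, and the reduction to the per-customer events $A_j$ isolates exactly the place where these independence assumptions are needed.
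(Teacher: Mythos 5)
Your argument is correct in substance, but it is genuinely different from the paper's proof, and it actually establishes a stronger statement: that $\E\big[\sum_{s\leq t}\Delta\en(s)\Delta\dn(s)\big]=0$ for every fixed $N$, not merely that the $N$-scaled expectation vanishes in the limit. The paper instead discretizes $[0,t]$ into windows $(k\delta,(k+1)\delta]$, splits the departures in each window into those of customers already in service by time $k\delta$ (killed exactly by (\ref{ineq2-dep}) applied at the deterministic times $r=k\delta$) and those of customers entering service within the window (bounded by the number of entrants with $v_j\le\delta$), and then controls the latter on the fluid scale via $\E[\fbaren_{\ind_{[0,\delta]}}(t+\delta)]=\E[\fdcompn_{f_\delta}(t+\delta)]$ and Lemma 5.8(3) of \cite{KasRam07}, sending $N\to\infty$ and then $\delta\downarrow 0$; this is why the conclusion there is only asymptotic. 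Your route is a per-customer conditioning argument very much in the spirit of the paper's own proof of (\ref{ineq1-dep}) in Lemma \ref{lem-dep}, with the countable target set being the jump times of $\en$ rather than the departure times of earlier customers. What it buys is a cleaner and stronger conclusion with no appeal to the fluid-scale estimates; what it costs is precisely the point you flag: you must make rigorous (i) the inductive, pathwise measurability of $\kninv_j$ with respect to $\sigma(\en,\xn(0),\{a_i^{(N)}(0)\},\{v_i: i<j\})$, which uses FCFS, positivity of the $v_i$ (from $G(0+)=0$) and the explicit construction of the state (Lemma A.1 of \cite{KanRam08}), and (ii) for $j\ge 1$, the independence of $v_j$ from the \emph{joint} data (full arrival path, initial conditions, other service times), which is implicit in the model construction and is used in the same way in the paper's proof of (\ref{ineq1-dep}) (where, conditioned on ${\cal F}_{\kninv_k}$, $v_k$ still has law $G$), but is not spelled out as a single joint-independence statement in Section \ref{subs-modyn}; note also that for the initial customers the unconditional law of $v_j$ is entangled with the ages, so your choice to dispose of all customers with $\kninv_j\le 0$ via (\ref{ineq2-dep}) at $r=0$, rather than by the conditioning argument, is exactly the right move. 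The paper's discretization sidesteps all of this bookkeeping at deterministic grid times, at the price of a weaker (but sufficient) conclusion.
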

\begin{proof}
With the aim of computing the left-hand side 
above, using the same notation as in Lemma \ref{lem-dep}, 
for $r > 0$ and $s \geq r$, let $D^{(N),r}(s)$ denote 
the cumulative number of departures during $(r,s]$  of 
 customers that entered service at or before time $r$, 
and let $D^{(N)+,r}(s)$ be the cumulative number 
departures during $(r,s]$ of 
 customers that have  entered service  after
time $r$. Then for $\delta >0$ and $k=1,2, \ldots$, we have
\begin{eqnarray*} 
\qquad \sum_{s\in (k\delta,(k+1)\delta]}\Delta\en(s) \Delta\dn(s) &
= & \sum_{s\in(k\delta,(k+1)\delta]}\Delta\en (s) \Delta
D^{(N),k\delta} (s) \\ 
& &  + \sum_{s\in(k\delta,(k+1)\delta]}\Delta\en(s)\Delta
D^{(N)+,k\delta} (s)
\end{eqnarray*}
The first summand on the right-hand side above is almost surely 
zero by (\ref{ineq2-dep}) of
Lemma \ref{lem-dep}.  Using the
 fact that $\en$ has unit jump sizes to 
bound the second term, we  obtain 
\begin{eqnarray}
\label{sq-var2}
 \sum_{s\in (k\delta,(k+1)\delta]}\Delta\en(s) \Delta\dn(s) & \leq &
 \sum_{s\in(k\delta,(k+1)\delta]} \Delta D^{(N)+,k\delta} (s) \\
\nonumber & \leq &
\sum_{j=\kn(k\delta)+1}^{\kn((k+1)\delta)}\ind_{\{v_j\le
  \delta\}}. 
\end{eqnarray}
Summing (\ref{sq-var2}) over $k=1,\ldots, \lfloor t/\delta\rfloor$ 
and dividing by $N$, we obtain 
\[
\dfrac{1}{N} \E \left[\suli_{s\leq 
t}\Delta\en (s) \Delta\dn(s)\right]  \leq  
\E\left[\int_0^t\ind_{\{v_{\kn(s)}\leq 
  \delta\}}d\fkn(s)\right]  
 \leq  \E\left[\fbaren_{\ind_{[0,\delta]}}(t+\delta)\right]. 
\]
For each $\delta > 0$, let $f_{\delta}$ be any continuous bounded
function on $[0,\endsup)$ such that $\ind_{[0,\delta)} \leq
f_{\delta} \leq \ind_{[0,2\delta)}$. Then we have 
\[ 
\dfrac{1}{N} \E \left[\suli_{s\leq 
t}\Delta\en (s) \Delta\dn(s)\right]
  \leq 
\E\left[ \fbaren_{f_{\delta}}(t+\delta)\right]
=\E\left[\fdcompn_{f_{\delta}}(t+\delta)\right],
\]
 On both sides, taking first the limit supremum 
 as $N \ra \infty$,  and then the limit as $\delta \downarrow 0$, 
we then conclude that 
\[ \limsup_{N \ra \infty} \dfrac{1}{N} \E \left[\suli_{s\leq 
t} \Delta\en (s) \Delta\dn(s)\right] \leq 
\lim_{\delta \downarrow 0} \limsup_{N \ra \infty}
\E\left[\fdcompn_{f_{\delta}}(t+\delta)\right] = 0, 
\]
were the last equality follows from 
 Lemma 5.8(3) of Kaspi and Ramanan
\cite{KasRam07}. 
\end{proof}

 \renewcommand{\theequation}{B.\arabic{equation}}
\renewcommand\thetheorem{B.\arabic{theorem}}

\beginsec

\section{Ramifications of  Assumptions on the Service Distribution}
\label{app-holder} 

\begin{lemma}
\label{lem-asmarkov} 
Suppose $h$ is uniformly bounded.  Then 
Assumptions \ref{as-h} and \ref{as-holder} are satisfied. 
If, in addition,  $g$ is absolutely continuous and 
$g^\prime$ is either locally essentially bounded or 
$g^\prime \in \L_2^{loc}$.   Then for any $\tilde{f} \in {\cal S}_c$, 
$f = \tilde{f}(1-G)^{-1} \in \H_2$.  Moreover, 
 if  $g$ is absolutely continuous and $g^\prime/(1-G)$ is 
 bounded then  $f \in \H_2$  implies 
$\Phi_t f \in \H_2$ for every $t \geq 0$ and for every $t > 0$, 
  the mapping from $\H_{-2}$ to $\H_{-2}$ that takes 
  $\nu \mapsto S^\nu_t = \nu (\Phi_t \cdot)$ 
is Lipschitz continuous.  
\end{lemma}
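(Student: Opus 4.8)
The plan is to establish the four assertions in order, each of which reduces to an elementary computation with the hazard rate. For the first, note that if $h$ is uniformly bounded then, since $h$ is locally integrable but fails to be integrable on $[0,\endsup)$ whenever $\endsup<\infty$, we must have $\endsup=\infty$, so Assumption \ref{as-h}(a) holds trivially. For Assumption \ref{as-holder}, I would repeat the estimate in Remark \ref{rem-holder}: for $x\geq 0$ and $y<y'$,
\[
\frac{|G(x+y)-G(x+y')|}{1-G(x)}=\int_y^{y'}\frac{g(x+u)}{1-G(x)}\,du\leq\int_y^{y'}\frac{g(x+u)}{1-G(x+u)}\,du\leq\nrm{h}_\infty|y-y'|,
\]
using $1-G(x+u)\leq 1-G(x)$, so (\ref{ineq-holder}) holds with $\hexp_G=1$ and $\hconst_G=\nrm{h}_\infty$.

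For the statement $f=\tilde f(1-G)^{-1}\in\H_2$, fix $\tilde f\in\testspace_c$ with $\supp(\tilde f)\subseteq[0,M]$, $M<\infty$. On $[0,M]$ the weight $w\doteq(1-G)^{-1}$ is bounded by $1/(1-G(M))<\infty$; since $g$ is absolutely continuous one computes the weak derivatives $w'=hw$ and $w''=g'(1-G)^{-2}+2h^2w$. Because $h$ and $w$ are bounded on $[0,M]$ and $g'\in\L^2[0,M]$ (which holds under either hypothesis on $g'$, as $[0,M]$ is bounded), we get $w,w'\in\L^\infty[0,M]$ and $w''\in\L^2[0,M]$. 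Since $f=\tilde f\,w$ is supported in $[0,M]$ and $\tilde f,\tilde f',\tilde f''$ are bounded with compact support, Leibniz's rule gives $f,f',f''\in\L^2[0,\infty)$, hence $f\in\H_2$.

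For the last two assertions, write $\rho_t(x)\doteq(1-G(x+t))/(1-G(x))$, so that $\Phi_t f=(f(\cdot+t))\,\rho_t$ and $0\leq\rho_t\leq 1$. Differentiating (using absolute continuity of $g$) gives $\rho_t'=\rho_t\,(h(\cdot)-h(\cdot+t))$ and $\rho_t''=\rho_t'(h(\cdot)-h(\cdot+t))+\rho_t(h'(\cdot)-h'(\cdot+t))$, where $h'=g'/(1-G)+h^2$ is bounded since $g'/(1-G)$ and $h$ are. Thus $\rho_t,\rho_t',\rho_t''$ are uniformly bounded, with bounds independent of $t$. Since $f\in\H_2[0,\infty)$ implies $f(\cdot+t)\in\H_2[0,\infty)$ with $\nrm{f(\cdot+t)}_{\H_2}\leq\nrm{f}_{\H_2}$, applying the Leibniz rule to $\Phi_t f=(f(\cdot+t))\rho_t$ together with these bounds yields $\Phi_t f\in\H_2$ and
\[
\nrm{\Phi_t f}_{\H_2}\leq C\,\nrm{f}_{\H_2}
\]
for a constant $C=C(\nrm{h}_\infty,\nrm{h'}_\infty)<\infty$ independent of $t$. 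Hence $\Phi_t$ is a bounded linear operator on $\H_2$; its adjoint $\Phi_t^*$ is then a bounded linear operator on $\H_{-2}$ with $\nrm{\Phi_t^*}\leq C$, and since $S^\nu_t(f)=\nu(\Phi_t f)=(\Phi_t^*\nu)(f)$ for $f\in\H_2$, the map $\nu\mapsto S^\nu_t=\Phi_t^*\nu$ is linear and bounded, hence Lipschitz continuous, from $\H_{-2}$ to $\H_{-2}$.

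The one step requiring genuine care is the computation and uniform control of $\rho_t''$, since this is precisely where the hypothesis that $g'/(1-G)$ be bounded is used (through the resulting bound on $h'$); the remaining work is bookkeeping with the product rule, together with the standard fact that $\H_2=W^{2,2}[0,\infty)$ is a module over the algebra of functions whose first two derivatives are bounded.
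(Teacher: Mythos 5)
Your proposal is correct and follows essentially the same route as the paper's own proof: the same H\"older estimate via $\nrm{h}_\infty$, the same weak-derivative expansions of $\tilde f(1-G)^{-1}$ and of $\Phi_t f = f(\cdot+t)\,\rho_t$ with uniform bounds on $\rho_t,\rho_t',\rho_t''$ coming from the boundedness of $h$ and $g'/(1-G)$, and the same duality/adjoint step to transfer the bound $\nrm{\Phi_t f}_{\H_2}\leq C\nrm{f}_{\H_2}$ to Lipschitz continuity of $\nu\mapsto\nu(\Phi_t\cdot)$ on $\H_{-2}$. No gaps.
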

\begin{proof}
If $h$ is uniformly bounded, then 
Assumption \ref{as-h} is trivially satisfied
and 
\[ \dfrac{G(x+y) - G(x+\tilde{y})}{1-G(x)} = 
\int_y^{\tilde{y}} \dfrac{g(x+u)}{1-G(x+u)} \dfrac{1-G(x+u)}{1-G(x)} \, du 
\leq \nrm{h}_{\infty} |y-\tilde{y}|, 
\]
which shows that Assumption \ref{as-holder} is satisfied 
with $C_G = \nrm{h}_{\infty}$ and $\gamma_G = 1$.

Now, suppose  that in addition,  $g$ is absolutely continuous and 
$g^\prime$ is either locally essentially bounded or 
$g^\prime \in \L_2^{loc}$.  
If $f = \tilde{f} (1-G)^{-1}$ then $f^\prime = \tilde{f}_x (1-G)^{-1} + hf$ 
and $f^{\prime \prime} = \tilde{f}_{xx} (1-G)^{-1} + 2 h (\tilde{f}_x + f_x) 
+  f h^2  + f g^\prime(1-G)^{-1}$.  Since $g$ is absolutely continuous, 
$f$ and $f^\prime$ and the first three terms in the expansion of 
$f^{\prime \prime} $ are continuous with compact support and hence in 
$\L_2^{loc}$.  In addition, because $f(1-G)^{-1}$ is  
continuous with compact support, 
the last term lies in $\L_2^{loc}$ if either $g^\prime$ 
is locally essentially bounded or, by Cauchy-Schwarz, 
 if $g^\prime$  lies in $\L_2^{loc}$. 

Now, suppose that 
 $g$ is absolutely continuous and $g^\prime/(1-G)$ is 
 bounded.  Fix $t \geq 0$ and $f \in \H_2$.  For notational conciseness, let 
\[ r(x) \doteq r_t(x) \doteq \dfrac{1-G(x+t)}{1-G(x)}, \qquad x \in [0,\endsup). \]
Then, by the definition (\ref{def-thetat}) of $\Phi_t$,  for $x \in [0,\endsup)$, 
\begin{eqnarray*}
(\Phi_t f)(x) & = & r(x) f(x+t), \\
  (\Phi_t f)^\prime(x) & = &  r^\prime (x) f(x+t) + r(x) f^\prime(x+t), \\
 (\Phi_t f)^{\prime \prime} (x) & = & r^{\prime \prime} (x) f(x+t) + 2
 r^\prime (x) f^\prime(x+t) + r(x) f^{\prime \prime}(x+t). 
\end{eqnarray*}
By the assumptions on $g$, if $f \in \H_2$ then 
$\Phi_tf$ has weak derivatives up to order two 
and elementary calculations show that 
\begin{eqnarray*}
r^\prime (x) & = & \dfrac{g(x)(1-G(x+t)) - (1-G(x))g(x+t)}{(1-G(x))^2}
 = r(x)\left( h(x) - h(x+t)\right),  \\
r^{\prime \prime}(x) & = & r(x) \left( \dfrac{g^\prime(x)}{1-G(x)} + h^2(x) -
  \dfrac{g^\prime(x+t)}{1-G(x+t)} - h^2(x+t)\right) \\
& &  + r^\prime (x) \left( h(x) - h(x+t) \right).  
\end{eqnarray*}
Clearly, $\nrm{r}_{\infty} \leq 1$ and, due to the assumed boundedness 
of $h$ and $g^\prime/(1-G)$, it follows that there exists $C \in [1,\infty)$ 
such that $\nrm{r^\prime}_{\infty} \leq C$ and 
$\nrm{r^{\prime \prime}}_\infty \leq C$. 
The above observations, when combined, show that 
\begin{eqnarray*}
\nrm{(\Phi_t f)}_{\L_2} & \leq & \nrm{f}_{\L_2} \leq \nrm{f}_{\H_2}, \\
 \nrm{(\Phi_t f)^\prime}_{\L_2} & \leq & 
\sqrt{2} C (\nrm{f}_{\L_2} + \nrm{f^\prime}_{\L_2}) \leq 
\sqrt{2} C \nrm{f}_{\H_2},  \\
\nrm{(\Phi_t f)^{\prime \prime}}_{\L_2} &\leq & 
4\sqrt{2} C \left(\nrm{f}_{\L_2} + \nrm{f^\prime}_{\L_2}
 + \nrm{f^{\prime \prime}}_{\L_2} \right)  \leq 
4\sqrt{2} C \nrm{f}_{\H_2}, 
\end{eqnarray*} 
which shows that $\nrm{\Phi_t f}_{\H_2} \leq \tilde{C} \nrm{f}_{\H_2}$ for 
some finite constant $\tilde{C}$.  This shows that 
$\Phi_t f \in \H_2$ and that, for any $t > 0$, 
 the map from $\H_2$ to $\H_2$ that takes $f$ to $\Phi_t f$ is 
Lipschitz continuous (with constant $\tilde{C}$).  This, in turn, trivially  
implies that for $\nu \in \H_{-2}$, the linear functional on $\H_2$ given by 
$S^\nu: f \mapsto \nu(\Phi_t f)$ also lies in $\H_{-2}$ and 
that the map from $\H_{-2}$ to itself that takes $\nu$  to $S^\nu$
is also Lipschitz continuous with the same constant.  This proves the 
second property and therefore the lemma. 
\end{proof}

 \renewcommand{\theequation}{C.\arabic{equation}}
\renewcommand\thetheorem{C.\arabic{theorem}}

\beginsec

\section{Proof of the Representation Formula}
\label{rep-prelim}

Fix $N \in \N$. We first show how (\ref{rep-hatmeasn}) 
can be deduced from (\ref{eq-dprelimit1}); the proof of 
how to obtain (\ref{rep-measn})  from 
(\ref{eqn-prelimit1}) is  analogous (in fact, a bit 
simpler), and is therefore omitted. 
Let $\tilde{\Omega}$ be a set of full $\P$-measure such that on 
$\tilde{\Omega}$, 
$\fdcompn(t)$, $\fdn(t)$, $\fbaren_{\f1}(t)$ and $\fkn(t)$ are finite 
for all $t \in [0,\infty)$.  
Fix $\omega \in \tilde{\Omega}$ and let 
$\gamma$ and $h \hmeasn$ be 
the linear functionals on ${\cal C}_c([0,\endsup) \times [0,\infty))$ 
defined, respectively, by 
\[ \gamma (\newf) \doteq \int_{[0,\endsup)} \newf (x,0)\, \hmeasn_0 (dx)  - 
\underset{[0,\endsup) \times [0,\infty)}{\int\int} \newf(x,s) \, \chatmn (dx,
ds)  + \int_{[0,\infty)} \newf (0,s) \, d\hkn (s) 
\]
and  
\[ 
h \hmeasn (\newf) \doteq \int_0^\infty 
\lan h(\cdot) \newf(\cdot, s), \hmeasn_s \ran \, ds 
\]
for $\newf \in {\cal C}_c([0,\endsup) \times [0,\infty))$. 
Since the total variation of 
$\hmeasn_0$ on $[0,\endsup)$ is bounded by $2 \sqrt{N}$, 
the total variation of $\chatmn_t (\f1)$ is bounded by 
$\sqrt{N}(\fdn(t) + \fdcompn_{\f1}(t))$ and 
the total variation of   $\hatkn$  on $[0,t]$ is 
bounded by $\sqrt{N}(\fkn(t) + \fk(t))$, for any 
$\newf \in {\cal C}_c([0,\endsup) \times [0,\infty))$ such that 
$\supp(\newf) \subset [0,\endsup) \times [0,t]$, we have 
\[  \left| \gamma (\newf) \right| \leq \sqrt{N} \nrm{\newf}_{\infty} 
\left( 2  + \fdn(t) + \fdcompn_{\f1} (t) + \fkn(t) + \fk(t)\right) 
\]
and, likewise, it can be argued that 
\[ \left| h \hmeasn (\newf) \right| \leq 
\sqrt{N}   \nrm{\newf}_{\infty} \left( \fdcompn_{\f1} (t) + \fdcomp_{\f1}(t)\right). 
\]
This shows that $\gamma$ and $h \hmeasn$ define Radon measures on $[0,\endsup)
\times [0,\infty)$.
Let $\tilde{{\cal C}}_c^{1,1}$ be the space of continuous functions with
compact support on $[0,\endsup) \times [0,\infty)$ such that the directional 
derivative $\newf_x + \newf_s$ exists and is continuous.  
Now, for every 
$\newf \in \tilde{{\cal C}}_c^{1,1}$, sending 
$t \ra \infty$ in (\ref{eq-dprelimit1}), the left-hand side of
(\ref{eq-dprelimit1}) vanishes because $\newf$ has compact support, 
and we obtain  
\[ -\int_0^\infty \lan \newf_x(\cdot, s) + \newf_s(\cdot, s), \hmeasn_s \ran \,
ds =  - h \hmeasn (\newf) + \gamma (\newf). 
\]
Since $\{\hmeasn_t,t \geq 0\} \in {\cal D}_{{\cal M}[0,\endsup)}[0,\infty)$, 
the last equation shows that 
$\{\hmeasn_t,t \geq 0\}$ satisfies the so-called abstract age 
equation  for $\gamma$, as introduced in 
Definition 4.9  of Kaspi and Ramanan \cite{KasRam07}. 
 Therefore, by  Corollary 4.17 and (4.24) of 
\cite{KasRam07},
 it follows  that for every $f \in {\cal C}_c[0,\endsup)$, 
 $\lan f, \hmeasn_t \ran = 
\gamma(\newf_t^f)$, $t \geq 0$,  
where 
\[  \newf_t^f (x,s) = \psi_{h}^{-1}(x,s) f(x+t-s) \psi_h(x+t-s,t),  \quad
(x,s) \in [0,\endsup) \times [0,t],
\]
where $\psi_h$ is the function  defined in (4.53) of \cite{KasRam07}, and
reproduced as equation (\ref{eq-psih}) of this  paper. 
Elementary algebra (specifically combining the  relations in (\ref{psil-ids}) with the definition 
(\ref{def-thetat}) of $\Psi_t$) then shows that 
$\newf^f_t(x,s) = \Psi_t f(x,s)$.  
For $f \in {\cal C}_c[0,\endsup)$, 
 the representation (\ref{rep-hatmeasn}) 
 is then obtained by 
expanding $\gamma(\Psi_tf)$  using the definition of 
$\gamma$ given above together with the relations  
$(\Psi_t f)(\cdot, 0) = \Phi_t f$,  
$(\Psi_t f) (0,\cdot) = f(t-\cdot) (1-G(t-\cdot))$, 
$\hathn_t(f) = \chatmn_t(\Psi_t f)$ and the definition 
(\ref{def-hcalkn}) of $\hcalkn$. 
Since the right-hand side of (\ref{rep-hatmeasn}) is well defined 
for $f \in {\cal C}_b[0,\endsup)$, a standard approximation 
argument can then be used to show that 
 the representation (\ref{rep-hatmeasn}) holds for  
$f \in {\cal C}_b[0,\endsup)$.

 \renewcommand{\theequation}{D.\arabic{equation}}
\renewcommand\thetheorem{D.\arabic{theorem}}

\beginsec

\section{Some Moment Estimates}

\label{subs-bdint}

In this section, we prove the  estimates stated in 
Lemma \ref{lem-bdint}. 

\begin{proof}[Proof of Lemma \ref{lem-bdint}]
Fix $N \in \N$ and $T < \infty$ and, for conciseness, let
$\overline{Y}^{(N)}(s) = (\ren(s), \fxn(s), \fmeasn_s)$, $s \in [0,\infty),$
represent the state process. 
Using the fact that $\fmartn_{\f1} = \fdn - \fdcompn_{\f1}$ is a martingale and taking 
expectations of both sides of the
inequality (5.30) of Kaspi and Ramanan \cite{KasRam07}, with $t$ and $\delta$
replaced by $0$ and $T$, respectively, it follows that
\be
\label{bdint-1}
 \E_{\overline{Y}^{(N)}(0)}
\left[\fdcompn_{\f1}(T) \right] = \E_{\overline{Y}^{(N)}(0)}
\left[ \fdn(T)\right]
\leq U(T),
\ee
where $U$ is the renewal function associated with $G$. 
This shows that the inequality (\ref{eq-bdint}) holds for $k = 1$.
We proceed by induction.  
Suppose that (\ref{eq-bdint}) holds with $k = j-1$ for
some integer $j \geq 2$.
Then we can write 
\begin{eqnarray*}
 \left( \fdcompn_{\f1} (T)  \right)^{j}
& =  & \int_0^T \cdots  \int_0^T \left( \lan h, \fmeasn_{s_1} \ran
\,  \lan h, \fmeasn_{s_2} \ran \,  \ldots \lan h, \fmeasn_{s_j} \ran
\right) \, ds_j \ldots ds_1  \\
& = & \int_0^T \lan h, \fmeasn_{s_1} \ran \left( \int_0^T \ldots \int_0^T
\left( \lan h, \fmeasn_{s_2} \ran \, \ldots  \lan h, \fmeasn_{s_{j}} \ran \right)
\, ds_j \ldots ds_{2} \right) ds_1 \\
& = & j \int_0^T  \lan h, \fmeasn_{s_1} \ran \left( \int_{s_1}^T \ldots
\int_{s_1}^T
\left( \lan h, \fmeasn_{s_2} \ran \,  \ldots \lan h, \fmeasn_{s_{j}} \ran
\right) \,
ds_j \ldots ds_{2} \right) \, ds_1 \\
& = & j \int_{0}^T \lan h, \fmeasn_{s_1} \ran \left( \fdcompn_{\f1} (T)  -
\fdcompn_{\f1} (s_1) \right)^{j-1} \, ds_1.
\end{eqnarray*}
Taking  expectations of both sides above and applying 
Tonelli's theorem we obtain 
\[
 \E_{\overline{Y}^{(N)}(0)}
\left[\left( \fdcompn_{\f1} (T)  \right)^{j}\right]  =  
j \int_{0}^T  \E_{\overline{Y}^{(N)}(0)}
\left[\lan h, \fmeasn_{s_1} \ran \left( \fdcompn_{\f1} (T)  -
\fdcompn_{\f1} (s_1) \right)^{j-1}\right] \, ds_1. 
\]
For each $s_1 \in [0,T]$, due to the Markov property 
of $\overline{Y}^{(N)}$ established in Lemma B.1 of Kang and 
Ramanan \cite{KanRam08} we obtain 
\[ 
\begin{array}{l}
 \E_{\overline{Y}^{(N)}(0)}
\left[\lan h, \fmeasn_{s_1} \ran \left( \fdcompn_{\f1} (T)  -
\fdcompn_{\f1} (s_1) \right)^{j-1}\right] \\
\qquad \ds =  
 \E_{\overline{Y}^{(N)}(0)} \left[ \E_{\overline{Y}^{(N)}(0)} \left[  
\lan h, \fmeasn_{s_1} \ran \left( \fdcompn_{\f1} (T)  -
\fdcompn_{\f1} (s_1) \right)^{j-1} |{\cal F}_{s_1}^{(N)} \right]
\right] \\[1.0em]
\qquad \ds =  \E_{\overline{Y}^{(N)}(0)} \left[\lan h, \fmeasn_{s_1} \ran
\E_{\overline{Y}^{(N)}(s_1)} \left[  \left( \fdcompn_{\f1} (T-s_1)
  \right)^{j-1}\right] \right]. 
\end{array}
\]
Applying the induction assumption to the last term above, it follows
 that 
\[  \E_{\overline{Y}^{(N)}(0)}
\left[\lan h, \fmeasn_{s_1} \ran \left( \fdcompn_{\f1} (T)  -
\fdcompn_{\f1} (s_1) \right)^{j-1}\right] \leq 
 (j-1)! U(T)^{j-1} \E_{\overline{Y}^{(N)}(0)} \left[\lan h,
  \fmeasn_{s_1} \ran \right]. 
\]
Combining the last three displays,  applying  
Tonelli's theorem again and using  (\ref{bdint-1}),
 we obtain 
\[ \E_{\overline{Y}^{(N)}(0)}
\left[\left( \fdcompn_{\f1} (T)\right)^j\right] \leq  
j! U(T)^{j-1}\E_{\overline{Y}^{(N)}(0)}\left[ \int_0^T \lan h,
  \fmeasn_{s_1} \ran \, ds_1 \right] 
\leq  j! U(T)^j. 
\]
This shows that (\ref{eq-bdint}) is also satisfied for $k = j$ 
and hence, by induction, for all positive integers $k$.

We now turn to the proof of the second bound. 
We can assume without loss of generality that 
 $\newf^* h$ is integrable on $[0,\endsup)$ because 
otherwise the inequality holds trivially.   
On substituting $l = \newf^* h$, $\newf = \f1$, $r = 0$ and $t = T$ in 
(5.31) of Proposition 5.7 of Kaspi and Ramanan \cite{KasRam07}, for every 
$N \in \N$, we have 
\be
\label{obs-1} \E_{\overline{Y}^{(N)} (0)} \left[
 \fdcompn_{\newf} (T) \right] \leq 
\E_{\overline{Y}^{(N)} (0)} \left[
 \fdcompn_{\newf^*} (T) \right]
\leq  C_1(T) \left( \int_{[0,\endsup)} \newf^*(x) h(x) \, dx \right),
\ee
where 
\[ C_1(T) \doteq \sup_{N} \E[ \fxn(0)  + \fen(T) ] \leq
C(T) \doteq \sup_{N} \sup_{s \in [0,T]} \E[\fxn(s) + \fen(T)], 
\]
which is finite by Theorem \ref{th-flimit}.  
Given (\ref{obs-1}), the same inductive argument used in the 
proof of the first assertion of the lemma can then be used to 
complete the proof of the second bound.

Next, note that if Assumptions \ref{as-flinit} and \ref{as-h} hold, then 
 $\fdcompn_{\f1} \Rightarrow \fdcomp_{\f1}$  by Proposition 5.17 of 
\cite{KasRam07} and $\fdcomp$ is continuous. 
Together with  
the Skorokhod representation theorem, 
Fatou's lemma and the inequality  (\ref{bdint-1}), 
this implies that 
\[ \fdcomp_{\f1} (T)  \leq \liminf_{N \ra \infty} \E\left[\fdcompn_{\f1}(T) \right]\leq \limsup_{N \ra \infty} \E\left[\fdcompn_{\f1}(T) \right] 
\leq U(T). 
\]
The inequality (\ref{eq-bdintl}) can now be deduced from this inequality 
exactly as the inequality (\ref{eq-bdint}) was deduced from 
the inequality  (\ref{bdint-1}), though the proof is in fact much 
 simpler because $\fdcomp_{\f1}$ is deterministic. 
\end{proof}

%

 \renewcommand{\theequation}{E.\arabic{equation}}
\renewcommand\thetheorem{E.\arabic{theorem}}

\beginsec

 \renewcommand{\theequation}{E.\arabic{equation}}
\renewcommand\thetheorem{E.\arabic{theorem}}

\section{Proof of Consistency} 
\label{sec-consistency}

We first start by establishing some Fubini theorems.  

\begin{lemma}
\label{lem-fub} 
Let Assumptions \ref{as-flinit}-\ref{as-holder} 
be satisfied, let $g$ be  continuous and 
let $\hath$ and $\hcalk$ be defined as in (\ref{def-hath}) and
(\ref{def-hcalk}), respectively. 
Suppose $\tilde{\newf} \in 
{\cal C}_b([0,\endsup) \times [0,\infty)$. 
Given the family of mappings $\opint_t$, $t \geq 0$,  defined in 
(\ref{def-opint}),  we have 
\be
\label{fub-1}
 \chatm_t \left(\opint_t \tilde{\newf} \right) 
= \int_0^t \chatm_r (\Psi_r (\tilde{\newf}(\cdot,r)) \, dr 
= \int_0^t \hath_r (\tilde{\newf}(\cdot,r)) \, dr. 
\ee
 If, in addition, 
for every $t \in [0,T]$,  $x \mapsto \int_0^t \tnewf(x,r) \, dr$ 
is bounded and H\"{o}lder continuous, uniformly in $t$, then 
for every $s \geq 0$, almost surely  for every $t \geq 0$,  
\be
\label{fub-0}
\chatm_s \left( \int_0^t \Psi_s\left(  \tilde{\newf} (\cdot,r)\right) \, dr \right) 
=  \int_0^t \chatm_s (\Psi_s(\tilde{\newf}(\cdot,r))) \, dr = 
\int_0^t \hath_s (\tilde{\newf}(\cdot,r)) \, dr. 
\ee
 Moreover, if either $x \mapsto \tilde{\newf}(x,r)$ is absolutely 
continuous  for every $r > 0$ 
 and $(x,r) \mapsto \tilde{\newf}_x(x,r)(1-G(x))$ is 
locally integrable on $[0,\endsup) \times [0,\infty)$, or 
$g$ is absolutely continuous and $\tilde{\newf} \in {\cal C}_b([0,\endsup)
\times [0,\infty))$, 
then almost surely, for every $s, t \geq 0$, 
\be
\label{fub-2}
  \hcalk_s \left(\int_0^t \tilde{\newf}(\cdot,r) \, dr \right) = \int_0^t 
 \hcalk_s (\tilde{\newf}(\cdot,r)) \, dr. 
\ee
\end{lemma}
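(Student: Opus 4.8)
\textbf{Proof proposal for Lemma \ref{lem-fub}.}

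The plan is to prove the three Fubini-type identities in turn, in each case reducing a stochastic-integral-of-an-integral to an integral-of-a-stochastic-integral, using the definition of the stochastic integral with respect to the orthogonal martingale measure $\chatm$ and the covariance functional (\ref{def-covfnalchatm}). The common strategy for (\ref{fub-1}) and (\ref{fub-0}) is to first verify the identity for a dense class of ``elementary'' integrands (finite sums of products $a(x) b(r)$ with $a \in {\cal C}_b[0,\endsup)$, $b$ continuous), for which the interchange is just the classical Fubini theorem applied pathwise to $\chatm_\cdot(a)$ (which has a continuous version by Lemma \ref{lem-hathn}(4) and the results of Section \ref{subs-scaledmmeas}), and then pass to the limit using the Walsh $L^2$-isometry together with the moment bounds (\ref{eq-martest2}) and (\ref{eq-martest3}) of Lemma \ref{lem-martest} and the integrability bound (\ref{eq-bdintl}) of Lemma \ref{lem-bdint}. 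For (\ref{fub-1}) the key observation is that by the semigroup relation (\ref{rel-psiphi}) and the definition (\ref{def-opint}) of $\opint_t$ one has $(\opint_t \tilde{\newf})(x,u) = \int_u^t (\Psi_s \tilde{\newf}(\cdot,s))(x,u)\,ds$, so that applying $\chatm_t$ and interchanging the (deterministic) $ds$-integral with the stochastic integral over $[0,\endsup)\times[0,t]$ yields $\int_0^t \chatm_s(\Psi_s \tilde{\newf}(\cdot,s))\,ds$; the upper limit $t$ on the inner $ds$-integral is automatically accounted for because $(\Psi_s \tilde{\newf}(\cdot,s))(x,u) = \tilde{\newf}(x+(s-u)^+,s)(1-G(x+(s-u)^+))/(1-G(x))$ is, as a function of $u$, supported on $[0,s]$ in the relevant sense, and because $\chatm$ restricted to $[0,\endsup)\times[s,t]$ contributes nothing once we recall $\hath_s(f) = \chatm_s(\Psi_s f)$. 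The identity $\chatm_s(\Psi_s \tilde{\newf}(\cdot,s)) = \hath_s(\tilde{\newf}(\cdot,s))$ is exactly (\ref{rel-hath}).

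For (\ref{fub-0}), the structure is similar but now $t$ is the ``outer'' variable and $s$ is fixed; I would first fix $s$ and a countable dense set of $t$'s, establish the identity there by the elementary-integrand-plus-$L^2$-limit argument (the boundedness and uniform-in-$t$ H\"older continuity of $x \mapsto \int_0^t \tilde{\newf}(x,r)\,dr$ is what makes $\Psi_s(\int_0^t \tilde{\newf}(\cdot,r)\,dr)$ a legitimate element of ${\cal C}_b([0,\endsup)\times[0,s])$ and supplies, via Lemma \ref{ineq-basic} and (\ref{addeq-1}), a modulus estimate in $t$), and then upgrade to ``almost surely for every $t$'' by invoking the joint continuity in $(s,t)$ of the random field $\{\hath_s(\int_0^t \tilde{\newf}(\cdot,r)\,dr)\}$ established in Lemma \ref{lem-hathn}(3), together with continuity of the right-hand side $t \mapsto \int_0^t \hath_s(\tilde{\newf}(\cdot,r))\,dr$ (which follows from local boundedness of $r \mapsto \hath_s(\tilde{\newf}(\cdot,r))$, itself a consequence of (\ref{op-ineqs}) and (\ref{eq-martest2})). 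Both sides being continuous in $t$ and agreeing on a dense set, they agree identically, off a single null set.

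For (\ref{fub-2}) the situation is different: $\hcalk$ is defined via an ordinary (Lebesgue--Stieltjes, or under the hypotheses of Theorem \ref{th-main1}, semimartingale) convolution integral against $\hatk$, using the explicit formula (\ref{def-hcalk}), $\hcalk_s(f) = f(0)\hatk(s) + \int_0^s \hatk(u)\trans_f(s-u)\,du$ with $\trans_f = f'(1-G) - fg$. So here I would substitute $f = \int_0^t \tilde{\newf}(\cdot,r)\,dr$, compute $f(0) = \int_0^t \tilde{\newf}(0,r)\,dr$ and, under the first alternative hypothesis, $\trans_f(y) = \int_0^t \tilde{\newf}_x(y,r)\,dr\,(1-G(y)) - \int_0^t \tilde{\newf}(y,r)\,dr\,g(y) = \int_0^t \trans_{\tilde{\newf}(\cdot,r)}(y)\,dr$ (commuting $\partial_x$ with $\int_0^t dr$, valid by the assumed local integrability of $(x,r)\mapsto \tilde{\newf}_x(x,r)(1-G(x))$ and absolute continuity in $x$); under the second alternative ($g$ absolutely continuous, $\tilde{\newf} \in {\cal C}_b$) one instead integrates by parts to move the $x$-derivative off $\tilde{\newf}$ onto $(1-G)$, writing $\hcalk_s(f) = \int_{[0,s]} f(s-u)(1-G(s-u))\,d\hatk(u)$ as in (\ref{def2-hcalk}) and then commuting $\int_0^t dr$ past the $\hatk$-integral by the ordinary Fubini theorem (the integrand $(r,u) \mapsto \tilde{\newf}(s-u,r)(1-G(s-u))$ being jointly measurable and bounded on $[0,t]\times[0,s]$, and $\hatk$ having finite variation on $[0,s]$ — or being a semimartingale, in which case one uses the stochastic Fubini theorem, e.g. (5.17) of Revuz and Yor \cite{revyorbook}). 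Either way the interchange is a routine application of a classical Fubini theorem once the derivative/integral commutation is justified, and the remaining $\int_0^s \hatk(u)\,(\cdot)\,du$ integral is then pulled back inside to reconstitute $\int_0^t \hcalk_s(\tilde{\newf}(\cdot,r))\,dr$.

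The main obstacle I anticipate is purely in (\ref{fub-0}): the interchange must hold \emph{almost surely simultaneously for all $t$}, not merely for each fixed $t$, and the inner object $\chatm_s(\Psi_s(\int_0^t \tilde{\newf}(\cdot,r)\,dr))$ must be shown to have a version jointly continuous (or at least continuous in $t$) so that the ``dense set'' argument closes. This is exactly why the hypothesis that $x \mapsto \int_0^t \tilde{\newf}(x,r)\,dr$ is bounded and H\"older continuous uniformly in $t$ is imposed, and why Lemma \ref{lem-hathn}(3) was proved in the form it was; the care needed is to check that the modulus-of-continuity estimate (\ref{addeq-1}) applies with constants uniform in $t$, which follows because the H\"older constant $C_{\tilde{\newf},T}$ and exponent $\gamma_{\tilde{\newf},T}$ are assumed independent of $t$. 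Everything else is bookkeeping with the isometry and the bounds already recorded in Lemmas \ref{lem-bdint}, \ref{lem-martest}, \ref{ineq-basic} and \ref{lem-hathn}.
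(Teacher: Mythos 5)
Your overall strategy coincides with the paper's on the points that carry the weight: the interchange in (\ref{fub-1}) and (\ref{fub-0}) is a stochastic Fubini theorem for integrals against the martingale measure $\chatm$ --- the paper simply invokes Theorem 2.6 of Walsh \cite{walshbook}, after noting $\int_0^t \Psi_s(\tilde{\newf}(\cdot,r))\,dr = \Psi_s\bigl(\int_0^t \tilde{\newf}(\cdot,r)\,dr\bigr)$ and checking boundedness via (\ref{op-ineqs}), whereas you re-derive it by an elementary-integrand/$\L^2$-isometry argument --- and your upgrade of (\ref{fub-0}) from ``each $t$ a.s.'' to ``a.s.\ for all $t$'' by continuity in $t$, with the left side handled through Lemma \ref{lem-hathn}(3) and the uniform H\"{o}lder hypothesis, is exactly the paper's argument. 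Your treatment of (\ref{fub-2}) under the first alternative (commute $\partial_x$ with $\int_0^t\,dr$, then classical Fubini, using continuity and hence local boundedness of $\hatk$) is also the paper's. One presentational caution: $(\Psi_s\tilde{\newf}(\cdot,s))(x,u)$ is neither of product form nor supported on $\{u\le s\}$; the restriction to $u\le s$ after the interchange comes from the indicator generated by the inner integral $\int_u^t ds$ in (\ref{def-opint}), not from the integrand, so the dense-class step needs to be phrased accordingly.

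Two steps do not go through as written. First, the paper needs (\ref{fub-1}), like (\ref{fub-0}), to hold on a single null set for all $t$ (it is used in that form in Lemma \ref{lem-intsa}), and gets this from the continuity of $t \mapsto \chatm_t(\opint_t\tilde{\newf})$, which is precisely property 4 of Lemma \ref{lem-hathn}; your proposal never addresses continuity in $t$ of the left side of (\ref{fub-1}). Second, and more substantively, your argument for the second alternative of (\ref{fub-2}) relies on the representation (\ref{def2-hcalk}) of $\hcalk$ as a convolution integral against $\hatk$, together with $\hatk$ being of finite variation or a semimartingale. Under the hypotheses of this lemma (Assumptions \ref{as-flinit}--\ref{as-holder} and $g$ continuous), $\hatk$ is only known to be continuous (Theorem \ref{th-main}); the semimartingale property and (\ref{def2-hcalk}) are available only under the stronger conditions of Theorem \ref{th-main1} ($h$ bounded and absolutely continuous, Assumption \ref{as-diffinit}'), and $\hatk$ is certainly not of finite variation in general. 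The paper instead stays with the defining formula (\ref{def-hcalk}), exchanging differentiation and integration there and then applying the ordinary Fubini theorem, so no semimartingale structure is needed.
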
  
\begin{proof} 
Fix $s, t \geq 0$.  
 Then 
\be
\label{eq-fubone} 
\int_0^t \Psi_s (\tilde{\newf}(\cdot,r)) \, dr 
= \Psi_s (\int_0^t\tilde{\newf}(\cdot,r)) \, dr ) 
\ee
 and so, by  
the inequality (\ref{op-ineqs}) and the boundedness assumption on $\tilde{\newf}$, 
 $\int_0^t \Psi_s (\tilde{\newf}(\cdot,r)) \, dr$ and  
$\opint_t\tilde{\newf}$ are uniformly bounded on  
$[0,\endsup) \times [0,t]$. 
We can thus apply 
 Fubini's theorem for stochastic integrals with respect to 
martingale measures (see Theorem 2.6 of \cite{walshbook}) to 
conclude that  almost surely, (\ref{fub-1}) and (\ref{fub-0}) are satisfied.  
We now have to show that the set of measure zero on which they 
are not satisfied is independent of $t$, for which it suffices 
to show that the processes on both sides of 
(\ref{fub-1}) and (\ref{fub-0}) are continuous in $t$. 
The processes on the 
right-hand sides of (\ref{fub-1}) and (\ref{fub-0}) are clearly 
continuous in $t$, whereas the continuity in $t$ of 
the process on the left-hand side of  (\ref{fub-1}) follows from 
property 4 of Lemma \ref{lem-hathn}. 
Because of (\ref{eq-fubone}), the relation $\chatm_s(\Psi_s f)  = 
\hath_s(f)$ and the assumed boundedness and uniform H\"{o}lder continuity of 
$x \mapsto \int_0^t \tilde{\newf}(x,r) \, dr$, the continuity in $t$ 
of the left-hand side of (\ref{fub-0}) follows from property 3 of 
Lemma \ref{lem-hathn}. 
Thus, for any given $s > 0$ there exists a set of full $\P$-measure on which 
(\ref{fub-0}) and (\ref{fub-1}) hold simultaneously for 
all $t \geq 0$. 

Next,  by the definition of $\hcalk$ in (\ref{def-hcalk}), note that 
$\hcalk_s (\int_0^t \tilde{\newf}(\cdot,r) \, dr)$ is equal to 
\[ 
\ds \left(\int_0^t \tilde{\newf}(0,r) \, dr \right) \hatk(s) 
+ \int_0^s \hatk(u)\dfrac{\partial}{\partial x} \left. \left( \int_0^t ((1-G(x))
  \tilde{\newf}(x,r) \, dr \right)\right|_{x=s-u}  \, du. 
\]
By  the stated assumptions, it follows that $g$ is continuous and 
 for each $r > 0$, 
the function $x \mapsto (1-G(x))\tilde{\newf}(x,r)$ 
is absolutely continuous and  
its derivative (with respect to $x$) is locally integrable. 
Moreover, 
by Theorem \ref{th-main},  $\hatk$ is almost surely continuous and 
thus locally bounded. 
Thus, we can first  exchange the order of differentiation and integration 
and then apply Fubini's theorem for Lebesgue integrals 
in the last display to conclude that 
$\hcalk_s (\int_0^t \tilde{\newf}(\cdot,r) \, dr)$ is equal to 
\[
\begin{array}{l}
  \ds \int_0^t \tilde{\newf}(0,r) \hatk(s) \, dr 
+ \int_0^s \hatk(u)  \left( \int_0^t\dfrac{\partial}{\partial x} \left. \left((1-G(x))
  \tilde{\newf}(x,r)\right)\right|_{x=s-u} \, dr \right) \, du \\
\qquad =  \ds \int_0^t \tilde{\newf}(0,r) \hatk(s) \, dr 
+ \int_0^t  \left(\int_0^s\hatk(u)\dfrac{\partial}{\partial x} \left. \left((1-G(x)) \tilde{\newf}(x,r)\right)\right|_{x=s-u} \, du \right) \, dr \\
\qquad \ds =  \int_0^t \hcalk_s (\tilde{\newf}(\cdot,r)) \, dr, 
\end{array}
\]
which completes the proof of the lemma. 
\end{proof}

We now  prove the consistency lemma.

\begin{proof}[Proof of Lemma \ref{lem-consistency}] 
Fix  $f \in \testspace$ and $s, t \geq 0$. 
 Then, replacing $t$ by $t+s$ in (\ref{rep-hatmeasn}),
we obtain 
\be
\label{consist1} 
\ds \hmeasn_{s+t}(f)  =  \ds \ops_{s+t}^{\hmeasn_0} (f)- \hathn_{t+s}(f) +
\hcalkn_{t+s}(f).  
\ee 
Using the shift relations introduced  in 
(\ref{def-shift1})--(\ref{def-shift3}),
and recalling the definitions of $\hathn$ and $\hcalkn$  in (\ref{def-hathn})
and (\ref{eq-hcalkn}), respectively, 
 the last two terms on the right-hand side of (\ref{consist1}) 
can be decomposed as follows:  
\be
\label{consist3}
\ds \hathn_{t+s}(f)  =  \chatmn_{t+s} (\Psi_{t+s} f) = \chatmn_{s}
(\Psi_{t+s}(f)) + (\Theta_s \chatmn)(\Psi_t f), 
\ee
and, similarly, 
\begin{eqnarray}
\nonumber
\ds \hcalkn_{t+s}(f) & = & \int_{[0,s+t]} f(s+t-u) (1-G(s+t-u)) \, d\hatkn(u)\\
\label{consist2}
& = &  \int_{[0,s]} f(s+t-u)  (1-G(s+t-u)) \, d\hatkn(u) \\
\nonumber
& & \quad + \int_{[0,t]} (1-G(t-u)) f(t-u) \, d(\Theta_s \hatkn) (u).  
\end{eqnarray} 
On the other hand, since  
$\Phi_t f \in {\cal C}_b[0,\endsup)$, 
replacing $f$ and $\hmeasn_0$ in 
 (\ref{rep-hatmeasn})  by $\Phi_t f$ and $\hmeasn_s$, respectively, and 
using
the semigroup property (\ref{eq-sgroup}) and the fact that 
$\Psi_s \Phi_t = \Psi_{s+t}$  on the appropriate domain 
as specified in (\ref{rel-psiphi}),  
we obtain 
\begin{eqnarray}
\nonumber
\ds \ops_t^{\hmeasn_s} (f) = \lan \Phi_t f, \hmeasn_s \ran   & = &     
\lan \Phi_{s+t} f, \hmeasn_0 \ran - \chatmn_s(\Psi_s \Phi_t f) \\ 
\nonumber
&& \quad + \int_{[0,s]} (\Phi_t f) (s-u)   (1 - G(s-u))\, d\hatkn (u) \\
\label{consist4}
 &= &  \ds \ops^{\hmeasn_0}_{s+t}(f) - \chatmn_s(\Psi_{s+t} f) \\
\nonumber
& & \quad +  
\int_{[0,s]}  f(s+t-u) (1 - G(s+t-u)) \, d\hatkn (u).
\end{eqnarray} 
The relation (\ref{eq-consist0}) is then obtained by   
subtracting (\ref{consist4}) from  (\ref{consist1}), rearranging 
terms  and using 
the relations (\ref{consist2}) and (\ref{consist3}). 

Now, suppose that Assumptions \ref{as-flinit}--\ref{as-holder} are satisfied
 and further, 
assume that $g$ is continuous.  Then  Theorem \ref{th-fclt} shows that 
the limit $\hmeas$ of $\{\hmeasn\}$ is a continuous $\H_{-2}$-valued process 
that is given explicitly by (\ref{def-hmeas}).  
The shifted equation (\ref{eq-consist00}) for the limit $\hmeas$
 is proved in a similar 
fashion as for the corresponding quantity $\hmeasn$ in the $N$-server system, 
except that now $\hcalk$ has the slightly different 
representation (\ref{def-hcalk}).   We fill in the details for completeness.  
Applying (\ref{def-hmeas}) with $t$ replaced by $t+s$, we see that for bounded 
and absolutely continuous $f$, 
\[ \hmeas_{t+s} (f) = \ops_{t+s}^{\hmeas_0}(f) - \hath_{t+s}(f) + f(0)
\hatk(s) 
+ \int_0^{t+s} \hatk(u) \trans_{f} (t+s-u) \, du. 
\]
On the other hand, applying (\ref{def-hmeas}) with $f$ and $t$, respectively, 
replaced by $\Phi_tf$ and $s$ and using the semigroup relation
(\ref{eq-sgroup}) for $\Phi_t$ and the fact that $(\Phi_t f)(0) =
f(t)(1-G(t))$,  we obtain  
\be
\label{neweq6}
\begin{array}{rcl}
\ds \ops_t^{\hmeas_s} (f) & = &  \ds \hmeas_s \left( \Phi_t f\right) \\ 
& = & \ds \ops_{s+t}^{\hmeas_0} (f)  - \chatm_s (\Psi_t \Phi_t f) + f(t)(1-G(t))\hatk(s) 
+ \int_0^s \hatk(u) \trans_{\Phi_t f} (s-u) \, du. 
\end{array}
\ee
Simple calculations show that $\trans_{\Phi_t f} = \trans_f (\cdot +t)$. Hence,   
\[
\ds \int_0^{t+s} \hatk(u) \newf_f(t+s-u) \, du - 
\int_0^s \hatk(u) \trans_{\Phi_t f} (s-u) \, du 
  =  \ds \int_0^t \hatk(s+u) \trans_f(t-u) \, du 
\]
and, since $\trans_f = (f(1-G))'$, 
\[ \int_0^t \hatk(s) \trans_f(t-u) \, du  =  f(0)\hatk(s) - f(t)(1-G(t))\hatk(s). 
\]
Equation (\ref{eq-consist00}) can now be obtained by  
combining the last four equations with the limit analog of (\ref{consist3}), 
in which $\hathn$ and $\chatmn$, respectively, are replaced by 
$\hath$ and $\chatm$.

To show  that (\ref{eq-consist02}) is satisfied, 
 note that  by Theorem \ref{th-main}, 
 $(\hatk, \hatx, \hmeas_0(\f1)) = \Lambda (\hate, \widehat{x}_0,
\ops^{\hmeas_0}(\f1) - \hath(\f1))$.  This implies  that 
the centered many-server 
equations (\ref{ref-meas})--(\ref{ref-nonidling}) are
satisfied with $v, Z, X, K$ and $E$, respectively, replaced 
by  $\hmeas(\f1), \ops^{\hmeas_0}(\f1) - \hath(\f1), \hatx, \hatk$ 
and $\hate$.   
Fix any $s > 0$.  Subtracting  the  equation (\ref{ref-sm}) evaluated 
at $t+s$ from the same equation evaluated at $t$, it follows that 
 (\ref{ref-sm}) also  
holds when $K, E, X$ and $v$ is replaced, respectively, 
by $\Theta_s \hatk, \Theta_s \hate, \hatx_{s+\cdot}$ and $\hmeas_{s + \cdot}(\f1)$. 
It is also clear  that (\ref{ref-nonidling}) is satisfied with 
$v$ and $X$ replaced by $\hmeas_{s+t}(\f1)$  and $\hatx_{s+t}$ for all $t \geq 0$. 
Lastly, substituting $f = \f1$ in (\ref{eq-consist00}), using  
the definition (\ref{def-shift4}) of $\Theta_s \hcalk$ and the fact that 
$\newf_{\f1} = -g$, it follows  that 
 (\ref{ref-meas}) holds with $v, Z$ and $K$,  respectively, replaced, 
 by $\hmeas_{s+\cdot}(\f1), \ops^{\hmeas_s}(\f1)- \Theta_s \hath(\f1)$ and 
$\Theta_{s} \hatk$.  This proves (\ref{eq-consist02}).

Fix $s > 0$. We first need to show that Assumption \ref{as-hate} 
is satisfied when $\haten$ and $\hate$, respectively, are replaced by 
$\Theta_s\haten$ and $\Theta_s \hate$.   This is easily deduced 
using basic properties of renewal processes and Poisson processes 
and is thus left to the reader.    Next, we show that 
 Assumption \ref{as-diffinit}' is satisfied when $\widehat{x}^{(N)}_0$, 
$\widehat{x}_0, \hmeasn_0$,
 and $\hmeas_0$, respectively,  
are replaced by $\hatxn(s), \hatx(s), 
\hmeasn_s$ and $\hmeas_s$.  
By definition (\ref{def-hmeas}), $\hmeas_s(f)$ is a well defined 
random variable for all $f \in \acbl$ and, since $\Phi_t f \in 
\acbl$ if $f \in \acbl$ it follows that 
$\ops_t^{\hmeas_s}(f) = \hmeas_s(\Phi_t f)$ is also a well 
defined random variable for every $f \in \acbl$.  
Now, due to (\ref{def-hmeas}), 
 the assumed measurability of $f \mapsto \ops^{\hmeas_0}_s(f)$ 
and the joint measurability of the maps
 $(s,f) \mapsto \chatm_s(f)$ and $(s,f) \mapsto \hcalk_s(f)$ for 
$f \in {\cal C}_b[0,\endsup)$, 
which is a consequence of the definition of these stochastic integrals, 
it follows that almost surely the map $f \mapsto \hmeas_s (f)$ 
from $\acbl \subset {\cal C}_b[0,\endsup)$ to $\R$ (both 
equipped with their respective Borel $\sigma$-algebras) is also 
measurable.    
Moreover, for a given $\varphi \in {\cal C}_b([0,\endsup) \times [0,T])$ and 
$t > 0$, 
the maps  
$r \mapsto \Phi_r \varphi(\cdot,r)$
from  
$[0,T]$ to ${\cal C}_b[0,\endsup)$ 
and   $f \mapsto \Phi_t f$ 
from ${\cal C}_b[0,\endsup)$ to itself 
are clearly also measurable (they are in fact continuous). 
Since the composition of measurable maps is measurable,
 it follows that almost surely, 
both  $r \mapsto 
\hmeas_s (\Phi_r(\varphi(\cdot,r)))$ and 
$f \mapsto \ops^{\hmeas_s}_t(f) = \hmeas_s(\Phi_tf)$ are measurable.

In addition, due to the continuity of the limit in the convergence 
(\ref{conv-fclt}) established in Proposition \ref{prop-fclt}, it 
follows  that  
\[ (\hatxn(s), \hmeasn_{s+\cdot}, \Theta_s \hcalkn, \Theta_s \hathn) \Rightarrow 
( \hatx(s), \hmeas_{s+\cdot}, \Theta_s \hcalk,\Theta_s \hath) \]
 in the space $\R \times {\cal D}_{\H_{-2}}^3[0,\infty)$. In particular, this implies
that $\hmeas_s$ has an $\H_{-2}$-valued version, 
and so property (a) of Assumption \ref{as-diffinit} is satisfied. 
Next, by (\ref{eq-consist0}) and (\ref{eq-consist00}), 
$\ops^{\hmeasn_s}$  can be expressed as a linear combination of the 
$\H_{-2}$-valued processes 
 $(\hmeasn_{s+\cdot}, \Theta_s \hcalkn, \Theta_s \hathn)$ and, 
 likewise, $\ops^{\hmeas_s}$  is the same linear combination of 
$(\hmeas_{s+\cdot}, \Theta_s \hcalk, \Theta_s \hath)$. 
Therefore, the continuity of $\hmeas_{s+\cdot}, \Theta_s \hcalk$ and 
$\Theta_s \hath$ show that $\ops^{\hmeas_s}$ is a continuous 
$\H_{-2}$-valued process. The same logic used above then shows 
that  the real-valued process $\ops^{\hmeas_s} (\f1)$ 
is continuous and that the limits in property (c) of Assumption 
\ref{as-diffinit} holds.  Thus, we have established 
that Assumption \ref{as-diffinit} continues to hold 
at a shifted time.

It only remains to show that
 Assumption \ref{as-diffinit}' is satisfied when 
$\hmeas_0$ is replaced by $\hmeas_s$.  Fix  
$\newf \in {\cal C}_b([0,\endsup) \times [0,\infty))$ such that 
$x \mapsto \newf(x,r)$ is absolutely continuous and  H\"{o}lder continuous and 
$\newf_x$ is integrable on $[0,\endsup) \times [0,T]$ for any 
$T < \infty$.  
We will make repeated use of the  semigroup property  
$\Phi_{s}\circ \Phi_{r} = \Phi_{s+r}$, the 
relation $\Psi_{s+r} = \Psi_s \circ \Phi_{r}$
 on the appropriate domain 
as specified in (\ref{rel-psiphi}),  
 and the 
form (\ref{def-hcalk}) of $\hcalk$, without explicit mention.  
Then,  by  the other assumptions on $h$, 
for any $r > 0$, $\Phi_{r} \newf(\cdot,r)$ and 
$\int_0^t \Phi_{r} \newf (\cdot,r)\, dr$ are both bounded, H\"{o}lder
continuous and absolutely 
continuous functions on $[0,\endsup)$. 
Therefore, substituting $f = \Phi_{r} \newf(\cdot, r)$ into
(\ref{def-hmeas}) 
 we see that
\be
\label{eq-phirh}
\hmeas_s (\Phi_{r} \newf(\cdot,r))  =  
 \hmeas_0 (\Phi_{s+r} \newf(\cdot,r)) 
- \chatm_s (\Psi_{s+r} \newf(\cdot,r)) + 
\hcalk_s ( \Phi_r \newf(\cdot,r)). 
\ee  
By (\ref{rel-ops}), it follows that 
$\hmeas_0(\Phi_{s+r} \newf(\cdot,r)) = \ops_s^{\hmeas_0} (\Phi_r \newf
(\cdot,r))$.   
 Furthermore, substituting $f = \int_0^t \Phi_{r} \newf(\cdot,r) \, dr$
into (\ref{def-hmeas}),  invoking Assumption \ref{as-diffinit}'(d) with 
$\newf(\cdot,r)$ replaced by $\Phi_s \newf(\cdot,r)$, and applying the Fubini theorems 
(\ref{fub-0}) and (\ref{fub-2}) with the 
absolutely continuous and 
uniformly bounded function 
$\tilde{\newf}(x,r) = (\Phi_r
\newf(\cdot,r))(x)$, it follows that 
$\hmeas_s \left(\int_0^t \Phi_{r} \newf(\cdot,r) \, dr\right)$ is equal to  
\begin{eqnarray*}
 \quad \hmeas_0 \left( \int_0^t\Phi_{s+r} \newf(\cdot,r) \, dr \right) 
- \chatm_s\left( \int_0^t \Psi_{s+r}(\newf(\cdot,r)) \, dr \right) 
 + \hcalk_s \left(\int_0^t \Phi_r \newf (\cdot,r) \, dr \right) \\
 =  \int_0^t \hmeas_0 (\Phi_{s+r} \newf(\cdot,r)) \, dr -  
\int_0^t \chatm_s (\Psi_{s+r} \newf(\cdot,r)) \, dr  + 
\int_0^t \hcalk_s (\Phi_r \newf (\cdot,r)) \, dr. 
\end{eqnarray*}
A comparison with (\ref{eq-phirh}) shows that the 
right-hand side above equals $\int_0^t \hmeas_s (\Phi_r \newf(\cdot,r)) \, dr$. 
Thus, Assumption \ref{as-diffinit}'(d) holds with $\hmeas_0$ replaced 
by $\hmeas_s$. 
\end{proof}

 \noi
 {\bf Acknowledgments. }  We are grateful
to Leonid Mytnik for some useful discussions and to
Avi Mandelbaum for bringing this open problem to our attention.


\begin{thebibliography}{10}

\bibitem{adamsbook}
\textsc{Adams, R.A.} (1975) \textit{Sobolev Spaces}, 
Academic Press, New York. 

\bibitem{aga89}
\textsc{Agadzhanov, A.N.} (1989)
 Functional properties of
Sobolev spaces of infinite order.  
\textit{Soviet Math.\ Dokl.} 
\textbf{38}, 88--92. 

\bibitem{asmbook}
\textsc{Asmussen, S.} (2003) 
\textit{Applied Probability and Queues}, 2nd edition
ed., Springer-Verlag, New York.


\bibitem{bilbook} 
\textsc{Billingsley, P.}  (1968) 
\textit{Convergence of Probability Measures}, 
John Wiley, New York.

\bibitem{Bor67} (1967) 
\textsc{Borovkiov, A.A.} 
On limit laws for service processes in multichannel systems (in Russian), 
\textit{Siberian Math. J.} {\bf 8} 746--763. 

\bibitem{brownetal}
\textsc{Brown, L., Gans, N., Mandelbaum, A., Sakov, A., Shen, H, Zeltyn,
S.} and \textsc{Zhao, L.} (2005) 
  Statistical analysis of a telephone call center: a queueing science
  perspective, \textit{JASA} \textbf{100}, No.~469, 36--50. 


\bibitem{Erlang48} 
\textsc{Erlang, A.K.} On the rational determination 
of the number of circuits. (1948) In 
{\em The Life and Works of A.K.\ Erlang}, 
E.\ Brockmeyer, H.L.\ Halstrom and A.\ Jensen, Eds. 
The Copenhagen Telephone Company, Copenhagen. 

\bibitem{burbook}
\textsc{Burton, B.} (2005) 
 \emph{Volterra Integral and Differential Equations.} 
Elsevier.  

\bibitem{DecMoy08}
\textsc{Decreusefond, L.} and \textsc{Moyal, P.} (2008) 
 A functional central limit theorem for the $M/GI/\infty$ 
queue.  {\em Annals of Applied Probability},
{\bf 18}, 2156--2178.   

\bibitem{ethkurbook}
\textsc{Ethier, S.N.} and \textsc{Kurtz, T.G.} (1986) 
 \emph{Markov Processes: Characterization and
  convergence}, Wiley. 

\bibitem{evansbook} \textsc{Evans, L.C.} (1988)
 \emph{Partial Differential
Equations}, Graduate Studies in Mathematics Vol. 19, American 
Mathematical Society. 



\bibitem{Friedmanbook}
\textsc{Friedman, A.} (2006) 
\emph{Stochastic Differential Equations and Applications,}
Dover Publications.

\bibitem{GamMom08}
\textsc{Gamarnik, D.} and \textsc{Mom\v{c}ilovi\'{c}, P.} (2008).
Steady-state analysis of a multiserver queue in the Halfin-Whitt regime,
\textit{Advances in Applied Probability}  \textbf{40}  No. 2, 548--577.

\bibitem{GlyWhi91} 
\textsc{Glynn, P. and Whitt, W.} (1991) 
A new view of the heavy-traffic limit theorem for infinite-server 
queues. 
\textit{Adv.\ Appl.\ Probab.} {\bf 2} 188--209. 

\bibitem{HalWhi81} 
\textsc{Halfin, S. and Whitt, W.} (1981). Heavy-traffic limit theorems for queues with many servers,
\textit{Oper. Res.} \textbf{29}, 567--588.

\bibitem{Igl65}
\textsc{Iglehart, D.L.} (1965)
Limit diffusion approximations for the many server queue and the repairman 
problem. {\it J.\ Appl.\ Probab.} \textbf{2} 429--441. 

\bibitem{JacShiBook}
\textsc{Jacod, J.} and  \textsc{Shiryaev, A.N.}
{\it Limit theorems for stochastic processes},
  Springer-Verlag, Berlin, 1987. 

\bibitem{Jag74}
\textsc{Jagerman, D.} (1974)
\textit{Some properties of Erlang loss function.} 
{\em Bell System Techn.\ J.} {\bf 53}  no. 3, 525--551. 

\bibitem{JelManMom04}
\textsc{Jelenkovic, P., Mandelbaum, A.} and \textsc{Mom\v{c}ilovi\'{c}, P.} (2004).
Heavy traffic limits for queues with many deterministic servers,
\textit{Queueing Syst.} \textbf{47} No. 1-2,  53--69.

\bibitem{KanRam08}
\textsc{Kang, W. N.} and \textsc{Ramanan, K.} (2008) 
Fluid limits of many-server queues with reneging,
\textit{Annals of Applied Probability}, to appear.

\bibitem{KanRam10a}
\textsc{Kang, W. N.} and \textsc{Ramanan, K.} (2010) 
Approximations of stationary distributions of many-server queues, 
\textit{Annals of Applied Probability}, to appear. 


\bibitem{KasRam07}
\textsc{Kaspi, H.} and \textsc{Ramanan, K.} (2007). Law of large numbers limits for many-server queues, \textit{Annals of Applied Probability}, to appear.




\bibitem{KriPuh97} 
\textsc{Krichagina, E.}  and \textsc{Puhalskii, A.} (1997)
A heavy traffic analysis of a closed queueing system with a GI/G/$\infty$ 
service center,   
\textit{Queueing Syst.} \textbf{25}, 235--280. 

\bibitem{manmasrei}
\textsc{Mandelbaum, A., Massey, W.} and \textsc{Reiman, M.} (1998) 
Strong approximations for
  Markovian service networks, 
\textit{Queueing Syst.} \textbf{30}, 149--201.



\bibitem{ManMom05}
 A.~Mandelbaum and P. Momcilovic. 
{\em Queues with many servers: the virtual waiting-time process in the 
QED regime.} Math of Oper. Res. FILL IN  

\bibitem {mitoma83a} 
\textsc{Mitoma, I.} (1983) 
Tightness of Probabilities on ${\cal C}([0,1],S')$
and ${\cal D}([0,1],S')$, \textit{Annals of Probability}
 \textbf{11} 
989-999.

\bibitem {mitoma83b}
\textsc{Mitoma, I.} (1983) 
On the sample continuity of ${\cal S}'$-processes,
\textit{J.\ Math.\ Soc.\ Japan} 
 \textbf{35}, No. 4, 629--636.

\bibitem{PuhRei00}
\textsc{Puhalskii, A.A.} and \textsc{Reiman, M.} (2000) 
The multiclass GI/PH/N queue in
the Halfin-Whitt regime, 
{\em Adv. Appl. Probab.} \textbf{32}, 564-595.

\bibitem{Puhree09} 
\textsc{Puhalskii, A.A.} and \textsc{Reed, J.} (2010) 
On many servers queues in
    heavy traffic, {\it Ann.\ Appl.\ Probab.} \textbf{20} No.1, 
    129--195.

\bibitem{reed07}
\textsc{Reed, J.} (2009) The {G/GI/N} queue in the {H}alfin-{W}hitt regime,
 {\em Ann.\ Appl.\ Probab.} \textbf{19} No. 6, 2211-2269.

\bibitem{ReedTal09} 
\textsc{Reed, J. and Talreja, R.} 
Distribution-valued heavy-traffic limits for the 
G/GI/$\infty$ queue. 
{\em Preprint.} 




\bibitem{revyorbook} 
\textsc{Revuz, D.} and \textsc{Yor, M.} (1999) 
\emph{Continuous Martingales
and Brownian Motion.} Third Edition, Springer, Berlin. 

\bibitem{RogWilbook2} \textsc{Rogers, D.} and \textsc{Williams, D.} (1994) 
\newblock {\em Diffusion, Markov Processes and Martingales, Volume 2: Foundations}.
Second Edition, Wiley. 

\bibitem{sharpebook}
\textsc{Sharpe, M.} (1988) \emph{General Theory of Markov Processes.} Academic
Press. 

\bibitem{walshbook} 
\textsc{Walsh, J.B.} {\em An Introduction to Stochastic Partial
Differential Equations}, \'{E}cole d'\'{e}t\'{e} probabilit\'{e}s
de Saint-Flour XIV. {\em Lecture Notes in Math.} {\bf 1180}
265--439.  Springer, Berlin.

\bibitem{Whi82} (1982) 
\textsc{Whitt, W.} On the heavy-traffic limit theorem for 
GI/G/$\infty$ queues. 
{\em Adv.\ Appl.\ Probab.} {\bf 14}, 171--190. 

\bibitem{whi05} 
\textsc{Whitt, W.}  (2005) Heavy-traffic limits for the G/H2/n/m queue, 
\textit{Queueing Syst.} \textbf{30}, 1--27. 

\end{thebibliography}
\end{document}